\newtheorem{theorem}{Theorem}[section]
\newtheorem{proposition}[theorem]{Proposition}
\newtheorem{lemma}[theorem]{Lemma}
\newtheorem{corollary}[theorem]{Corollary}
\newcommand{\E}{\mathbb{E}}
\newcommand{\C}{\mathcal{C}}
\theoremstyle{definition}
\newtheorem{remark}[theorem]{Remark}
\newtheorem{definition}[theorem]{Definition}
\newtheorem{example}[theorem]{Example}
\tikzstyle{empty}=[circle,draw=black!80,thick]
\tikzstyle{emptyn}=[circle,draw=black!80,fill=white,scale=0.5] 
\tikzstyle{nero}=[circle,draw=black!80,fill=black!80,thick] 
\def\e{\epsilon}
\def\a{\alpha}
\def\b{\beta}
\def\g{\gamma}
\def\d{\delta}
\def\Z{\mathbb{Z}}
\def\E{\mathbb{E}}
\def\C{\mathbb{C}}
\def\tr{\mathrm{tr}}
\def\op{\mathrm{op}}
\def\so3{\mathrm{SO}(3)}
\def\fso3{\mathrm{FSO}(3)}
\def\bbA{\mathbbm 1_A}
\def\bbAA{\mathbbm 1_{A^{-1}}}
\def\hf{{\hat f}}
\def\hg{{\hat g}}
\title{Partial associativity and rough approximate groups}
\date{}
\author{W. T. Gowers and J. Long}
\begin{document}
\maketitle

\abstract{Suppose that a binary operation $\circ$ on a finite set $X$ is injective in each variable separately and also associative. It is easy to prove that $(X,\circ)$ must be a group. In this paper we examine what happens if one knows only that a positive proportion of the triples $(x,y,z)\in X^3$ satisfy the equation $x\circ(y\circ z)=(x\circ y)\circ z$. Other results in additive combinatorics would lead one to expect that there must be an underlying `group-like' structure that is responsible for the large number of associative triples. We prove that this is indeed the case: there must be a proportional-sized subset of the multiplication table that approximately agrees with part of the multiplication table of a metric group. A recent result of Green shows that this metric approximation is necessary: it is not always possible to obtain a proportional-sized subset that agrees with part of the multiplication table of a group.

\section{Introduction}\label{intro}

The following statement is a known result in additive combinatorics. Let $n$ be a prime, let $A\subset\Z/n\Z$ and let $\phi:A\to\Z/n\Z$ be a map such that the number of quadruples $(a,b,c,d)\in A^4$ with $a+b=c+d$ and $\phi(a)+\phi(b)=\phi(c)+\phi(d)$ is at least $\a n^3$. Then there is a subset $A'\subset A$ of size at least $\b n$, where $\b$ depends on $\a$ only, such that $\phi(a)+\phi(b)=\phi(c)+\phi(d)$ whenever $a,b,c,d\in A'$ and $a+b=c+d$. A map with this last property is called a \emph{Freiman homomorphism}, so this result is saying that a map that obeys the condition for a Freiman homomorphism a constant fraction of the time can be restricted to a dense set that obeys the condition all the time. One can then go further and show that $\phi$ agrees on a further dense subset with the restriction of a `linear-like' function, which gives a global structural characterization of functions that satisfy the initial local conditions. (For more details, see for example~\cite{gowersszem}, in particular Corollary 7.6.)

There are several known statements of this general flavour, and the purpose of this paper is to prove another one. Here our starting point is a binary operation $\circ$ defined on a finite set $X$. We assume that it is {an injection (and therefore a bijection)} in each variable separately, and that there exists a constant $c>0$, independent of the size of $X$, such that the number of triples $(x,y,z)\in X^3$ with $x\circ(y\circ z)=(x\circ y)\circ z$ is at least $c|X|^3$. It is easy to see that if $c=1$ then these conditions are equivalent to the group axioms, so it is natural to ask whether if a binary operation has this property for some smaller $c$, then there must be some underlying group structure that `explains' the prevalence of associative triples. This question appears to have been known to various people -- we heard about it from Emmanuel Breuillard, who attributed it to Ehud Hrushovski, and an essentially equivalent question arose out of work we ourselves had been doing -- but it does not seem to have appeared in print.

The `99\% case' was dealt with by Elad Levi \cite{EL}, who proved that if $c$ is close to 1, then there must be a group $G$ of size approximately equal to $|X|$ and an injection $\phi:X\to G$ such that $\phi(x\circ y)=\phi(x)\phi(y)$ for almost all pairs $x,y\in X^2$. In other words, the multiplication table of $\circ$ agrees almost everywhere with a group operation. In this paper we look at the `1\% case' -- that is, the case where $c$ is a small fixed constant. We also weaken the hypothesis in a small way by considering binary operations that are only partially defined: this has no significant effect on our arguments, but it is convenient when discussing examples not to have to worry about whether they are defined everywhere. In the discussion that follows, we shall often use the word `operation' to mean `partial binary operation'. 

An easy way to create an operation with many associative triples is to take the operation $\circ$ on a group $G$ and turn it into a partial binary operation by restricting it to a dense subset $X\subset G^2$. This is not guaranteed to work, as there are not necessarily $c|G|^3$ triples $(x,y,z)\in G^3$ such that all of $(x,y), (y,z), (x,y\circ z)$ and $(x\circ y,z)$ belong to $X$. However, in many cases, such as when $X$ is a random subset, it does. More generally, given any operation with many associative triples, one can find restrictions that still have many associative triples. 

Another method is to take a subset $A$ of a group $G$ and restrict the group operation $\circ$ to all pairs $(a,b)\in A^2$ such that $a\circ b\in A$. Again, this is not guaranteed to work, but if $A$ is an \emph{approximate subgroup}, which roughly speaking means that it is closed `1\% of the time' (we shall discuss this condition in more detail in a moment), then this gives another source of examples.

A third method is based on structures that are approximately groups in a metric sense. For concreteness, we discuss a specific example. 

\begin{example}\label{fuzzyso3}
Let $\d>0$ and let $X$ be a maximal $\d$-separated subset of $\so3$ {equipped with a suitable translation invariant metric}. Now define a partial binary operation as follows. Let $\theta>0$ be a suitable absolute constant (as opposed to $\d$, which is comparable to $|X|^{-1/3}$) and then for $x,y,z\in X$ let $x\circ y=z$ if and only if $d(xy,z)\leq\theta\d$. If $d(xy,X)>\theta\delta$ then the product $x\circ y$ is undefined. {Importantly, translation invariance means that (assuming that $\theta<1/2$) the resulting operation is injective in each variable separately, in the sense that e.g. $x\circ y=x'\circ y$ with both sides defined implies that $x=x'$.}
\end{example} 

We show in an appendix that no matter how the set $X$ in Example~\ref{fuzzyso3} is chosen {the resulting binary operation will be defined a positive proportion of the time, and} there will necessarily be many associative triples. (In order to prove this, we first prove a Bogolyubov-type lemma for $\so3$ (Lemma \ref{bogolyubov} in the second appendix) that may be of independent interest. However, there is no obvious way of passing to a subset of $X^2$ where the operation is isomorphic to a restriction of a group operation. Indeed, in an earlier version of this paper, we conjectured that there was no such subset, and that conjecture has been proved by Ben Green \cite{greenso3}. 

This example shows that a natural conjecture -- that a partially associative binary operation agrees on a substantial set of pairs with a group operation -- is false. However, the example has a suggestive structure that suggests an appropriate weakening of the conjecture.

By a metric group, we mean a group $G$ equipped with a bi-invariant metric $d$, which we allow to take the value infinity. Our main result will be that if an operation has many associative triples (and is injective in each variable separately), then it agrees on a large set of pairs with a restriction of a small perturbation of the binary operation on a metric group. 

The next theorem is not in fact our main theorem, but a consequence of it. However, to state the main theorem requires one more definition, so we shall state this result first. Loosely speaking, it says that the multiplication table of a partial binary operation with many associative triples must be approximately isomorphic to part of the multiplication table of a metric group $G$. The precise statement is as follows.

\begin{theorem} \label{assoc_main} Let $c>0$, let $X$ be a finite set and let $\circ$ be a partially defined binary operation on $X$ that is injective in each variable separately. Let $E\subset X^2$ be the set of pairs on which $\circ$ is defined. Suppose that there are at least $\e|X|^3$ triples $(x,y,z)\in X^3$ such that $x\circ(y\circ z)=(x\circ y)\circ z$ (where this means in particular that all expressions and subexpressions are defined). Then for every positive integer $b$ there exist $\d(\e,b)\ge \e^{b^{26b}}$, a subset $A\subset E$ of density at least $\d$, a metric group $G$, and maps $\phi,\psi$ and $\omega$ from $X$ to $G$, such that the images $\phi(X), \psi(X)$ and $\omega(X)$ are 1-separated, and $d(\phi(x)\psi(y),\omega(z))\leq b^{-1}$ for every $(x,y,z)\in X^3$ such that $(x,y)\in A$ and $x\circ y=z$.
\end{theorem}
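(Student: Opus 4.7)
The approach I would take is to reinterpret the hypothesis as a statement about an approximate homomorphism into a symmetric group, and then distil a metric group structure from a ``rough approximate group'' of translations. Since $\circ$ is injective in each variable, for each $x \in X$ the left translation $L_x : z \mapsto x\circ z$ is a partial bijection of $X$, and similarly each right translation $R_y$ is a partial bijection. The associativity of a triple $(x,y,z)$ is exactly the relation $L_xL_y(z) = L_{x\circ y}(z)$, so the hypothesis says that for a density-$\epsilon$ set of pairs $(x,y)$ the permutations $L_xL_y$ and $L_{x\circ y}$ agree on a positive density of inputs $z$. Equipping $\mathrm{Sym}(X)$ with its bi-invariant (normalised) Hamming metric, the assignment $x \mapsto L_x$ is thus an approximate homomorphism in a weak, metric sense, and $\{L_x : x \in X\}$ is a candidate rough approximate group.

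First I would apply averaging and pigeonholing to pass to a subset $A_0 \subset E$ of density comparable to $\epsilon$ on which some crude version of the approximate-homomorphism property holds uniformly --- for instance, every pair $(x,y) \in A_0$ has $L_xL_y$ within some absolute constant distance of $L_{x\circ y}$. I would then iterate a refinement loop: at each stage, either the metric approximation at the current target precision $1/i$ already holds on a proportional subset (in which case one advances to precision $1/(i{+}1)$), or a non-trivial discrepancy singles out a dense substructure on which the operation is more regular, giving a density increment of the type recalled for Freiman homomorphisms in the introduction. Each iteration costs an exponential factor in the density and sharpens the precision by a controlled amount; the bound $\delta \ge \epsilon^{b^{26b}}$ is consistent with running order $b^{O(b)}$ such refinements in order to reach precision $1/b$.

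To produce the metric group $G$ itself, one takes a suitable closure of the translations $L_x$ inside a symmetric group --- or inside a product of two copies, so as to encode left and right translations symmetrically --- with the Hamming metric rescaled so that $\phi(X)$, $\psi(X)$ and $\omega(X)$ can be chosen $1$-separated, a discretisation step that would likely force a further density loss. The flexibility of having three different maps $\phi$, $\psi$, $\omega$ rather than a single embedding is precisely the metric slack that is compatible with Green's $\mathrm{SO}(3)$ counterexample: one is approximating the ternary relation $x\circ y=z$ rather than identifying $x$ with a single element of a group. The main obstacle, I anticipate, is this construction of $G$ in tandem with the density control: one must simultaneously maintain bi-invariance of the metric, equip $G$ with an honest associative operation, and verify that the $1/b$ slack absorbs all approximation errors accumulated through $b^{O(b)}$ refinements. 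Without this slack Green's example shows the theorem is false, so the argument must use the metric in an essential way, and it is the bookkeeping of the metric error against the compounding density loss that I expect to be the delicate combinatorial heart of the proof.
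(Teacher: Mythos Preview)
Your approach is fundamentally different from the paper's and has a genuine gap at its core. The paper does \emph{not} build $G$ inside a symmetric group or any concrete ambient group; instead it takes $G$ to be the \emph{free} group on $X\cup Y\cup Z$ equipped with the metric $d(w_1,w_2)=b^{-1}\cdot(\text{minimal area of a van Kampen diagram with boundary }w_1w_2^{-1})$, where the relations are all triangles $xyz^{-1}$ with $x\circ y=z$. With this definition the condition $d(\phi(x)\psi(y),\omega(z))\le b^{-1}$ is automatic; the entire difficulty is showing that after passing to a dense subset the images are $1$-separated, i.e.\ that no two distinct generators are connected by a van Kampen diagram of area $<b$. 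The paper proves this via a removal-type argument: first convert associativity to many octahedra (Lemma~1.9), then use dependent random selection to ensure every short cycle has near-maximal numbers of ``dispersed ring decompositions'' (Sections~2--3), and finally run a popular-replacement scheme (Section~5) showing that if a small disc with boundary of length~$2$ survived, one could unfix its edges one at a time to produce more copies of a certain surface than the trivial maximum allows.

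Your proposal, by contrast, tries to realise $G$ inside $\mathrm{Sym}(X)$ with a rescaled Hamming metric. This runs straight into the $\mathrm{SO}(3)$ obstruction you yourself mention: if the $L_x$ were $1$-separated and $L_xL_y$ were $b^{-1}$-close to $L_{x\circ y}$ in any rescaling of Hamming distance, you would be asserting that a large piece of the multiplication table agrees with an honest group multiplication up to a small fraction of errors, which Green's example rules out. The three maps $\phi,\psi,\omega$ do not rescue this, because in your construction they all take values in the same concrete group of permutations with its intrinsic metric; the flexibility the theorem needs is in the \emph{metric}, not just in having three maps. Separately, your ``refinement loop'' is not an argument: you posit a dichotomy (either precision $1/i$ holds on a dense set, or a discrepancy yields a density increment) without saying what structure the discrepancy exposes or why the density goes up. In the paper the analogous step is concrete --- a vertex of high degree in an auxiliary graph is shown to force too many copies of a fixed surface --- and it is this counting, not a generic increment, that drives the bound $\epsilon^{b^{O(b)}}$.
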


\subsection{Quasigroups, the quadrangle condition, torsors, and our main theorem.}\label{intro2}

Our main result will have the same conclusion as that of Theorem \ref{assoc_main} but a hypothesis that is both weaker and in some ways more natural. It arises out of the following simple question: suppose that an $n\times n$ grid is filled with labels. Under what conditions is this labelled grid the multiplication table of some group?

We can ask the question more formally as follows. Suppose we are given three sets $X$, $Y$ and $Z$ with $|X|=|Y|=n$, and a function $f:X\times Y\to Z$. Under what conditions does there exist a group $G$ of order $n$ and bijections $\phi:X\to G, \psi:Y\to G$ and $\omega:Z\to G$ such that for every $(x,y)\in X\times Y$ we have $\phi(x)\psi(y)=\omega(f(x,y))$?

To discuss this, we use the following vocabulary. We call the elements of $Z$ \emph{labels}, sets of the form $\{x\}\times Y$ \emph{columns} and sets of the form $X\times\{y\}$ \emph{rows}. If $f(x,y)=z$, we say that $z$ is the label in position $(x,y)$. A very obvious necessary condition is that $Z$ should also have cardinality $n$. Another is that each label occurs exactly once in each row and each column. 

A labelling of an $n\times n$ grid that satisfies these two conditions is known as the \emph{Latin square} $(X,Y,Z,f)$. If we think of the labelled grid as the multiplication table of the binary operation $f$, then it has the property that for each $x\in X$ the function $y\mapsto f(x,y)$ is a bijection from $Y$ to $Z$, and for each $y\in Y$ the function $x\mapsto f(x,y)$ is a bijection from $X$ to $Z$. If we identify the sets $X, Y$ and $Z$ (using arbitrary bijections) and write $x\circ y$ instead of $f(x,y)$, then we have a set $X$ with a binary operation $\circ$ with the property that for every $a,b\in X$ the equations $a\circ x=b$ and $x\circ a=b$ have unique solutions. Such an algebraic structure is called a \emph{quasigroup}. (Thus, quasigroups and Latin squares are essentially the same.)

The question now becomes the following: when is a quasigroup a group? Equivalently, when is a Latin square the multiplication table of a group?\footnote{{It is important to clarify exactly what this question is asking. When we are presented with the Latin square, we are \emph{not} given any correspondences between rows, columns and labels. Rather, we are given an arrangement of labels and asked to \emph{find} correspondences in such a way that the resulting binary operation is a group operation.}}

The following definition, due to Brandt~\cite{Brandt}, turns out to be a natural concept when determining whether a quasigroup manifests group structure.

\begin{definition}\label{qc1}
	Let $A=(X,Y,Z,f)$ be a Latin square. We say that $A$ satisfies the \emph{quadrangle condition} if for every configuration of the following form in $A$,
	\[\begin{matrix}c&d\\ a&b\\ &&&c&d'\\ &&&a&b\\ \end{matrix}\]
	we have $d=d'$. To put it a different way, we can define a ternary \emph{rectangle completion} operation on the set $Z$ of labels by mapping $(a,b,c)$ to $d$ whenever there exists a rectangle with labels $a,b,c,d$ such that $a$ is in the same row as $b$ and the same column as $c$.
\end{definition}

Observe that if $x_1,x_2,y_1,y_2$ are elements of a group $G$, and $x_1y_1=a, x_2y_1=b$ and $x_1y_2=c$, then $x_2y_2=ba^{-1}c$. This simple observation shows that if a Latin square is a group multiplication table, then the rectangle completion operation is well-defined, and therefore group multiplication tables satisfy the quadrangle condition.

It turns out that the converse is true as well: a Latin square that satisfies the quadrangle condition is the multiplication table of a group. This is a well-known observation of Brandt \cite{Brandt}. Since the proof is short, we give it here.

\begin{proposition}\label{prop:QC}
Every Latin square that satisfies the quadrangle condition is the multiplication table of a group.
\end{proposition}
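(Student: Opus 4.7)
The plan is to use the Latin-square structure together with a choice of basepoints to convert $Z$ itself into the underlying group, and then verify the group axioms, with only associativity requiring the quadrangle condition.

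First I would fix arbitrary $x_{0}\in X$ and $y_{0}\in Y$ and set $e:=f(x_{0},y_{0})\in Z$. Because each row and each column of a Latin square contains every label exactly once, the maps $\phi:X\to Z$ and $\psi:Y\to Z$ defined by $\phi(x)=f(x,y_{0})$ and $\psi(y)=f(x_{0},y)$ are bijections; take $\omega$ to be the identity on $Z$. Transport the operation to $Z$ by $a\cdot b:=f(\phi^{-1}(a),\psi^{-1}(b))$, so by construction $\phi(x)\psi(y)=f(x,y)$ holds identically. The identity law $e\cdot a=a\cdot e=a$ is immediate from $\phi^{-1}(e)=x_{0}$, $\psi^{-1}(e)=y_{0}$, and the definitions of $\phi,\psi$. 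Existence of one-sided inverses follows because, given $a\in Z$, the equation $f(\phi^{-1}(a),y)=e$ has a unique solution $y\in Y$ (each label appears once in the row $\phi^{-1}(a)$); the corresponding $b:=\psi(y)$ then satisfies $a\cdot b=e$, and symmetrically on the other side. Two-sided inverses will follow automatically once associativity is in place.

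The main step, and the one that consumes the quadrangle condition, is associativity. Given $a,b,c\in Z$, write $d:=a\cdot b$ and $e':=b\cdot c$, and set $x_{a}:=\phi^{-1}(a)$, $x_{b}:=\phi^{-1}(b)$, $x_{d}:=\phi^{-1}(d)$, $y_{b}:=\psi^{-1}(b)$, $y_{c}:=\psi^{-1}(c)$, $y_{e'}:=\psi^{-1}(e')$. I would then exhibit two rectangles in the Latin square that share the triple of labels $(b,e',d)$ in the top-left, top-right and bottom-left positions. The first lives in rows $x_{0},x_{a}$ and columns $y_{b},y_{e'}$, and its four entries are $b,e',d$ and $a\cdot e'$. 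The second lives in rows $x_{b},x_{d}$ and columns $y_{0},y_{c}$, and its four entries are $b,e',d$ and $d\cdot c$. The quadrangle condition forces the fourth labels to agree, giving $a\cdot e'=d\cdot c$, which is exactly $a\cdot(b\cdot c)=(a\cdot b)\cdot c$.

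The only real obstacle is choosing the right pair of rectangles so that three labels align in the same corners; once that is seen, everything else is bookkeeping. Together with the identity and inverse observations above, associativity makes $(Z,\cdot)$ a group, and the bijections $\phi,\psi,\omega$ realize the given Latin square as its multiplication table, completing the proof.
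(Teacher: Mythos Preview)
Your proof is correct and follows essentially the same approach as the paper's: both fix a reference row and column, transport the operation to $Z$ via the resulting bijections, and verify associativity by exhibiting two rectangles sharing the label triple $(b,\,b\cdot c,\,a\cdot b)$ in matching relative positions so that the quadrangle condition forces the fourth labels $a\cdot(b\cdot c)$ and $(a\cdot b)\cdot c$ to agree. Your writeup is slightly more streamlined in describing the two rectangles separately rather than inside one large diagram, but the underlying quadrangle application is identical.
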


\begin{proof}
Choose an arbitrary row $R$ and column $C$ and define a binary operation $\circ$ on the set of labels as follows. Given labels $a$ and $b$, find where $a$ appears in row $R$ and where $b$ appears in column $C$, and then let $a\circ b=c$, where $c$ is the label of the point in the same column as $a$ and the same row as $b$. The label of the point where $R$ and $C$ intersect is then an identity for $\circ$, and the Latin square condition implies that every element has both a left and a right inverse. It remains to check associativity. To do this, consider the following picture, which is of a portion of the Latin square, chosen to demonstrate that $(a\circ b)\circ c=a\circ(b\circ c)$. We write $d$ for $a\circ b$, $f$ for $d\circ c$, $g$ for $b\circ c$, $h$ for $a\circ g$, and $e$ for the identity.
\[\begin{matrix}g&&h\\ c&&&&g&&f\\ b&&d\\ e&& a&&b&&d\\ \end{matrix}\]
For associativity we need $f$ to equal $h$. But this follows from the quadrangle condition, since included in the above diagram are the points
\[\begin{matrix}g&&h\\ &&&&&g&&f\\ b&&d\\ &&& &&b&&d\\ \end{matrix}\]
Thus, the set of labels has an associative binary operation with an identity such that every element has a left and a right inverse, and we are done.
\end{proof}

A notable feature of the above argument is the arbitrary choice of the row $R$ and the column $C$, and hence the arbitrary choice of which label would serve as the identity element. It shows that if we are presented just with the labelled grid and not with any correspondences between rows, columns and labels, then there is no way of telling which label corresponds to the identity. Another way of expressing this observation is to say that if $G$ is a group and $x$ is any element of $G$, then we can form a group $G_x$ with identity element $x$ by taking the binary operation $a\circ b=ax^{-1}b$, which is derived from the rectangle-completion operation discussed above.

If one wishes to avoid the artificiality of choosing an arbitrary element to be the identity, one can do so by working with an algebraic structure known as a \emph{torsor}, which can be thought of as a group `except that we do not know which element is the identity'. The formal definition of a torsor is that it is a set $X$ with a ternary operation $\tau$ which has the following two properties.
\begin{itemize}
\item $\tau(x,x,y)=\tau(y,x,x)=y$ for every $x,y\in X$;
\item $\tau(x,y,\tau(z,u,v))=\tau(\tau(x,y,z),u,v)$ for every $x,y,z,u,v\in X$.
\end{itemize} 
{The quantity $\tau(x,y,z)$ should be thought of as $xy^{-1}z$. Indeed,} the relationship between groups and torsors is closely analogous to the relationship between vector spaces and affine spaces, and the ternary map is also closely analogous to the (partially defined) map $(a,b,c)\mapsto a-b+c$ that often appears in additive combinatorics when one has a set $A$ with additive structure that is not `centred on zero'.

In order to draw out the relationship between the quadrangle condition and our problem, in which we are given a partially defined binary operation, we must turn our attention to \emph{partial} Latin squares -- that is, to grids that are partially labelled in such a way that no label occurs more than once in any row or column. The formal definition is given below.

\begin{definition}\label{PLS}
	A \emph{partial Latin square} is a quintuple $(X,Y,Z,A,\phi)$, where $X,Y,Z$ are finite sets, $A\subset X\times Y$, and $\phi:A\to Z$ is a function such that if $\phi(a,b_1)=\phi(a,b_2)$ then $b_1=b_2$, and if $\phi(a_1,b)=\phi(a_2,b)$, then $a_1=a_2$. 
\end{definition}	
	
	We shall refer to the elements of $X, Y$ and $Z$ as \emph{columns}, \emph{rows} and  \emph{labels}, respectively, since we imagine drawing the object $(X,Y,Z,A,\phi)$ as a labelled grid with columns indexed by $X$ and rows indexed by $Y$, labelling the point $(x,y)\in  A$ with $\phi(x,y)$.

Given a partial Latin square $(X,Y,Z,A,\phi)$ with $|X|=|Y|=|Z|=n$, we shall sometimes abuse notation and say that $A$ is an $n\times n$ partial Latin square (or simply that $A$ is a partial Latin square). If $(X,Y,Z,A,\phi)$ is a partial Latin square and $B\subset A$, we may also refer to the partial Latin square $(X,Y,Z,B,\phi|_B)$ as $B$, calling it simply a subset of $A$ (if it is clear from context that both objects are partial Latin squares).

The above definition does not treat the sets $X,Y$ and $Z$ in a symmetric way. For some of our arguments, which themselves give a special role to the label set, that is appropriate. However, later we shall need to make statements that are symmetric in the three sets, so we note here that a partial Latin square is nothing other than a linear tripartite 3-uniform hypergraph. (Recall that a 3-uniform hypergraph is a set of triples, and it is said to be linear if no pair is contained in more than one triple.) We briefly prove that now.

\begin{lemma}\label{PLSequalsH} 
There is a one-to-one correspondence between partial Latin squares $(X,Y,Z,A,\phi)$ and linear tripartite 3-uniform hypergraphs with vertex sets $X, Y$ and $Z$.
\end{lemma}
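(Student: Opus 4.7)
The plan is to exhibit explicit maps in both directions and verify that they are mutual inverses.

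Given a partial Latin square $(X,Y,Z,A,\phi)$, I associate to it the tripartite 3-uniform hypergraph
\[ H(A,\phi) = \{(x,y,\phi(x,y)) : (x,y)\in A\} \subset X\times Y\times Z. \]
Conversely, given a linear tripartite 3-uniform hypergraph $H\subset X\times Y\times Z$, I set $A_H = \{(x,y)\in X\times Y : \exists z\in Z,\ (x,y,z)\in H\}$, and define $\phi_H:A_H\to Z$ by $\phi_H(x,y) = z$, where $z$ is the unique element of $Z$ with $(x,y,z)\in H$ (uniqueness follows from linearity applied to the pair $(x,y)$).

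It then remains to verify three things. First, that $H(A,\phi)$ is indeed a linear hypergraph: the only way two distinct triples could share a pair is if they share an $(x,z)$- or $(y,z)$-pair, and this is ruled out precisely by the two injectivity conditions in Definition~\ref{PLS} (if $(x,y_1,z)$ and $(x,y_2,z)$ both lie in $H(A,\phi)$ then $\phi(x,y_1)=\phi(x,y_2)=z$, forcing $y_1=y_2$; symmetrically for sharing a $(y,z)$-pair). Second, that $(A_H,\phi_H)$ is a genuine partial Latin square: if $\phi_H(x,y_1)=\phi_H(x,y_2)=z$ then $H$ contains both $(x,y_1,z)$ and $(x,y_2,z)$, sharing the pair $(x,z)$, so linearity forces $y_1=y_2$, and similarly for the row condition. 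Third, that the two constructions are mutually inverse; this is immediate from unpacking the definitions.

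There is no real obstacle here — the content of the lemma is essentially a tautological reformulation of the injectivity conditions in Definition~\ref{PLS} as the linearity condition on a hypergraph, once one agrees to encode the function $\phi$ by its graph.
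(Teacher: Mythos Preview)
Your proof is correct and follows essentially the same approach as the paper: both construct $H$ as the graph of $\phi$, recover $(A,\phi)$ from $H$ via projection and the linearity-guaranteed uniqueness of the third coordinate, and verify the injectivity/linearity conditions by the same pairwise comparisons. The paper leaves the mutual-inverse check implicit, whereas you state it explicitly, but otherwise the arguments are the same.
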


\begin{proof}
Given a partial Latin square $P=(X,Y,Z,A,\phi)$, let $H$ consist of all triples $(x,y,z)\in X\times Y\times Z$ such that $(x,y)\in A$ and $\phi(x,y)=z$. Since $\phi$ is a function, for any $(x,y)$ there is at most one $z$ such that $(x,y,z)\in H$, and since $\phi$ is injective in each variable separately, for each $(x,z)$ there is at most one $y$ such that $(x,y,z)\in H$ and for each $(y,z)$ there is at most one $x$ such that $(x,y,z)\in H$. Therefore, $H$ is linear.

Conversely, given a linear tripartite 3-uniform hypergraph $H$ with vertex sets $X,Y,Z$, let $A$ be the set of all $(x,y)$ such that there exists $z$ with $(x,y,z)\in H$. Since $H$ is linear, such a $z$ is unique if it exists, so we can define a function $\phi:A\to Z$ by setting $\phi(x,y)$ to be the unique $z$ such that $(x,y,z)\in H$. If $\phi(x,y_1)=\phi(x,y_2)$, then $(x,y_1,z)$ and $(x,y_2,z)$ both belong to $H$, so by the linearity property $y_1=y_2$. Therefore, $\phi$ is injective in the second variable. Similarly, it is injective in the first variable.
\end{proof}

With the above observations and definitions in mind, it is natural to formulate a torsor version of the question about binary operations with many associative triples. For reasons that we shall explain in the next subsection, we call a pair of identically labelled rectangles in a partial Latin square an octahedron. The following is a precise definition.

\begin{definition}\label{cuboctahedron}
Given a partial Latin square $(X,Y,Z,A,\phi)$, an \emph{octahedron} in $A$ consists of a pair of rectangles $$R_1=((x_1,y_1),(x_2,y_1),(x_1,y_2),(x_2,y_2))\in A^4$$
and
$$R_2=((x_3,y_3),(x_4,y_3),(x_3,y_4),(x_4,y_4))\in A^4$$
such that we have the four identities
$$\phi(x_1,y_1)=\phi(x_3,y_3),\quad \quad \phi(x_2,y_1)=\phi(x_4,y_3),$$
$$\phi(x_1,y_2)=\phi(x_3,y_4),\text{ and }\phi(x_2,y_2)=\phi(x_4,y_4).$$
We allow degeneracies in this definition -- for example, there might be equalities between values of $\phi$ beyond those required by the definition itself.
\end{definition}

{As a subset of a partial Latin square $(X,Y,Z,A,\phi)$ which has been drawn as a labelled grid}, an octahedron is a configuration that looks like this (where we have chosen the example to emphasize that there is no ordering on $X$ or $Y$, so all we care about are the relations `is in the same column as', `is in the same row as', and `has the same label as').
\[\begin{matrix} c&&&d\\&a&&&&&b\\a&&&b\\&&\\&c&&&&&d\\ \end{matrix}\quad\quad\quad{(a,b,c,d\in Z)}\]

In a full Latin square, the relationship between the quadrangle condition (Definition~\ref{qc1}) and octahedra is simple: an $n\times n$ Latin square $A$ satisfies the quadrangle condition if and only if the number of octahedra contained in $A$ is equal to $n^5$ (which is the maximal possible number in a Latin square). To see this, note that the number of rectangles in $A$ is $n^4$, and if one wishes to find another rectangle with the same labelling, then there are at most $n$ choices for the first corner (since its label is determined) and at most one choice for each remaining corner (since their labels are determined, as well as at least one of their row and column). 


We now show that the multiplication table of a binary operation with many associative triples also contains many octahedra.

\begin{lemma}\label{assoctocuboct}
Let $X$ be a set of size $n$ and let $\circ$ be a partially defined binary operation on $X$ that is injective in each variable separately and for which there are at least $\e n^3$ triples $(x,y,z)\in X^3$ with $x\circ(y\circ z)=(x\circ y)\circ z$. Then the multiplication table of $\circ$ contains at least $\e^4n^5$ octahedra.
\end{lemma}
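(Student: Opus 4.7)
The plan is a double application of Cauchy--Schwarz; the crucial quantitative input is a constraint on which quadruples of labels can arise as the label pattern of certain rectangles.

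For $(y,z)\in X^2$ write $m(y,z)=|\{x\in X:(x,y,z)\text{ is associative}\}|$. Since $\sum_{y,z}m(y,z)\geq\e n^3$, Cauchy--Schwarz over $X^2$ yields $\sum_{y,z}m(y,z)^2\geq\e^2 n^4$, which is exactly the number of ordered quadruples $(x_1,x_2,y,z)\in X^4$ such that both $(x_1,y,z)$ and $(x_2,y,z)$ are associative. To each such quadruple I attach a rectangle in the multiplication table by setting $u=y\circ z$, $v_i=x_i\circ y$ and $w_i=v_i\circ z=x_i\circ u$ for $i=1,2$: the four entries $(x_i,y,v_i)$ and $(x_i,u,w_i)$ all lie in the table (because both triples are associative, every subexpression is defined), and they form a rectangle with ordered columns $(x_1,x_2)$, ordered rows $(y,u)$, and label pattern $(v_1,v_2,w_1,w_2)$. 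This map is injective: the rectangle reveals $x_1,x_2,y,u$, and then $z$ is forced to be the unique element with $y\circ z=u$ by injectivity of $\circ$ in the second variable.

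The key observation is that the quadruples of labels $(a,b,c,d)$ arising this way are heavily constrained: they must satisfy $a\circ z=c$ and $b\circ z=d$ for the \emph{same} $z$. Given $(a,b,c)$, injectivity in the second variable pins down $z$ as the unique element with $a\circ z=c$, whereupon $d=b\circ z$ is forced. Hence the set of attainable label patterns has cardinality at most $n^3$. Writing $T'(a,b,c,d)$ for the number of quadruples producing the pattern $(a,b,c,d)$, we have $\sum T'(a,b,c,d)\geq\e^2 n^4$ with the sum supported on at most $n^3$ tuples, so a second Cauchy--Schwarz yields $\sum_{a,b,c,d}T'(a,b,c,d)^2\geq(\e^2 n^4)^2/n^3=\e^4 n^5$.

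Each ordered pair of quadruples contributing to the same $T'(a,b,c,d)$ produces a pair of rectangles with identical labels in matching positions, i.e.\ an octahedron in the sense of Definition~\ref{cuboctahedron}. Because the quadruple-to-rectangle map is injective, distinct ordered pairs of quadruples yield distinct ordered pairs of rectangles and hence distinct octahedra, so the multiplication table contains at least $\e^4 n^5$ octahedra. The delicate step is the label-pattern count: without exploiting the shared-$z$ constraint, a naive bound on the support would be $n^4$, producing only $\e^4 n^4$ octahedra---a factor of $n$ short of the claim.
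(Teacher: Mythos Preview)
Your proof is correct, but it follows a genuinely different route from the paper's argument. The paper fixes the \emph{middle} element $b$ and, writing $W_b$ for the set of pairs $(a,c)$ with $(a,b,c)$ associative, applies the standard 4-cycle Cauchy--Schwarz to $W_b$: this produces at least $\e_b^4 n^4$ quadruples $(a_0,a_1,c_0,c_1)$ with all four $(a_i,b,c_j)$ associative, and each such quadruple \emph{directly} yields an octahedron (the two rectangles sit in the columns $a_0,a_1$ and $d_0,d_1$ where $d_i=a_i\circ b$). Jensen over $b$ then gives $\e^4 n^5$, with injectivity coming from the fact that $b$ can be read off the column indices of the octahedron.

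By contrast, you fix the tail pair $(y,z)$, Cauchy--Schwarz once to double $x$, and then make the crucial observation that the resulting rectangles have label patterns supported on only $n^3$ tuples (because the hidden $z$ links $a$ to $c$ and $b$ to $d$); a second Cauchy--Schwarz over this constrained label space then pairs rectangles into octahedra. What the paper's approach buys is directness: one quadruple already \emph{is} an octahedron, so there is no second pairing step. What your approach buys is that the ``only $n^3$ label patterns'' constraint is made completely explicit, which is arguably the conceptual heart of why associativity forces many octahedra; your remark that a naive $n^4$ support would fall short by a factor of $n$ nicely isolates where the structure is used.
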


\begin{proof}
For each $b\in X$, let $W_b$ be the set of $(a,c)$ such that $a\circ(b\circ c)=(a\circ b)\circ c$. Then the average size of $|W_b|$ is at least $\e n^2$. Writing $\e_b$ for the density of $W_b$ in $X^2$, an easy Cauchy-Schwarz argument tells us that $W_b$ contains at least $\e_b^4n^4$ quadruples $(a_0,a_1,c_0,c_1)$ such that all four points $(a_i,c_j)$ belong to $W_b$. Therefore, by Jensen's inequality, the average number of such quadruples in $W_b$ is at least $\e^4n^4$. Each such quadruple yields a diagram of the following form.
\[\begin{matrix}g_1&&&&f_{01}&&f_{11}&&&&&&\\ g_0&&&&f_{00}&&f_{10}&&\\ c_1&&&&&&&&g_1&&f_{01}&&f_{11}\\ c_0 &&&&&&&&g_0&&f_{00}&&f_{10}\\ b&&&&d_0&&d_1\\ \\ \circ&&&&a_0&&a_1&&b&&d_0&&d_1\\ \end{matrix}\]
where the left column and bottom row say which elements are being multiplied together. The associativity of the triples $(a_i,b,c_j)$ is used to prove that $a_i\circ(b\circ c_j)=(a_i\circ b)\circ c_j=f_{ij}$, and the result is that each quadruple of triples gives us an octahedron. Note that from the octahedron we can reconstruct the pairs $(a_0,d_0)$ and $(a_1,d_1)$ from looking at which columns are used, and since the equation $a_0x=d_0$ has a unique solution, we can reconstruct $b$. Therefore, distinct $b$ give rise to distinct octahedra, and putting all this together implies that there are at least $\e^4n^5$ octahedra, as claimed.
\end{proof}

Observe that in an $n\times n$ grid labelled completely at random from a set of size $n$, the expected number of octahedra is $n^4$, since there are $n^8$ pairs of rectangles and the probability that the labels agree is $n^{-4}$ for each pair. It follows that the number of octahedra in a group multiplication table is far larger than the number of octahedra in a randomly labelled grid. The same is true of the number of octahedra in the multiplication table of a partially associative operation. These observations suggest that the octahedron count in a partial Latin square may serve as an indicator for underlying group structure. 

Thus, the hypothesis that we wish to consider is a weakening of the hypothesis of Theorem \ref{assoc_main}. Our main result is that we can obtain the same conclusion.

\begin{theorem}\label{main} Let $X,Y,Z$ be sets of size $n$, let $E\subset X\times Y$, and let $\lambda:E\to Z$ be such that $(X,Y,Z,E,\lambda)$ is a partial Latin square with at least $\e n^5$ octahedra. Then for every positive integer $b$ there exist a subset $A\subset E$ of density at least $\e^{b^{25b}}$, a metric group $G$, and maps $\phi:X\to G$, $\psi:Y\to G$ and $\omega:Z\to G$, such that the images $\phi(X), \psi(Y)$ and $\omega(Z)$ are 1-separated, and $d(\phi(x)\psi(y),\omega(z))\leq b^{-1}$ for every $(x,y,z)\in X\times Y\times Z$ such that $(x,y)\in A$ and $\lambda(x,y)=z$.
\end{theorem}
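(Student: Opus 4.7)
The plan is to convert the combinatorial ``many octahedra'' hypothesis into an approximate torsor structure on the labels, and then to promote that approximate torsor to a genuine metric group. The iteration index $b$ in the bound $\varepsilon^{b^{25b}}$ suggests a refinement scheme in which one progressively sharpens the metric resolution and pays in the density of the retained subset.

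First I would repackage octahedron-richness as a concentration statement for the rectangle labelling. Writing $r(a,b,c,d)$ for the number of positioned rectangles in $E$ whose four corners are labelled $a,b,c,d$, the octahedron count is exactly $\sum r(a,b,c,d)^2$. An assumption of $\varepsilon n^5$ octahedra forces the $\ell^2$ mass to concentrate, so for many triples $(a,b,c)$ the completion $d$ is close to unique. This lets us define, after passing to a dense subset, a partial ternary rectangle-completion operation $\tau(a,b,c)=d$ on a dense set of inputs. Brandt's observation (Proposition \ref{prop:QC}) tells us what the target is: if $\tau$ were exactly a function and exactly associative, the labels would already form a group.

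Next I would prove approximate associativity of $\tau$. Using iterated Cauchy--Schwarz on the hypergraph indicator, one lifts the octahedron count to counts of larger ``cube configurations'' in which three row-classes and three column-classes fix nine labels that are pairwise consistent under $\tau$. These cube configurations are the combinatorial witnesses to the two torsor axioms holding simultaneously, and a Balog--Szemer\'edi--Gowers-style refinement purifies the data: one extracts a subset $A\subset E$, of density polynomial in $\varepsilon$, on which $\tau$ behaves as a genuine torsor operation up to a small failure fraction. On the three vertex classes $X,Y,Z$ the row, column, and label projections of $A$ then inherit compatible partial operations, which will feed the maps $\phi,\psi,\omega$.

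The hardest step, and the one the title ``rough approximate groups'' refers to, is to pass from the approximate torsor to a metric group $G$. Here one picks a basepoint $o$ in the label set, defines a partial binary operation $a\cdot b=\tau(a,o,b)$, and studies the translate-intersection statistics $|gS\cap hS|$ for a suitably chosen set $S$. These intersections yield a bi-invariant pseudometric on the labels; quotienting by its null ideal produces a metric group $G$ with a canonical map $\omega:Z\to G$, and the required maps $\phi:X\to G$, $\psi:Y\to G$ are then obtained from $\omega$ via left multiplication by row and column transports along $A$. A Pl\"unnecke-type count combined with a compactness/completion argument shows that $G$ can be chosen continuous, with images 1-separated and the product $\phi(x)\psi(y)$ within $b^{-1}$ of $\omega(\lambda(x,y))$ on $A$.

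The main obstacle, and the source of the iterated loss $\varepsilon^{b^{25b}}$, lies in controlling the resolution $b^{-1}$ of the metric approximation. Green's theorem on $\mathrm{SO}(3)$ (Example \ref{fuzzyso3}) guarantees that no discrete group suffices, so the construction of $G$ must be genuinely continuous. To achieve resolution $b^{-1}$ the refinement must be iterated roughly $b$ times: each level tightens the pseudometric by a constant factor but costs a polynomial-in-$\varepsilon$ loss in density, and propagating approximate associativity across all $b$ levels requires working with configurations of complexity $\sim b$, producing the $b^{O(b)}$ tower in the exponent. Carrying the approximate torsor symmetrically across all three vertex classes, so that $\phi,\psi,\omega$ all emerge simultaneously rather than just one of them, is the other combinatorial subtlety that must be handled throughout the refinement.
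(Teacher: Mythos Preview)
Your proposal diverges from the paper's argument at the most important point, and the divergence is not just a different route but a genuine gap. You propose to build the metric group $G$ \emph{out of the data}: pick a basepoint, define $a\cdot b=\tau(a,o,b)$, extract a bi-invariant pseudometric from translate-intersection statistics, and quotient. The paper does the opposite. Its metric group is always the same object, obtained by a \emph{universal} construction: $G$ is the free group on $X\cup Y\cup Z$, and the distance between two words is $b^{-1}$ times the minimum area of a van Kampen diagram with boundary $w_1w_2^{-1}$, using the relations $xyz^{-1}$ coming from the partial Latin square. The maps $\phi,\psi,\omega$ are just the inclusions of generators, and the inequality $d(\phi(x)\psi(y),\omega(z))\le b^{-1}$ is automatic. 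All the combinatorial work goes into the \emph{1-separation} condition: one must pass to a dense sub-Latin-square in which no two distinct generators are joined by a van Kampen disc of area less than $b$.

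This matters because your route has no mechanism for producing a genuinely metric (continuous) group. BSG-type refinement and Pl\"unnecke statistics applied to an approximate torsor naturally yield a \emph{discrete} approximate group; you note Green's $\mathrm{SO}(3)$ obstruction and then appeal to ``compactness/completion'', but there is no visible source of a nontrivial metric in your construction. In the paper's approach the metric is present from the outset and encodes exactly the right obstruction: short van Kampen proofs of $u=v$ for distinct generators. The $b^{O(b)}$ loss arises not from $b$ rounds of metric tightening, but from an ``unfixing'' argument on discs: one repeatedly replaces $2r$-gons inside a putative bad disc by dispersed ring decompositions (a refinement of the earlier $C$-well-definedness machinery), with $r$ ranging up to about $2b$ and the number of replacement steps bounded by the number of internal vertices of a disc of area $b$. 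That is where the iterated polynomial loss comes from, and it is tied to eliminating geometric configurations of bounded complexity, not to sharpening a pseudometric.
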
  

{Theorem~\ref{assoc_main} follows immediately by applying Lemma~\ref{assoctocuboct} followed by Theorem~\ref{main}.}

While the quadrangle condition as defined for Latin squares in Definition~\ref{qc1} makes sense also for partial Latin squares, it is not symmetric in $X,Y$ and $Z$, and that turns out to be inconvenient. Instead, we make the following definitions.

\begin{definition}\label{qc2} Let $P=(X,Y,Z,A,\phi)$ be a partial Latin square. Then $P$ satisfies the \emph{column, row,} or \emph{label quadrangle condition} if it contains no configuration of the form
\[\begin{matrix}c&d\\ a&b\\ &&&c&&d\\ &&&a&b\\ \end{matrix}\ \ ,\qquad  \begin{matrix}c&d\\ a&b\\ &&&&d\\ &&&c\\ &&&a&b\\ \end{matrix}\ \ , \qquad\text{or}\qquad \begin{matrix} c&d\\ a&b\\ &&&c&d'\\ &&&a&b\\ \end{matrix}\ \ ,\]
respectively. It satisfies the \emph{quadrangle condition} if it satisfies all three of the column, row, and label quadrangle conditions.
\end{definition}
The column/row/label quadrangle condition is saying that there is no configuration obtained from an octahedron by taking one of its elements and changing just its column/row/label. For a full Latin square, the three quadrangle conditions are equivalent, but for a partial Latin square they are not, and it is convenient to distinguish between them. 

The following combinatorial statement, which is of independent interest, is in fact a special case of Theorem~\ref{main}. We will treat it separately in Section~\ref{pa:sec:flappycub} as an introduction to the more general argument.

\begin{theorem} \label{noflappycub} Let $X,Y$ and $Z$ be sets of size $n$, let $A\subset X\times Y$, and let $\phi:A\to Z$ be a partial Latin square with at least $\e n^5$ octahedra. Then there is a subset $B\subset A$ of size at least $\a n^2$, where $\a=\a(\e)>0$, that satisfies the quadrangle condition.
\end{theorem}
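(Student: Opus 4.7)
The plan is a density-increment argument alternating two steps: \emph{cleaning} (removing cells that participate in quadrangle violations) and \emph{incrementing} (passing to a sub-partial-Latin-square with strictly higher octahedron density when cleaning alone would be too costly). Starting from $A$, I would re-express the octahedron count as $O(A) = \sum_{(a,b,c,d)\in Z^4} r(a,b,c,d)^2$, where $r(a,b,c,d)$ is the number of rectangles in $A$ whose corners carry the labels $a,b,c,d$ in the canonical octahedron pattern. The hypothesis gives $\sum r(a,b,c,d)^2 \ge \e n^5$, so Cauchy--Schwarz plus a dyadic pigeonhole concentrates a constant fraction of octahedra at a single scale of $r$-values; these are the ``useful'' octahedra whose rectangles actually have partners.

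In the cleaning step, I would greedily remove cells lying in flappy configurations of any of the three types (column-, row-, or label-shift). If the current subsquare $A'$ contains at most $\delta|A'|$ flappy configurations, cleaning terminates after at most $\delta|A'|$ removals and we are done. Otherwise, the large flappy count is used to drive the increment step: I argue that an abundance of flappy configurations forces the existence of a substantial sub-partial-Latin-square $A''\subset A'$ in which the ratio of octahedra to cells is strictly larger than in $A'$. The idea is that a flappy configuration is exactly a ``near-octahedron,'' and if many rectangles have such near-partners then a careful restriction to well-chosen sets of columns, rows, and labels (extracted via dependent random choice applied to the sum $\sum r(a,b,c,d)^2$) concentrates rectangles into genuine octahedra. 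Since the octahedron-to-cell density is bounded above by $n^3$, the increment loop must terminate after finitely many rounds depending on $\e$, and the final cleaning step produces $B$ of density $\alpha(\e)>0$.

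The main obstacle is the increment step: converting an abundance of flappy configurations into a strict density increase. In general the total number of flappy configurations in $A$ can exceed the number of octahedra by a factor of up to $n$, since every octahedron can be perturbed in many ways, so a naive charging of flappy configurations against octahedra fails outright. The likely fix is a careful double-count relating flappy counts to the sums $\sum r(a,b,c,d)$ and $\sum r(a,b,c,d)^2$, followed by a pigeonhole extraction producing a sub-square where the surviving rectangles have many octahedral partners and flappy configurations become rare. A secondary subtlety is that the three quadrangle conditions are formally independent for partial Latin squares, so one must either run the increment/cleaning loop in sequence for each condition (verifying that later iterations do not destroy earlier guarantees) or design a single potential function that dominates all three flappy counts at once.
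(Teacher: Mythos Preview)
Your proposal has a genuine gap at exactly the point you flag: the increment step. You correctly observe that the number of flappy configurations can exceed the octahedron count by a factor of $n$, so there is no charging argument available. But your proposed fix---a ``careful double-count relating flappy counts to $\sum r(a,b,c,d)$ and $\sum r(a,b,c,d)^2$, followed by a pigeonhole extraction''---is not a mechanism, it is a hope. Nothing in the hypothesis prevents a partial Latin square from having $\e n^5$ octahedra \emph{and} $\Theta(n^6)$ flappy configurations distributed in such a way that no restriction to subsets of rows/columns/labels raises the octahedron-to-cell ratio. The increment step is the entire content of the theorem, and you have not supplied one.

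The paper does not use density increment at all. Its route is: (i) use dependent random choice and a ``popularity'' argument (Sections~2--3) to pass to a nested sequence $A=A_0\supset A_1\supset\cdots$ of dense subsquares such that every $2r$-cycle in $A_i$ has within a constant of the maximum possible number of \emph{dispersed ring decompositions} in $A_{i-1}$; (ii) reinterpret everything via van Kampen complexes, so that a flappy configuration becomes a triangulated disc (a ``slit octahedron'') with boundary of length~2; (iii) define an auxiliary graph on labels where $z\sim z'$ if they bound a slit octahedron, and show this graph has bounded degree by an ``unfixing'' argument: if some $z$ had $M$ neighbours, one could take each of the $M$ slit octahedra and repeatedly replace $2r$-gons by dispersed ring decompositions, producing $\Omega(M)$ times the trivial maximum number of copies of a certain larger disc, a contradiction for $M$ large; (iv) pass to an independent set in the auxiliary graph. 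The key idea you are missing is step~(iii): the ability to ``blow up'' any single bad configuration into a near-maximal family of larger configurations, which converts a \emph{single} flappy configuration into a counting contradiction rather than trying to aggregate many of them into a density increment.
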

In order to prove it, we first obtain a dense subset that satisfies the label quadrangle condition, and then use symmetry to pass to further dense subsets that satisfy the column and row quadrangle conditions as well.

One might at first think that Theorem \ref{noflappycub} {(with a suitable bound) would imply} not just Theorem \ref{main}, but even a stronger result where $H$ is a $k$-approximate subgroup rather than an $(\e,k)$-approximate subgroup. However, while a Latin square that satisfies the quadrangle condition must be the multiplication table of a group, a partial Latin square {that satisfies the quadrangle condition} is not necessarily part of the multiplication table of a group: indeed, Example~\ref{fuzzyso3} of approximate multiplication on a $\d$-net of $\so3$ gives a counterexample. (This is significantly easier to prove than Green's result~\cite{greenso3}, which says that one cannot even restrict it to a dense set that is isomorphic to part of a group multiplication table.) More elementary counterexamples can be obtained by observing that if a group multiplication table ever contains the following configuration, 
\[\begin{matrix}e&d\\ &f&c\\ a&&b\\ \end{matrix}\]
then $ab^{-1}cd^{-1}ef^{-1}$ is equal to the identity, so any five of the labels determine the sixth. Thus, in a group multiplication table we have not only the (label) quadrangle condition but also a natural `pair of 6-cycles' generalization, which states that in a configuration such as the following, $f$ must equal $f'$.
\[\begin{matrix}e&d\\ &f&c\\ a&&b\\ &&&&e&d\\ &&&&&f'&c\\ &&&&a&&b\\ \end{matrix}\]
Note that that configuration itself satisfies the quadrangle condition (for trivial reasons) even if $f\ne f'$. 

What we therefore need to do in order to prove Theorem \ref{main} is find a subset of the partial Latin square that satisfies a generalized quadrangle condition that applies to all configurations up to a certain size. What those configurations are will be explained in the next subsection.

\subsection{Where the metric group comes from}\label{sketch}

It turns out that the metric group we obtain in Theorem~\ref{main} (and therefore also in Theorem~\ref{assoc_main}) is given by a simple universal construction. That does not mean that the proof of our main result is simple, however, because it is not obvious how to pass to a dense subset of the partial Latin square for which the universal construction has the desired properties. 

Suppose we are given a partial Latin square $(X,Y,Z,A,\lambda)$ and would like it to satisfy the conclusion of Theorem~\ref{main}. Then we want a metric group $G$ and maps $\phi:X\to G,\psi:Y\to G$ and $\omega:Z\to G$ such that the images of $\phi,\psi$ and $\omega$ are 1-separated and $d(\phi(x)\psi(y),\omega(z))\leq b^{-1}$ for every $x\in X, y\in Y$ and $z\in Z$ such that $\lambda(x,y)=z$. An obvious approach is to let $G$ be the free group on $X\cup Y\cup Z$ (we assume that $X, Y$ and $Z$ are disjoint -- if not, we make disjoint copies), to take the largest metric such that the second condition holds, and hope that it is large enough for the first condition to hold as well.

This metric can be described explicitly in a standard way using van Kampen diagrams. We take the group presentation with generators $X\cup Y\cup Z$ and all the relations given by the partial Latin square when it is thought of as a multiplication table. That is, the relations are all words of the form $xyz^{-1}$ such that $\lambda(x,y)=z$. Given two words $w_1,w_2$ in the free group on $X\cup Y\cup Z$, their distance is $b^{-1}$ times the area of the smallest van Kampen diagram with these relations and with boundary word $w_1w_2^{-1}$. Of course, if the relations do not imply that $w_1=w_2$, then there is no such van Kampen diagram, in which case we set the distance to be infinite. (It is not hard to see that a necessary and sufficient condition for the partial Latin square to arise as part of a group multiplication table is that \emph{all} distances between distinct generators are infinite.)

In order to prove Theorem~\ref{main}, we therefore need to pass to a dense subset of the partial Latin square with the property that given any two distinct elements $x_1,x_2$ of $X$, there is no van Kampen diagram of area less than $b$ with boundary word $x_1x_2^{-1}$, and similarly for $Y$ and $Z$.  

We shall discuss van Kampen diagrams more fully later in the paper, but for the benefit of the reader unfamiliar with them, we illustrate here how a partial Latin square that fails the label quadrangle condition gives rise to a van Kampen diagram of area 8 with boundary word of the form $z_1z_2^{-1}$. Suppose, then, that the partial Latin square contains the configuration
\[\begin{matrix}c&d\\ a&b\\ &&&c&d'\\ &&&a&b\\ \end{matrix}\]
and that $d\ne d'$. If we give appropriate names to the rows and columns, then the relations we obtain from this configuration give us the van Kampen diagram in Figure \ref{pa:fig:fovK} below. For example, the triangle towards the left with edges labelled $x_1, y_1$ and $a$ is telling us that the first $a$ in the configuration belongs to column $x_1$ and row $y_1$. (We direct edges so that if $xy=z$ then there is a directed path with edges $x$ then $y$, and a directed edge $z$ with the same start and end points.) Since there is also a triangle with edges labelled $x_3,y_3$ and $a$, we can read off that $x_1y_1=x_3y_3$. More generally, any cycle corresponds to a word that can be proved to be the identity if we use the given relations: in this case, the word is $x_1y_1y_3^{-1}x_3^{-1}$. 

	\begin{figure} 
		\centering
		\begin{tikzpicture}[scale=3.0, every node/.style={scale=1.0}]
		\draw (0,0) ellipse (2cm and 1cm);
		
		\draw (-2,0) node {$\bullet$};
		\draw (2,0) node {$\bullet$};
		\draw (-2/3,0) node {$\bullet$};
		\draw (2/3,0) node {$\bullet$};
		\draw (0,2/3) node {$\bullet$};
		\draw (0,-2/3) node {$\bullet$};
		\draw (0,1.1) node {$d$};
		\draw (0,-1.1) node {$d'$};
		
		\draw[-{Latex[length=4mm, width=2mm]}] (0,-2/3) -- (-2/3,0) node[midway, right] {$y_3$};
		\draw[{Latex[length=4mm, width=2mm]}-] (0,2/3) -- (2/3,0) node[midway, left] {$x_2$};
		\draw[{Latex[length=4mm, width=2mm]}-] (0,-2/3) -- (2/3,0) node[midway, left] {$x_4$};
		
		\draw[{Latex[length=5mm, width=1mm]}-] (2,0) -- (0,2/3) node[midway, above] {$y_2$};
		\draw[{Latex[length=4mm, width=2mm]}-] (0,2/3) -- (-2,0) node[midway, above] {$x_1$};	
		\draw[{Latex[length=5mm, width=1mm]}-] (2,0) -- (0,-2/3) node[midway, below] {$y_4$};
		\draw[{Latex[length=4mm, width=2mm]}-] (0,-2/3) -- (-2,0) node[midway, below] {$x_3$};
		
		\draw[-{Latex[length=4mm, width=2mm]}] (0,2/3) -- (-2/3,0) node[midway, right] {$y_1$};
		\draw[-{Latex[length=4mm, width=2mm]}] (-2,0) -- (-2/3,0) node[midway, above] {$a$};
		\draw[-{Latex[length=4mm, width=2mm]}] (2/3,0) -- (-2/3,0) node[midway, above] {$b$};
		\draw[-{Latex[length=5mm, width=1mm]}] (2/3,0) -- (2,0) node[midway, above] {$c$};
		\draw[-{Latex[length=4mm, width=2mm]}] (1.9988,0.005) -- (2,0);
		\draw[-{Latex[length=4mm, width=2mm]}] (1.9988,-0.005) -- (2,0);
		
		\end{tikzpicture}
		\caption{A van Kampen diagram of area 8 that corresponds to a proof that $d$ and $d'$ are equal in any group that satisfies the given relations.}\label{pa:fig:fovK}
	\end{figure}
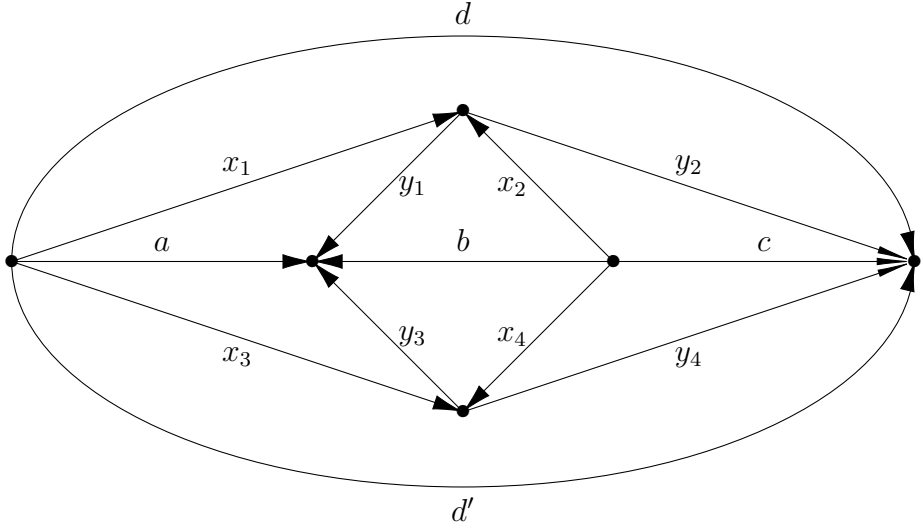

Since this van Kampen diagram has area 8 (which simply means that it has eight faces), it shows that the distance between $d$ and $d'$ is at most $8b^{-1}$. Therefore, for $b>8$ we need to avoid it. The generalized quadrangle condition we mentioned earlier is then that a partial Latin square should contain no configuration that corresponds to a van Kampen diagram with boundary word of the form $uv^{-1}$ for distinct generators $u,v$, and area less than $b$. 

Note that if $d=d'$, then the edges labelled $d$ and $d'$ become identified, and the resulting diagram is a triangulation of the 2-sphere. It is the triangulation given by the faces of an octahedron, which is why we use the word `octahedron' in Definition~\ref{cuboctahedron} to describe a pair of identically labelled rectangles in the grid.

\section{Obtaining $C$-well-defined completion operations}\label{pa:sec2}

In this section, we take an important first step by proving a weaker form of Theorem \ref{noflappycub}, or more precisely a generalization of a weaker form. Recall, as discussed in Definition~\ref{qc1}, that we can think about the quadrangle conditions in terms of certain functions being well-defined. For instance, a partial Latin square satisfies the label quadrangle condition if and only if there is a well-defined partial ternary operation on the set of labels that takes a triple $(a,b,c)$ to $d$ if and only if there exists a rectangle with its corners labelled $\begin{matrix}c&d\\ a&b\\ \end{matrix}$. We called this the \emph{rectangle completion operation}. We now make the following definitions. It will be convenient to think of a partial Latin square as a hypergraph.

\begin{definition}\label{2rcycledef} Let $H$ be a linear tripartite 3-uniform hypergraph. A \emph{label} $2r$-\emph{cycle} in $H$ is a sequence of triples 
\[(x_1,y_1,z_1),(x_2,y_2,z_2),\dots,(x_{2r},y_{2r},z_{2r})\]
such that $x_i=x_{i+1}$ if $i$ is even, and $y_i=y_{i+1}$ if $i$ is odd (and $x_{2r}=x_1$). The sequence $(z_1,\dots,z_{2r})$ is the \emph{label sequence} of the $2r$-cycle. A \emph{row $2r$-cycle} and a \emph{column $2r$-cycle} are defined in the same way after appropriate cyclic permutations of the coordinates. (For instance, for a column $2r$-cycle we require $y_i=y_{i+1}$ if $i$ is odd and $z_i=z_{i+1}$ if $i$ is even, and $(x_1,\dots,x_{2r})$ is the column sequence.)  

The \emph{label $2r$-cycle completion operation} is the not necessarily well-defined partial function that takes a $(2r-1)$-tuple $(z_1,\dots,z_{2r-1})$ to $z_{2r}$ if $(z_1,\dots,z_{2r})$ is the label sequence of some $2r$-cycle. We say that this operation is \emph{well-defined} if there is always at most one possibility for $z_{2r}$ given $z_1,\dots,z_{2r-1}$, and we say that it is $C$-\emph{well-defined} if the number of possibilities is always at most $C$.
\end{definition}

Definition~\ref{2rcycledef} generalises the notion of a rectangle in a partial Latin square (which is simply a label 4-cycle). Here are pictures of a label 6-cycle, a column 6-cycle, and a row 6-cycle.  

\[\begin{matrix} f&&e\\ &c&d\\ a&b\\ \end{matrix}\qquad\qquad \begin{matrix} a&b\\ &&&b&c\\ &&&&&c&a\\ \end{matrix}\qquad\qquad \begin{matrix}a\\ b\\ &b\\ &c\\ &&c\\ &&a\\ \end{matrix}\]

In this section, we shall be using the language of partial Latin squares, so we shall omit the word `label' when referring to label $2r$-cycles and the label $2r$-cycle completion operation. (Of course, a label $2r$-cycle in a partial Latin square is just a label $2r$-cycle in the corresponding hypergraph.) 

{When referring to a $2r$-cycle $C$, we will sometimes abbreviate the list of triples to a string, e.g. $C=x_1y_1\dots x_ry_r$ where the $x_i$ and $y_j$ are triples and the row of $x_1$ is shared with the row of $y_1$, etc.}

The main result of this section is the following.

\begin{theorem}\label{pa:CWD}
	Let $0<\epsilon<10^{-3}$ and let $k\ge2$ be a positive integer. Let $A$ be a partial Latin square containing at least $\epsilon n^5$ octahedra (see Definition~\ref{cuboctahedron}). Then we can find a subset $B\subset A$ of density $\beta\ge\epsilon^{10}$ with the property that for each $2\le r\le k$ the $2r$-cycle completion operation in $B$ is $\epsilon^{-33k^3}$-well-defined.
\end{theorem}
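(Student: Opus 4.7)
The plan is to extract a single dense subset $B \subset A$ via a popularity-style cleaning step and then to establish the $C$-well-defined property for every $2 \le r \le k$ simultaneously inside $B$ without any further subsetting. The density loss $\epsilon^{10}$ is absorbed entirely at the cleaning stage, while the constant $C = \epsilon^{-33k^3}$ grows with $k$ to absorb the combinatorial cost of longer cycles.

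For the cleaning step, I would observe that each octahedron contains eight triples, so the average triple in $A$ participates in at least $8\epsilon n^5/|A|$ octahedra. Iteratively removing triples that lie in too few octahedra (a standard averaging/removal argument) yields a subset $B \subset A$ with $|B| \ge \epsilon^{10} n^2$ in which every triple still participates in at least $\epsilon^{O(1)} n^3$ octahedra within $B$. The density loss is easy to track, and crucially this is the only time we shrink the set.

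For the base case $r=2$, which is the rectangle-completion operation and corresponds to the label quadrangle condition of Definition~\ref{qc2}, I would argue by contradiction. If some prefix $(z_1,z_2,z_3)$ admitted more than $C_2 = \epsilon^{-O(1)}$ completions $z_4$ in $B$, each completion is witnessed by one or more rectangles with the three fixed labels; pairing these rectangles against each other, and then against the many octahedra through each of their triples guaranteed by the cleaning step, would produce far more octahedra sharing configurations than is compatible with the $\epsilon n^5$ bound, once $C_2$ is chosen appropriately.

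For $r \ge 3$ I would induct, using that a $2r$-cycle decomposes as a $2(r-1)$-cycle glued to a rectangle along a hinge pair of triples. Any excess of completions at length $2r$ either forces an excess at length $2(r-1)$ (contradicting the inductive hypothesis on the constant $C_{r-1}$) or forces configurations producing excess octahedra against the regularity from the cleaning step. Each inductive step blows up the allowed constant by a factor polynomial in $\epsilon^{-1}$, and iterating $k$ times telescopes to the stated $\epsilon^{-33k^3}$ after careful bookkeeping. The main obstacle is precisely that the density drop must occur only once, since the conclusion demands a single $B$ valid for all $r \le k$ simultaneously; this rules out the cleaner strategy of iterative passage to sub-subsets per $r$. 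All combinatorial control must therefore flow from the single octahedron-participation regularity in $B$, and engineering the $2r$-cycle decomposition so that the inductive bookkeeping closes without any further shrinking of $B$, while balancing the per-step blow-up in $C_r$ to fit the exponent $33k^3$, is the core technical challenge.
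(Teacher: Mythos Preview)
Your proposal has a structural gap in the inductive step that I do not see how to repair. You want to reduce control of $2r$-cycle completions to control of $2(r-1)$-cycle completions by ``gluing a rectangle along a hinge pair of triples''. But a label $2r$-cycle with label sequence $(z_1,\dots,z_{2r})$ does not contain a label $2(r-1)$-cycle whose label sequence is a subsequence of $(z_1,\dots,z_{2r})$: if you remove two consecutive points, the row/column incidences no longer alternate correctly, and if you contract via a hinge point, that hinge carries a \emph{new} label not determined by the prefix $(z_1,\dots,z_{2r-1})$. So an excess of completions at length $2r$ does not, on the face of it, force an excess at length $2(r-1)$ for any fixed shorter prefix. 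The same objection applies to the base case: many rectangle completions of a fixed triple of labels does not obviously produce excess octahedra merely from the hypothesis that every triple lies in many octahedra, because the octahedra through a given triple need not share the three fixed labels.

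The paper's route is quite different and avoids induction on $r$ entirely. It first uses a dependent random selection (Lemma~\ref{pa:lem4}, taking a random neighbourhood in the ``popular rectangle'' graph) to pass to $B_1$ of density $\ge\epsilon/2$ in which almost every $2r$-cycle has $\Omega(n^2)$ popular \emph{point decompositions}. A second dependent-random-selection-plus-removal step (Lemmas~\ref{pa:lem5} and~\ref{pa:lem6}) upgrades ``almost all'' to ``all'' at the cost of replacing point decompositions by the more elaborate \emph{full decompositions}; this is where the density drops to $\epsilon^{10}$. The $C$-well-definedness then comes from a pigeonhole argument (Lemma~\ref{pa:lem7}): for each of the $K$ completions one fixes the labels of a carefully chosen set of $6r+2$ points in its full decomposition, and then uses that every rectangle in the decomposition is $\epsilon/2$-popular to show that the remaining labels are determined up to a factor of $\epsilon^{-10r}$. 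No induction on $r$ is needed; the bound $\epsilon^{-33k^3}$ arises from the parameter dependence $\gamma=\epsilon^{30k^3}$ in the full-decomposition count, not from a telescoping product over $r$. Your single-pass greedy cleaning (every triple in many octahedra) is too coarse to substitute for this two-stage decomposition machinery.
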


We begin with a well-known bound for the number of $2r$-cycles in a bipartite graph, which will underlie many of the calculations throughout this section.

\begin{lemma}\label{pa:lem1} 
	Let $A$ be a subset of the $n\times n$ grid of density $\alpha$. Then $A$ contains at least $\alpha^{2r} n^{2r}$ and at most $\alpha^rn^{2r}$ distinct labelled $2r$-cycles.
\end{lemma}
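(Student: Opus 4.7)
My plan is to reduce both bounds to standard facts about the symmetric positive semi-definite matrix $G := MM^T$, where $M$ is the $n\times n$ biadjacency matrix of $A\subseteq X\times Y$. Writing $a\colon X\times Y\to\{0,1\}$ for the indicator of $A$ and setting $g(x,x'):=\sum_y a(x,y)a(x',y)$, a labelled $2r$-cycle is precisely a tuple $(x_1,y_1,x_2,y_2,\dots,x_r,y_r)$ such that each pair $(x_i,y_i)$ and $(x_{i+1},y_i)$ (indices mod $r$) lies in $A$. Summing over the $y_i$ first, the number of such cycles equals
\[\sum_{x_1,\dots,x_r}\prod_{i=1}^r g(x_i,x_{i+1})=\tr(G^r).\]

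With this identity in hand, the upper bound is immediate: writing $\lambda_1\ge\dots\ge\lambda_n\ge0$ for the eigenvalues of $G$,
\[\tr(G^r)=\sum_i\lambda_i^r\le\Big(\sum_i\lambda_i\Big)^r=\tr(G)^r,\]
and $\tr(G)=\sum_{x,y}a(x,y)=|A|=\alpha n^2$, which gives $\tr(G^r)\le\alpha^r n^{2r}$.

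For the lower bound I will estimate $\lambda_1$ by the Rayleigh quotient of the all-ones vector $\mathbf{1}$. Since $G$ is PSD,
\[\lambda_1\ge\frac{\mathbf{1}^TG\mathbf{1}}{n}=\frac{1}{n}\sum_{x,x'}g(x,x')=\frac{1}{n}\sum_y d_y^2,\]
where $d_y$ is the degree of column $y$ in $A$. One application of Cauchy--Schwarz gives $\sum_y d_y^2\ge(\sum_y d_y)^2/n=|A|^2/n=\alpha^2 n^3$, so $\lambda_1\ge\alpha^2 n^2$ and $\tr(G^r)\ge\lambda_1^r\ge\alpha^{2r}n^{2r}$, as required.

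The whole argument is a short Cauchy--Schwarz plus trace calculation once the identity $\#\{\text{labelled }2r\text{-cycles}\}=\tr(G^r)$ is in place, so there is no real obstacle; the only step needing care is the combinatorial bookkeeping that translates the defining relations of a $2r$-cycle into the monomials $\prod_i a(x_i,y_i)a(x_{i+1},y_i)$ before collapsing the $y_i$ sums into $g$.
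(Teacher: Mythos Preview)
Your proof is correct and essentially the same as the paper's: both identify the number of labelled $2r$-cycles with $\sum_i \sigma_i^{2r}$ (equivalently $\tr(G^r)$ with $G=MM^T$) and obtain the lower bound from $\sigma_1\ge\alpha n$ via the Rayleigh quotient. The only cosmetic difference is the upper bound, where the paper argues combinatorially that the $r$ ``even'' points of a $2r$-cycle determine the rest, while you use the equivalent spectral inequality $\sum_i\lambda_i^r\le(\sum_i\lambda_i)^r=\tr(G)^r=|A|^r$.
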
 
\begin{proof}
	We may view $A$ as a bipartite graph with vertex sets $X$ and $Y$ of size $n$ and $\alpha n^2$ edges. Let $\lambda_1,\dots,\lambda_n$ be the singular values of the adjacency matrix of this graph. Then the number of $2r$-cycles is equal to $\sum_i\lambda_i^{2r}$. But the largest singular value is at least $\alpha n$, so this sum is at least $\alpha^{2r} n^{2r}$.	
		
	For the upper bound we observe that the number of $2r$-cycles can be counted by summing, over all (ordered) $r$-tuples $(x_1,\dots,x_r) \in A^r$, the indicator that there is a $2r$-cycle $x_1y_1\dots x_ry_r$. This sum is clearly at most $|A|^r=\alpha^rn^{2r}$, since that is the number of ways of choosing $(x_1,\dots,x_r)$ without the additional condition.
\end{proof}

The lower bound on the octahedron count in $A$ requires that the labelling of a random rectangle is repeated, on average, many times. This motivates the following definition.

\begin{definition}\label{pa:def1}
	Given a partial Latin square $A$, a $2r$-cycle is $\theta$-\emph{popular} in $A$ if the labelling of the cycle occurs at least $\theta n$ times in $A$. 
\end{definition}

\noindent Note that the trivial maximum for the number of occurrences of a $2r$-cycle with a given labelling is $n$, since once one has chosen which of at most $n$ points to choose with the first label, the condition that no label is repeated in any row or column implies that rest of the $2r$-cycle, if it exists, is determined by the labelling.

The first step towards obtaining the decompositions we need is a dependent random selection that ensures that almost all $2r$-cycles can be decomposed into popular rectangles in many ways. The next definition explains what we mean by `decomposed' here.

\begin{definition}\label{point}
	Given a $2r$-cycle $C=x_1y_1\dots x_ry_r$ in a partial Latin square $A$, a \emph{point decomposition} of $C$ in $A$ is a collection of $2r$ rectangles, all belonging to $A$ and all sharing a point $u$, with the corners opposite to $u$ being the $x_i$ and $y_i$. We call the point decomposition \emph{$\epsilon$-popular} if each of the $2r$ rectangles is $\epsilon$-popular in $A$ (see Definition~\ref{pa:def1}).
\end{definition}

Point decompositions for a rectangle and a 6-cycle are shown in Figure~\ref{pa:fig:pt4}.

\begin{figure}
	\centering
	\begin{subfigure}{0.5\linewidth}
		\centering
		\begin{tikzpicture}[scale=0.7, every node/.style={scale=1.0}]
		\draw (0,0) rectangle (6,5);
		\draw (2,0) -- (2,5);
		\draw (0,2) -- (6,2);
		
		\draw (0,5) node[label=above left:$a$] {};
		\draw (6,5) node[label=above right:$b$] {};
		\draw (6,0) node[label=below right:$c$] {};
		\draw (0,0) node[label=below left:$d$] {};
		
		\draw (2,2) node[label=below left:$u$] {};

		\draw (0,0) node[] {$\bullet$};
		\draw (2,0) node[] {$\bullet$};
		\draw (6,0) node[] {$\bullet$};
		\draw (0,2) node[] {$\bullet$};
		\draw (6,2) node[] {$\bullet$};
		\draw (0,5) node[] {$\bullet$};
		\draw (2,5) node[] {$\bullet$};
		\draw (6,5) node[] {$\bullet$};
		\draw (2,2) node[] {$\bullet$};
		\end{tikzpicture}
	\end{subfigure}%
	\begin{subfigure}{0.5\linewidth}
		\centering
		\begin{tikzpicture}[scale=0.7, every node/.style={scale=1.0}]
		\draw (-1,0) rectangle (1,5);
		\draw (1,3) rectangle (4,0);
		\draw (1,5) rectangle (2,2);
		\draw (-1,0) rectangle (4,2);
		
		\draw (-1,5) node[label=above left:$a$] {};
		\draw (2,5) node[label=above right:$b$] {};
		\draw (2,3) node[label=above right:$c$] {};
		\draw (4,3) node[label=above right:$d$] {};
		\draw (4,0) node[label=below right:$e$] {};
		\draw (-1,0) node[label=below left:$f$] {};
		\draw (1,2) node[label=below left:$u$] {};
		
		\draw (-1,0) node[] {$\bullet$};
		\draw (1,0) node[] {$\bullet$};
		\draw (4,0) node[] {$\bullet$};
		\draw (-1,2) node[] {$\bullet$};
		\draw (1,2) node[] {$\bullet$};
		\draw (2,2) node[] {$\bullet$};
		\draw (4,2) node[] {$\bullet$};
		\draw (1,3) node[] {$\bullet$};
		\draw (2,3) node[] {$\bullet$};
		\draw (4,3) node[] {$\bullet$};
		\draw (-1,5) node[] {$\bullet$};
		\draw (1,5) node[] {$\bullet$};
		\draw (2,5) node[] {$\bullet$};
		\end{tikzpicture}
	\end{subfigure}
	\caption{A point decomposition of a rectangle $(a,b,c,d)$ and a 6-cycle $(a,b,c,d,e,f)$. If all rectangles with $u$ and a vertex from the cycle as opposite corners are $\epsilon$-popular, then the decomposition is $\epsilon$-popular.}\label{pa:fig:pt4}
\end{figure}
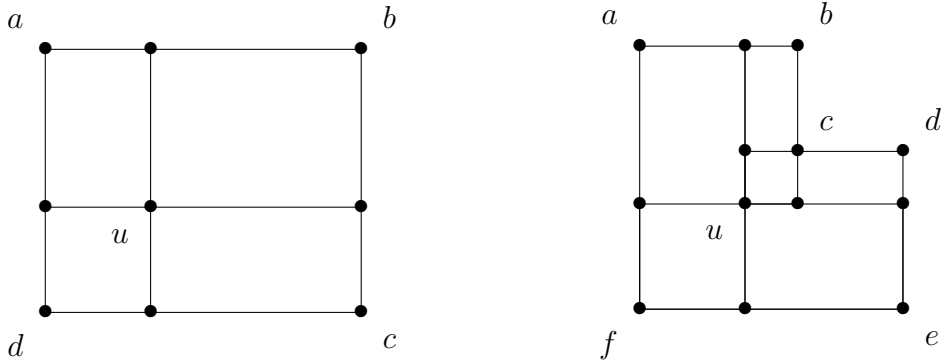

\begin{lemma}\label{pa:lem4}
	Let $0<\epsilon,\delta<\tfrac{1}{100}$ and let $k>1$ be a fixed integer. Given a partial $n\times n$ Latin square $A$ containing at least $\epsilon n^5$ octahedra, we can find
	a subset $B_1\subset A$ of density $\beta_1\ge\epsilon/2$ such that for each $2\le r\le k$, a proportion at least $1-\delta$ of $2r$-cycles in $B_1$ have at least $\delta \epsilon^{4k} n^2$ different $\epsilon/2$-popular point decompositions in $A$.
\end{lemma}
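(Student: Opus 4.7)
The plan is a one-step dependent random selection. For $u\in X\times Y$ set
$$N(u):=\{x\in A:R(u,x)\text{ lies in }A\text{ and is }\epsilon/2\text{-popular}\},$$
where $R(u,x)$ is the rectangle with $u$ and $x$ as opposite corners. I would choose $u$ at random in $X\times Y$ and take $B_1:=N(u)$. The key observation is that $\bigcap_{v\in C}N(v)$ is exactly the set of centers of $\epsilon/2$-popular point decompositions of a cycle $C$.

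First I would show popular rectangles are abundant. Writing $r_L$ for the number of rectangles in $A$ with labelling $L$, the octahedron hypothesis gives $\sum_L r_L^2\ge\epsilon n^5$; since $\sum_{r_L<\epsilon n/2}r_L^2\le(\epsilon n/2)\sum_L r_L\le(\epsilon/2)n^5$ and $r_L\le n$ in a partial Latin square, at least $(\epsilon/2)n^4$ rectangles are $\epsilon/2$-popular. Accounting for ordered opposite-corner pairings gives $\sum_u|N(u)|\ge c_0\epsilon n^4$ for an absolute constant $c_0>0$, so $\E_u|N(u)|\ge c_0\epsilon n^2$. Since $|N(u)|\le n^2$, a reverse Markov (Paley--Zygmund) inequality gives $\P_u[|N(u)|\ge\epsilon n^2/2]\ge c_1\epsilon$ for some absolute $c_1>0$; on this density event, $\beta_1\ge\epsilon/2$.

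Next, call a $2r$-cycle $C$ \emph{bad} if $|\bigcap_{v\in C}N(v)|<\delta\epsilon^{4k}n^2$, i.e., if it admits fewer than $\delta\epsilon^{4k}n^2$ $\epsilon/2$-popular point decompositions. For any bad $C$, $\P_u[C\subset N(u)]=|\bigcap_{v\in C}N(v)|/n^2<\delta\epsilon^{4k}$, so summing over the at most $n^{2r}$ cycles in $A$,
$$\E_u\#\{\text{bad }2r\text{-cycles contained in }N(u)\}<\delta\epsilon^{4k}n^{2r}.$$
On the density event, Lemma~\ref{pa:lem1} gives at least $(\epsilon/2)^{2r}n^{2r}$ $2r$-cycles in $N(u)$, so by Markov the probability the bad fraction exceeds $\delta$ is at most $2^{2r}\epsilon^{4k-2r}\le 4^k\epsilon^{2k}$.

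Finally, union-bounding over $r=2,\dots,k$, the total failure probability is at most $(k-1)4^k\epsilon^{2k}$, which is much smaller than $c_1\epsilon$ under the standing assumption $\epsilon<10^{-3}$ (with $k$ fixed). Hence some $u$ satisfies both the density and bad-fraction bounds simultaneously for every $r\in\{2,\dots,k\}$, and taking $B_1:=N(u)$ proves the lemma. The only real subtlety is bookkeeping: the $k-1$ Markov losses must stay below the Paley--Zygmund estimate for the density event, which is what forces the exponent $4k$ in the decomposition count to dominate the exponent $2r\le 2k$ arising from Lemma~\ref{pa:lem1}. No step is conceptually deep; the argument is a clean application of dependent random selection, set up so that the ``common neighbourhood'' of a cycle $C$ in the $X\times Y$-to-$A$ structure coincides with the set of valid centres for its popular point decompositions.
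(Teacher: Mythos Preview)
Your approach is essentially the same as the paper's: both pick a random $u\in[n]^2$, set $B_1=N(u)$, and exploit that $\bigcap_{v\in C}N(v)$ is exactly the set of centres of $\epsilon/2$-popular point decompositions of $C$. The difference is purely in bookkeeping. The paper combines the two requirements into a single linear expectation,
\[
\E\Bigl(|N(v)|n^{-2}-\epsilon/2-\epsilon Z(2k\eta)^{-1}\Bigr)\ge 0,
\]
and picks a single $v$ making this nonnegative; you instead use a reverse-Markov/Paley--Zygmund estimate to lower-bound the probability of the density event, then Markov plus a union bound for the bad-fraction events. Both are standard and valid.

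One small quantitative point to tighten: your popular-rectangle count only yields $\sum_u|N(u)|\ge(\epsilon/2)n^4$, i.e.\ $\E_u|N(u)|\ge(\epsilon/2)n^2$, so reverse Markov gives $\beta_1\ge\epsilon/4$ rather than the stated $\epsilon/2$. The paper sidesteps this by bounding the octahedron count directly in terms of edges and non-edges of $G$: writing $X$ for the edge count and $Y$ for the non-edge count, each edge contributes at most $n$ octahedra and each non-edge fewer than $\epsilon n/2$, so $Xn+Y\epsilon n/2\ge\epsilon n^5$, whence $X\ge(\epsilon/2)n^4$ and the average degree is at least $\epsilon n^2$. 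Using this in place of your rectangle count recovers $c_0=1$ and hence $\beta_1\ge\epsilon/2$. (Also, the hypothesis is $\epsilon<1/100$, not $10^{-3}$; your union-bound inequality $(k-1)4^k\epsilon^{2k-1}<c_1$ still holds under the weaker assumption.)
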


\begin{proof} We define a graph $G$ with vertex set given by $[n]^2$ corresponding to the cells of the $n\times n$ grid, and edges given by joining $x$ to $y$
	if the rectangle with opposite corners $x$ and $y$ has all its vertices in $A$ and is $\e/2$-popular. 
	
	Let $X$ be the number of edges in $G$ and $Y$ be the number of non-edges. An edge in $G$ can be associated to a set of at least $\e n/2$ (and at most $n$) octahedra, by combining the rectangle corresponding to the edge with one of the other rectangles with the same labelling. Similarly, a non-edge in $G$ can be associated to a set of less than $\e n/2$ octahedra. In such a way, all octahedra of $A$ are accounted for. Therefore
	$$ Xn+Y\e n/2\ge \e n^5$$
	$$\Rightarrow Xn+\e n^5/2\ge \e n^5$$
	so $G$ has average degree at least $\e n^2$.
	
	Let $\eta=\eta(\delta,k)=\delta\epsilon^{4k}$. A $2r$-cycle has at least $\eta n^2$ different $\epsilon/2$-popular point decompositions in $A$ if the common neighbourhood (in $G$) of the $2r$ corner vertices has size at least $\eta n^2$.
	
	We choose a vertex $v$ in $G$ uniformly at random, and let $N(v)$ be the neighbourhood of $v$ in $G$. This is our dependent random selection. It remains to prove that it works with positive probability.
	
	Let $C=x_1y_1\dots x_ry_r$ be a given $2r$-cycle in $A$. Let $N(C)$ be the set
	of vertices in $G$ that are joined to all of $x_1,\dots,y_r$. We shall say that $C$ is {\em bad} if
	$|N(C)|<\eta n^2$. If $C$ is bad, we have that
	\[\mathbb{P}(C\subset N(v))=\frac{|N(C)|}{n^2}<\eta.\]
	
	Let $Z_r$ count the number of bad $2r$-cycles in $N(v)$. We have $\mathbb{E}Z_r\le \eta n^{2r}$. Let $Z=\sum_{r=2}^k n^{-2r}Z_r$. Then
	\[\mathbb{E}Z\le \sum_{r=2}^k \eta\le k\eta.\]
	
	Our lower bound on the average degree of $G$ also gives us that
	\[\mathbb{E}(|N(v)|)\ge \epsilon n^2.\]
	
	In particular, we have
	\[\mathbb{E}\Big(|N(v)|n^{-2}-\epsilon/2-\epsilon Z(2k\eta)^{-1}\Big)\ge 0\]
	so there is a choice of vertex $v$ such that $|N(v)|n^{-2}\ge \epsilon/2$ and $|N(v)|n^{-2}\ge \epsilon\eta^{-1}Z/2k$. The first inequality gives us that the total count, $X_r$, of $2r$-cycles in $N(v)$ is at least $(\epsilon/2)^{2r}n^{2r}$, while the second inequality implies that $Z\le 2k\eta \epsilon^{-1}$. So for each $2\le r\le k$, 
	\[Z_rn^{-2r}\le 2k\eta \epsilon^{-1}\le 2k\eta \epsilon^{-1}(\epsilon/2)^{-2r}n^{-2r}X_r, \]
	which implies that
	\[Z_r\le k\eta(\epsilon/2)^{-(2r+1)}X_r.\]
	
	Therefore, letting $B_1=N(v)$ for this choice of $v$, we have $\beta_1n^2=|N(v)|\ge\epsilon n^2/2$ and the proportion of $2r$-cycles in $N(v)$ which are bad is at most $k\eta(\epsilon/2)^{-(2r+1)}\le\delta$.
\end{proof} 

Using Lemma~\ref{pa:lem4} we may pass to a dense subset $B_1$ of $A$ such that almost all $2r$-cycles have many (within a constant factor of the trivial maximum) popular point decompositions in $A$. However, for our purposes the `almost all' is not sufficient, and we need to use a more complicated decomposition to boost Lemma~\ref{pa:lem4} into an `all' statement.

The following definitions introduce these more complex decompositions.

\begin{definition}\label{ring}
	Let $X$ be a partial Latin square. Given a $2r$-cycle $C=x_1y_1\dots x_ry_r$, let $C'=y_1'x_2'\dots x_r'y_r'x_1'$ be a second $2r$-cycle in $X$ such that all the rectangles $R_1,\dots,R_{2r}$ with opposite corner pairs either $(x_i,x_i')$ or $(y_i,y_i')$ belong to $X$. We call the collection of $C'$ and the rectangles $R_1,\dots,R_{2r}$ a \emph{ring decomposition} of $C$ in $X$. An example (for $r=2$) is shown in Figure~\ref{pa:fig:rd}. As in Definition~\ref{point}, we say that a ring decomposition $(C',R_1,\dots, R_{2r})$ of $C$ is \emph{$\e$-popular} if $C'$ and all the rectangles $R_1,\dots, R_{2r}$ are $\e$-popular in $X$ (see Definition~\ref{pa:def1}).
\end{definition}

\begin{remark}\label{ordering}
{The reason we listed the points of the cycle $C'$ in the order $y_1'x_2'\dots x_r'y_r'x_1'$ is that according to Definition~\ref{2rcycledef} of a $2r$-cycle, the first two points of a cycle must share a row rather than a column, so had we listed the vertices in the more obvious order, then it would not technically have been a cycle. This is slightly more than hair splitting: it is important that if two points in $C$ share a row, then the corresponding points in $C'$ share a column, and vice versa.}
\end{remark}

\begin{definition}\label{full}
	Let $X$ be a partial Latin square. Let $C$ be a $2r$-cycle in $X$, and let $(C',R_1,\dots, R_{2r})$ be a ring decomposition of $C$. A \emph{full decomposition} of $C$ in $X$ consists of a point decomposition of $C'$, together with point decompositions of each rectangle $R_i$. Since a point decomposition of a $2r$-cycle itself consists of $2r$ rectangles, a full decomposition of a $2r$-cycle involves $10r$ rectangles as shown (for $r=2$) in Figure~\ref{pa:fig:fd}. As usual, we say that a full decomposition is \emph{$\e$-popular} if all of these $10r$-rectangles are $\e$-popular in $X$ (see Definition~\ref{pa:def1}).
\end{definition}

\begin{figure}
	\centering
	\begin{tikzpicture}[scale=1.0, every node/.style={scale=1.0}]
	\draw (0,0) rectangle (1,2);
	\draw (1,0) rectangle (3,1);
	\draw (2,1) rectangle (3,3);
	\draw (0,2) rectangle (2,3);
	
	\draw (0,0) node[label=below left:$d$] {};
	\draw (3,0) node[label=below right:$c$] {};
	\draw (3,3) node[label=above right:$b$] {};
	\draw (0,3) node[label=above left:$a$] {};
	
	\draw (0,0) node[] {$\bullet$};
	\draw (1,0) node[] {$\bullet$};
	\draw (3,0) node[] {$\bullet$};
	\draw (3,1) node[] {$\bullet$};
	\draw (3,3) node[] {$\bullet$};
	\draw (2,3) node[] {$\bullet$};
	\draw (0,3) node[] {$\bullet$};
	\draw (0,2) node[] {$\bullet$};
	\draw (1,2) node[] {$\bullet$};
	\draw (2,2) node[] {$\bullet$};
	\draw (1,1) node[] {$\bullet$};
	\draw (2,1) node[] {$\bullet$};
	\end{tikzpicture}
	\caption{A ring decomposition of a 4-cycle $(a,b,c,d)$.}\label{pa:fig:rd}
\end{figure}
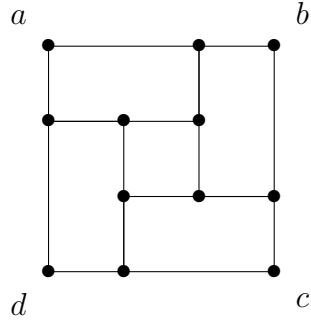

\begin{figure} 
	\centering
	\begin{tikzpicture}[scale=2.5, every node/.style={scale=1.0}]
	\draw (0,0) rectangle (1,2);
	\draw (1,0) rectangle (3,1);
	\draw (2,1) rectangle (3,3);
	\draw (0,2) rectangle (2,3);
	
	\draw (0,0) node[label=below left:$d$] {};
	\draw (3,0) node[label=below right:$c$] {};
	\draw (3,3) node[label=above right:$b$] {};
	\draw (0,3) node[label=above left:$a$] {};
	
	\draw (0,0) node[] {$\bullet$};
	\draw (1,0) node[] {$\bullet$};
	\draw (3,0) node[] {$\bullet$};
	\draw (3,1) node[] {$\bullet$};
	\draw (3,3) node[] {$\bullet$};
	\draw (2,3) node[] {$\bullet$};
	\draw (0,3) node[] {$\bullet$};
	\draw (0,2) node[] {$\bullet$};
	\draw (1,2) node[] {$\bullet$};
	\draw (2,2) node[] {$\bullet$};
	\draw (1,1) node[] {$\bullet$};
	\draw (2,1) node[] {$\bullet$};
	
	\draw (0,2.55) -- (2,2.55);
	\draw (1.55,2) -- (1.55,3);
	\draw (2.4,1) -- (2.4,3);
	\draw (2,1.7) -- (3,1.7);
	\draw (1.8,0) -- (1.8,1);
	\draw (1,0.3) -- (3,0.3);
	\draw (0,1.6) -- (1,1.6);
	\draw (0.5,0) -- (0.5,2);
	\draw (1,1.3) -- (2,1.3);
	\draw (1.35,1) -- (1.35,2);
	
	\draw (0,2.55) node[] {$\bullet$};
	\draw (2,2.55) node[] {$\bullet$};
	\draw (1.55,2) node[] {$\bullet$};
	\draw (1.55,3) node[] {$\bullet$};
	\draw (1.55,2.55) node[] {$\bullet$};
	
	\draw (2.4,1) node[] {$\bullet$};
	\draw (2.4,3) node[] {$\bullet$};
	\draw (2,1.7) node[] {$\bullet$};
	\draw (3,1.7) node[] {$\bullet$};
	\draw (2.4,1.7) node[] {$\bullet$};
	
	\draw (1.8,0) node[] {$\bullet$};
	\draw (1.8,1) node[] {$\bullet$};
	\draw (1,0.3) node[] {$\bullet$};
	\draw (3,0.3) node[] {$\bullet$};
	\draw (1.8,0.3) node[] {$\bullet$};
	
	\draw (0,1.6) node[] {$\bullet$};
	\draw (1,1.6) node[] {$\bullet$};
	\draw (0.5,0) node[] {$\bullet$};
	\draw (0.5,2) node[] {$\bullet$};
	\draw (0.5,1.6) node[] {$\bullet$};
	
	\draw (1,1.3) node[] {$\bullet$};
	\draw (2,1.3) node[] {$\bullet$};
	\draw (1.35,1) node[] {$\bullet$};
	\draw (1.35,2) node[] {$\bullet$};
	\draw (1.35,1.3) node[] {$\bullet$};
	\end{tikzpicture}
	\caption{A full decomposition of the rectangle with labels $(a,b,c,d)$. If each of the 20 small rectangles in the figure is $\epsilon$-popular, then we say that the decomposition is $\epsilon$-popular.}\label{pa:fig:fd}
\end{figure}
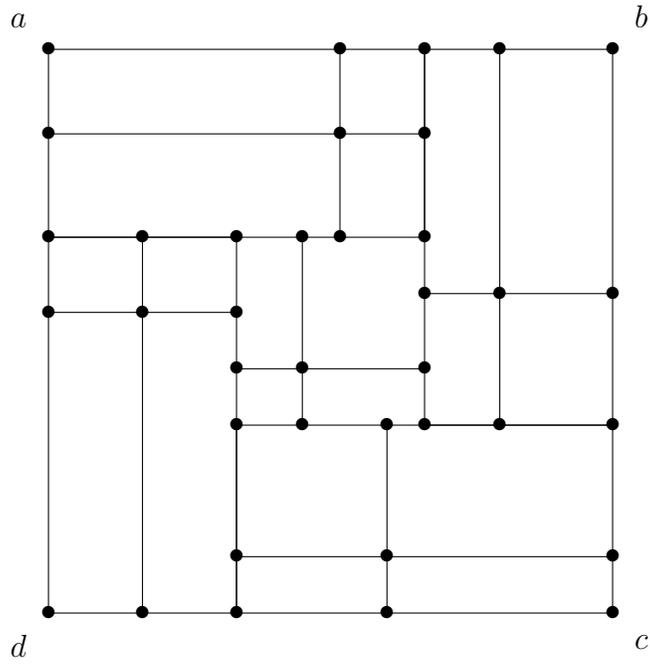

\begin{remark}\label{pa:rem1}
	It will be important to keep track of the order (in $n$) of the trivial maxima for the number of ring decompositions and full decompositions of a $2r$-cycle in a dense subset of an $n\times n$ Latin square. The number of ring decompositions is at most $n^{2r}$, since a ring decomposition of a $2r$-cycle $C$ is uniquely defined by a $2r$-cycle $C'$. In a full decomposition, $C'$ and all the rectangles in the ring decomposition are given point decompositions, each of which can be chosen in at most $n^2$ ways. So the number of full decompositions is at most $n^{2r+2(2r+1)}=n^{6r+2}$.
\end{remark}

Our next step is to pass to a subset $B_2$ of $B_1$ such that all $2r$-cycles in $B_2$ have within a constant factor of the trivial maximum number of ring decompositions. Since almost all $2r$-cycles in $B_1$ have many popular point decompositions, we will then be able to pass to a further subset $B_3$ of $B_2$ so that all $2r$-cycles in $B_3$ have within a constant factor of the trivial maximum number of $\epsilon$-popular full decompositions.

In order to achieve the first step of this process, we will again apply a dependent random selection argument. The following lemma is in fact far more general than we need, but the full statement is more natural to prove than the special case that we will use. 

\begin{lemma}\label{pa:lemtechnical}
	Let $k$ be a positive integer. Let $G$ be a bipartite graph with vertex classes $X$, $Y$ of size $n$ and edge density $\delta$, with $0<\delta<\tfrac{1}{100}$. Then we can pass to subsets $X'\subset X$ and $Y'\subset Y$, each of size at least $\delta^2n/16$, such that the edge density in $G'=G|_{X'\times Y'}$ is at least $\delta/4$ and for any $2\le r\le k$ and any choice of $r$ vertices $x_1,\dots,x_r\in X'$ and $y_1,\dots,y_r\in Y'$ we have at least $\delta^{5k^2+4k} n^{2r}$ choices of vertices $u_1,\dots, u_r, v_1,\dots, v_r$ in $G$ with $u_iv_j\in E(G)$, $x_iu_i\in E(G)$ and $y_iv_i\in E(G)$ for each $i,j\in\{1,\dots,r\}$.
\end{lemma}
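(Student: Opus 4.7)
The plan is to prove this lemma via a two-sided dependent random choice argument. I would pick a parameter $s$ (which I expect to be of order $k$, so that the final exponent comes out of order $k^2$) and sample uniformly random $s$-tuples $(a_1,\ldots,a_s)\in X^s$ and $(b_1,\ldots,b_s)\in Y^s$ independently. Letting $U=\bigcap_{i=1}^s N(a_i)\subset Y$ and $V=\bigcap_{j=1}^s N(b_j)\subset X$, Jensen's inequality yields $\E|U|,\E|V|\ge n\delta^s$. I would then define
\[X'=\{x\in X:|N(x)\cap U|\ge (\delta/2)|U|\},\qquad Y'=\{y\in Y:|N(y)\cap V|\ge (\delta/2)|V|\},\]
and, after a small pruning step to enforce $e(X',Y')\ge (\delta/4)|X'||Y'|$, a standard averaging would give $|X'|,|Y'|\ge \delta^2 n/16$.

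For the counting condition, given $x_1,\ldots,x_r\in X'$ and $y_1,\ldots,y_r\in Y'$, the count proceeds sequentially: there are at least $(\delta|U|/2)^r$ choices of $(u_1,\ldots,u_r)$ with $u_i\in N(x_i)\cap U$ for each $i$, and similarly one hopes to have many choices of $v_j\in N(y_j)\cap V$ that also satisfy $u_iv_j\in E(G)$ for all $i,j$. The coupling between the $u_i$ and the $v_j$ is handled by a standard DRS concentration bound: the expected number of $r$-tuples $(u_1,\ldots,u_r)\in U^r$ with $|\bigcap_i N(u_i)|\le t$ is at most $n^r(t/n)^s$, and similarly for $V$. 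By taking $s$ of order $k$ and applying Markov's inequality to all such bad events for $2\le r\le k$ simultaneously (only $O(k)$ events, so the union bound is cheap), I would fix a realization of the initial random choice under which, for every $r\le k$, all but a negligible fraction of $r$-tuples in $U^r$ (respectively $V^r$) have common neighbourhood in $X$ (respectively $Y$) of size at least $\delta^{O(s)} n$.

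The main obstacle I anticipate lies in the crossed coupling: each $v_j$ must simultaneously lie in $V$, in $N(y_j)$, and in $\bigcap_i N(u_i)$. I would handle this by exploiting the independence of the two random tuples. Because $V=\bigcap_j N(b_j)$ is itself a random common neighbourhood independent of the $u$-tuple, one can compare the intersections $\bigcap_i N(u_i)$ and $V\cap\bigcap_i N(u_i)$ via another averaging argument, ensuring that the latter has positive density in the former for a typical realisation. Combining this with $y_j\in Y'$ (which guarantees $|N(y_j)\cap V|\ge (\delta/2)|V|$) and transferring the density to the smaller intersection $N(y_j)\cap V\cap\bigcap_i N(u_i)$ yields a lower bound of order $\delta^{O(s)} n$ choices per $v_j$. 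Multiplying through the $r$ choices of $u_i$ and the $r$ choices of $v_j$, one obtains a total count of order $\delta^{O(sk)}n^{2r}$; taking $s$ to be an appropriate multiple of $k$ then recovers the claimed $\delta^{5k^2+4k}n^{2r}$. The most delicate bookkeeping will be making the Markov thresholds in the various simultaneous bad events compatible, so that the rare exceptional $r$-tuples cannot conspire to ruin the count for any fixed choice of $(x_1,\ldots,x_r,y_1,\ldots,y_r)$.
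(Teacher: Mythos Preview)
Your two-sided dependent random choice is a natural thing to try, but the step you yourself flag as the main obstacle---the ``crossed coupling''---does not go through as sketched, and I do not see how to repair it without essentially abandoning the symmetric setup.

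Concretely: having fixed the realisation, you want, for every $y_j\in Y'$ and most choices of $(u_1,\dots,u_r)$ from $N(x_i)\cap U$, that $N(y_j)\cap\bigcap_i N(u_i)$ is large. Your definition of $Y'$ only guarantees that $N(y_j)$ is dense in $V$; it says nothing about the density of $N(y_j)$ inside $\bigcap_i N(u_i)$, which is an entirely different (and moving) subset of $X$. Even if you could show $V\cap\bigcap_i N(u_i)$ is large---which is itself not clear, since the number of relevant $u$-tuples is $n^r$ and cannot be union-bounded---there is no mechanism to ``transfer'' the density of $N(y_j)$ from $V$ to this intersection. Two subsets can each be dense in $V$ and still be disjoint. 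The independence of the two random samples does not help here, because once you fix the realisation (as you must, to get $X'$ and $Y'$), the quantifier over $y_j$ is universal.

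The paper sidesteps this by working asymmetrically. A single DRS step (picking one vertex $y\in Y$ and taking $X_2=\Gamma(y)$) gives $X_2\subset X$ in which most $(k{+}1)$-tuples have large common neighbourhood in $Y$; one then refines to $X_3\subset X_2$ consisting of those $x$ that are individually good for most $k$-tuples from $X_2$. The set $Y'$ is simply the vertices with many neighbours in $X_3$. The count then runs in the opposite order from yours: first pick $v_j\in N(y_j)\cap X_3$ (large by the degree condition defining $Y'$), and then, because each $x_i\in X_3$ is good for most $k$-tuples from $X_2$, most such $(v_1,\dots,v_k)$ have $|\Gamma(x_i,v_1,\dots,v_k)|\ge c_2 n$ for every $i$, so the $u_i$ can be chosen freely from there. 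The per-vertex refinement $X_2\to X_3$ is exactly what eliminates the triple-intersection problem you ran into; without something playing that role, the argument does not close.
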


{Figure~\ref{DRSlemfig} shows the vertices $x_i,y_j, u_k$ and $v_l$ with the corresponding edges when $k=r=3$.}

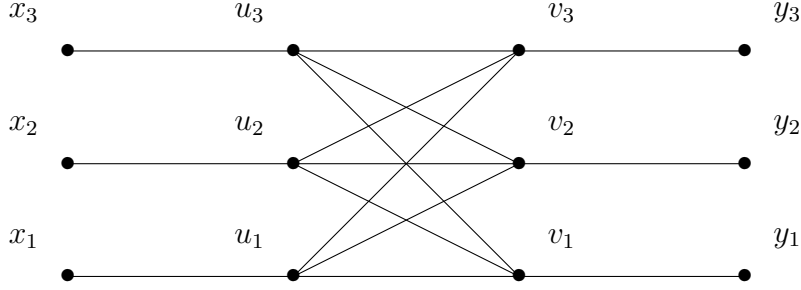
\begin{figure} 
	\centering
	\begin{tikzpicture}[scale=1, every node/.style={scale=1.0}]
	\draw (0,0) node[label=above left:$x_1$] {$\bullet$};
	\draw (0,1.5) node[label=above left:$x_2$] {$\bullet$};
	\draw (0,3) node[label=above left:$x_3$] {$\bullet$};
	
		\draw (3,0) node[label=above left:$u_1$] {$\bullet$};
	\draw (3,1.5) node[label=above left:$u_2$] {$\bullet$};
	\draw (3,3) node[label=above left:$u_3$] {$\bullet$};
	
		\draw (6,0) node[label=above right:$v_1$] {$\bullet$};
	\draw (6,1.5) node[label=above right:$v_2$] {$\bullet$};
	\draw (6,3) node[label=above right:$v_3$] {$\bullet$};
	
		\draw (9,0) node[label=above right:$y_1$] {$\bullet$};
	\draw (9,1.5) node[label=above right:$y_2$] {$\bullet$};
	\draw (9,3) node[label=above right:$y_3$] {$\bullet$};
	
	\draw (0,0) -- (3,0);
	\draw (0,1.5) -- (3,1.5);
	\draw (0,3) -- (3,3);
	
	\draw (6,0) -- (9,0);
	\draw (6,1.5) -- (9,1.5);
	\draw (6,3) -- (9,3);
	
	\draw (3,0) -- (6,0);
	\draw (3,0) -- (6,1.5);
	\draw (3,0) -- (6,3);
	\draw (3,1.5) -- (6,0);
	\draw (3,1.5) -- (6,1.5);
	\draw (3,1.5) -- (6,3);
	\draw (3,3) -- (6,0);
	\draw (3,3) -- (6,1.5);
	\draw (3,3) -- (6,3);
	\end{tikzpicture}
	\caption{The subgraph found in Lemma~\ref{pa:lemtechnical} when $k=r=3$.}\label{DRSlemfig}
\end{figure}
\begin{proof}
	Let us begin by discarding all vertices from $X$ of degree smaller than $\delta n/2$. This leaves a set $X_1\subset X$ of size at least $\delta n/2$.
	
	Let $c_1=\delta^{2k}$ and $c_2=\delta^{5k}$. We will use a dependent random selection argument that allows us to pass to a subset $X_2\subset X_1$ of size at least $(\delta^2/8) n$ with the property that for a $(1-c_1)$ proportion of choices $(x_1,\dots,x_{k+1})$ from $X_2$ we have at least $c_2 n$ vertices in the shared neighbourhood $\Gamma(x_1,\dots,x_{k+1})\subset Y$. 
	
	We do this by picking a vertex $y\in Y$ at random and considering $\Gamma(y)$. Observe that
	\[\mathbb{E}(|\Gamma(y)|)\ge \delta|X_1|/2\ge \delta^2n/4 .\]
	Let us call a $(k+1)$-tuple $(x_1,\dots,x_{k+1})$ \emph{bad} if $|\Gamma(x_1,\dots,x_{k+1})|<c_2 n$. Let $B$ be the number of bad tuples in $\Gamma(y)$. The probability that a given bad ${(k+1)}$-tuple belongs to $\Gamma(y)$ is less than $c_2$, since for this to happen $y$ must be picked from $\Gamma(x_1,\dots,x_{k+1})$. Therefore
	\[\mathbb{E}(B)<c_2 n^{k+1}\]
	and so
	\[\mathbb{E}(c_1|\Gamma(y)|^{k+1}-c_1(\delta^2/8)^{k+1} n^{k+1}-B)> (c_1(\delta^2/4)^{k+1}-c_1(\delta^2/8)^{k+1}-c_2)n^{k+1}.\]
	Since $c_2=c_1\delta^{3k}$, this expectation is positive and so there is some choice of $y$ for which both $c_1|\Gamma(y)|^{k+1}\ge c_1(\delta^2/8)^{k+1} n^{k+1}$ and $c_1|\Gamma(y)|^{k+1}\ge B$. These inequalities imply that $|\Gamma(y)|\ge (\delta^2/8) n$ and that at most a proportion $c_1$ of the ${(k+1)}$-tuples from $\Gamma(y)$ are bad. So we may take $X_2=\Gamma(y)$.
	
	Now we let $X_3$ be the subset of $X_2$ consisting of all vertices $x_1\in X_2$ with the property that for a proportion $(1-2c_1)$ of the choices of $x_2,\dots,x_{{k+1}}\in X_2$, the shared neighbourhood $\Gamma(x_1,\dots,x_{{k+1}})\subset Y$ contains at least $c_2 n$ vertices. Since $|\Gamma(x_1,\dots,x_{{k+1}})|\ge c_2 n$ for at least a proportion $(1-c_1)$ of all ${(k+1)}$-tuples, $|X_3|\ge |X_2|/2\ge \delta^2n/16$.
	
	Since each vertex in $X_3$ has at least $\delta n/2$ neighbours in $Y$, the number of edges from $Y$ to $X_3$ is at least $\delta n|X_3|/2$. We now pass to the subset $Y_1\subset Y$ that consists of all vertices with at least $\delta |X_3|/4$ edges into $X_3$. We note that $|Y_1|\ge \delta n/4$.
	
	Now let $x_1,\dots, x_k$ be chosen from $X_3$ and $y_1,\dots,y_k$ from $Y_1$. Let $A_1,\dots,A_k$ be the neighbourhoods of the $y_i$ in $X_3$ -- note that $|A_i|\ge \delta |X_3|/4$. Let $T=A_1\times\dots\times A_k$ and note that it has cardinality at least $(\delta |X_3|/4)^{k}\ge (\delta |X_2|/8)^{k}$. 
	
	By the choice of $X_3$, we know that the number of choices of $u_1,\dots,u_{k}\in X_2$ such that $|\Gamma(x_i,u_1,\dots,u_{k})|<c_2 n$ is at most $2c_1 |X_2|^{k}$ for each $i=1,\dots,k$. Letting $c_1=\delta^{2k}$ so that $2c_1 k< (\delta/8)^{k}/2$ 
	and noting that $|T|=(\delta|X_2|/8)^k$, we see that there must be at least $(\delta|X_2|/8)^{k}/2$ choices of $(a_1,\dots,a_k)\in T$ such that $|\Gamma(x_i,a_1,\dots,a_k)|\ge c_2 n$ for each $i=1,\dots,k$. Observe that for any such choice of $(a_1,\dots,a_k)$ and for any choice of $b_i\in \Gamma(x_i,a_1,\dots,a_k)$ we get a complete bipartite graph between the $a_i$ and the $b_i$ as well as the edges $x_ib_i$ and $y_ia_i$ for each $i$.
	
	The number of choices of the $a_i$ and $b_i$ from the above paragraph is at least 
	\[\Big((\delta|X_2|/8)^{k}/2\Big)\times \Big((c_2 n)^k\Big)\ge (\delta^3/64)^{k}(\delta^{5k})^kn^{2k}\]
	\[\ge\delta^{5k^2+4k}n^{2k}.\]
	
	Observe that the subgraph induced by the $x_i,y_j,a_k$ and $b_l$ contains a $2r$-cycle $a_1b_1\dots a_rb_r$ as well as the edges $x_ia_i$ and $y_ib_i$ for each $i$. Moreover, the edge density in $X_3\times Y_1$ is at least $\delta/4$, so taking $X'=X_3$ and $Y'=Y_1$, the result follows.
\end{proof}

\begin{remark}\label{pa:rem2}
	It is well known that given a dense bipartite graph $G$, we may pass to a dense subgraph $H$ such that any two vertices of $H$ are joined by many $P_3$s in $G$~\cite{DRC}. Lemma~\ref{pa:lemtechnical} shows that a considerable generalization of this statement is available for relatively little extra effort: given any fixed bipartite graph $H'$ with $t$ special vertices $v_1,\dots,v_t$ such that the shortest path from any $v_i$ to any $v_j$ has length at least 3, we may pass to a dense subgraph $H$ of $G$ such that for any $u_1,\dots, u_t$ the number of isomorphic copies $\phi(H')$ of $H'$ in $H$ with $\phi(v_i)=u_i$ for all $i$ is within a constant of the trivial maximum. The $P_3$ statement is the special case where $H'$ is a path of length 3 and $v_1$ and $v_2$ are its endpoints. (A similar observation was made in a blog post of Tao \cite{taocohomology}, but he was content to discuss just the special case he needed, and he left the proof as an exercise for the reader.) 
\end{remark}

As an immediate corollary we obtain the following result, which will soon be applied in order to help guarantee the presence of many ring decompositions.

\begin{lemma}\label{pa:lem5}
	Let $k>1$ be a positive integer. Let $G$ be a bipartite graph with vertex classes $X$, $Y$ of size $n$ and edge density $\delta$, with $0<\delta<\tfrac{1}{100}$. Then we can pass to subsets $X'\subset X$ and $Y'\subset Y$, each of size at least $\delta^2n/16$, such that the edge density in $G'=G|_{X'\times Y'}$ is at least $\delta/4$ and for any $2\le r\le k$ and any choice of $r$ vertices $x_1,\dots,x_r\in X'$ and $y_1,\dots,y_r\in Y'$ we have at least $\delta^{7k^2} n^{2r}$ choices of $2r$-cycle $u_1v_1\dots u_rv_r$ in $G$ with $x_iu_i\in E(G)$ and $y_iv_i\in E(G)$ for each $i=1,\dots,r$.
\end{lemma}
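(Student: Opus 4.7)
The plan is to derive Lemma~\ref{pa:lem5} as an immediate corollary of Lemma~\ref{pa:lemtechnical}. The two statements have identical hypotheses and the same target subsets $X' \subset X$ and $Y' \subset Y$; the only substantive difference lies in the internal structure requested on the $u_i$ and $v_j$. Lemma~\ref{pa:lemtechnical} produces many tuples in which the $u_i$'s and $v_j$'s span a complete bipartite graph $K_{r,r}$, along with the pendant edges $x_i u_i$ and $y_i v_i$, whereas Lemma~\ref{pa:lem5} asks only for a $2r$-cycle $u_1 v_1 u_2 v_2 \dots u_r v_r$ carrying the same pendant edges. Since $K_{r,r}$ trivially contains the Hamiltonian cycle $u_1 v_1 u_2 v_2 \dots u_r v_r$ (the required edges $u_i v_i$ and $v_i u_{i+1}$ are automatically present), every tuple produced by Lemma~\ref{pa:lemtechnical} yields the desired $2r$-cycle.

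Concretely, I would first apply Lemma~\ref{pa:lemtechnical} to $G$ with the given $k$ and $\delta$, obtaining subsets $X' \subset X$ and $Y' \subset Y$ of size at least $\delta^2 n / 16$ with edge density at least $\delta/4$ in $G|_{X' \times Y'}$. Second, for any $2 \le r \le k$ and any choice of $x_1, \dots, x_r \in X'$ and $y_1, \dots, y_r \in Y'$, I would invoke the conclusion of Lemma~\ref{pa:lemtechnical} to obtain at least $\delta^{5k^2 + 4k} n^{2r}$ distinct ordered tuples $(u_1, \dots, u_r, v_1, \dots, v_r)$ with $x_i u_i, y_i v_i \in E(G)$ and $u_i v_j \in E(G)$ for all $i,j$. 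Distinct tuples yield distinct ordered $2r$-cycles $u_1 v_1 u_2 v_2 \dots u_r v_r$ attached to the $x_i$ and $y_i$ via the requested pendant edges.

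Finally, I would verify the numerical bound: since $\delta < 1/100 < 1$, the inequality $\delta^{5k^2 + 4k} \ge \delta^{7k^2}$ is equivalent to $5k^2 + 4k \le 7k^2$, i.e., $4k \le 2k^2$, which holds whenever $k \ge 2$. Hence the $2r$-cycle count is at least $\delta^{7k^2} n^{2r}$, as required. I do not anticipate any real obstacle here; the substantive work has already been carried out in Lemma~\ref{pa:lemtechnical}, and the passage from a $K_{r,r}$-shaped core to a $2r$-cycle core costs nothing in the exponent beyond what is already absorbed by the inequality $5k^2 + 4k \le 7k^2$.
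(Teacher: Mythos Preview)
Your proposal is correct and matches the paper's own proof essentially verbatim: the paper also derives the lemma directly from Lemma~\ref{pa:lemtechnical} by noting that the complete bipartite graph on $r+r$ vertices contains a $2r$-cycle. Your explicit check that $5k^2+4k\le 7k^2$ for $k\ge 2$ is exactly the computation implicit in the paper's one-line argument.
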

\begin{proof}
	The result follows by applying Lemma~\ref{pa:lemtechnical}, and noting that the complete bipartite graph on $r+r$ vertices contains a $2r$-cycle. 
\end{proof}

When viewed as a statement about subsets of the grid, Lemma~\ref{pa:lem5} states that we may pass to a dense subset $B_2\subset B_1$ such that all $2r$-cycles in $B_2$ have many ring decompositions in $B_1$ (see Definition~\ref{ring}). We must now pass to a further subset in which all $2r$-cycles have many popular full decompositions.

In the statement of the following lemma, we introduce a set $\mathcal{C}$ of cycles. Each $C\in\mathcal{C}$ is a $2r$-cycle for some $2\le r \le k$. At this point in the argument, it may help to think about $\mathcal{C}$ as the set of $\theta$-popular cycles (for some suitable $\theta$), as this will be how $\mathcal{C}$ is defined in our first application of the lemma. However, we will apply the lemma again later in the paper with a different collection $\mathcal{C}$ of cycles, and hence we state the result in this more general way.

\begin{lemma}\label{pa:lem6} 
	Let $0<\beta,\delta,\gamma<\tfrac{1}{100}$ and $k>1$. Let $B$ be an $n\times n$ partial Latin square of density at least $\beta$, and let $\mathcal{C}$ be a collection of cycles in $B$ with the property that for each $2\le r\le k$ at least a proportion
	$1-\delta$ of $2r$-cycles in $B$ belong to $\mathcal{C}$. If $\delta\leq\beta^{9k^2}$ then we
	can find a subset $B'$ of $B$ with density $\beta'\ge\beta^8$ with the property that any $2r$-cycle in
	$B'$ has at least $\beta^{8k^2}n^{2r}$ different ring decompositions (see Definition~\ref{ring}) into cycles belonging to $\mathcal{C}$.
\end{lemma}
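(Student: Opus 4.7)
The plan is to apply Lemma~\ref{pa:lem5} to the bipartite graph on rows and columns whose edges are the cells of $B$, extract a dense subset $B'$ in which every $2r$-cycle has many ring decompositions in $B$, and then use the hypothesis on $\mathcal{C}$ to discard the few ring decompositions whose inner cycle lies outside $\mathcal{C}$. Applying Lemma~\ref{pa:lem5} to $G$ of edge density $\beta$ yields $X'\subset X$, $Y'\subset Y$ of size at least $\beta^2 n/16$ each, with $B'=B|_{X'\times Y'}$ of density at least $\beta/4$ inside $X'\times Y'$. The density of $B'$ in the whole $n\times n$ grid is therefore at least $(\beta/4)(\beta^2/16)^2=\beta^5/1024$; since $\beta<1/100$ forces $\beta^{-3}>10^{6}>1024$, this exceeds $\beta^8$.

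For any $2r$-cycle $C=x_1y_1\dots x_ry_r$ in $B'$, let $X_C\subset X'$ and $Y_C\subset Y'$ be the $r$ columns and $r$ rows used by its cells. Feeding these into Lemma~\ref{pa:lem5} produces at least $\beta^{7k^2}n^{2r}$ choices of $2r$-cycle $u_1v_1\dots u_rv_r$ in $B$ together with pendant cells joining it to the vertices in $X_C$ and $Y_C$. Unpacking Definition~\ref{ring} shows that each such choice is precisely a ring decomposition of $C$ in $B$: the cycle $u_1v_1\dots u_rv_r$ plays the role of the inner cycle $C'$, the pendant cells are the two ``extra'' corners of each of the $2r$ rectangles whose opposite-corner pairs are $(x_i,x_i')$ or $(y_i,y_i')$, and the cyclic shift in the ordering of $C'$ noted in Remark~\ref{ordering} is automatic because rows and columns alternate along a bipartite cycle.

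Finally, trim the ring decompositions whose inner cycle lies outside $\mathcal{C}$. By Lemma~\ref{pa:lem1} the total number of $2r$-cycles in $B$ is at most $\beta^r n^{2r}\le n^{2r}$, so by hypothesis at most $\delta n^{2r}\le\beta^{9k^2}n^{2r}$ of them are bad. Each such unordered bad cycle contributes at most $4r\le 4k$ ordered traversals to Lemma~\ref{pa:lem5}'s count, so the number of bad ring decompositions is at most $4k\beta^{9k^2}n^{2r}$. Since $\beta<1/100$ makes $\beta^{k^2}$ and $4k\beta^{2k^2}$ both extremely small, $\beta^{7k^2}-4k\beta^{9k^2}\ge\beta^{8k^2}$, leaving at least $\beta^{8k^2}n^{2r}$ ring decompositions of $C$ into cycles of $\mathcal{C}$, as required.

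The only delicate step is the bookkeeping in the second paragraph, matching Lemma~\ref{pa:lem5}'s graph-theoretic output to the geometric ring-decomposition structure of Definition~\ref{ring}; once that identification is made, the rest is a clean averaging-and-subtraction argument in the spirit of Markov's inequality.
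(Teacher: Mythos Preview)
Your argument has a genuine gap: a ring decomposition of $C$ consists of the inner $2r$-cycle $C'$ \emph{and} the $2r$ rectangles $R_1,\dots,R_{2r}$ (see Definition~\ref{ring}), and the lemma requires \emph{all} of these cycles to lie in $\mathcal{C}$. You only trim for bad $C'$ and ignore the possibility that some $R_i\notin\mathcal{C}$.

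The subtraction you use for $C'$ does not extend to the rectangles. Each $R_i$ contains a fixed corner of $C$, so the number of bad ring decompositions of a given $C$ caused by bad rectangles is bounded by $(\text{bad rectangles through a vertex of }C)\times n^{2r-2}$. A single point of $B$ can lie in as many as $n^2$ rectangles, and nothing in the hypothesis prevents a large fraction of those from being bad; if that happens for a vertex of $C$, the count of bad ring decompositions of $C$ is of order $n^{2r}$, swamping your $\beta^{7k^2}n^{2r}$ good ones. The global bound of $\delta n^4$ bad rectangles says nothing about how they are distributed over individual cells.

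This is exactly the obstacle the paper's proof addresses with its extra step: after obtaining many ring decompositions via Lemma~\ref{pa:lem5} and subtracting those with bad $C'$, it argues that if too many cycles in $B'$ remain ``indecomposable'' then one can find many \emph{disjoint} indecomposable cycles; disjointness forces the bad rectangles they witness to be distinct, and summing over them contradicts the global bound $\delta\le\beta^{9k^2}$ on the number of bad rectangles. Otherwise one deletes a maximal disjoint family of indecomposable cycles and keeps a dense subset. Your write-up is missing this entire dichotomy-and-deletion argument.
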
 
\begin{proof} 
	Recall from Definition~\ref{ring} that a ring decomposition of a $2r$-cycle $C$ involves a $2r$-cycle $C'$ and $2r$ rectangles $R_1,\dots,R_{2r}$ between these cycles, as shown in Figure~\ref{pa:fig:rd}. The ring decomposition is fully determined by the choice of cycle $C'$. We shall call a ring decomposition of a cycle $C$ \emph{good} if all of $C', R_1,\dots,R_{2r}$ belong to $\mathcal{C}$.  We call a $2r$-cycle \emph{indecomposable} if it has fewer than $\beta^{8k^2}n^{2r}$ good ring decompositions in $B$, so that our goal is to pass to a dense subset of $B$ in which no $2r$-cycle is indecomposable for $2\le r\le k$.
	
	In parallel with the partial Latin square $B$, we shall also consider the corresponding bipartite graph $G$ in which the rows and columns form the vertex sets and the points of $B$ form the edges (the labels here are ignored). The $2r$-cycles in $B$ correspond precisely to the $2r$-cycles in $G$.
	
	We begin by applying Lemma~\ref{pa:lem5} to $G$. This allows us to pass to a subset $B'$ of $B$ of density at least $(\beta^2/16)^2(\beta/4)\ge\beta^7$ with the property that each $2r$-cycle in $B'$ has at least $\beta^{7k^2}n^{2r}$ ring decompositions in $B$. 
	
	Let $C$ be a $2r$-cycle in $B'$. If $(C', R_1,\dots,R_{2r})$ is a ring decomposition of $C$ which is bad (i.e. not good), then either $C'$ does not belong to $\mathcal{C}$ or some $R_i$ does not belong to $\mathcal{C}$, or both. Since only a proportion $\delta\le \beta^{9k^2}$ of all $2r$-cycles in $B$ are bad, and the maximum possible number of $2r$-cycles in $B'$ is trivially bounded by $n^{2r}$, the number of ring decompositions of $C$ for which $C'$ is bad is at most $\beta^{9k^2}n^{2r}$. Therefore $C$ has at least $(\beta^{7k^2}-\beta^{9k^2})n^{2r}$ ring decompositions $(C', R_1,\dots,R_{2r})$ in $B$ in which the $2r$-cycle $C'$ is good.
	
	If for each $2\le r \le k$ there are no more than $\beta^7n^2/4k^2$ disjoint indecomposable $2r$-cycles in $B'$, then discarding all points from a maximal disjoint set of indecomposable cycles we discard at most $\beta^7n^2/2$ points, leaving a set of density at least $\beta^7/2\ge \beta^8$ with no indecomposable cycles and so we are done.
	
	Thus, for some $r$ it must be possible to find at least $\beta^7n^2/4k^2$ disjoint indecomposable $2r$-cycles $C_1,\dots,C_t$ in $B'$. Each of these cycles $C_i$ has at least
	$$(\beta^{7k^2}-\beta^{9k^2})n^{2r}>\beta^{7k^2}n^{2r}/2$$
	 ring decompositions in $B$ in which the $2r$-cycle $C'$ is good but also has fewer than $\beta^{8k^2}n^{2r}$ good ring decompositions. It follows that each $C_i$ has at least
	 $$(\beta^{7k^2}-2\beta^{8k^2})n^{2r}/2>\beta^{7k^2}n^{2r}/3$$
	 ring decompositions $(C_i', R_{i,1},\dots,R_{i,2r})$ in which some rectangle $R_{i,j}$ does not belong to $\mathcal{C}$. 
	
	Since the cycles $C_i$ are disjoint, the collections $R_{i_1,j}$ and $R_{i_2,j}$ of rectangles are disjoint when $i_1\neq i_2$ (since a rectangle $R_{i_1,j}$ contains points from $C_{i_1}$ which is disjoint from $C_{i_2}$). Furthermore, a rectangle $R_{i,j}$ can belong to at most $n^{2r-2}$ different ring decompositions of $C_i$ (since $R$ determines two vertices of $C_i'$). This means that $B$ contains at least 
	$$\Big(\beta^{7k^2}n^{2}/3\Big)\Big(\beta^7n^2/4k^2\Big)>\beta^{9k^2}n^4$$
	bad rectangles in $B$.
	
	But the number of bad rectangles is at most $\delta\beta^2n^4$, so if $\delta\leq\beta^{9k^2}$ then we have a contradiction.	
\end{proof}

By applying Lemmas~\ref{pa:lem4}, \ref{pa:lem5} and \ref{pa:lem6} we will be able to pass to a dense subset $B$ of $A$ in which all $2r$-cycles have many popular full decompositions (see Definition~\ref{full} and Figure~\ref{pa:fig:fd}). Before we give the details, we give one more technical lemma which draws a connection between popular full decompositions and the $2r$-cycle completion operation described in Definition~\ref{2rcycledef}. 

\begin{lemma}\label{pa:lem7}
	Let $A$ be a partial Latin square and let $B$ be a subset of $A$. Suppose that every $2r$-cycle in $B$ has at least $\gamma n^{6r+2}$ different $\epsilon$-popular full decompositions in $A$ (see Definition~\ref{full}). Then the $2r$-cycle completion operation in $B$ is $\epsilon^{-10r}\gamma^{-1}$-well-defined (see Definition~\ref{2rcycledef}).
\end{lemma}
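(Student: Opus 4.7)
The plan is to prove the bound by a double-counting argument in which $\epsilon$-popularity is exploited through alternative copies of rectangles. Let $T$ denote the number of distinct completions of $(z_1,\dots,z_{2r-1})$ to the label sequence of a $2r$-cycle in $B$, and let $C^{(1)},\dots,C^{(T)}$ be the corresponding cycles in $B$; I must show $T\le \epsilon^{-10r}\gamma^{-1}$.

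I would count the collection $\mathcal{P}$ of triples $(j,D,\sigma)$ where $D$ is an $\epsilon$-popular full decomposition of $C^{(j)}$ in $A$, and $\sigma$ is an assignment, to each of the $10r$ rectangles $R$ of $D$, of an alternative rectangle $\sigma(R)$ in $A$ with the same label pattern as $R$. The hypothesis gives at least $\gamma n^{6r+2}$ choices of $D$ for each $j$, and the $\epsilon$-popularity of each rectangle gives at least $\epsilon n$ alternatives, so
\[|\mathcal{P}| \;\ge\; T\cdot \gamma n^{6r+2}\cdot (\epsilon n)^{10r}\;=\;T\gamma\epsilon^{10r}n^{16r+2}.\]

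For the matching upper bound $|\mathcal{P}|\le n^{16r+2}$, I would parametrise each triple by its underlying cell data in $A$: (i) the outer cycle $\tilde C$ of $D$ (which has label sequence starting with $(z_1,\dots,z_{2r-1})$ and is determined by its first cell, of which there are at most $n$ by coordinate-injectivity of $\lambda$); (ii) the rest of the full decomposition of $\tilde C$, contributing at most $n^{6r+2}$ further choices by Remark~\ref{pa:rem1}; and (iii) the $10r$ alternative rectangles, each of which is determined up to $n$ choices by any one of its corners. The naive product $n\cdot n^{6r+2}\cdot n^{10r}=n^{16r+3}$ is a factor of $n$ too large, and the essential structural point is that the first cell of the outer cycle is itself a corner of one of the rectangles of $D$, so the corresponding alternative $\sigma(R)$ is tied to the choice of first cell, eliminating one degree of freedom and yielding $|\mathcal{P}|\le n^{16r+2}$.

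Combining the two bounds gives $T\gamma\epsilon^{10r}\le 1$, i.e.\ $T\le \epsilon^{-10r}\gamma^{-1}$, as required. The main obstacle is the upper bound: saving the single factor of $n$ requires a careful tracking of which corners of $D$'s rectangles must coincide with corners of the chosen alternatives, and verifying that this encoded identification is an honest restriction of the parameter count rather than merely a reweighting. Everything else is routine double counting.
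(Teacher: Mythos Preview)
Your lower bound $|\mathcal{P}|\ge T\gamma\epsilon^{10r}n^{16r+2}$ is fine, but the upper bound $|\mathcal{P}|\le n^{16r+2}$ is not established, and the device you propose for saving the stray factor of $n$ does not work. In your parametrisation (i)--(iii), the outer cycle ranges only over the $T$ chosen cycles $C^{(1)},\dots,C^{(T)}$, so the honest upper bound is $T\cdot n^{6r+2}\cdot n^{10r}$, which combined with the lower bound gives nothing. If instead you allow the outer cycle to range over all $\le n$ cycles with first $2r-1$ labels $(z_1,\dots,z_{2r-1})$, you get $n^{16r+3}$, and the single factor of $n$ is exactly what you need to kill. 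Your proposed saving---that the first cell of $C$, being a corner of some rectangle $R$ of $D$, is ``tied'' to $\sigma(R)$---is not a genuine identification: $\sigma(R)$ is merely some rectangle with the same four labels as $R$, and carries no information about where $R$ sits in the grid, hence none about the first cell of $C$. One can check more carefully that $\sigma$ alone determines the full labelling of $D$ and hence $(j,D)$; but this only reduces the problem to counting the tuples $\sigma$, and there is no direct bound of the form $n^{16r+2}$ on those either.

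The paper's argument avoids the extra layer $\sigma$ entirely. It exploits $\epsilon$-popularity not by manufacturing $\epsilon n$ alternative copies of each rectangle, but through the dual observation that in an $\epsilon$-popular rectangle any three labels determine the fourth up to at most $\epsilon^{-1}$ choices. The crux is then combinatorial: one singles out $6r+2$ specific points of the full decomposition whose labels, together with the known $a_1,\dots,a_{2r-1}$, \emph{generate} all remaining $10r$ labels via successive $4$-cycle completions through the $\epsilon$-popular rectangles. Pigeonholing on those $6r+2$ labels over the $\ge K\gamma n^{6r+2}$ decompositions then gives $K\gamma\le\epsilon^{-10r}$ directly. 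The real content is identifying such a generating set and the order in which completions can be applied---this is what your approach is missing.
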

\begin{proof}
	Suppose that we have a tuple $(a_1,\dots,a_{2r-1})$ such that the set $\{x_i\}$ of possible labelling completions has size at least $K$. For each completion we can find $\gamma n^{6r+2}$ $\epsilon$-popular full decompositions. 
	
	Let us think about a typical one of these decompositions as follows. (For the discussion that follows, it may help to look at Figure~\ref{pa:fig:fd2}.) We begin with a $2r$-cycle $C$ with points $x_1,y_1,\dots,x_r,y_r$, where $x_i$ has label $a_{2i-1}$ when $1\leq i\leq k$, $y_i$ has label $a_{2i}$ when $1\leq i\leq k-1$, and we do not know about the label attached to the point $y_r$. (It is important to be clear that the $x_i$ and $y_i$ are elements of $[n]^2$ and not of $[n]$ in this discussion.)
	
	Next, we have another $2r$-cycle $C'=y_1'x_2'\dots x_r'y_r'x_1'$ {(see Remark \ref{ordering} for an explanation of why we list its points in this order).} 
	
	Now we complete the cycles $C$ and $C'$ to a ring decomposition by adding in $2r$ points $u_1,v_1,\dots,u_r,v_r$, where $u_i$ is in the row that contains $x_i$ and $y_i$ and the column that contains $x_i'$ and $y_i'$, and $v_i$ is in the column that contains $y_i$ and $x_{i+1}$ and the row that contains $y_i'$ and $x_{i+1}'$. (The points $u_i$ and $v_i$ do not form a $2r$-cycle.) 
	
	The rectangles $R_1,\dots,R_{2r}$ of this ring decomposition are given by $S_i=(x_i,u_i,x_i',v_{i-1})$ and $T_i=(y_i,v_i,y_i',u_i)$. To form a point decomposition, we now add points $p_i$ and $q_i$, and form the four rectangles that have a vertex in $S_i$ and the opposite vertex at $p_i$, and the four rectangles that have a vertex in $T_i$ and the opposite vertex at $q_i$. As well as the point $p_i$, we have to add four more points to $S_i$ in order to complete the decomposition into four rectangles. Of these four points, let $r_i$ and $s_i$ be the ones in the same row and the same column as $x_i$; we shall not bother giving names to the other two. Similarly, let $w_i$ and $z_i$ be the points in the same column and row as $y_i$ that are part of the decomposition of $T_i$ into four rectangles. 
	
	\begin{figure} 
		\centering
		\begin{tikzpicture}[scale=3.5, every node/.style={scale=0.9}]
		
		\fill [blue,opacity=0.2] (0,0) rectangle (1,2);
		\fill [blue,opacity=0.2] (2,1) rectangle (3,3);
		\fill [red,opacity=0.2] (0,2) rectangle (2,3);
		\fill [red,opacity=0.2] (1,0) rectangle (3,1);
		\fill [green,opacity=0.2] (1,1) rectangle (2,2);
		\draw (0,0) rectangle (1,2);
		\draw (1,0) rectangle (3,1);
		\draw (2,1) rectangle (3,3);
		\draw (0,2) rectangle (2,3);
		
		\draw (0,0) node[label=below left:$y_2$] {};
		\draw (3,0) node[label=below right:$x_2$] {};
		\draw (3,3) node[label=above right:$y_1$] {};
		\draw (0,3) node[label=above left:$x_1$] {};
		
		\draw (0,0) node[] {$\bullet$};
		\draw (1,0) node[label=below:$u_2$] {$\bullet$};
		\draw (3,0) node[] {$\bullet$};
		\draw (3,1) node[label=right:$v_1$] {$\bullet$};
		\draw (3,3) node[] {$\bullet$};
		\draw (2,3) node[label=above:$u_1$] {$\bullet$};
		\draw (0,3) node[] {$\bullet$};
		\draw (0,2) node[label=left:$v_2$] {$\bullet$};
		\draw (1,2) node[label={[label distance=-0.3cm]135:$y_2'$}] {$\bullet$};
		\draw (2,2) node[label={[label distance=-0.3cm]45:$x_1'$}] {$\bullet$};
		\draw (1,1) node[label={[label distance=-0.3cm]225:$x_2'$}] {$\bullet$};
		\draw (2,1) node[label={[label distance=-0.3cm]315:$y_1'$}] {$\bullet$};
		
		\draw (0,2.55) -- (2,2.55);
		\draw (1.55,2) -- (1.55,3);
		\draw (2.4,1) -- (2.4,3);
		\draw (2,1.7) -- (3,1.7);
		\draw (1.8,0) -- (1.8,1);
		\draw (1,0.3) -- (3,0.3);
		\draw (0,1.6) -- (1,1.6);
		\draw (0.5,0) -- (0.5,2);
		\draw (1,1.3) -- (2,1.3);
		\draw (1.35,1) -- (1.35,2);
		
		\draw (0,2.55) node[label=left:$s_1$] {$\bullet$};
		\draw (2,2.55) node[] {$\bullet$};
		\draw (1.55,2) node[] {$\bullet$};
		\draw (1.55,3) node[label=above:$r_1$] {$\bullet$};
		\draw (1.55,2.55) node[label={[label distance=-0.3cm]225:$p_1$}] {$\bullet$};
		
		\draw (2.4,1) node[] {$\bullet$};
		\draw (2.4,3) node[label=above:$z_1$] {$\bullet$};
		\draw (2,1.7) node[] {$\bullet$};
		\draw (3,1.7) node[label=right:$w_1$] {$\bullet$};
		\draw (2.4,1.7) node[label={[label distance=-0.3cm]45:$q_1$}] {$\bullet$};
		
		\draw (1.8,0) node[label=below:$r_2$] {$\bullet$};
		\draw (1.8,1) node[] {$\bullet$};
		\draw (1,0.3) node[] {$\bullet$};
		\draw (3,0.3) node[label=right:$s_2$] {$\bullet$};
		\draw (1.8,0.3) node[label={[label distance=-0.3cm]45:$p_2$}] {$\bullet$};
		
		\draw (0,1.6) node[label=left:$w_2$] {$\bullet$};
		\draw (1,1.6) node[] {$\bullet$};
		\draw (0.5,0) node[label=below:$z_2$] {$\bullet$};
		\draw (0.5,2) node[] {$\bullet$};
		\draw (0.5,1.6) node[label={[label distance=-0.3cm]225:$q_2$}] {$\bullet$};
		
		\draw (1,1.3) node[] {$\bullet$};
		\draw (2,1.3) node[] {$\bullet$};
		\draw (1.35,1) node[] {$\bullet$};
		\draw (1.35,2) node[] {$\bullet$};
		\draw (1.35,1.3) node[] {$\bullet$};
		\end{tikzpicture}
		\caption{A full decomposition of the 4-cycle $(a,b,c,d)$, with labelling as described in the proof of Lemma~\ref{pa:lem7}. The rectangles $S_1$ and $S_2$ are shaded in red, $T_1$ and $T_2$ in blue, and the cycle $C'$ in green.}\label{pa:fig:fd2}
	\end{figure}
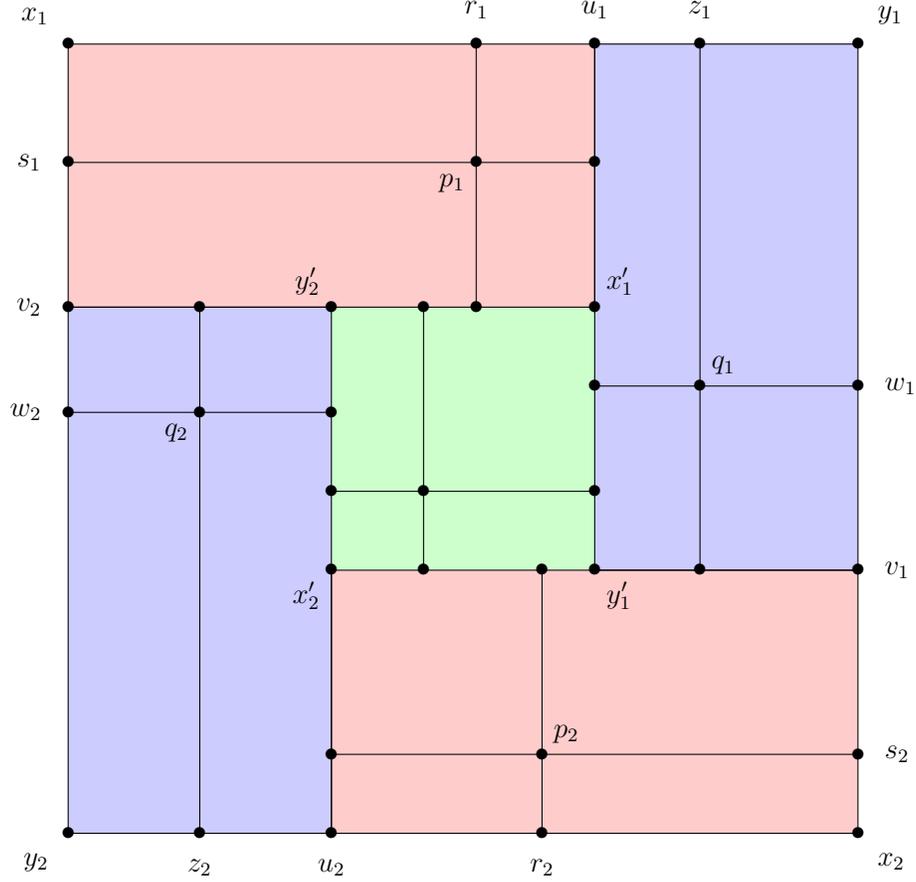
	
	Now let us consider a certain subset of the (variable set of) points of the full decomposition. We shall take the points $u_i$ and $v_i$, the points $r_i$ and $s_i$, and the points $w_i$ and $z_i$ with $1\leq i\leq r-1$. We shall also take the two points from the point decomposition of $C'$ that are in the same row and column as $x_1'$, and the two points from the decomposition of the rectangle $T_r$ that are in the same row and column as $y_r'$. This makes a total of $6r+2$ points, so by the pigeonhole principle we can find some choice of labellings of these $6r+2$ points that occurs at least $K\gamma$ times amongst the set of $\epsilon$-popular full decompositions of $2r$-cycles $C$ for which the points $x_1,y_1,\dots,x_r$ are labelled $a_1,\dots,a_{2r-1}$. 
	
	Observe that a full decomposition of a given cycle is uniquely determined by the way it is labelled, since once a point has been specified, any other point in the same row or column is then determined by its label. Observe also that since each rectangle in an $\epsilon$-popular full decomposition is $\epsilon$-popular, given three labels of any rectangle there are at most $1/\epsilon$ different choices of label for the fourth, since otherwise there would be more than $n$ rectangles that shared three labels, which is impossible. 
	
	Our aim now is use this observation to show that once the labellings of the $6r+2$ points specified earlier are given, the number of possible labellings of the remaining points is at most $\epsilon^{-10r}$. Since we know that it is also at least $K\gamma$, this will give us our desired upper bound on $K$.  
	
	To do this, we consider the 4-cycle completion operation as described in Definition~\ref{2rcycledef}. The observation above implies that if we know the labels at some subset of the points of the $\e$-popular full decomposition from which we can repeatedly apply the 4-cycle completion operation to generate the entire decomposition, and if there are $t$ other points, then the number of possible ways of completing the labelling is at most $\epsilon^{-t}$. We apply this to the set of $6r+2$ points we have chosen.
	
	Note first that the rectangles $R_i$, apart from the rectangle containing the unfixed point of $C$, each contain five points from the set in their point decompositions, and furthermore these five generate (using the 4-cycle completion operation) the other four. Therefore the closure of the set contains all the points in all the point decompositions of the rectangles $S_1,\dots,S_r$ and $T_1,\dots,T_{r-1}$. These include the points $x_1',\dots,x_r'$ and $y_1',\dots,y_{r-1}'$. Since we also have the points in the same row and column as $x_1'$, we obtain the central point of the point decomposition of $C'$, and using this we can work round $C'$ and obtain all the points in its point decomposition. And now we have five points of the rectangle $T_r$ that generate the others (since they lie along two edges), which shows that the $6r+2$ points we choose generate all the points of the full decomposition. It is not hard to check that a full decomposition contains $18r+1$ points, so we find, as promised, that the number of labellings given the labels at the $6r+2$ points and $2r-1$ of the points of $C$ is at most $\epsilon^{-10r}$, as claimed, and this proves that $K\leq\epsilon^{-10r}\gamma$. 
\end{proof}

We are now ready to prove the result we stated at the beginning of the section.

\begin{proof}[Proof of Theorem \ref{pa:CWD}]
	Apply Lemma~\ref{pa:lem4} with $\delta=(\epsilon /2)^{9k^2}$. This allows us to pass to a subset $B_1\subset A$ of density $\beta_1\ge\epsilon/2$ such that for each $2\le r\le k$ a proportion at least $1-\delta$ of $2r$-cycles in $B_1$ have at least 
	$$(\epsilon/2)^{9k^2}\epsilon^{4k}n^2\ge \epsilon^{11k^2}n^2$$ 
	different $\epsilon/2$-popular point decompositions.
	
	From here we apply Lemma~\ref{pa:lem6}, where we take the collection $\mathcal{C}$ of cycles to be those $2r$-cycles $C$ with $2\le r\le k$ and such that $C$ has at least $\epsilon^{11k^2}n^2$ different $\e/2$-popular point decompositions. We can do this since $B_1$ has density $\beta_1\ge\epsilon/2$, so $\delta\leq\beta_1^{9k^2}$. The lemma gives us a subset $B_2$ of $B_1$ of density (in the original $n\times n$ grid) $\beta_2\ge\beta_1^8\ge\epsilon^{10}$ in which every $2r$-cycle in $B_2$ has at least 
	\begin{align*}\Big((\epsilon/2)^{8k^2}n^{2r}\Big)\Big(\epsilon ^{11k^2}n^2\Big)^{2r+1}
	&\ge\epsilon^{20k^2+20k^3}n^{6r+2}\\
	&\ge \epsilon^{30k^3}n^{6r+2}\\
	\end{align*}
	different $\epsilon/2$-popular full decompositions. (The first bracket on the left is a lower bound for the number of ring decompositions in which every cycle belongs to $\mathcal{C}$, and the second is a lower bound for the number of ways of converting each one into an $\epsilon/2$-popular full decomposition.) 
	
	This allows us to apply Lemma~\ref{pa:lem7} with $\gamma=\epsilon^{30k^3}$, which implies the result with $B=B_2$ (since $(\epsilon/2)^{-10r}\gamma^{-1}\le (\epsilon/2)^{-10k-30k^3}\le \epsilon^{-33k^3}$).
\end{proof}

We draw attention here to an analogy with {the additive combinatorics result mentioned at the beginning of the paper,} {which states that if $\phi:\Z_N\to\Z_N$ is a map} such that $\phi(x)+\phi(y)=\phi(z)+\phi(w)$ for a positive proportion of the quadruples $x+y=z+w$, {then} we can pass to a dense subset $A\subset\Z_N$ such that the restriction of $\phi$ to $A$ is a Freiman homomorphism. One way of proving this result begins by showing that it is possible to pass to a set $A'$ such that for each $w$, the number of values that $\phi(x)+\phi(y)-\phi(z)$ can take when $x+y-z=w$ is bounded by some constant $C$ that is independent of $N$. This first step mirrors what we have achieved thus far. It is then necessary to find a separate argument to pass to a further subset where $C$ is reduced to 1. 

We have to do the same here, though at this point the analogy breaks down somewhat, since in the additive problem, Pl\"unnecke's inequality is used, but our setting does not involve an ambient group so we do not appear to have an analogous tool. Thus, while Theorem~\ref{pa:CWD} constitutes significant progress towards our positive result, it turns out that we are still quite a long way from reducing $C$ to $1$. 

\section{Simplifying the decompositions}\label{pa:sec3}

Perhaps surprisingly, the first step towards reducing $C$ to $1$ will involve abandoning full decompositions. The reason is that when we start to consider van Kampen diagrams, we want the configurations that we consider in a partial Latin square to correspond to triangulated surfaces. Each labelled point in a partial Latin square corresponds to a triangle, and the triangles corresponding to two points share an edge if and only if the points share a column, a row, or a label. The problem with full decompositions is that they give rise to edges that are contained in more than two faces, and hence not to surfaces.   

In this section, we shall use Theorem~\ref{pa:CWD} as a tool in a `second pass' through the arguments in Section~\ref{pa:sec2}. Our first lemma for this section shows that the property of $C$-well-definedness (see Definition~\ref{2rcycledef}) is sufficient to ensure that almost all of the cycles in $B$ are popular, for a lower threshold of popularity (see Definition~\ref{pa:def1}). This is significant because it enables us to repeat the process of the previous section, but eliminates the need for Lemma~\ref{pa:lem4} and point decompositions, which are the source of the edges that are contained in too many faces. We will simply be able to reapply Lemma~\ref{pa:lem6} to the subset $B$ with a different set $\mathcal{C}$ of cycles: $\mathcal{C}$ will now simply be the set of $\theta$-popular cycles, for some appropriate $\theta$, rather than the set of those cycles with many popular point decompositions. 

\begin{lemma}\label{pa:lem8}
	Let $B$ be an $n\times n$ partial Latin square of density $\beta$. Suppose that the $2r$-cycle completion operation in $B$ is $C$-well-defined (see Definition~\ref{2rcycledef}). Let $\delta$, $\theta$ be such that $\beta^{2r}\delta\theta^{-1}> C$. Then the proportion of $2r$-cycles in $B$ that are not $\theta$-popular is at most $\delta$.
\end{lemma}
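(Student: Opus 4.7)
The plan is to count non-$\theta$-popular $2r$-cycles by grouping them according to the $(2r-1)$-prefix of their label sequence, exploiting $C$-well-definedness to bound how many completions each prefix can have.

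First I would invoke Lemma~\ref{pa:lem1} to get $N\geq \beta^{2r}n^{2r}$ for the total number $N$ of $2r$-cycles in $B$; this will serve as the denominator of the proportion we wish to bound. Next I would bound the number of $2r$-cycles carrying any single label sequence $(z_1,\dots,z_{2r})$. Since $B$ is a partial Latin square, a given label appears at most once in each row and each column, so there are at most $n$ cells whose label is $z_1$; and once the first vertex of the cycle is chosen, each successive vertex is forced to be the unique cell of the prescribed shared row or column that carries the next required label. Hence each label sequence labels at most $n$ cycles, and a non-$\theta$-popular label sequence labels strictly fewer than $\theta n$ cycles by definition.

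By $C$-well-definedness, for each $(2r-1)$-prefix $L'=(z_1,\dots,z_{2r-1})$ there are at most $C$ completions $z_{2r}$ such that $(L',z_{2r})$ is the label sequence of some $2r$-cycle. Therefore the number of non-$\theta$-popular cycles sharing prefix $L'$ is strictly less than $C\theta n$. Summing over the at most $n^{2r-1}$ possible prefixes, the total number $N_{\mathrm{bad}}$ of non-$\theta$-popular cycles in $B$ is strictly less than $n^{2r-1}\cdot C\theta n = C\theta n^{2r}$, and consequently
\[
\frac{N_{\mathrm{bad}}}{N}<\frac{C\theta n^{2r}}{\beta^{2r}n^{2r}}=\frac{C\theta}{\beta^{2r}}<\delta,
\]
where the last inequality is the hypothesis $\beta^{2r}\delta\theta^{-1}>C$. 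I do not foresee any serious obstacle: once the multiplicity bound $m(L)\le n$ and the trivial count of at most $n^{2r-1}$ prefixes are in hand, the $C$-well-definedness does all of the work, and the numerical hypothesis is tailored precisely to close the inequality.
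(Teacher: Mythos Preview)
Your proof is correct and follows essentially the same approach as the paper's: both count non-$\theta$-popular cycles by observing that each such cycle has a label sequence shared by fewer than $\theta n$ cycles, and then use the $C$-well-definedness to bound the number of label sequences per $(2r-1)$-prefix. The paper phrases this as a contradiction via pigeonhole (too many bad cycles would force some prefix to have more than $C$ completions), while you give the equivalent direct forward bound; the arithmetic is the same.
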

\begin{proof}
	By Lemma~\ref{pa:lem1}, the number of $2r$-cycles in $B$ is at least $\beta^{2r}n^{2r}$. Therefore, given a tuple $(a_1,\dots,a_{2r-1})$ of labels, the number of $2r$-cycles with first $2r-1$ labels $(a_1,\dots,a_{2r-1})$ is on average at least $\beta^{2r}n$. However, since the $2r$-cycle completion operation is $C$-well-defined we have further that the number of different $a_{2r}$ completing a $2r$-cycle labelling $(a_1,\dots,a_{2r})$ in $B$ is at most $C$.
	
	If a proportion greater than $\delta$ of $2r$-cycles are not $\theta$-popular, then by averaging there must be some $(a_1,\dots,a_{2r-1})$ such that a proportion greater than $\delta$ of $2r$-cycles starting with these labels are not $\theta$-popular. But that means that there must be more than $\beta^{2r}\delta\theta^{-1}> C$ completions which is a contradiction to the assumption that the $2r$-cycle completion operation in $B$ is $C$-well-defined.
\end{proof}

We are now ready to put together our technical lemmas to prove the following proposition, which will be the main tool for passing to a dense subset of a partial Latin square that avoids the configurations we wish to avoid.

\begin{proposition}\label{pa:prop2}
	Let $0<\epsilon<10^{-3}$ and let $A$ be a partial Latin square containing at least $\epsilon n^5$ octohedra (see Definition~\ref{cuboctahedron}). Let $k\ge 100$ be an integer. Then we can find $B\subset A$ of density $\beta\ge  \epsilon^{80}$ such that for each $r=2,\dots,k$ we have that every label $2r$-cycle in $B$ has at least $\epsilon^{80k^2}n^{2r}$ different $\theta$-popular ring decompositions (see Definition~\ref{ring}) in $A$, where $\theta\ge  \epsilon^{35k^3}$. Moreover, the number of octahedra in $B$ is at least $\epsilon^{70k^3}n^5$.
\end{proposition}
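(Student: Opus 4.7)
The plan is to take the $C$-well-defined subset provided by Theorem \ref{pa:CWD} and feed it back into the Section \ref{pa:sec2} machinery in a cleaner way: Lemma \ref{pa:lem8} will convert $C$-well-definedness into popularity of almost all cycles, and we may then re-apply Lemma \ref{pa:lem6} with $\mathcal{C}$ now being the collection of popular cycles rather than the more complicated collection of cycles admitting many popular point decompositions. This second pass avoids point decompositions entirely, matching the motivation given at the start of Section \ref{pa:sec3}.

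In detail, first apply Theorem \ref{pa:CWD} to $A$ to obtain $B_1\subset A$ of density $\beta_1\ge\epsilon^{10}$ in which the $2r$-cycle completion operation is $C$-well-defined with $C=\epsilon^{-33k^3}$ for each $2\le r\le k$. Set $\theta=\epsilon^{35k^3}$ and $\delta=\epsilon^{90k^2}$. For each such $r$, Lemma \ref{pa:lem8} applied inside $B_1$ yields that at most a $\delta$-proportion of $2r$-cycles in $B_1$ fails to be $\theta$-popular, provided $\beta_1^{2r}\delta>C\theta$; with our choices this reduces to $20r+90k^2<2k^3$, which holds comfortably for $k\ge 100$ and $r\le k$. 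Let $\mathcal{C}$ be the union over $r$ of the $\theta$-popular $2r$-cycles in $B_1$; since $B_1\subseteq A$, $\theta$-popularity in $B_1$ implies $\theta$-popularity in $A$. The condition $\delta\le\beta_1^{9k^2}=\epsilon^{90k^2}$ required by Lemma \ref{pa:lem6} is met, so applying that lemma produces $B\subset B_1$ of density $\beta\ge\beta_1^8\ge\epsilon^{80}$ such that every $2r$-cycle in $B$ admits at least $\beta_1^{8k^2}n^{2r}\ge\epsilon^{80k^2}n^{2r}$ ring decompositions into cycles in $\mathcal{C}$, giving the main conclusion.

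Finally, for the octahedron count, I would observe that $C$-well-definedness of a cycle completion operation is trivially inherited by subsets, so the rectangle (i.e.\ label $4$-cycle) completion operation in $B$ is still $\epsilon^{-33k^3}$-well-defined. Apply Lemma \ref{pa:lem8} inside $B$ with $r=2$, $\delta'=\tfrac12$ and $\theta'=\epsilon^{321+33k^3}$; the hypothesis $\beta^4\delta'>C\theta'$ reduces to $\tfrac12>\epsilon$, which certainly holds. Hence at least half of the $\ge\beta^4n^4\ge\epsilon^{320}n^4$ rectangles in $B$ are $\theta'$-popular in $B$, and each pairs with at least $\theta' n$ rectangles of $B$ having matching labels at the four corners to form an octahedron (Definition \ref{cuboctahedron}), yielding at least $\tfrac12\epsilon^{320}n^4\cdot\theta' n\ge\epsilon^{70k^3}n^5$ octahedra in $B$ since $37k^3\gg 641$ for $k\ge 100$. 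The only real obstacle is the bookkeeping of exponents so that the input hypotheses of Lemmas \ref{pa:lem6} and \ref{pa:lem8} are met simultaneously with the advertised output bounds; no genuinely new combinatorial idea is required beyond those developed in Section \ref{pa:sec2}.
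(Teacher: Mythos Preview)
Your proof is correct and follows essentially the same route as the paper: apply Theorem~\ref{pa:CWD} to obtain $B_1$ with $C$-well-defined completion operations, invoke Lemma~\ref{pa:lem8} so that almost all cycles are $\theta$-popular, and then feed this into Lemma~\ref{pa:lem6} with $\mathcal{C}$ the collection of $\theta$-popular cycles; the parameter choices and resulting bounds match those in the paper (one harmless slip: you write $\beta_1^{9k^2}=\epsilon^{90k^2}$, but of course $\beta_1^{9k^2}\ge\epsilon^{90k^2}$, which is the direction you need).

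The only point of divergence is the final octahedron estimate. The paper argues directly: since there are at least $\beta^4 n^4$ rectangles in $B$ and at most $Cn^3$ possible label sequences (each triple $(a,b,c)$ admitting at most $C$ completions $d$), convexity gives at least $(\beta^4 n/C)^2 n^3=(\beta^8/C^2)n^5\ge\epsilon^{70k^3}n^5$ octahedra. You instead re-apply Lemma~\ref{pa:lem8} inside $B$ to show that a positive proportion of rectangles are $\theta'$-popular and then pair each with its $\theta' n$ label-mates. Both arguments exploit the same underlying $C$-well-definedness inherited by $B$; the paper's convexity line is slightly more direct, while yours is a clean reuse of existing machinery. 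Either is perfectly adequate for the stated bound.
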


\begin{proof}
	We begin by applying Theorem~\ref{pa:CWD}. This allows us to pass to a subset $B_1\subset A$ of density $\beta_1\ge\epsilon^{10}$ with the property that for each $2\le r\le k$ the $2r$-cycle completion operation in $B$ is $C$-well-defined, where $C=\epsilon^{-33k^3}$.
	
	By Lemma~\ref{pa:lem8} we see that a proportion greater than $1-\delta$ of $2r$-cycles (for each $2\le r\le k$) in $B$ are $\theta$-popular for any choice of $\theta<\beta_1^{2k}\delta/C$.
	
	We now apply Lemma~\ref{pa:lem6} again, but taking the collection $\mathcal{C}$ to consist of those $2r$-cycles for $2\le r\le k$ which are $\theta$-popular. To do this, we take $\delta=(\beta_1)^{9k^2}$. With this value of $\delta$, we may take some $\theta\ge\epsilon^{20k+90k^2+33k^3}\ge \epsilon^{35k^3}$. 
	 
	The lemma then gives us a subset $B_2$ of density $\beta_2\ge\beta_1^{8}\ge \epsilon^{80}$ in which every $2r$-cycle in $B_2$ has at least $\beta_1^{8k^2}n^{2r}\ge \epsilon^{80k^2}n^{2r}$ many $\theta$-popular ring decompositions in $A$.
	
	Since $B_2$ is a subset of $B_1$, the rectangle completion operation in $B_2$ is still $C$-well-defined. By Lemma~\ref{pa:lem1} the number of rectangles in $B_2$ is at least $\beta_2^4n^4$, and since octahedra are counted by pairs of rectangles with the same labelling, the octahedron count is minimized when the the number of rectangles with each labelling is as balanced as possible (by convexity). For each triple of labels $(a,b,c)$ the number of possible completions $d$ is at most $C$, so the number of octahedra is at least 
	$$(\beta_2^4 n/C)^2n^3=(\beta_2^8/C^2)n^5\ge \epsilon^{70k^3}n^5$$
	as required.
\end{proof}

We now observe that an octahedron, which consists of two identically labelled rectangles, still corresponds to an octahedron if we permute the coordinates of the points. Indeed, using the hypergraph point of view we can define it more symmetrically as a sequence of eight triples 
\[(x_{000},y_{000},z_{000}),(x_{001},y_{001},z_{001}),\dots,(x_{111},y_{111},z_{111})\]
such that $x_\epsilon=x_\eta$ if $\e_2=\eta_2$ and $\e_3=\eta_3$, $y_\e=y_\eta$ if $\e_1=\eta_1$ and $\e_3=\eta_3$, and $z_\e=z_\eta$ if $\e_1=\eta_1$ and $\e_2=\eta_2$. (Note that this is \emph{not} an octahedron in the usual hypergraph sense. Indeed, it cannot be, since adjacent faces of an octahedron intersect in an edge, so that hypergraph is not linear. Rather, it is the hypergraph formed by the triangular faces of a cuboctahedron. {However, as mentioned in Section~\ref{sketch}, this object does correspond to an octahedron when viewed in the framework of van Kampen diagrams, {so to keep terminology to a minimum we use the word `octahedron' even when referring} to the corresponding hypergraph just described.})

This observation implies that Proposition~\ref{pa:prop2} remains true if the word `label' is replaced by either `column' or `row' and we interpret `ring decompositions' in the appropriate way -- that is, after permuting the coordinates. (To put it slightly differently, to perform a column ring decomposition, one can interchange the column and label coordinates, perform a label ring decomposition, and interchange the column and label coordinates again.)
Thus, we will in fact be able to find decompositions of all three kinds of $2r$-cycles. This will be crucial for our argument in Section~\ref{popreparg1}.

\begin{theorem}\label{pa:thm2}
	Fix $\e\le 10^{-3}$ and $k\ge 100$. Let $A$ be a 3-uniform, linear hypergraph that contains at least $\epsilon n^5$ octahedra. Then there exists a sequence $A=A_0\supset A_1\supset \dots$ such that each $A_i$ has density at least $\alpha_i(\epsilon,k)$ and $A_i$ contains at least $\epsilon_i(\epsilon,k)n^5$ octahedra, and for each $r=2,\dots,k$, every $2r$-cycle in $A_i$ has at least $\gamma_i(\epsilon,k) n^{2r}$ different $\theta_i(\epsilon,k)$-popular ring decompositions in $A_{i-1}$. Each of the parameters $\alpha_i,\epsilon_i,\theta_i,\gamma_i$ may be chosen to be at least $\e^{k^{15i}}$.
\end{theorem}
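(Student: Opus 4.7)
The plan is to iterate Proposition~\ref{pa:prop2}, combined with the symmetry observation in the paragraph immediately following it. Set $A_0 = A$ and $\epsilon_0 = \epsilon$. For each $i \geq 1$, assume inductively that $A_{i-1}$ contains at least $\epsilon_{i-1} n^5$ octahedra, and note that $\epsilon_{i-1} \leq \epsilon \leq 10^{-3}$, so the hypothesis of Proposition~\ref{pa:prop2} is met. The idea is then to apply the proposition three times in succession: first in its label form to obtain $A_i^{(1)} \subset A_{i-1}$, then in its row form (valid by the symmetry discussed after the proposition) applied to $A_i^{(1)}$ to obtain $A_i^{(2)} \subset A_i^{(1)}$, and finally in its column form applied to $A_i^{(2)}$ to obtain $A_i := A_i^{(3)} \subset A_i^{(2)}$.

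The reason this nesting preserves all three kinds of decompositions is straightforward: any $2r$-cycle of type $t$ in $A_i$ is also a $2r$-cycle of type $t$ in each intermediate set $A_i^{(j)}$, so the popular ring decompositions of type $t$ that were shown to exist inside $A_i^{(j-1)} \subset A_{i-1}$ continue to exist, and lie inside $A_{i-1}$. To invoke the second and third applications we need the previous intermediate set to still contain many octahedra, which is supplied by the last clause in the conclusion of Proposition~\ref{pa:prop2}.

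For the parameter tracking, each application of Proposition~\ref{pa:prop2} with input octahedron density $\tilde{\epsilon}$ yields an output with absolute grid density at least $\tilde{\epsilon}^{80}$, at least $\tilde{\epsilon}^{80k^2} n^{2r}$ popular ring decompositions per $2r$-cycle, popularity threshold $\theta \geq \tilde{\epsilon}^{35k^3}$, and at least $\tilde{\epsilon}^{70k^3} n^5$ octahedra in the output set. Since three applications take place per step, the worst-case exponent multiplies by $(70k^3)^3$ per iteration, giving $\epsilon_i \geq \epsilon^{(70k^3)^{3i}}$ and analogous bounds for $\alpha_i, \gamma_i, \theta_i$. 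For $k \geq 100$ we have $k^6 \geq 10^{12} \geq 343000$, hence $(70k^3)^3 = 343000 \cdot k^9 \leq k^{15}$, and all four parameters therefore satisfy the claimed lower bound $\epsilon^{k^{15i}}$.

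There is no substantive obstacle here: Proposition~\ref{pa:prop2} has already done the hard work. The theorem amounts to packaging its conclusion into an iterative statement and observing that its three symmetric forms can be invoked simultaneously by nesting. The only mild point to verify is that the popularity and decomposition counts that were established at the $j$-th nested application, which refer to ring decompositions lying in $A_i^{(j-1)}$, are inherited by $A_i$ as ring decompositions lying in $A_{i-1}$, which follows from the inclusions $A_i \subset A_i^{(j)}$ and $A_i^{(j-1)} \subset A_{i-1}$.
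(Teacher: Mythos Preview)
Your proposal is correct and follows essentially the same approach as the paper's own proof: both iterate Proposition~\ref{pa:prop2} three times per step (once for each of the label, row, and column versions, using the symmetry observation), set $A_i$ to be the result of the third application, and track the parameter degradation through the nested applications. Your bookkeeping via $(70k^3)^3 \le k^{15}$ for $k\ge 100$ is a slightly cruder but equally valid way to reach the same bound the paper obtains.
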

\begin{proof}
	Let $A_0=A$. We now repeatedly use Proposition~\ref{pa:prop2}.
	Once we have chosen $A_i$, we apply Proposition~\ref{pa:prop2} to pass to a dense subset $B_i^{(1)}$ in which all label $2r$-cycles have at least $\gamma_in^{2r}$ different $\theta_i$-popular ring decompositions in $A_i^{(1)}$ for $2\le r\le k$. We then apply the proposition to pass to a dense subset $B_i^{(2)}$ of $B_i^{(1)}$ in which all column $2r$-cycles have at least $\gamma_i'n^{2r}$ different $\theta_i'$-popular ring decompositions in $B_i^{(1)}$ for $2\leq r\leq k$. Finally, we apply the proposition to pass to a dense subset $B_i^{(3)}$ of $B_i^{(2)}$ such that all row $2r$-cycles have at least $\gamma_i''n^{2r}$ different $\theta_i''$-popular ring decompositions in $B_i^{(2)}$ for $2\leq r\leq k$. The dependence between these parameters, which we shall discuss in more detail in a moment, is given by Proposition~\ref{pa:prop2}. We now set $A_{i+1}$ to be $B_i^{(3)}$. 
	
	If the density of $A_i$ is $\alpha_i$ and the number of octahedra in $A_i$ is $\epsilon_in^5$, then the density of $B_i^{(1)}$ is at least $\epsilon_i^{80}$. Moreover, the octahedron count of $B_i^{(1)}$ is at least $\epsilon_i^{70k^3}n^5$. Therefore, the density of $B_i^{(2)}$ is at least 
	$$(\epsilon_i^{70k^3})^{80}\ge\epsilon_i^{2^{13}k^3}$$
	and the octahedron count of $B_i^{(2)}$ is at least 
	$$(\e_i^{70k^3})^{70k^3}\ge \e_i^{2^{13}k^6}.$$
	This implies that the density of $B_i^{(3)}$ is at least
	$$(\e_i^{2^{13}k^6})^{80}\ge \e_i^{2^{20}k^6} \ge \e_i^{k^{15}}$$
	and the octahedron count is at least 
	$$ (\e_i^{2^{13}k^6})^{70k^3}n^5\ge \e_i^{2^{20}k^9}n^5 \ge \e_i^{k^{15}}n^5. $$
	Lastly, we also have 
	$$\theta_i''\ge (\e_i^{2^{13}k^6})^{35k^3}\ge \e_i^{2^{19}k^9} \ge \e_i^{k^{15}} $$
	and
	$$\gamma_i''\ge (\e_i^{2^{13}k^6})^{80k^2}\ge \e_i^{2^{20}k^8} \ge \e_i^{k^{15}}.$$
	Note also that $\gamma_i,\gamma_i'\ge \gamma_i''$ and $\theta_i,\theta_i'\ge\theta_i''$ since the octahedron counts of $B_i^{(1)}$ and $B_i^{(2)}$ are larger than that of $B_i^{(3)}$.
	
	Therefore, $A_{i+1}$ is still dense, and has the property that any $2r$-cycle (for $2\le r\le k$) in $A_{i+1}$ is popularly decomposable in $A_i$. 
	
	After each step of the inductive construction, the density $\alpha_{i+1}$ is at least $\e_i^{k^{15}}$ and the octahedron count $\epsilon_{i+1} n^5$ is at least $\e_i^{k^{15}}n^5.$ The threshold for popularity $\theta_{i+1}$ is at least $\e_i^{k^{15}}$, and $\gamma_{i+1}\ge \e_i^{k^{15}}$ also.
	
	Therefore, starting at $A_0=A$ with $\epsilon n^5$ octahedra, we find that for $i\ge 1$ we have
	$$\epsilon_i\ge\e^{k^{15i}}.$$
	This gives us
	$$\alpha_i\ge (\e^{k^{15(i-1)}})^{k^{15}}= \e^{k^{15i}}$$
	and similarly $\theta_i\ge\e^{k^{15i}}$ and $\gamma_i\ge \e^{k^{15i}}.$

	Therefore every $2r$-cycle in $A_i$ is $\e^{k^{15i}}$-popularly decomposable in $A_{i-1}$ in at least $\e^{k^{15i}} n^{2r}$ different ways.
\end{proof}

To close this section, we observe that saying that a $2r$-cycle is popularly decomposable in many different ways is equivalent to saying that the $2r$-cycle has many decompositions of another kind, which we now define.

\begin{definition}\label{shatteredring}
	Let $C$ be a $2r$-cycle $x_1y_1\dots x_ry_r$ with $x_1$ and $y_1$ sharing a row. A \emph{dispersed ring decomposition} of $C$ consists of a $2r$-cycle $x_1'y_1'\dots x_r'y_r'$ with $x_1'$ and $y_1'$ sharing a column, together with rectangles $R_i=x_i''u_ix_i'''v_i$ and $S_i=y_i''w_iy_i'''z_i$ (where $u_i$ shares a row with $x_i''$ and $w_i$ shares a column with $y_i''$) such that for each $i$, $x_i$ and $x_i''$ have the same label, $x_i'$ and $x_i'''$ have the same label, $y_i$ and $y_i''$ have the same label, $y_i'$ and $y_i'''$ have the same label, $u_i$ and $z_i$ have the same label, and $w_i$ and $v_{i+1}$ have the same label.
\end{definition}

The reason for this terminology is as follows. Consider a ring decomposition of a label $2r$-cycle. It consists of another label $2r$-cycle and a collection of label 4-cycles. To obtain a dispersed ring decomposition, we take the various cycles that form the decomposition and `disperse' them by replacing them by other cycles of the same length that have the same label sequences. The conditions above are precisely the ones that are guaranteed to hold for the various cycles after we have done this: a point in one cycle has to have the same label as a point in another cycle if before the `dispersing' they were the same point. Note that to say that a ring decomposition is popular is precisely to say that one can obtain many dispersed ring decompositions from it in this way.

The equivalence mentioned above is given more precisely by the following simple lemma.

\begin{lemma}\label{pa:shatteredcount}
	Let $(X,Y,Z,A,\phi)$ be a partial Latin square with $|X|=|Y|=|Z|=n$. Let $F$ be a $2r$-cycle which is $\theta$-popularly decomposable in $A$ in at least $\gamma n^{2r}$ different ways. Then $F$ has at least $\gamma\theta^{2r+1}n^{4r+1}$ different dispersed ring decompositions. 
\end{lemma}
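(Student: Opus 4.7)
The plan is to produce many dispersed ring decompositions of $F$ by dispersing each of its $\gamma n^{2r}$ popular ring decompositions, and then to show that the resulting counting map is injective so that no overcounting occurs.

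Fix a popular ring decomposition $P=(C',R_1,\dots,R_{2r})$ of $F$ and form a dispersal by replacing each of its $2r+1$ pieces (the cycle $C'$ together with the $2r$ rectangles) by another object in the partial Latin square having the identical labelling. Since every piece is $\theta$-popular, each such replacement has at least $\theta n$ choices, yielding at least $(\theta n)^{2r+1}$ dispersals producible from $P$. The resulting object is automatically a dispersed ring decomposition in the sense of Definition~\ref{shatteredring}. The nontrivial label-matching conditions there---that the labels of $u_i$ and $z_i$ agree, and that those of $w_i$ and $v_{i+1}$ agree---hold because in $P$ the corresponding points literally coincide: $u_i$ of $R_i$ and $z_i$ of $S_i$ are the shared corner where $R_i$ and $S_i$ meet in the ring, and similarly $w_i$ of $S_i$ and $v_{i+1}$ of $R_{i+1}$. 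Their labels therefore agree in $P$, and this equality of labels is preserved under dispersal.

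The crux of the argument is to verify that the map sending a pair (popular $P$, dispersal) to the resulting dispersed decomposition $D$ is injective. Since the dispersal data is exactly $D$, it suffices to recover $P$ from $D$. The key observation is that although the dispersed pieces may sit anywhere in $A$, their labels encode the geometry of $P$. From $D$ we read off the labels at the off-diagonal rectangle corners $u_i$ and $v_i$. In $P$, the point $u_i$ lies at the intersection of the row of $x_i$ (known from $F$) and the column of $x_i'$. Since each label appears at most once per row of a partial Latin square, knowing $u_i$'s label together with $x_i$'s row determines a unique column, which must be the column of $x_i'$. A symmetric argument using $v_i$ pins down $x_i'$'s row. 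Every $x_i'$ is therefore determined, $C'$ is recovered, and then the rectangles of $P$ are forced by the ring structure once $C$ and $C'$ are fixed.

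Injectivity then delivers at least $\gamma n^{2r}\cdot(\theta n)^{2r+1}=\gamma\theta^{2r+1}n^{4r+1}$ distinct dispersed ring decompositions, as claimed. The main care required will simply be unpacking the conventions of Definition~\ref{shatteredring} to confirm that the off-diagonal corners $u_i$ and $v_i$ play the role described within each $R_i$, and handling the cyclic indexing in the matching condition between $w_i$ and $v_{i+1}$; both are straightforward once the ring structure of $P$ is spelled out.
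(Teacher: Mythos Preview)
Your proof is correct and follows the same approach as the paper: take each of the $\gamma n^{2r}$ popular ring decompositions, disperse each of its $2r+1$ pieces in $\theta n$ ways, and multiply. The paper's proof is a two-line sketch that simply asserts ``from which the result follows'' without addressing overcounting; your injectivity argument (recovering $C'$ from the labels of the off-diagonal corners $u_i,v_i$ together with the known rows and columns of the points of $F$) fills in exactly the detail the paper leaves implicit, and is correct.
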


\begin{proof}
	Without loss of generality $F$ is a label $2r$-cycle. There are at least $\gamma n^{2r}$ different ring decompositions of $F$ into cycles that are $\theta$-popular. Each of these popular cycles can be replaced with one of $\theta n$ different cycles with the same label sequence as the original, giving a total of $(\theta n)^{2r+1}$ further choices, from which the result follows.
\end{proof}

In the next section we shall see how to describe a dispersed ring decomposition of a $2r$-cycle as a triangulated surface of a certain shape that has that $2r$-cycle as its boundary, which should make the concept significantly clearer. 

\section{The van Kampen picture}\label{sec4}

In this section we discuss in more detail the role that van Kampen diagrams play in our proof. Though the material is well known, we include it here partly because our treatment of it is not standard in all respects, and partly for the convenience of readers who may not be familiar with it. 

Let $(X,Y,Z,A,\phi)$ be a partial Latin square. As we have seen, we can think of it as a linear tripartite 3-uniform hypergraph $H$. However, it will be very helpful to represent $H$ in an unusual way as follows. 

\begin{definition}\label{vKcomplex} Let $H$ be a linear tripartite 3-uniform hypergraph with vertex sets $X,Y$ and $Z$. The \emph{van Kampen complex} $K(H)$ corresponding to $H$ is a {directed} simplicial complex with three vertices $u,v,w$, a {directed} 1-cell joining $u$ to $v$ for each $x\in X$, a {directed} 1-cell joining $v$ to $w$ for each $y\in Y$, a {directed} 1-cell joining $u$ to $w$ for each $z\in Z$, and a 2-cell bounded by the 1-cells corresponding to $x\in X$, $y\in Y$ and $z\in Z$ if and only if $xyz$ is a face of $H$.
\end{definition}

{The directions are chosen to conform with the definition of a van Kampen diagram. A face $xyz$ of $H$ corresponds to a point $(x,y,z)$ in the partial Latin square, which, when thought of as a multiplication table, corresponds to the relation $xy=z$. We therefore want the 2-cell corresponding to $xyz$ to have boundary word $xyz^{-1}$.} 

To put the above definition more loosely, $K(H)$ is simply the result of replacing the vertex sets of $H$ by sets of edges joining $u$ to $v$, $v$ to $w$, and $u$ to $w$, so for each face of $H$, its vertices become edges.

Several of the concepts that we have defined up to now become more natural and geometrical when reinterpreted in terms of {triangulated} surfaces that may or may not live in $K(H)$. The vertices $u,v$ and $w$ in $K(H)$ do not play an important role, and in order to represent these reinterpretations pictorially it is convenient to `de-identify' them. For example, to obtain the surface corresponding to what we have called an octahedron, we take two edges between $u$ and $v$, two between $v$ and $w$, and two between $u$ and $w$, and we fill all of the eight resulting triangles with faces, which results not in an octahedron, but in an octahedron with opposite vertices identified. However, there is no harm in drawing it without the identifications and remembering that strictly speaking the identifications are needed if we want to talk about copies of the octahedron inside $K(H)$. The next definitions allow us to do this precisely.

\begin{definition}\label{surfacedefs}
An $n_1\times n_2\times n_3$ \emph{tripartite simplicial complex} is a {directed} 2-dimensional simplicial complex with three vertex classes $U$, $V$ and $W$, and {$n_1$ edges directed from $U$ to $V$, $n_2$ edges directed from $V$ to $W$, and $n_3$ edges directed from $U$ to $W$.} (Multiple edges are allowed.) A \emph{homomorphism} from a tripartite simplicial complex $K_1$ to a tripartite simplicial complex $K_2$ is a map $\phi$ that takes the vertices, edges and faces of $K_1$ to the vertices, edges and faces of $K_2$ and respects incidences, in the sense that if $u,v$ are the start and end points of an edge $e$ in $K_1$, then $\phi(u)$ and $\phi(v)$ are the start and end points of $\phi(e)$, and if the edges $e_1,e_2,e_3$ bound a face $f$ in $K_1$, then $\phi(e_1),\phi(e_2)$ and $\phi(e_3)$ bound the face $\phi(f)$ in $K_2$. If in addition $\phi$ is a bijection on the edges and faces (but not necessarily the vertices), then it is an \emph{isomorphism}. 

A \emph{tripartite surface} is a tripartite simplicial complex such that each edge is contained in at most two faces. The \emph{boundary} of a tripartite surface is the set of edges that are contained in exactly one face. 
A \emph{copy} of a surface $S$ in a tripartite simplicial complex $K$ is the image of a homomorphism $\phi:S\to K$. We call the surface a \emph{disc} if it is homeomorphic to a disc, and a \emph{sphere} if it is homeomorphic to the 2-sphere. The \emph{area} of a surface is the number of faces it contains, and the \emph{length} of the boundary of a surface is the number of edges it contains.
\end{definition}

It may seem strange not to insist that a copy of a surface is \emph{iso}morphic to the surface itself, but for the purposes of counting it is convenient to allow a small proportion of the copies to be degenerate. 

Note that the van Kampen complex of a linear tripartite 3-uniform hypergraph is a tripartite simplicial complex with the additional properties that it has just one vertex in each vertex class, and that no two edges are contained in more than one face. We call such a complex \emph{linear} as well. {As the next (almost trivial) proposition shows, every linear tripartite simplicial complex arises in this way, so there is a simple equivalence between linear tripartite 3-uniform hypergraphs and linear tripartite simplicial complexes.}

\begin{proposition}
{Every linear tripartite simplicial complex is the van Kampen complex of a linear tripartite 3-uniform hypergraph.}
\end{proposition}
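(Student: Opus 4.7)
The plan is to reverse the construction in Definition~\ref{vKcomplex}: given a linear tripartite simplicial complex $K$, read off a hypergraph $H$ whose van Kampen complex is $K$. Since \emph{linear} here includes the property of having exactly one vertex in each vertex class, write the vertex classes of $K$ as $\{u\}$, $\{v\}$, $\{w\}$, and let $X$, $Y$, $Z$ be the sets of edges directed from $u$ to $v$, from $v$ to $w$, and from $u$ to $w$, respectively. Define a tripartite 3-uniform hypergraph $H$ on vertex sets $X,Y,Z$ by declaring $\{x,y,z\}$ to be a face of $H$ precisely when $K$ contains a 2-cell whose boundary consists of the edges $x,y,z$.

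First I would check that $H$ is a well-defined tripartite 3-uniform hypergraph. Each 2-cell of $K$ is a triangle, and because $K$ is tripartite, the three boundary edges of a 2-cell cannot all lie between the same pair of vertex classes; indeed, the only way three directed edges (with directions compatible with the $u \to v \to w$, $u \to w$ orientation) can close up into a triangle at $\{u,v,w\}$ is to take one edge of each of the three types. So each face of $H$ has exactly one vertex in each of $X,Y,Z$.

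Next I would verify linearity of $H$. If two distinct faces $\{x_1,y_1,z_1\}$ and $\{x_2,y_2,z_2\}$ of $H$ shared two vertices — say $x_1=x_2$ and $y_1=y_2$ — then by construction the two corresponding 2-cells of $K$ would share the pair of edges $x_1$ and $y_1$, contradicting the assumption that no two edges of $K$ are contained in more than one face.

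Finally, the equality $K(H)=K$ is just bookkeeping: by Definition~\ref{vKcomplex}, $K(H)$ has three vertices (matching $u,v,w$), one directed edge of each of the three types for each element of $X$, $Y$, $Z$ (matching the edges of $K$), and a 2-cell for every face of $H$ (matching, by construction, the 2-cells of $K$). The main, and really only, obstacle is ensuring the conventions line up — in particular that linearity of $K$ in the sense defined in the paragraph preceding the proposition is exactly what the hypergraph definition of linearity needs, and that edge orientations are consistent — but once the definitions are set out side by side this is immediate.
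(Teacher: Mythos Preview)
Your proof is correct and follows essentially the same approach as the paper: reverse the van Kampen construction by taking the edge sets $X,Y,Z$ as vertex classes of $H$ and reading off faces of $H$ from 2-cells of $K$, then observe that linearity of $K$ translates directly into linearity of $H$. You are slightly more thorough than the paper in explicitly checking that each face of $H$ meets all three classes and that $K(H)=K$, but the argument is the same.
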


\begin{proof}
{Let $K$ be a linear tripartite simplicial complex with vertices $u,v$, and $w$, and let $X$ be the set of edges from $u$ to $v$, $Y$ the set of edges from $v$ to $w$, and $Z$ the set of edges from $u$ to $w$. Let $H$ be the hypergraph with vertex sets $X,Y$ and $Z$, where $(x,y,z)$ is a face of $H$ if and only if the edges $x,y$ and $z$ bound a face of $K$. Then $H$ is clearly tripartite. It is also linear, since the condition that $K$ is linear is precisely the condition that no two vertices of $H$ are contained in more than one face.}
\end{proof}

The main statement we wish to prove, which for reasons sketched in the introduction (and explained in more detail in Section \ref{overview}) will imply our main theorem, is the following.

\begin{theorem}\label{nosmallslitspheres}
Let $b$ be a positive integer, let $K$ be linear tripartite simplicial complex $K$ with with $n$ edges joining each pair of vertices, and suppose that $K$ contains at least $\e n^5$ octahedra. Then $K$ contains a subcomplex $L$ with at least $c(\e)n^2$ faces such that $L$ does not contain a copy of any disc with area less than $b$ and boundary of length 2.
\end{theorem}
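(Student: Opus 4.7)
My plan is to set $L = A_N$, the $N$-th term of the nested sequence of subcomplexes produced by Theorem~\ref{pa:thm2}, with $N$ and the parameter $k$ chosen as explicit functions of $b$ and $\epsilon$. Since Theorem~\ref{pa:thm2} guarantees that $A_N$ has at least $\epsilon^{k^{15N}} n^2$ faces, this supplies the desired lower bound $c(\epsilon) n^2$. The remaining task is to show that $A_N$ contains no copy of a disc of area less than $b$ with boundary of length 2.

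I would prove this by induction on the area $a < b$ of such a hypothetical disc $D$. For the base case, the smallest area compatible with a two-edge boundary (realised by the ``slit octahedron'' of Figure~\ref{pa:fig:fovK}, area $8$) corresponds to a failure of the row, column, or label quadrangle condition, which can be eliminated either by a preliminary application of Theorem~\ref{noflappycub} or, more uniformly, by using the $4$-cycle completion property implied by Theorem~\ref{pa:CWD}. For the inductive step, suppose $A_i$ contains a copy of a disc $D$ of area $a$ with boundary of length 2, with $a$ minimal. Then $D$ has an interior vertex $v$, and the link of $v$ in $D$ (the cycle of edges opposite to $v$ in the faces at $v$) is a $2r$-cycle in the tripartite sense with $2r \le 2a < 2b$. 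Choosing $k \ge b$ ensures via Theorem~\ref{pa:thm2} that this cycle admits many dispersed ring decompositions in $A_{i-1}$. Geometrically, each such decomposition is a way of replacing the wheel of faces around $v$ with a topologically equivalent triangulated annulus; substituting it yields a new disc $D'$ with the same boundary, but with the neighbourhood of $v$ now inside $A_{i-1}$. Iterating at every interior vertex, and descending through $A_N \supset A_{N-1} \supset \cdots \supset A_0$, I would produce many copies in $A_0$ of a disc whose area depends only on $b$, and contrast the resulting lower bound on these copies against the combinatorial bound implied by the $\epsilon n^5$ octahedron count in $A_0$ to obtain a contradiction.

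The main obstacle will be making the wheel-replacement step precise and topologically valid: I must verify that replacing the wheel of $v$ by a dispersed ring decomposition yields a genuine disc (simply connected, with unchanged boundary of length 2 and no new identifications), that the area increase is controlled, and that the final counting contradiction can be extracted cleanly. The combinatorics of dispersed ring decompositions (Definition~\ref{shatteredring}) appears tailored to model exactly this kind of surface substitution, so the local topology should work out, but tracking which cycle lengths appear, how many decompositions contribute at each stage, and how the parameters $N$, $k$ must scale with $b$ to outpace the accumulated losses at each level will require careful bookkeeping.
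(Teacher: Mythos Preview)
Your proposal captures the broad shape of the paper's argument---use the nested sequence from Theorem~\ref{pa:thm2}, and at each step replace the $2r$-gon around an interior vertex by a dispersed ring decomposition, descending through the levels---but it has a genuine gap at the point where you claim a contradiction.

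Starting from a \emph{single} copy of a disc $D$ in $A_N$ and unfixing it level by level will indeed produce, in $A_0$, a collection of copies of some large disc $D_t$ with the same two boundary edges. But the size of this collection will only be \emph{within a constant factor of} the trivial maximum given by Lemma~\ref{pa:numofvKs}; it cannot exceed it, so there is no contradiction. Your proposed source of a contradiction---``the combinatorial bound implied by the $\epsilon n^5$ octahedron count in $A_0$''---is not an upper bound on anything relevant; the octahedron count is a hypothesis, not a constraint that the unfixed disc collection could violate. The paper's actual mechanism is different: it defines an auxiliary graph on the edges of $K$, joining $z$ and $z'$ when they bound a copy of $D$, and assumes this graph has a vertex $z$ of degree $M$. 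Each of the $M$ discs with boundary $\{z,z_j\}$ is separately unfixed, and the \emph{union} over $j$ then gives roughly $M$ times the trivial maximum for $D_t$ with both boundary edges fixed. A short trick (attaching one face to the unfixed boundary edge) shows the trivial maximum with only $z$ fixed is no larger, forcing $M$ to be bounded. One then passes to an independent set in the auxiliary graph to eliminate all copies of $D$. Without this high-degree-vertex step and the independent-set cleanup, your argument cannot close.

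Separately, the ``induction on area'' framing is misleading: the replacement step \emph{increases} the area of the disc, so what is actually happening is an iterative unfixing of internal vertices (whose number is bounded by the original area), not an induction that reduces to a smaller case.
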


{
One particular kind of surface plays an important role in our arguments.

\begin{definition}\label{2rdisks}
	A \emph{$2r$-gon} is the disc with boundary of length $2r$ formed by $2r$ triangles that share a central vertex.
\end{definition}

A $2r$-gon corresponds to a $2r$-cycle in a partial Latin square, as shown in Figure~\ref{newfig2}.

}

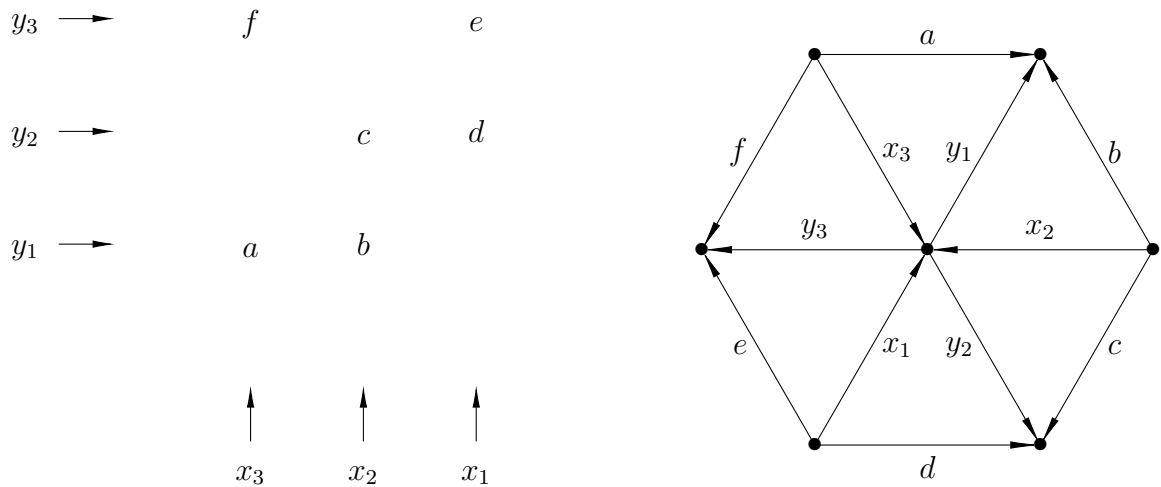
\begin{figure}
	\centering
	\begin{tikzpicture}[scale=1.5, every node/.style={scale=1.0}]
	
	\draw (-6,0) node {$a$};
	\draw (-5,0.05) node {$b$};
	\draw (-5,1) node {$c$};
	\draw (-4,1.05) node {$d$};
	\draw (-4,2) node {$e$};
	\draw (-6,2) node {$f$};
	
	\draw (-4,-2) node {$x_1$};
	\draw (-5,-2) node {$x_2$};
	\draw (-6,-2) node {$x_3$};
	
	\draw[-{Latex[length=3mm, width=1mm]}] (-4,-2+0.3) -- (-4,-2+0.8) ;
	\draw[-{Latex[length=3mm, width=1mm]}] (-5,-2+0.3) -- (-5,-2+0.8) ;
	\draw[-{Latex[length=3mm, width=1mm]}] (-6,-2+0.3) -- (-6,-2+0.8) ;
	
	\draw (-8,0.0) node {$y_1$};
	\draw (-8,1.0) node {$y_2$};
	\draw (-8,2.0) node {$y_3$};	
	
	\draw[-{Latex[length=3mm, width=1mm]}] (-8+0.3,0.05) -- (-8+0.8,0.05) ;
	\draw[-{Latex[length=3mm, width=1mm]}] (-8+0.3,1.05) -- (-8+0.8,1.05) ;
	\draw[-{Latex[length=3mm, width=1mm]}] (-8+0.3,2.05) -- (-8+0.8,2.05) ;
	
	\draw (-2,0) node {$\bullet$};
	\draw (2,0) node {$\bullet$};
	\draw (-1,1.732) node {$\bullet$};
	\draw (1,1.732) node {$\bullet$};
	\draw (-1,-1.732) node {$\bullet$};
	\draw (1,-1.732) node {$\bullet$};
	
	\draw (0,0) node {$\bullet$};
	
	\draw[-{Latex[length=4mm, width=1mm]}] (0,0) -- (-2,0) node[midway, above] {$y_3$};
	\draw[-{Latex[length=4mm, width=1mm]}] (2,0) -- (0,0) node[midway, above] {$x_2$};
	
	\draw[-{Latex[length=4mm, width=1mm]}] (-1,-1.732) -- (0,0) node[midway, right] {$x_1$};
	\draw[-{Latex[length=4mm, width=1mm]}] (0,0) -- (1,1.732) node[midway, left] {$y_1$};
	
	\draw[-{Latex[length=4mm, width=1mm]}] (-1,1.732) -- (0,0) node[midway, right] {$x_3$};
	\draw[-{Latex[length=4mm, width=1mm]}] (0,0) -- (1,-1.732) node[midway, left] {$y_2$};
	
	\draw[-{Latex[length=4mm, width=1mm]}] (-1,1.732) -- (1,1.732) node[midway, above] {$a$} ;
	\draw[{Latex[length=4mm, width=1mm]}-] (1,1.732) -- (2,0) node[midway, right] {$b$} ;
	\draw[-{Latex[length=4mm, width=1mm]}] (2,0) -- (1,-1.732) node[midway, right] {$c$} ;
	\draw[{Latex[length=4mm, width=1mm]}-] (1,-1.732) -- (-1,-1.732) node[midway, below] {$d$} ;
	\draw[-{Latex[length=4mm, width=1mm]}] (-1,-1.732) -- (-2,0) node[midway, left] {$e$};
	\draw[{Latex[length=4mm, width=1mm]}-] (-2,0) -- (-1,1.732) node[midway, left] {$f$};
	

	\end{tikzpicture}
	\caption{A label 6-cycle in a partial Latin square is shown on the left, with the corresponding rows and columns shown. The corresponding 6-gon (see Definition~\ref{2rdisks}) is shown on the right. A copy of this disc would appear in the van Kampen complex of the partial Latin square.}\label{newfig2}
\end{figure}

We can now give the geometrical interpretation of a dispersed ring decomposition from Definition~\ref{shatteredring} that we promised earlier. 
A dispersed ring decomposition of a $2r$-cycle is obtained by starting with a prism, the two ends of which are $2r$-gons. Each face of this prism (that is, two $2r$-gons and $2r$ rectangles) is triangulated by joining a central point to every vertex. That is, the dispersed ring decomposition contains two $2r$-gons, one at each end of the prism, with $2r$ 4-gons joining them. One of the $2r$-gons corresponds to to the $2r$-cycle that has been decomposed, and the rest of the prism is a more complicated triangulation of the disc that has the same boundary, which is obtained by taking another $2r$-gon and surrounding it with $2r$ 4-gons. Figure \ref{pa:fig:shatteredPFvK} below shows this more complicated triangulation in the case $r=2$: that is, it illustrates the disc that comes from a dispersed ring decomposition of a 4-cycle. Note that in this case the prism is a cube. 

\begin{figure}
	\centering
	\begin{tikzpicture}[scale=3.0, every node/.style={scale=1.0}]
	
	\draw (0,0) node {$\bullet$};
	\draw (0,4) node {$\bullet$};
	\draw (4,0) node {$\bullet$};
	\draw (4,4) node {$\bullet$};
	
	\draw (1,1) node {$\bullet$};
	\draw (1,3) node {$\bullet$};
	\draw (3,3) node {$\bullet$};
	\draw (3,1) node {$\bullet$};
	
	\draw (2,3.5) node {$\bullet$};
	\draw (0.5,2) node {$\bullet$};
	\draw (2,0.5) node {$\bullet$};
	\draw (3.5,2) node {$\bullet$};
	\draw (2,2) node {$\bullet$};
	
	\draw[-{Latex[length=4mm, width=1mm]}] (0,0) -- (0,4) ;
	\draw[-{Latex[length=4mm, width=1mm]}] (0,0) -- (4,0) ;
	\draw[{Latex[length=4mm, width=1mm]}-] (4,0) -- (4,4) ;
	\draw[{Latex[length=4mm, width=1mm]}-] (0,4) -- (4,4) ;
	
	\draw[-{Latex[length=4mm, width=1mm]}] (0,0) -- (1,1) ;
	\draw[-{Latex[length=4mm, width=1mm]}] (1,3) -- (0,4) ;
	\draw[{Latex[length=4mm, width=1mm]}-] (4,0) -- (3,1) ;
	\draw[{Latex[length=4mm, width=1mm]}-] (3,3) -- (4,4) ;
	
	\draw[-{Latex[length=4mm, width=1mm]}] (0,0) -- (2,0.5) ;
	\draw[{Latex[length=4mm, width=1mm]}-] (1,1) -- (2,0.5) ;
	\draw[-{Latex[length=4mm, width=1mm]}] (3,1) -- (2,0.5) ;
	\draw[{Latex[length=4mm, width=1mm]}-] (4,0) -- (2,0.5) ;
	
	\draw[{Latex[length=4mm, width=1mm]}-] (1,1) -- (2,2) ;
	\draw[-{Latex[length=4mm, width=1mm]}] (1,3) -- (2,2) ;
	\draw[{Latex[length=4mm, width=1mm]}-] (3,3) -- (2,2) ;
	\draw[-{Latex[length=4mm, width=1mm]}] (3,1) -- (2,2) ;
	
	\draw[{Latex[length=4mm, width=1mm]}-] (0,4) -- (0.5,2) ;
	\draw[-{Latex[length=4mm, width=1mm]}] (1,3) -- (0.5,2) ;
	\draw[{Latex[length=4mm, width=1mm]}-] (1,1) -- (0.5,2) ;
	\draw[-{Latex[length=4mm, width=1mm]}] (0,0) -- (0.5,2) ;
	
	\draw[{Latex[length=4mm, width=1mm]}-] (0,4) -- (2,3.5) ;
	\draw[-{Latex[length=4mm, width=1mm]}] (4,4) -- (2,3.5) ;
	\draw[{Latex[length=4mm, width=1mm]}-] (3,3) -- (2,3.5) ;
	\draw[-{Latex[length=4mm, width=1mm]}] (1,3) -- (2,3.5) ;
	
	\draw[{Latex[length=4mm, width=1mm]}-] (4,0) -- (3.5,2) ;
	\draw[-{Latex[length=4mm, width=1mm]}] (3,1) -- (3.5,2) ;
	\draw[{Latex[length=4mm, width=1mm]}-] (3,3) -- (3.5,2) ;
	\draw[-{Latex[length=4mm, width=1.5mm]}] (4,4) -- (3.5,2) ;
	
	\draw[{Latex[length=4mm, width=1mm]}-] (1,1) -- (1,3) ;
	\draw[{Latex[length=4mm, width=1mm]}-] (1,1) -- (3,1) ;
	\draw[-{Latex[length=4mm, width=1mm]}] (3,1) -- (3,3) ;
	\draw[-{Latex[length=4mm, width=1mm]}] (1,3) -- (3,3) ;

	\end{tikzpicture}
	\caption{The disc corresponding to the dispersed ring decomposition of a 4-cycle. We have omitted the labels on the edges.}\label{pa:fig:shatteredPFvK}
\end{figure}
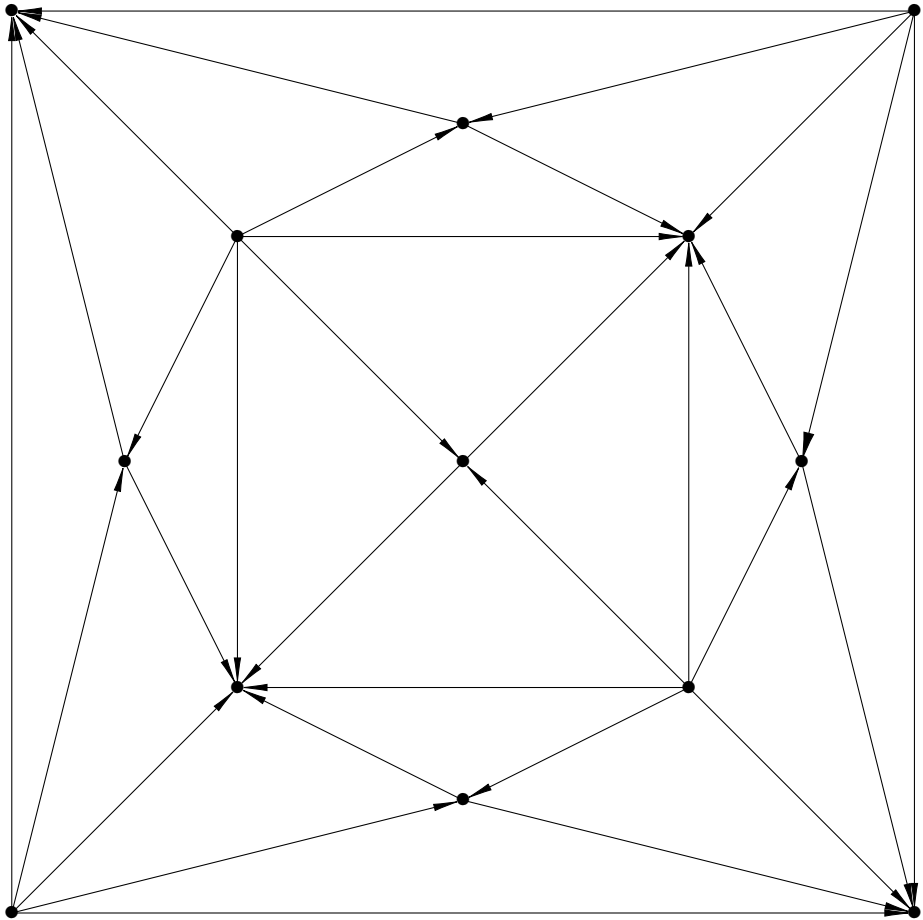

	For convenience, we encapsulate the above remarks in the following definition, which translates the notion of a dispersed ring decomposition (see Definition~\ref{shatteredring}) into the language of tripartite surfaces.
	
\begin{definition}\label{2rprisms}
	Given a tripartite simplicial complex $K$ and a copy $D$ of a $2r$-gon in $K$, a \emph{dispersed ring decomposition of $D$ in $K$} is a copy $D'$ in $K$ of the disc obtained by surrounding a $2r$-gon with $2r$ 4-gons as described above to give a triangulation of the disc, in which the boundary of $D'$ is the same as the boundary of $D'$.
\end{definition}

\iftrue
\else

Although we have formulated this definition in grid terms, referring to cycles and labels, it has a natural description in hypergraph terms.

\begin{definition}
	Let $F$ be a $2r$-PF. A \emph{shattered ring decomposition} of $F$ consists of a second $2r$-PF $F'$ with petals in the same vertex class, together with $2r$ 4-PFs, each of has a petal equal to a petal of $F$ and its opposite petal equal to the corresponding petal of $F'$, and each of which shares a petal with its predecessor and a petal with its successor, in such a way that the assignment of vertex classes to the inner vertices of each 4-PF is the reflection of the assigment of classes to its predecessor.
\end{definition}

The hypergraph forms of shattered ring decompositions of a 4-PF, 6-PF and 8-PF are shown in Figure~\ref{pa:fig0} (with the 4-PF, 6-PF and 8-PF not drawn -- their petals will coincide with the degree-1 vertices in the diagrams).

\begin{figure}
	\centering
	\begin{subfigure}{0.33\linewidth}
		\centering
		\begin{tikzpicture}[scale=0.2, every node/.style={scale=0.45}]
		\draw (1,1) rectangle (-1,-1);
		\draw (1,1) -- (3,0) -- (1,-1);
		\draw (1,-1) -- (0,-3) -- (-1,-1);
		\draw (-1,-1) -- (-3,0) -- (-1,1);
		\draw (-1,1) -- (0,3) -- (1,1);
		
		\draw (-1,5) rectangle (1,7);
		\draw (1,5) -- (0,3) -- (-1,5);
		\draw (1,5) -- (6,6) -- (1,7);
		\draw (1,7) -- (0,9) -- (-1,7);
		\draw (-1,7) -- (-6,6) -- (-1,5);
		
		\draw (5,1) rectangle (7,-1);
		\draw (5,-1) -- (3,0) -- (5,1);
		\draw (5,1) -- (6,6) -- (7,1);
		\draw (7,1) -- (9,0) -- (7,-1);
		\draw (7,-1) -- (6,-6) -- (5,-1);
		
		\draw (-1,-5) rectangle (1,-7);
		\draw (-1,-5) -- (0,-3) -- (1,-5);
		\draw (1,-5) -- (6,-6) -- (1,-7);
		\draw (1,-7) -- (0,-9) -- (-1,-7);
		\draw (-1,-7) -- (-6,-6) -- (-1,-5);
		
		\draw (-7,-1) rectangle (-5,1);
		\draw (-5,1) -- (-3,0) -- (-5,-1);
		\draw (-5,-1) -- (-6,-6) -- (-7,-1);
		\draw (-7,-1) -- (-9,0) -- (-7,1);
		\draw (-7,1) -- (-6,6) -- (-5,1);
		
		\draw[fill,gray,opacity=0.2] (1,1) -- (-1,1) -- (0,3) -- cycle;
		\draw[fill,gray,opacity=0.2] (1,1) -- (1,-1) -- (3,0) -- cycle;
		\draw[fill,gray,opacity=0.2] (-1,-1) -- (1,-1) -- (0,-3) -- cycle;
		\draw[fill,gray,opacity=0.2] (-1,-1) -- (-1,1) -- (-3,0) -- cycle;
		\draw[fill,gray,opacity=0.2] (0,3) -- (1,5) -- (-1,5) -- cycle;
		\draw[fill,gray,opacity=0.2] (3,0) -- (5,1) -- (5,-1) -- cycle;
		\draw[fill,gray,opacity=0.2] (0,-3) -- (1,-5) -- (-1,-5) -- cycle;
		\draw[fill,gray,opacity=0.2] (-3,0) -- (-5,1) -- (-5,-1) -- cycle;
		\draw[fill,gray,opacity=0.2] (1,5) -- (6,6) -- (1,7) -- cycle;
		\draw[fill,gray,opacity=0.2] (-1,5) -- (-1,7) -- (-6,6) -- cycle;
		\draw[fill,gray,opacity=0.2] (-1,7) -- (1,7) -- (0,9) -- cycle;
		\draw[fill,gray,opacity=0.2] (5,1) -- (7,1) -- (6,6) -- cycle;
		\draw[fill,gray,opacity=0.2] (5,-1) -- (7,-1) -- (6,-6) -- cycle;
		\draw[fill,gray,opacity=0.2] (7,1) -- (7,-1) -- (9,0) -- cycle;
		\draw[fill,gray,opacity=0.2] (1,-5) -- (1,-7) -- (6,-6) -- cycle;
		\draw[fill,gray,opacity=0.2] (-1,-5) -- (-1,-7) -- (-6,-6) -- cycle;
		\draw[fill,gray,opacity=0.2] (-1,-7) -- (1,-7) -- (0,-9) -- cycle;
		\draw[fill,gray,opacity=0.2] (-5,1) -- (-7,1) -- (-6,6) -- cycle;
		\draw[fill,gray,opacity=0.2] (-5,-1) -- (-7,-1) -- (-6,-6) -- cycle;
		\draw[fill,gray,opacity=0.2] (-7,1) -- (-7,-1) -- (-9,0) -- cycle;
		\end{tikzpicture}
		\caption{} 
	\end{subfigure}
	\begin{subfigure}{0.33\linewidth}
		\centering
		\begin{tikzpicture}[scale=0.24, every node/.style={scale=0.9}]
		\providecommand* \angle {30} 
		\coordinate[](O) at (0,0);
		\coordinate[](A) at (-1,1.732);
		\coordinate[](B) at (1,1.732);
		\coordinate[](C) at (0,3.464);
		\coordinate[](D) at (-1,5.196);
		\coordinate[](E) at (1,5.196);
		\coordinate[](F) at (1,7.196);
		\coordinate[](G) at (-1,7.196);
		\coordinate[](H) at (0,8.928);
		\coordinate[](I) at (3.5,6.06);
		\coordinate[](J) at (-3.5,6.06);
		\draw (A) -- (B) -- (C) -- cycle;
		\draw (C) -- (D) -- (E) -- cycle;
		\draw (D) -- (E) -- (F) -- (G) -- cycle;
		\draw (F) -- (G) -- (H) -- cycle;
		\draw (E) -- (I);
		\draw (F) -- (I);
		\draw (D) -- (J);
		\draw (G) -- (J);
		
		\draw[fill,gray,opacity=0.2] (A) -- (B) -- (C) -- cycle;
		\draw[fill,gray,opacity=0.2] (C) -- (D) -- (E) -- cycle;
		\draw[fill,gray,opacity=0.2] (D) -- (G) -- (J) -- cycle;
		\draw[fill,gray,opacity=0.2] (E) -- (F) -- (I);
		\draw[fill,gray,opacity=0.2] (F) -- (G) -- (H);
		
		\begin{scope}[rotate around={60:(O)}]
		\coordinate[](O) at (0,0);
		\coordinate[](A) at (-1,1.732);
		\coordinate[](B) at (1,1.732);
		\coordinate[](C) at (0,3.464);
		\coordinate[](D) at (-1,5.196);
		\coordinate[](E) at (1,5.196);
		\coordinate[](F) at (1,7.196);
		\coordinate[](G) at (-1,7.196);
		\coordinate[](H) at (0,8.928);
		\coordinate[](I) at (3.5,6.06);
		\coordinate[](J) at (-3.5,6.06);
		\draw (A) -- (B) -- (C) -- cycle;
		\draw (C) -- (D) -- (E) -- cycle;
		\draw (D) -- (E) -- (F) -- (G) -- cycle;
		\draw (F) -- (G) -- (H) -- cycle;
		\draw (E) -- (I);
		\draw (F) -- (I);
		\draw (D) -- (J);
		\draw (G) -- (J);
		\draw[fill,gray,opacity=0.2] (A) -- (B) -- (C) -- cycle;
		\draw[fill,gray,opacity=0.2] (C) -- (D) -- (E) -- cycle;
		\draw[fill,gray,opacity=0.2] (D) -- (G) -- (J) -- cycle;
		\draw[fill,gray,opacity=0.2] (E) -- (F) -- (I);
		\draw[fill,gray,opacity=0.2] (F) -- (G) -- (H);
		\end{scope}
		
		\begin{scope}[rotate around={120:(O)}]
		\coordinate[](O) at (0,0);
		\coordinate[](A) at (-1,1.732);
		\coordinate[](B) at (1,1.732);
		\coordinate[](C) at (0,3.464);
		\coordinate[](D) at (-1,5.196);
		\coordinate[](E) at (1,5.196);
		\coordinate[](F) at (1,7.196);
		\coordinate[](G) at (-1,7.196);
		\coordinate[](H) at (0,8.928);
		\coordinate[](I) at (3.5,6.06);
		\coordinate[](J) at (-3.5,6.06);
		\draw (A) -- (B) -- (C) -- cycle;
		\draw (C) -- (D) -- (E) -- cycle;
		\draw (D) -- (E) -- (F) -- (G) -- cycle;
		\draw (F) -- (G) -- (H) -- cycle;
		\draw (E) -- (I);
		\draw (F) -- (I);
		\draw (D) -- (J);
		\draw (G) -- (J);
		\draw[fill,gray,opacity=0.2] (A) -- (B) -- (C) -- cycle;
		\draw[fill,gray,opacity=0.2] (C) -- (D) -- (E) -- cycle;
		\draw[fill,gray,opacity=0.2] (D) -- (G) -- (J) -- cycle;
		\draw[fill,gray,opacity=0.2] (E) -- (F) -- (I);
		\draw[fill,gray,opacity=0.2] (F) -- (G) -- (H);
		\end{scope}
		
		\begin{scope}[rotate around={180:(O)}]
		\coordinate[](O) at (0,0);
		\coordinate[](A) at (-1,1.732);
		\coordinate[](B) at (1,1.732);
		\coordinate[](C) at (0,3.464);
		\coordinate[](D) at (-1,5.196);
		\coordinate[](E) at (1,5.196);
		\coordinate[](F) at (1,7.196);
		\coordinate[](G) at (-1,7.196);
		\coordinate[](H) at (0,8.928);
		\coordinate[](I) at (3.5,6.06);
		\coordinate[](J) at (-3.5,6.06);
		\draw (A) -- (B) -- (C) -- cycle;
		\draw (C) -- (D) -- (E) -- cycle;
		\draw (D) -- (E) -- (F) -- (G) -- cycle;
		\draw (F) -- (G) -- (H) -- cycle;
		\draw (E) -- (I);
		\draw (F) -- (I);
		\draw (D) -- (J);
		\draw (G) -- (J);
		\draw[fill,gray,opacity=0.2] (A) -- (B) -- (C) -- cycle;
		\draw[fill,gray,opacity=0.2] (C) -- (D) -- (E) -- cycle;
		\draw[fill,gray,opacity=0.2] (D) -- (G) -- (J) -- cycle;
		\draw[fill,gray,opacity=0.2] (E) -- (F) -- (I);
		\draw[fill,gray,opacity=0.2] (F) -- (G) -- (H);
		\end{scope}
		
		\begin{scope}[rotate around={240:(O)}]
		\coordinate[](O) at (0,0);
		\coordinate[](A) at (-1,1.732);
		\coordinate[](B) at (1,1.732);
		\coordinate[](C) at (0,3.464);
		\coordinate[](D) at (-1,5.196);
		\coordinate[](E) at (1,5.196);
		\coordinate[](F) at (1,7.196);
		\coordinate[](G) at (-1,7.196);
		\coordinate[](H) at (0,8.928);
		\coordinate[](I) at (3.5,6.06);
		\coordinate[](J) at (-3.5,6.06);
		\draw (A) -- (B) -- (C) -- cycle;
		\draw (C) -- (D) -- (E) -- cycle;
		\draw (D) -- (E) -- (F) -- (G) -- cycle;
		\draw (F) -- (G) -- (H) -- cycle;
		\draw (E) -- (I);
		\draw (F) -- (I);
		\draw (D) -- (J);
		\draw (G) -- (J);
		\draw[fill,gray,opacity=0.2] (A) -- (B) -- (C) -- cycle;
		\draw[fill,gray,opacity=0.2] (C) -- (D) -- (E) -- cycle;
		\draw[fill,gray,opacity=0.2] (D) -- (G) -- (J) -- cycle;
		\draw[fill,gray,opacity=0.2] (E) -- (F) -- (I);
		\draw[fill,gray,opacity=0.2] (F) -- (G) -- (H);
		\end{scope}
		
		\begin{scope}[rotate around={300:(O)}]
		\coordinate[](O) at (0,0);
		\coordinate[](A) at (-1,1.732);
		\coordinate[](B) at (1,1.732);
		\coordinate[](C) at (0,3.464);
		\coordinate[](D) at (-1,5.196);
		\coordinate[](E) at (1,5.196);
		\coordinate[](F) at (1,7.196);
		\coordinate[](G) at (-1,7.196);
		\coordinate[](H) at (0,8.928);
		\coordinate[](I) at (3.5,6.06);
		\coordinate[](J) at (-3.5,6.06);
		\draw (A) -- (B) -- (C) -- cycle;
		\draw (C) -- (D) -- (E) -- cycle;
		\draw (D) -- (E) -- (F) -- (G) -- cycle;
		\draw (F) -- (G) -- (H) -- cycle;
		\draw (E) -- (I);
		\draw (F) -- (I);
		\draw (D) -- (J);
		\draw (G) -- (J);
		\draw[fill,gray,opacity=0.2] (A) -- (B) -- (C) -- cycle;
		\draw[fill,gray,opacity=0.2] (C) -- (D) -- (E) -- cycle;
		\draw[fill,gray,opacity=0.2] (D) -- (G) -- (J) -- cycle;
		\draw[fill,gray,opacity=0.2] (E) -- (F) -- (I);
		\draw[fill,gray,opacity=0.2] (F) -- (G) -- (H);
		\end{scope}
		
		\end{tikzpicture}
		\caption{} 
	\end{subfigure}
	\begin{subfigure}{0.33\linewidth}
		\centering
		
		\begin{tikzpicture}[scale=0.18, every node/.style={scale=0.9}]
		\providecommand* \angle {30}
		\coordinate[](O) at (0,0);
		\coordinate[](A) at (-2.828,2.828);
		\coordinate[](B) at (2.828,2.828);
		\coordinate[](C) at (0,5.657);
		\coordinate[](D) at (-1,7.389);
		\coordinate[](E) at (1,7.389);
		\coordinate[](F) at (1,9.389);
		\coordinate[](G) at (-1,9.389);
		\coordinate[](H) at (3.45,8.389);
		\coordinate[](I) at (0,11.121);
		\coordinate[](J) at (-3.45,8.389);
		\draw (A) -- (C);
		\draw (B) -- (C);
		\draw (C) -- (D) -- (E) -- cycle;
		\draw (D) -- (E) -- (F) -- (G) -- cycle;
		\draw (E) -- (F) -- (H) -- cycle;
		\draw (F) -- (G) -- (I) -- cycle;
		\draw (D) -- (G) -- (J) -- cycle;
		\draw[fill,gray,opacity=0.2] (-1.7,4.0) -- (1.7,4.0) -- (C) -- cycle;
		\draw[fill,gray,opacity=0.2] (C) -- (D) -- (E) -- cycle;
		\draw[fill,gray,opacity=0.2] (D) -- (G) -- (J) -- cycle;
		\draw[fill,gray,opacity=0.2] (E) -- (F) -- (H);
		\draw[fill,gray,opacity=0.2] (F) -- (G) -- (I);
		
		\begin{scope}[rotate around={45:(O)}]
		\coordinate[](O) at (0,0);
		\coordinate[](A) at (-2.828,2.828);
		\coordinate[](B) at (2.828,2.828);
		\coordinate[](C) at (0,5.657);
		\coordinate[](D) at (-1,7.389);
		\coordinate[](E) at (1,7.389);
		\coordinate[](F) at (1,9.389);
		\coordinate[](G) at (-1,9.389);
		\coordinate[](H) at (3.45,8.389);
		\coordinate[](I) at (0,11.121);
		\coordinate[](J) at (-3.45,8.389);
		\draw (A) -- (C);
		\draw (B) -- (C);
		\draw (C) -- (D) -- (E) -- cycle;
		\draw (D) -- (E) -- (F) -- (G) -- cycle;
		\draw (E) -- (F) -- (H) -- cycle;
		\draw (F) -- (G) -- (I) -- cycle;
		\draw (D) -- (G) -- (J) -- cycle;
		\draw[fill,gray,opacity=0.2] (-1.7,4.0) -- (1.7,4.0) -- (C) -- cycle;
		\draw[fill,gray,opacity=0.2] (C) -- (D) -- (E) -- cycle;
		\draw[fill,gray,opacity=0.2] (D) -- (G) -- (J) -- cycle;
		\draw[fill,gray,opacity=0.2] (E) -- (F) -- (H);
		\draw[fill,gray,opacity=0.2] (F) -- (G) -- (I);
		\end{scope}
		
		\begin{scope}[rotate around={90:(O)}]
		\coordinate[](O) at (0,0);
		\coordinate[](A) at (-2.828,2.828);
		\coordinate[](B) at (2.828,2.828);
		\coordinate[](C) at (0,5.657);
		\coordinate[](D) at (-1,7.389);
		\coordinate[](E) at (1,7.389);
		\coordinate[](F) at (1,9.389);
		\coordinate[](G) at (-1,9.389);
		\coordinate[](H) at (3.45,8.389);
		\coordinate[](I) at (0,11.121);
		\coordinate[](J) at (-3.45,8.389);
		\draw (A) -- (C);
		\draw (B) -- (C);
		\draw (C) -- (D) -- (E) -- cycle;
		\draw (D) -- (E) -- (F) -- (G) -- cycle;
		\draw (E) -- (F) -- (H) -- cycle;
		\draw (F) -- (G) -- (I) -- cycle;
		\draw (D) -- (G) -- (J) -- cycle;
		\draw[fill,gray,opacity=0.2] (-1.7,4.0) -- (1.7,4.0) -- (C) -- cycle;
		\draw[fill,gray,opacity=0.2] (C) -- (D) -- (E) -- cycle;
		\draw[fill,gray,opacity=0.2] (D) -- (G) -- (J) -- cycle;
		\draw[fill,gray,opacity=0.2] (E) -- (F) -- (H);
		\draw[fill,gray,opacity=0.2] (F) -- (G) -- (I);
		\end{scope}
		
		\begin{scope}[rotate around={135:(O)}]
		\coordinate[](O) at (0,0);
		\coordinate[](A) at (-2.828,2.828);
		\coordinate[](B) at (2.828,2.828);
		\coordinate[](C) at (0,5.657);
		\coordinate[](D) at (-1,7.389);
		\coordinate[](E) at (1,7.389);
		\coordinate[](F) at (1,9.389);
		\coordinate[](G) at (-1,9.389);
		\coordinate[](H) at (3.45,8.389);
		\coordinate[](I) at (0,11.121);
		\coordinate[](J) at (-3.45,8.389);
		\draw (A) -- (C);
		\draw (B) -- (C);
		\draw (C) -- (D) -- (E) -- cycle;
		\draw (D) -- (E) -- (F) -- (G) -- cycle;
		\draw (E) -- (F) -- (H) -- cycle;
		\draw (F) -- (G) -- (I) -- cycle;
		\draw (D) -- (G) -- (J) -- cycle;
		\draw[fill,gray,opacity=0.2] (-1.7,4.0) -- (1.7,4.0) -- (C) -- cycle;
		\draw[fill,gray,opacity=0.2] (C) -- (D) -- (E) -- cycle;
		\draw[fill,gray,opacity=0.2] (D) -- (G) -- (J) -- cycle;
		\draw[fill,gray,opacity=0.2] (E) -- (F) -- (H);
		\draw[fill,gray,opacity=0.2] (F) -- (G) -- (I);
		\end{scope}
		
		\begin{scope}[rotate around={180:(O)}]
		\coordinate[](O) at (0,0);
		\coordinate[](A) at (-2.828,2.828);
		\coordinate[](B) at (2.828,2.828);
		\coordinate[](C) at (0,5.657);
		\coordinate[](D) at (-1,7.389);
		\coordinate[](E) at (1,7.389);
		\coordinate[](F) at (1,9.389);
		\coordinate[](G) at (-1,9.389);
		\coordinate[](H) at (3.45,8.389);
		\coordinate[](I) at (0,11.121);
		\coordinate[](J) at (-3.45,8.389);
		\draw (A) -- (C);
		\draw (B) -- (C);
		\draw (C) -- (D) -- (E) -- cycle;
		\draw (D) -- (E) -- (F) -- (G) -- cycle;
		\draw (E) -- (F) -- (H) -- cycle;
		\draw (F) -- (G) -- (I) -- cycle;
		\draw (D) -- (G) -- (J) -- cycle;
		\draw[fill,gray,opacity=0.2] (-1.7,4.0) -- (1.7,4.0) -- (C) -- cycle;
		\draw[fill,gray,opacity=0.2] (C) -- (D) -- (E) -- cycle;
		\draw[fill,gray,opacity=0.2] (D) -- (G) -- (J) -- cycle;
		\draw[fill,gray,opacity=0.2] (E) -- (F) -- (H);
		\draw[fill,gray,opacity=0.2] (F) -- (G) -- (I);
		\end{scope}
		
		\begin{scope}[rotate around={225:(O)}]
		\coordinate[](O) at (0,0);
		\coordinate[](A) at (-2.828,2.828);
		\coordinate[](B) at (2.828,2.828);
		\coordinate[](C) at (0,5.657);
		\coordinate[](D) at (-1,7.389);
		\coordinate[](E) at (1,7.389);
		\coordinate[](F) at (1,9.389);
		\coordinate[](G) at (-1,9.389);
		\coordinate[](H) at (3.45,8.389);
		\coordinate[](I) at (0,11.121);
		\coordinate[](J) at (-3.45,8.389);
		\draw (A) -- (C);
		\draw (B) -- (C);
		\draw (C) -- (D) -- (E) -- cycle;
		\draw (D) -- (E) -- (F) -- (G) -- cycle;
		\draw (E) -- (F) -- (H) -- cycle;
		\draw (F) -- (G) -- (I) -- cycle;
		\draw (D) -- (G) -- (J) -- cycle;
		\draw[fill,gray,opacity=0.2] (-1.7,4.0) -- (1.7,4.0) -- (C) -- cycle;
		\draw[fill,gray,opacity=0.2] (C) -- (D) -- (E) -- cycle;
		\draw[fill,gray,opacity=0.2] (D) -- (G) -- (J) -- cycle;
		\draw[fill,gray,opacity=0.2] (E) -- (F) -- (H);
		\draw[fill,gray,opacity=0.2] (F) -- (G) -- (I);
		\end{scope}
		
		\begin{scope}[rotate around={270:(O)}]
		\coordinate[](O) at (0,0);
		\coordinate[](A) at (-2.828,2.828);
		\coordinate[](B) at (2.828,2.828);
		\coordinate[](C) at (0,5.657);
		\coordinate[](D) at (-1,7.389);
		\coordinate[](E) at (1,7.389);
		\coordinate[](F) at (1,9.389);
		\coordinate[](G) at (-1,9.389);
		\coordinate[](H) at (3.45,8.389);
		\coordinate[](I) at (0,11.121);
		\coordinate[](J) at (-3.45,8.389);
		\draw (A) -- (C);
		\draw (B) -- (C);
		\draw (C) -- (D) -- (E) -- cycle;
		\draw (D) -- (E) -- (F) -- (G) -- cycle;
		\draw (E) -- (F) -- (H) -- cycle;
		\draw (F) -- (G) -- (I) -- cycle;
		\draw (D) -- (G) -- (J) -- cycle;
		\draw[fill,gray,opacity=0.2] (-1.7,4.0) -- (1.7,4.0) -- (C) -- cycle;
		\draw[fill,gray,opacity=0.2] (C) -- (D) -- (E) -- cycle;
		\draw[fill,gray,opacity=0.2] (D) -- (G) -- (J) -- cycle;
		\draw[fill,gray,opacity=0.2] (E) -- (F) -- (H);
		\draw[fill,gray,opacity=0.2] (F) -- (G) -- (I);
		\end{scope}
		
		\begin{scope}[rotate around={315:(O)}]
		\coordinate[](O) at (0,0);
		\coordinate[](A) at (-2.828,2.828);
		\coordinate[](B) at (2.828,2.828);
		\coordinate[](C) at (0,5.657);
		\coordinate[](D) at (-1,7.389);
		\coordinate[](E) at (1,7.389);
		\coordinate[](F) at (1,9.389);
		\coordinate[](G) at (-1,9.389);
		\coordinate[](H) at (3.45,8.389);
		\coordinate[](I) at (0,11.121);
		\coordinate[](J) at (-3.45,8.389);
		\draw (A) -- (C);
		\draw (B) -- (C);
		\draw (C) -- (D) -- (E) -- cycle;
		\draw (D) -- (E) -- (F) -- (G) -- cycle;
		\draw (E) -- (F) -- (H) -- cycle;
		\draw (F) -- (G) -- (I) -- cycle;
		\draw (D) -- (G) -- (J) -- cycle;
		\draw[fill,gray,opacity=0.2] (-1.7,4.0) -- (1.7,4.0) -- (C) -- cycle;
		\draw[fill,gray,opacity=0.2] (C) -- (D) -- (E) -- cycle;
		\draw[fill,gray,opacity=0.2] (D) -- (G) -- (J) -- cycle;
		\draw[fill,gray,opacity=0.2] (E) -- (F) -- (H);
		\draw[fill,gray,opacity=0.2] (F) -- (G) -- (I);
		\end{scope}
		\end{tikzpicture}
		
		\caption{} 
	\end{subfigure}
	\caption{A shattered ring decomposition of a 4-PF, 6-PF and 8-PF in the hypergraph representation are depicted in (a), (b) and (c) respectively. In each figure the triangles correspond to faces of the 3-uniform hypergraph.}\label{pa:fig0}
\end{figure}

If a $2r$-PF $F$ is $\theta_i(\epsilon,k)$-popularly decomposable in at least $\gamma_i(\epsilon,k) n^{2r}$ different ways, this means that there are at least $\gamma_i(\epsilon,k) n^{2r}$ different ring decompositions of $F$ into PFs that are $\theta_i(\epsilon,k)$-popular. If a $2s$-PF $F'$ is $\theta_i(\epsilon,k)$-popular, this means that there are at least $\theta_i(\epsilon,k)n$ different $2s$-PFs that share all their petals with $F'$. This gives us the following lemma.

\begin{lemma}\label{pa:shatteredcount}
	Let $A$ be a linear tripartite hypergraph with $n$ vertices in each class. Let $F$ be a $2r$-PF which is $\theta_i(\epsilon,k)$-popularly decomposable in $A$ in at least $\gamma_i(\epsilon,k) n^{2r}$ different ways. Then $F$ has at least $\gamma_i\theta_i^{2r+1}n^{4r+1}$ different shattered ring decompositions. 
\end{lemma}

\begin{proof}
	As discussed above, there are at least $\gamma_in^{2r}$ different ring decompositions of $F$ into PFs which are $\theta_i$-popular. Each of these popular PFs can be replaced with one of $\theta_i n$ different PFs sharing petals with the original, giving a total of $(\theta_i n)^{2r+1}$ further choices, from which the result follows.
\end{proof}

Broadly speaking, the arguments in the next section will involve starting with a particular hypergraph $H$ and repeatedly replacing $2r$-PFs in $H$ with shattered ring decompositions. Keeping track of the number of ways these replacements are possible will be achieved using Lemma~\ref{pa:shatteredcount}.

\fi

\section{Popular replacement of discs}\label{popreparg1}

We now turn to the proof of Theorem \ref{nosmallslitspheres}. Since the details will get somewhat involved, it will be instructive to begin with the case concerning what we call \emph{slit octahedra}. These are tripartite surfaces that are isomorphic to the triangulated disc illustrated in Figure~\ref{pa:fig:fovK}, which can be obtained from an octahedron by cutting along one of its edges and opening up the cut. Since these are precisely the discs that show that the quadrangle condition fails, this special case will prove Theorem~\ref{noflappycub}.

The main fact that we shall need to prove is that if we pass to a suitable dense subset, then every copy of a slit octahedron in that subset can be replaced by a certain more complicated triangulated disc with the same boundary in a near-maximal number of ways (where `near-maximal' means within a constant of the trivial maximum). To prove this, we shall start with a fixed copy of the surface, and repeatedly replace $2r$-gons by discs with the same boundary that correspond to dispersed ring decompositions (see Definition~\ref{2rprisms}). This we shall be able to do in many ways, so we regard it as a kind of `unfixing' process, where little by little we unfix vertices in order to convert the original completely fixed slit octahedron into a variable surface, at each stage ensuring that the number of possibilities for the variable surface is within a constant of the trivial maximum, given the points that are still fixed. This idea will be explained in more detail later in the section.

The structure of the section will be as follows. First, we shall translate the main results of Section~\ref{pa:sec3} into the language of {linear tripartite simplicial} complexes introduced in Section~\ref{sec4}. Next, we shall give a more detailed overview of the popular replacement argument. This is followed by a brief section containing a technical lemma in which we determine the maximum possible number of copies of a certain tripartite surface that can appear in a {linear tripartite simplicial} complex. We are then ready to describe the popular replacement argument. We begin with the special case of the slit octahedron, before generalizing the approach to prove Theorem~\ref{nosmallslitspheres}.

\subsection{The surface picture}

As a first step, we reinterpret Theorem~\ref{pa:thm2} as a statement about the van Kampen complex from Definition~\ref{vKcomplex}. Doing so will allow us to work almost entirely with simplical complexes in this section of the proof, {thereby avoiding the need to think about the same object in two different ways at once}. 

Recall that the hypothesis of Theorem~\ref{pa:thm2} involves a 3-uniform, linear hypergraph $A$ that contains at least $\e n^5$ octahedra. The van Kampen complex $K(A)$ is therefore a linear tripartite simplicial complex (see Definition~\ref{surfacedefs}) with $n$ edges joining each pair of vertices and containing at least $\e n^5$ octahedra, where in the van Kampen case an octahedron is simply a (tripartite) triangulated surface isomorphic to an octahedron, when an octahedron is also considered as a triangulated sphere.

Given this hypothesis, the conclusion of Theorem~\ref{pa:thm2} is that there exists a sequence $A=A_0\supset A_1\supset \dots$ such that each $A_i$ has density at least $\alpha_i(\epsilon,k)$, each $A_i$ contains at least $\epsilon_i(\epsilon,k)n^5$ octahedra, and for each $r=2,\dots,k$, every $2r$-cycle in $A_i$ has at least $\gamma_i(\epsilon,k) n^{2r}$ different $\theta_i(\epsilon,k)$-popular ring decompositions in $A_{i-1}$. Each $A_i$ has an associated van Kampen complex $K(A_i)$ which is a subcomplex of $K(A)$, and the theorem tells us that $K(A_i)$ has at least $\alpha_i(\epsilon,k)n^2$ faces. Moreover, $K(A_i)$ contains at least $\epsilon_i(\epsilon,k)n^5$ octahedra.

Recall that a $2r$-cycle corresponds in the surface picture to a $2r$-gon (see Definition~\ref{2rdisks}). It remains to interpret, in the language of tripartite surfaces, the conclusion that for each $r=2,\dots,k$, every $2r$-cycle in $A_i$ has at least $\gamma_i(\epsilon,k) n^{2r}$ different $\theta_i(\epsilon,k)$-popular ring decompositions in $A_{i-1}$. By applying Lemma~\ref{pa:shatteredcount}, it follows that every $2r$-cycle in $A_i$ has at least $\gamma_i(\epsilon,k)\theta_i(\epsilon,k)^{2r+1} n^{4r+1}$ different dispersed ring decompositions (see Definition~\ref{shatteredring}). At the end of Section~\ref{sec4} we discussed the dispersed ring decomposition in terms of tripartite surfaces (see Definition~\ref{2rprisms}), so we are now ready to formulate the van Kampen version of Theorem~\ref{pa:thm2}.

\begin{theorem}\label{surfacethm3.3}
	Let $\e\le 10^{-3}$ and let $k\ge 100$. Let $K$ be an $n\times n\times n$ linear tripartite simplicial complex (see Definition~\ref{surfacedefs}) containing at least $\e n^5$ octahedra. Then there exists a sequence $K=K_0\supset K_1\supset \dots$ of subcomplexes of $K$ such that each $K_i$ has at least $\alpha_i(\epsilon,k)n^2$ faces, at least $\epsilon_i(\epsilon,k)n^5$ octahedra, and also has the property that, for each $r=2,\dots,k$, a given $2r$-gon in $K_i$ has at least $\gamma_i(\epsilon,k)\theta_i(\epsilon,k)^{2r+1} n^{4r+1}$ different dispersed ring decompositions in $K_i$.  
\end{theorem}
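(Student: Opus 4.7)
The plan is to recognize this theorem as essentially a dictionary translation of Theorem~\ref{pa:thm2} into the language of tripartite simplicial complexes, using the correspondence between partial Latin squares, linear tripartite 3-uniform hypergraphs, and linear tripartite simplicial complexes. So the real content has already been proved; the task here is to verify that each piece of the conclusion of Theorem~\ref{pa:thm2} translates correctly under the van Kampen dictionary.

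First I would pass from the given complex $K$ to the associated linear tripartite 3-uniform hypergraph $H$ whose van Kampen complex is $K$, using the (bijective) correspondence justified just after Definition~\ref{surfacedefs}. Under this correspondence, the faces of $K$ are precisely the hyperedges of $H$, so the hypothesis that $K$ contains at least $\e n^5$ octahedra translates to the statement that (the partial Latin square corresponding to) $H$ contains at least $\e n^5$ octahedra, where an octahedron in the simplicial complex is the triangulated sphere obtained by gluing together two identically labelled rectangles as described in Section~\ref{sketch}. This puts us in a position to apply Theorem~\ref{pa:thm2}, which furnishes a nested sequence $H = H_0 \supset H_1 \supset \dots$ of subhypergraphs such that each $H_i$ has density $\alpha_i(\e,k)$ and octahedron count at least $\e_i(\e,k)n^5$, and each $2r$-cycle ($2 \le r \le k$) in $H_i$ is $\theta_i(\e,k)$-popularly ring-decomposable in at least $\gamma_i(\e,k)n^{2r}$ ways.

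Next I would define $K_i$ to be the van Kampen subcomplex $K(H_i) \subset K(H) = K$. Since the bijection between hyperedges and faces is preserved under passing to subhypergraphs/subcomplexes, each $K_i$ inherits the face count $\alpha_i(\e,k)n^2$ and the octahedron count $\e_i(\e,k)n^5$ directly. A $2r$-cycle in $H_i$ corresponds to a copy of a $2r$-gon in $K_i$ (Figure~\ref{newfig2}), and, as explained at the end of Section~\ref{sec4}, a dispersed ring decomposition in the hypergraph sense (Definition~\ref{shatteredring}) corresponds exactly to the prism-shaped triangulated disc in Definition~\ref{2rprisms}. Applying Lemma~\ref{pa:shatteredcount} to each $2r$-cycle in $H_i$ converts the $\gamma_i$ popular ring decompositions into at least $\gamma_i(\e,k)\theta_i(\e,k)^{2r+1} n^{4r+1}$ dispersed ring decompositions, which is exactly the count claimed in the geometric conclusion.

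There is essentially no obstacle beyond bookkeeping: the only subtle point is to check that the notion of octahedron used in Definition~\ref{surfacedefs} (a tripartite triangulated surface) is the same object, under the van Kampen correspondence, as the symmetric eight-triple configuration in the hypergraph described at the end of Section~\ref{pa:sec3}, and that octahedron counts agree on the nose under this correspondence. Once that is verified, the chain $K_0 \supset K_1 \supset \dots$ with the claimed parameters is simply the image of the chain produced by Theorem~\ref{pa:thm2}, combined with Lemma~\ref{pa:shatteredcount} to upgrade ``popular ring decomposition'' to ``dispersed ring decomposition'' in the required numerical form.
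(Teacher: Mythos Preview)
Your proposal is correct and follows essentially the same approach as the paper: the theorem is stated in Section~5.1 precisely as a reinterpretation of Theorem~\ref{pa:thm2} via the van Kampen dictionary, with Lemma~\ref{pa:shatteredcount} applied to convert popular ring decompositions into the stated number of dispersed ring decompositions. The only minor point worth noting is that Theorem~\ref{pa:thm2} gives the ring decompositions of cycles in $A_i$ as lying in $A_{i-1}$, so the dispersed ring decompositions you obtain live in $K_{i-1}$ rather than $K_i$; this is how the result is actually used later (see the proof of Lemma~\ref{pa:lem10}), and the ``in $K_i$'' in the statement appears to be a slip.
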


\subsection{Overview}\label{overview}

The structure of our proof of Theorem~\ref{nosmallslitspheres} will be as follows. We start with a linear tripartite simplicial complex that contains many octahedra. We then apply Theorem~\ref{surfacethm3.3} to create a sequence $K_0\supset K_1\supset \dots$ of tripartite simplicial complexes with the properties stated. We fix some $s$ and pick some particular small disc $D_0$ with boundary of length 2 (see Definition \ref{surfacedefs}, or for an illustration see Figure \ref{pa:fig:fovK}) that we wish to get rid of, and consider the following auxiliary graph on the edges of the 1-skeleton of $K_{s}$: we join two edges $x$ and $y$ of $K_{s}$ by an edge if there is a copy of $D_0$ in $K_{s}$ such that $x$ and $y$ are the boundary edges of that copy. If the maximum degree of this auxiliary graph is bounded then we may pass to a dense independent set, and then we are done, since this independent set corresponds to a dense set of faces in $K_{s}$ such that no two faces are the boundary faces of a copy of $D_0$, and that implies that there is no copy of $D_0$. (Of course, we need to repeat this argument for all the discs we are trying to eliminate.)  

If the maximum degree is \emph{not} bounded, then we would like to find a contradiction. We are given a vertex of large degree in the auxiliary graph, which corresponds to a face of $K_{s}$ that is contained in many different copies of $D_0$, each with a different `opposite face'. Given one of these discs, we perform our unfixing process. Initially, we say that all edges are \emph{fixed}, meaning that we have specified precisely one copy of $D_0$. We then find a $2r$-gon in this copy and use dispersed ring decompositions guaranteed by Theorem~\ref{surfacethm3.3} to replace it with a new, more complicated disc, which we can do in many different ways. However we do the replacement, $D_0$ turns into a copy of a larger disc $D_1$ that still has a boundary of length 2. The copies of $D_1$ thus obtained lie in $K_{s-1}\supset K_{s}$, and we obtain $\Omega(n^{4r+1})$ of them, the trivial maximum being $n^{4r+1}$. We say that the internal edges in the chosen $2r$-gons are \emph{unfixed}, since they may differ from copy to copy. Note that the number of fixed edges has decreased. 

We may continue this process, choosing at each step a $2r$-gon with some fixed internal edges from $D_i$ and using dispersed ring decompositions to generate a larger collection of copies of a disc $D_{i+1}$ that lies in $K_{s-i-1}$, with fewer fixed edges. If $s$ is chosen sufficiently large relative to the area of $D_0$ then we may proceed until we obtain a collection $\mathcal{D}$ of copies of some disc $D_t$ in which the two boundary edges are fixed but every edge incident to an internal vertex is unfixed. One of the boundary edges corresponds to our initial vertex of high degree in the auxiliary graph. By repeating this process for each choice of neighbour of our chosen vertex from that auxiliary graph, we obtain many different collections of copies of $D_t$, {which all share} one of the two boundary edges. By taking the union of all of these collections, we end up violating the trivial upper bound on the maximum possible number of copies of $D_t$ in {an $n\times n \times n$ linear tripartite simplicial} complex. 

The next sections will expand on the details required for this argument. As promised earlier, we shall begin with a detailed account of the argument when $D_0$ is the `slit octahedron' illustrated in Figure \ref{pa:fig:fovK}, and then we shall tackle the necessary generalizations. Before we embark on this it will be necessary to work out the trivial maximum for the number of copies of a given disc with a given set of fixed edges in a {linear tripartite simplicial} complex. The main task of this section will then be to verify that during the unfixing process, the number of copies we obtain is always within a constant of the appropriate trivial maximum, so that in particular this is the case when we reach the tripartite disc $D_t$ with all non-boundary edges unfixed. This is essential for obtaining our desired contradiction. 

\subsection{The maximum number of copies of a partially fixed disc}

We begin with a definition that formalizes the notion of a partially fixed disc, which will be needed to describe the surfaces $D_i$ that appear part way through the overview described above.

\begin{definition}\label{partialsurface}
Let $K$ be {an $n\times n \times n$ linear tripartite simplicial} complex. We define a \emph{partially fixed disc} in $K$ to be a triple $(D,E,\gamma)$, where $D$ is a disc (see Definition~\ref{surfacedefs}), $E$ is a subset of the edges of $D$, and $\gamma$ is a homomorphism from $E$ to the 1-skeleton of $K$ that respects the tripartition of the vertices of $D$. We call the edges in $E$ \emph{fixed} and the other edges \emph{unfixed}. We call a face \emph{unfixed} if it contains at least one unfixed edge. 

A \emph{copy of} $(D,E,\gamma)$ in $K$ is a copy of $D$ in $K$ that extends $\g$ in the obvious sense. Less formally, it is a copy of $D$ in $K$ for which the images of the fixed edges have to be given by $\gamma$. By the \emph{trivial maximum number of copies} of a partially fixed disc $(D,E,\gamma)$ we mean the maximum possible number of copies of a partially fixed disc $(D,E,\gamma')$ in {an $n\times n \times n$ linear tripartite simplicial} complex $K$. Since the trivial maximum does not depend on the complex $K$ or the map $\g$, we also define an \emph{abstract partially fixed disc} to be just a pair $(D,E)$, where $D$ and $E$ are as above. If no confusion is likely to arise, we shall omit the word `abstract'. As above, the edges in $E$ will be called fixed. 
\end{definition}

{Next, we prove an upper bound on the trivial maximum possible number of copies of a partially fixed disc.}

\begin{lemma}\label{pa:numofvKs}
	Let $D$ be an abstract partially fixed disc obtained by triangulating the disc and fixing the boundary edges. Then the trivial maximum number of copies of $D$ is at most $n^{V_I}$ where $V_I$ is the number of internal vertices -- that is, vertices that do not lie on the boundary.
\end{lemma}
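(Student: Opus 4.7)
The plan is to define an injection from copies of $(D,E,\gamma)$ in $K$ to tuples in $E(K)^{V_I}$, thereby bounding the count by $n^{V_I}$.

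The workhorse is the linearity of $K$: any two edges of $K$ lie in at most one common face, so whenever two of the three edges of a triangular face of $D$ have had their images fixed, the image of the third edge is forced.

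I set up a dual-graph structure. Let $D^\star$ be the graph whose vertices are the faces of $D$ and whose edges are the internal edges of $D$, each joining the two faces that share it. Since $D$ is a connected triangulated disc, Euler's formula $V-E+F=1$ combined with $E_B=V_B$ (the boundary is a cycle) gives $E_I-F+1=V_I$, so the first Betti number of the connected graph $D^\star$ equals $V_I$ exactly. I fix a spanning tree $T\subseteq E(D^\star)$ and let $C=E(D^\star)\setminus T$ denote its co-tree, so that $|C|=V_I$.

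The injection sends each copy $\phi$ of $(D,E,\gamma)$ to the tuple $(\phi(e))_{e\in C}$, living in $E(K)^{V_I}$, a set of size at most $n^{V_I}$. It remains to check that this map is injective. For injectivity I show that the fixed boundary together with the tuple $(\phi(e))_{e\in C}$ forces the image of every other edge, via a leaf-peeling procedure on $T$. A leaf of $T$ is a face $f$ of $D$ incident to exactly one edge in $T$; its other two edges lie in the boundary or in $C$, hence are known, so linearity of $K$ determines the image of the one remaining $T$-edge. Removing that edge from $T$ yields a smaller tree, and I iterate. Since a finite tree with at least one edge has a leaf, the procedure terminates only when $T$ is edgeless, at which point every internal edge of $D$ has been assigned an image.

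The degenerate case $V_I=0$ is handled identically: $D^\star$ is itself a tree, $C=\varnothing$, and the leaf-peeling argument applied directly to $D^\star$ gives the trivial bound $n^0=1$. The main step to verify is simply that at each peeling step the face acted upon genuinely has two known edges, which follows directly from the definition of a leaf together with the partition $E(D^\star)=T\sqcup C$ and the observation that the non-$D^\star$ edges of $D$ are precisely the boundary edges.
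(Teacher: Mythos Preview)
Your argument is correct in substance and takes a genuinely different route from the paper. The paper argues by direct induction on the number of faces, at each step locating a face with two boundary edges (whose third edge is then forced) or, failing that, a face with one boundary edge adjacent to an internal vertex (giving one free choice and reducing $V_I$ by one); an Euler-formula computation rules out the remaining case. Your approach instead packages the whole count into a single global object: the dual graph $D^\star$, a spanning tree $T$, and the identification of the co-tree size with $V_I$ via Euler's formula. Both proofs rest on the same local fact (linearity forces the third edge of a triangle from the other two), but your spanning-tree framework computes the number of ``free'' choices up front rather than discovering them one face at a time. This is a clean alternative and arguably more conceptual.

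Two small points you should tighten. First, the set $E(K)^{V_I}$ has size $(3n)^{V_I}$, not $n^{V_I}$; what you want is that each $\phi(e)$ lies in the edge class of $K$ determined by the tripartite type of $e$, and each such class has size $n$, so the tuple lives in a product of size $n^{V_I}$. Second, in the leaf-peeling, your sentence ``its other two edges lie in the boundary or in $C$'' is only literally true at the first step; once you have removed some $T$-edges, a leaf of the \emph{current} tree may have one of its other two edges among the previously removed (and hence already determined) $T$-edges. The argument still works, but the invariant should read ``boundary, $C$, or already-determined $T$-edges''. Finally, you assert but do not justify that $D^\star$ is connected; this follows from the simple connectedness of the disc (any two face-interiors can be joined by a path in the open disc avoiding vertices, and such a path crosses only internal edges), and is needed both for the spanning tree to exist and for the Betti-number count $|C|=E_I-F+1$ to hold.
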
 

\begin{proof}
	The proof is by induction on the number of faces of $D$. The result is trivial when $D$ is a single face with all three edges fixed. Now suppose that $D$ has at least two faces. Suppose first that there is a face $f$ that has two boundary edges. Then the third edge must be internal. The label of this edge is determined by the labels on the two boundary edges. If we remove the face $f$ and fix its internal edge, then we obtain a disc that still has $V_I$ internal vertices, and hence at most $n^{V_I}$ copies, so we are done.
	
	If $D$ does not have such a face, then we split into two further cases. Suppose first that $D$ has an internal vertex: that is, a vertex that does not lie on the boundary. Then there must be an internal vertex that is joined by an edge to a boundary vertex $w$. The neighbours of $w$ form a path from its predecessor along the boundary to its successor. Let $v$ be the first internal vertex along this path. Then $v$ is joined to $w$ and to its predecessor, which gives us a face that has one boundary edge and two internal edges. We can choose the label for one of the internal edges in at most $n$ ways, and that determines the label for the other. Having done so, if we remove the face and fix the two internal edges, we obtain a simply connected disc $D'$ with one less internal vertex. For each of the at most $n$ choices of labelling for the newly fixed edges we get at most $n^{V_I-1}$ copies of $D'$, by the inductive hypothesis, so the number of copies of $D$ is at most $n^{V_I}$ as required.
	
	The final case is where $D$ does not have any internal vertices or any faces with two boundary edges. This case cannot in fact occur. Indeed, if it did, then note that the number of vertices would equal the number of boundary edges, and the number of faces would be at most the number of internal edges (since each face would contain at least two internal edges and each internal edge would be contained in two faces). It would follow that $V-E+F\leq 0$, contradicting Euler's formula (which would give $V-E+F=1$, since we are not counting the external face as a face).	
\end{proof}

A simple example that is important for us is that of a $2r$-gon: if the boundary is fixed, then we are left with at most $n$ possibilities. In the grid picture, this corresponds to the fact that if we know the labels of a label $2r$-cycle (say), then the first point of the cycle (which can be chosen in at most $n$ ways) determines the rest of the cycle if it exists.

An even more important example is where $D$ is taken to be the disc corresponding to the dispersed ring decomposition of a $2r$-gon as described in Definition~\ref{2rprisms}, again with the boundary cycle fixed. This bounds the maximum possible number dispersed ring decompositions of a given $2r$-gon. The number of internal vertices is $4r+1$, since the opposite $2r$-gon contributes $2r+1$ vertices, and each of the $2r$ 4-gons has a further internal vertex in the middle. Thus, Lemma~\ref{pa:numofvKs} gives an upper bound of $n^{4r+1}$ for the number of dispersed ring decompositions of a given $2r$-gon. But Lemma~\ref{pa:shatteredcount} gives us $\Omega(n^{4r+1})$ such decompositions, so we see again that our machinery from the previous section gives us within a constant factor of the maximum number of such objects.

\subsection{The slit octahedron case}\label{pa:sec:flappycub}

The aim of this section is to prove that if we are given a linear tripartite simplicial complex $K$ with $\e n^5$ octahedra, then we can pass to a dense subset $L$ of $K$ in which there are no slit octahedra. 

We begin by defining the auxiliary graph described in Section~\ref{overview}.

\begin{definition}\label{auxiliary}
	Given a linear tripartite simplicial complex, we let the auxiliary graph $G(K)$ have vertex set given by the edges in the 1-skeleton of $K$, with edges $x$ and $y$ joined if $x$ and $y$ are the boundary edges of some copy of the slit octahedron in $K$.
\end{definition}

{As discussed in Section~\ref{overview}, if we can prove that this auxiliary graph is of bounded degree, then we will be able to pass to a dense independent subset of the vertices and thereby eliminate all slit octahedra.} We shall achieve this by applying Theorem~\ref{surfacethm3.3} to $K$ and obtaining a subcomplex $K_s$ for some appropriately chosen $s$. The rough idea is that if we fix a vertex $z$ of the auxiliary graph $G(K_s)$ (recall that this is an edge of the complex $K_s$), then each edge $zz_i$ in $G(K_s)$ gives rise to a large number of copies of a certain tripartite surface (see Definition~\ref{surfacedefs}) homeomorphic to a disc with boundary of length 2, which we build from the initial slit-octahedron by unfixing all the interior edges using dispersed ring decompositions (see Definition~\ref{2rprisms}). If there are too many edges $zz_i$ in $G(K_s)$, this ends up contradicting Lemma \ref{pa:numofvKs}.

We now give the details.

\begin{lemma}\label{pa:lem10}
	Let $K$ be an $n\times n\times n$ linear tripartite simplicial complex containing at least $\epsilon n^5$ octahedra. Then there is a subcomplex $L$ of $K$ with at least $\epsilon^{2^{400}}n^2$ faces such that the maximum degree in the graph $G(L)$ is at most~$\epsilon^{-2^{450}}$.
\end{lemma}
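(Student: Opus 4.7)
The plan is to apply Theorem~\ref{surfacethm3.3} with parameter $k$ of order $100$ and with a suitably chosen depth $s$ so that $\alpha_s \ge \epsilon^{2^{400}}$, set $L = K_s$, and then bound the maximum degree of $G(L)$ by contradiction. The density lower bound on $L$ will follow immediately from Theorem~\ref{surfacethm3.3}; the bulk of the work will go into the degree bound.

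Suppose then that some edge $z$ of $L$ has degree $D > \epsilon^{-2^{450}}$ in $G(L)$. I will fix a copy $S_i$ of the slit octahedron with boundary $\{z, z_i\}$ for each of the $D$ neighbours $z_i$. For each $S_i$ I will carry out the unfixing process sketched in Section~\ref{overview}: iteratively select a $2r$-gon (with $r=2$, which is natural for the slit octahedron) containing at least one currently fixed internal edge, and replace it with a dispersed ring decomposition. Lemma~\ref{pa:shatteredcount} combined with Theorem~\ref{surfacethm3.3} gives at least $\gamma_{s-j}\theta_{s-j}^{2r+1}\,n^{4r+1}$ such decompositions in $K_{s-j-1}$, which by Lemma~\ref{pa:numofvKs} is within an $\epsilon$-dependent constant of the trivial maximum $n^{4r+1}$ at each step. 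After a bounded number $t$ of steps the only remaining fixed edges are $z$ and $z_i$, and the unfixing will have produced a collection $\mathcal{D}_i$ of copies in $K_{s-t}$ of some fixed disc $D_t$ with boundary $\{z, z_i\}$.

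The collections $\{\mathcal{D}_i\}$ are pairwise disjoint, since distinct $z_i$ force the copies of $D_t$ to be distinct, so their union furnishes at least $D\,|\mathcal{D}|$ copies of $D_t$ in $K_{s-t}$ sharing $z$ as a boundary edge. Summing the bound of Lemma~\ref{pa:numofvKs} over the $n$ possible choices for the other boundary edge will yield a matching upper bound on the number of such copies, and comparing the two inequalities will force $D \le \epsilon^{-2^{450}}$, contradicting the assumption.

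The main obstacle will be the constant-tracking needed to make this contradiction quantitative: I will need to calibrate $k$, $s$ and $t$ so that the product of multiplicative factors $\prod_j \gamma_{s-j}\theta_{s-j}^{2r+1}$ accumulated through the $t$ unfixing steps exactly absorbs the $n$-factors arising from the comparison to Lemma~\ref{pa:numofvKs}; the tower exponent $2^{450}$ will emerge from this calibration together with the doubly-exponential parameter degradation in Theorem~\ref{surfacethm3.3}. A secondary combinatorial subtlety will be ensuring that at every stage I can locate a $2r$-gon straddling the already-unfixed region and the remaining fixed part of the current disc, so that the unfixing actually reduces the number of fixed edges at each step; this I will handle by exploiting the explicit combinatorial structure of the slit octahedron together with that of the dispersed ring decompositions introduced at previous steps.
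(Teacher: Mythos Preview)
Your overall strategy matches the paper's, but there is a genuine gap in the final counting step that makes the contradiction fail. You propose to upper-bound the number of copies of $D_t$ with only $z$ fixed by summing Lemma~\ref{pa:numofvKs} over the $n$ possible choices of the other boundary edge, giving $n\cdot n^{V_I}$. Against your lower bound $D\cdot c\cdot n^{V_I}$ (with $c$ the accumulated $\epsilon$-dependent constant), this yields only $D\le n/c$, which is useless since $n$ is unbounded. The paper avoids this by observing that unfixing one boundary edge of $D_t$ does \emph{not} increase the trivial maximum: one attaches a single new triangular face to the unfixed boundary edge and fixes its two other edges, obtaining a disc $D_t''$ with fully fixed boundary and the \emph{same} number $V_I$ of internal vertices (the formerly unfixed edge becomes internal but its endpoints were already boundary vertices). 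Lemma~\ref{pa:numofvKs} then gives $n^{V_I}$ directly, and the comparison $D\cdot c\cdot n^{V_I}\le n^{V_I}$ yields $D\le c^{-1}$, the desired $n$-free bound. Equivalently, the unfixed boundary edge lies in a unique face of $D_t$ whose other two edges are internal, so by linearity it is determined once those internal edges are; this is why no extra factor of $n$ appears.

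A smaller inaccuracy: you cannot carry out all the unfixing steps with $r=2$. The four internal vertices of the slit octahedron do each have degree $4$ initially, but once you replace the $4$-gon around one of them, each adjacent internal vertex acquires extra incident edges from the dispersed ring decomposition. In the paper's explicit sequence the four replacements are of a $4$-gon, a $6$-gon, and two $8$-gons, so one needs $k\ge 4$ in Theorem~\ref{surfacethm3.3}. Your closing remark about ``locating a $2r$-gon straddling the unfixed and fixed regions'' hints that you suspect this, but the parenthetical ``with $r=2$'' should be dropped.
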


\begin{proof}
	
	First we apply Theorem~\ref{surfacethm3.3} with $k=100$ (we only need $k\ge 4$) to obtain a sequence $K=K_0\supset K_1\supset \dots\supset K_4$ with the property that $K_i$ has at least $\alpha_i(\epsilon)n^2$ faces, and for each $r=2,\dots,4$ we have that every $2r$-gon in $K_i$ has at least $\gamma_i(\epsilon)\theta_i(\epsilon)^{2r+1} n^{4r+1}$ different dispersed ring decompositions in $K_{i-1}$. The parameters $\alpha_i(\epsilon), \theta_i(\epsilon)$ and $\gamma_i(\epsilon)$ are all at least $\epsilon^{100^{15i}}\ge\epsilon^{2^{100i}}$.
	
	Now suppose that the auxiliary graph $G(K_4)$ of $K_4$ has a vertex of degree at least $M$. Without loss of generality, let us assume that the vertex class that contains this vertex is $Z$. If the vertex is $z$, then we can find a set $\{z_1,\dots,z_M\}$ of distinct vertices in $Z$ such that for each $j$ there exists a copy of the slit octahedron in $K_4$ for which the boundary has edges $z$ and $z_j$ (recall from Definition~\ref{auxiliary} that the vertices of $G(K_4)$ are edges of $K_4$). 
	
	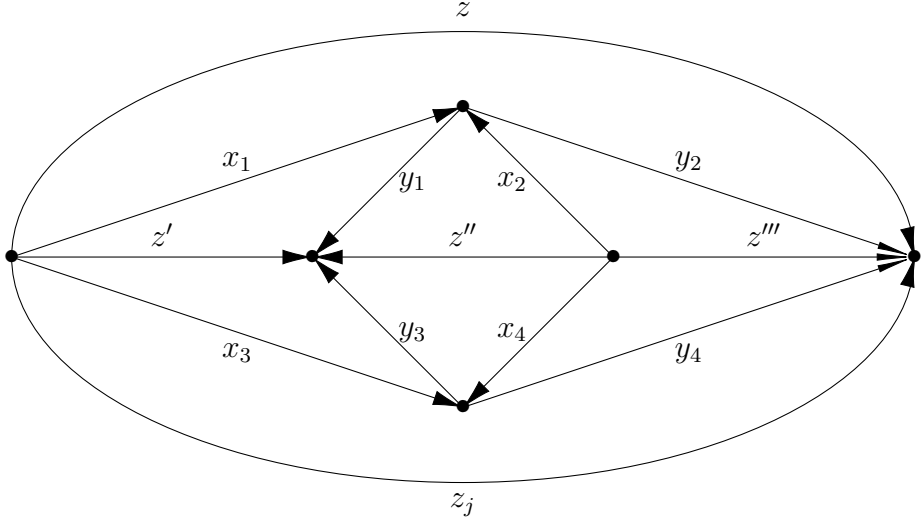
\begin{figure}
		\centering
		\begin{tikzpicture}[scale=3.0, every node/.style={scale=1.0}]
		\draw (0,0) ellipse (2cm and 1cm);
		
		\draw (-2,0) node {$\bullet$};
		\draw (2,0) node {$\bullet$};
		\draw (-2/3,0) node {$\bullet$};
		\draw (2/3,0) node {$\bullet$};
		\draw (0,2/3) node {$\bullet$};
		\draw (0,-2/3) node {$\bullet$};
		\draw (0,1.1) node {$z$};
		\draw (0,-1.1) node {$z_j$};
		
		\draw[-{Latex[length=4mm, width=2mm]}] (0,-2/3) -- (-2/3,0) node[midway, right] {$y_3$};
		\draw[{Latex[length=4mm, width=2mm]}-] (0,2/3) -- (2/3,0) node[midway, left] {$x_2$};
		\draw[{Latex[length=4mm, width=2mm]}-] (0,-2/3) -- (2/3,0) node[midway, left] {$x_4$};
		
		\draw[{Latex[length=5mm, width=1mm]}-] (2,0) -- (0,2/3) node[midway, above] {$y_2$};
		\draw[{Latex[length=4mm, width=2mm]}-] (0,2/3) -- (-2,0) node[midway, above] {$x_1$};	
		\draw[{Latex[length=5mm, width=1mm]}-] (2,0) -- (0,-2/3) node[midway, below] {$y_4$};
		\draw[{Latex[length=4mm, width=2mm]}-] (0,-2/3) -- (-2,0) node[midway, below] {$x_3$};
		
		\draw[-{Latex[length=4mm, width=2mm]}] (0,2/3) -- (-2/3,0) node[midway, right] {$y_1$};
		\draw[-{Latex[length=4mm, width=2mm]}] (-2,0) -- (-2/3,0) node[midway, above] {$z'$};
		\draw[-{Latex[length=4mm, width=2mm]}] (2/3,0) -- (-2/3,0) node[midway, above] {$z''$};
		\draw[-{Latex[length=5mm, width=1mm]}] (2/3,0) -- (2,0) node[midway, above] {$z'''$};
		\draw[-{Latex[length=4mm, width=2mm]}] (1.9988,0.005) -- (2,0);
		\draw[-{Latex[length=4mm, width=2mm]}] (1.9988,-0.005) -- (2,0);
		
		\end{tikzpicture}
		\caption{A slit octahedron yielding an edge between $z$ and $z_j$ in the auxiliary graph.}\label{pa:fig:fcvK}
	\end{figure}

		Let us now fix $j$ and let $D_0$ be the corresponding copy of the slit octahedron, which we shall think of as a partially fixed disc for which every edge is fixed. Let $\g$ be the inclusion map from the abstract slit octahedron to its copy in $D_0$. The slit octahedron is illustrated again in Figure~\ref{pa:fig:fcvK}, for ease of reference. We now select a 4-gon in $D_0$ by choosing some internal vertex and taking the four faces that surround it. For instance, we may select the bottom internal vertex, which is incident to the edges labelled $x_3,y_3,x_4$ and $y_4$. This gives us the triangulated 4-gon represented by the four faces in the bottom half of the diagram. We now create a new partially fixed disc $D_1$ as follows. First we remove this 4-gon from $D_0$ and replace it by a dispersed ring decomposition of the 4-gon (see Definition~\ref{2rprisms}). Then we declare all the internal edges of the dispersed ring decomposition to be unfixed, and the map $\g$ takes the same values as before, but is applied only to the fixed edges. The disc $D_1$ is illustrated in Figure~\ref{pa:fig:1ststepfcvK}, with the unfixed edges in red.

	Any 4-gon in $K_4$ has at least $\gamma_4(\epsilon)\theta_4^{5} n^{9}$ different dispersed ring decompositions in $K_3$. Since the trivial maximum number of these dispersed ring decompositions is $n^9$, by Lemma~\ref{pa:numofvKs} (because the number of internal vertices is 9), the number of copies of $D_1$ in the complex $K_{3}$ is within a constant of its trivial maximum, as we wanted. 
			
	\begin{figure}
		\centering
		\begin{tikzpicture}[scale=3.0, every node/.style={scale=1.0}]
		\draw (0,0) ellipse (2cm and 1cm);
		
		\draw (-2,0) node {$\bullet$};
		\draw (2,0) node {$\bullet$};
		\draw (-2/3,0) node {$\bullet$};
		\draw (2/3,0) node {$\bullet$};
		\draw (0,2/3) node {$\bullet$};
		
		\draw (-0.4,-0.4) node[color=red] {$\bullet$};
		\draw (0.4,-0.4) node[color=red] {$\bullet$};
		\draw (-0.4,-0.7) node[color=red] {$\bullet$};
		\draw (0.4,-0.7) node[color=red] {$\bullet$};
		\draw (0,-0.55) node[color=red] {$\bullet$};
		\draw (0,-0.2) node[color=red] {$\bullet$};
		\draw (-1,-0.2) node[color=red] {$\bullet$};
		\draw (1,-0.2) node[color=red] {$\bullet$};
		\draw (0,-0.88) node[color=red] {$\bullet$};
		
		\draw[-{Latex[length=4mm, width=1mm]}, color=red] (-0.4,-0.4) -- (0.4,-0.4);
		\draw[-{Latex[length=4mm, width=1mm]}, color=red] (-0.4,-0.4) -- (-0.4,-0.7);
		\draw[{Latex[length=4mm, width=1mm]}-, color=red] (-0.4,-0.7) -- (0.4,-0.7);
		\draw[-{Latex[length=4mm, width=1mm]}, color=red] (0.4,-0.7) -- (0.4,-0.4);
		
		\draw[-{Latex[length=4mm, width=1mm]}, color=red] (-0.4,-0.4) -- (0,-0.55);
		\draw[{Latex[length=4mm, width=1mm]}-, color=red] (0.4,-0.4) -- (0,-0.55);
		\draw[{Latex[length=4mm, width=1mm]}-, color=red] (-0.4,-0.7) -- (0,-0.55);
		\draw[-{Latex[length=4mm, width=1mm]}, color=red] (0.4,-0.7) -- (0,-0.55);
		
		\draw[-{Latex[length=4mm, width=1mm]}, color=red] (2/3,0) -- (0,-0.2);
		\draw[{Latex[length=4mm, width=1mm]}-, color=red] (-2/3,0) -- (0,-0.2);
		\draw[{Latex[length=4mm, width=1mm]}-, color=red] (0.4,-0.4) -- (0,-0.2);
		\draw[-{Latex[length=4mm, width=1mm]}, color=red] (-0.4,-0.4) -- (0,-0.2);
		
		\draw[-{Latex[length=4mm, width=1mm]}, color=red] (-2,0) -- (-1,-0.2);
		\draw[{Latex[length=4mm, width=1mm]}-, color=red] (-2/3,0) -- (-1,-0.2);
		\draw[{Latex[length=4mm, width=1mm]}-, color=red] (-0.4,-0.7) -- (-1,-0.2);
		\draw[-{Latex[length=4mm, width=1mm]}, color=red] (-0.4,-0.4) -- (-1,-0.2);
		
		\draw[-{Latex[length=4mm, width=1mm]}, color=red] (2/3,0) -- (1,-0.2);
		\draw[{Latex[length=4mm, width=1mm]}-, color=red] (2,0) -- (1,-0.2);
		\draw[{Latex[length=4mm, width=1mm]}-, color=red] (0.4,-0.4) -- (1,-0.2);
		\draw[-{Latex[length=4mm, width=1mm]}, color=red] (0.4,-0.7) -- (1,-0.2);
		
		\draw[-{Latex[length=4mm, width=1mm]}, color=red] (0,-0.88) -- (-0.4,-0.7);
		\draw[{Latex[length=4mm, width=1mm]}-, color=red] (0,-0.88) -- (0.4,-0.7);
		
		\draw [{Latex[length=4mm, width=1mm]}-, color=red] (0,-0.88) to [out=180,in=320] (-2,0);
		\draw [-{Latex[length=4mm, width=1mm]}, color=red] (0,-0.88) to [out=0,in=220] (2,0);

		\draw[-{Latex[length=4mm, width=1mm]}, color=red] (-0.4,-0.4) -- (-2/3,0);
		\draw[{Latex[length=4mm, width=1mm]}-, color=red] (0.4,-0.4) -- (2/3,0);
		\draw[{Latex[length=4mm, width=1mm]}-, color=red] (-0.4,-0.7) -- (-2,0);
		\draw[-{Latex[length=4mm, width=1mm]}, color=red] (0.4,-0.7) -- (2,0);
		
		\draw[{Latex[length=4mm, width=2mm]}-] (0,2/3) -- (2/3,0) ;

		\draw[{Latex[length=5mm, width=1mm]}-] (2,0) -- (0,2/3) ;
		\draw[{Latex[length=4mm, width=2mm]}-] (0,2/3) -- (-2,0) ;

		\draw[-{Latex[length=4mm, width=2mm]}] (0,2/3) -- (-2/3,0) ;
		\draw[-{Latex[length=4mm, width=2mm]}] (-2,0) -- (-2/3,0) ;
		\draw[-{Latex[length=4mm, width=2mm]}] (2/3,0) -- (-2/3,0) ;
		\draw[-{Latex[length=5mm, width=1mm]}] (2/3,0) -- (2,0) ;
		\draw[-{Latex[length=4mm, width=2mm]}] (1.9988,0.005) -- (2,0);
		\draw[-{Latex[length=4mm, width=2mm]}] (1.9988,-0.005) -- (2,0);
		
		\end{tikzpicture}
		\caption{The disc $D_1$ obtained after the first popular replacement in a slit octahedron. The dispersed ring decomposition is represented with the red part of the diagram. All labels have been omitted for simplicity.}\label{pa:fig:1ststepfcvK}
	\end{figure}
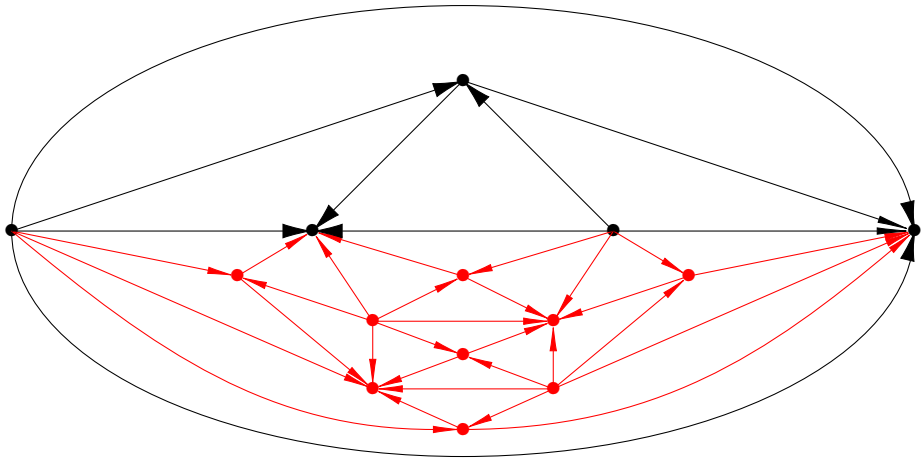
		
	The next step is to select another $2r$-gon by choosing another internal vertex, this time of $D_1$. We can do this by picking all the faces of $D_1$ that contain some given internal vertex that is incident to at least one unfixed edge. For instance, we might take the leftmost internal black vertex in Figure~\ref{pa:fig:1ststepfcvK}.
	
	This gives us a 6-gon $F$, since this vertex is contained in six faces of $D_1$. Let $D_2$ be the partially fixed disc obtained by replacing $F$ with a {dispersed} ring decomposition and declaring all its internal edges to be unfixed. It is already challenging to draw $D_2$ in detail, and we shall see shortly that it is not important to track the precise structure of the discs that we obtain at each step. Nevertheless, we include an illustration of $D_2$ in Figure~\ref{pa:fig:2ndstepfcvK} to help clarify the process.
	
	\begin{figure}
		\centering
		\begin{tikzpicture}[scale=3.0, every node/.style={scale=1.0}]
		\draw (0,0) ellipse (2cm and 1cm);
		
		\draw (-2,0) node {$\bullet$};
		\draw (2,0) node {$\bullet$};
		\draw (2/3,0) node {$\bullet$};
		\draw (0,2/3) node {$\bullet$};
		
		\draw (-0.4,-0.4) node[color=red] {$\bullet$};
		\draw (0.4,-0.4) node[color=red] {$\bullet$};
		\draw (-0.4,-0.7) node[color=red] {$\bullet$};
		\draw (0.4,-0.7) node[color=red] {$\bullet$};
		\draw (0,-0.55) node[color=red] {$\bullet$};
		\draw (0,-0.2) node[color=red] {$\bullet$};
		\draw (-1,-0.2) node[color=red] {$\bullet$};
		\draw (1,-0.2) node[color=red] {$\bullet$};
		\draw (0,-0.88) node[color=red] {$\bullet$};
		
		\draw (-0.2,-0.1) node[color=red] {$\bullet$};
		\draw (-0.5,-0.14) node[color=red] {$\bullet$};
		\draw (-0.8,-0.1) node[color=red] {$\bullet$};
		\draw (-1.1,0) node[color=red] {$\bullet$};
		\draw (-0.2,0.3) node[color=red] {$\bullet$};
		\draw (0,0.1) node[color=red] {$\bullet$};
		
		\draw (-0.41,0.05) node[color=red] {$\bullet$};
		
		\draw (-0.75,0.25) node[color=red] {$\bullet$};
		\draw (-1.2,-0.08) node[color=red] {$\bullet$};
		\draw (-0.7,-0.22) node[color=red] {$\bullet$};
		\draw (-0.3,-0.22) node[color=red] {$\bullet$};
		\draw (0.05,-0.05) node[color=red] {$\bullet$};
		\draw (0.05,0.3) node[color=red] {$\bullet$};
		
		\draw[color=red] (-0.75,0.25) -- (0,2/3);
		\draw[color=red] (-0.75,0.25) -- (-0.2,0.3);
		\draw[color=red] (-0.75,0.25) -- (-1.1,0);
		\draw[color=red] (-0.75,0.25) -- (-2,0);
		
		\draw[color=red] (-1.2,-0.08) -- (-1,-0.2);
		\draw[color=red] (-1.2,-0.08) -- (-0.8,-0.1);
		\draw[color=red] (-1.2,-0.08) -- (-1.1,0);
		\draw[color=red] (-1.2,-0.08) -- (-2,0);
		
		\draw[color=red] (-0.7,-0.22) -- (-1,-0.2);
		\draw[color=red] (-0.7,-0.22) -- (-0.8,-0.1);
		\draw[color=red] (-0.7,-0.22) -- (-0.5,-0.14);
		\draw[color=red] (-0.7,-0.22) -- (-0.4,-0.4);
		
		\draw[color=red] (-0.3,-0.22) -- (-0.2,-0.1);
		\draw[color=red] (-0.3,-0.22) -- (0,-0.2) ;
		\draw[color=red] (-0.3,-0.22) -- (-0.5,-0.14);
		\draw[color=red] (-0.3,-0.22) -- (-0.4,-0.4);
		
		\draw[color=red] (0.05,-0.05) -- (-0.2,-0.1);
		\draw[color=red] (0.05,-0.05) -- (0,-0.2) ;
		\draw[color=red] (0.05,-0.05) -- (2/3,0);
		\draw[color=red] (0.05,-0.05) -- (0,0.1);
		
		\draw[color=red] (0.05,0.3) -- (0,2/3);
		\draw[color=red] (0.05,0.3) --  (-0.2,0.3);
		\draw[color=red] (0.05,0.3) -- (2/3,0);
		\draw[color=red] (0.05,0.3) -- (0,0.1);

		\draw[color=red] (-0.2,-0.1) -- (-0.5,-0.14);
		\draw[color=red] (-0.5,-0.14) -- (-0.8,-0.1) -- (-1.1,0) -- (-0.2,0.3) -- (0,0.1) -- (-0.2,-0.1);
		
		\draw[color=red] (-0.2,-0.1) -- (0,-0.2);
		\draw[color=red] (0,0.1) -- (2/3,0);
		\draw[color=red] (-0.2,0.3) -- (0,2/3);
		\draw[color=red] (-1.1,0) -- (-2,0);
		\draw[color=red] (-0.8,-0.1) -- (-1,-0.2);
		\draw[color=red] (-0.5,-0.14) -- (-0.4,-0.4);
		
		\draw[color=red] (-0.2,-0.1) -- (-0.41,0.05);
		\draw[color=red] (0,0.1) -- (-0.41,0.05);
		\draw[color=red] (-0.2,0.3) -- (-0.41,0.05);
		\draw[color=red] (-1.1,0) -- (-0.41,0.05);
		\draw[color=red] (-0.8,-0.1) -- (-0.41,0.05);
		\draw[color=red] (-0.5,-0.14) -- (-0.41,0.05);

		\draw[color=red] (-0.4,-0.4) -- (0.4,-0.4);
		\draw[color=red] (-0.4,-0.4) -- (-0.4,-0.7);
		\draw[color=red] (-0.4,-0.7) -- (0.4,-0.7);
		\draw[color=red] (0.4,-0.7) -- (0.4,-0.4);
		
		\draw[color=red] (-0.4,-0.4) -- (0,-0.55);
		\draw[color=red] (0.4,-0.4) -- (0,-0.55);
		\draw[color=red] (-0.4,-0.7) -- (0,-0.55);
		\draw[color=red] (0.4,-0.7) -- (0,-0.55);
		
		\draw[color=red] (2/3,0) -- (0,-0.2);
		\draw[color=red] (0.4,-0.4) -- (0,-0.2);
		\draw[color=red] (-0.4,-0.4) -- (0,-0.2);
		
		\draw[color=red] (-2,0) -- (-1,-0.2);
		\draw[color=red] (-0.4,-0.7) -- (-1,-0.2);
		\draw[color=red] (-0.4,-0.4) -- (-1,-0.2);
		
		\draw[color=red] (2/3,0) -- (1,-0.2);
		\draw[color=red] (2,0) -- (1,-0.2);
		\draw[color=red] (0.4,-0.4) -- (1,-0.2);
		\draw[color=red] (0.4,-0.7) -- (1,-0.2);
		
		\draw[color=red] (0,-0.88) -- (-0.4,-0.7);
		\draw[color=red] (0,-0.88) -- (0.4,-0.7);
		
		\draw [color=red] (0,-0.88) to [out=180,in=320] (-2,0);
		\draw [color=red] (0,-0.88) to [out=0,in=220] (2,0);

		\draw[color=red] (0.4,-0.4) -- (2/3,0);
		\draw[color=red] (-0.4,-0.7) -- (-2,0);
		\draw[color=red] (0.4,-0.7) -- (2,0);
		
		\draw (0,2/3) -- (2/3,0);

		\draw (2,0) -- (0,2/3);
		\draw (0,2/3) -- (-2,0);

		\draw[] (2/3,0) -- (2,0) ;
		\draw[] (1.9988,0.005) -- (2,0);
		\draw[] (1.9988,-0.005) -- (2,0);
		
		\end{tikzpicture}
		\caption{The partially fixed disc $D_2$ obtained after the second replacement. The fixed part is shown in black, and the unfixed part in red. All labels and directions have been omitted for simplicity.}\label{pa:fig:2ndstepfcvK}
	\end{figure}
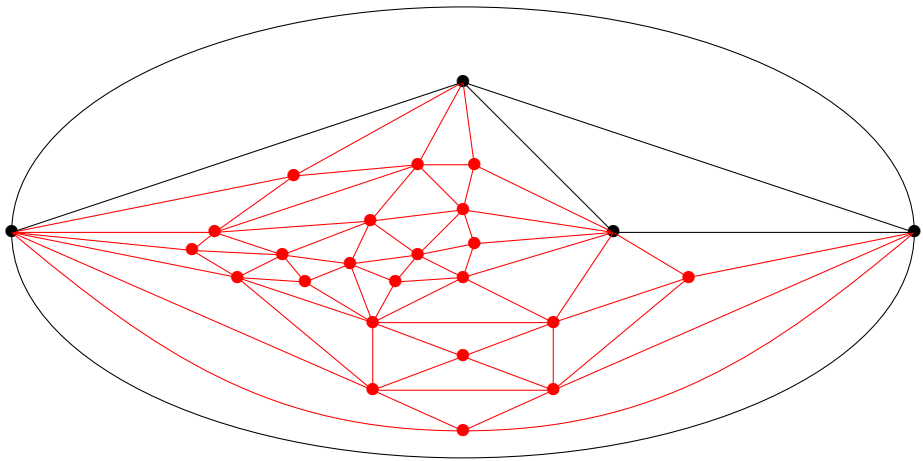
	
	Since any given 6-gon in $K_3$ has at least $\gamma_{3}\theta_3^{7}n^{13}$ different dispersed ring decompositions in $K_2$, we may obtain a copy of $D_2$ in $K_{2}$ by taking any one of the $\gamma_4\theta_4^5n^9$ copies of $K_1$ and then replacing the image of the 6-gon $F$ in that copy by any one of its $\gamma_{3}\theta_{3}^7n^{13}$ dispersed ring decompositions. We now claim that this gives us 
	\[(\gamma_4\theta_4^5n^9)(\gamma_{3}\theta_{3}^7n^{13})=\gamma_{3}\gamma_4\theta_{3}^7\theta_4^5n^{22}\]
 different copies of $D_2$ in $K_{2}$, but to verify this we must ensure that each copy we have just described is counted at most once.
	
	Suppose that $D$ and $D'$ are copies of $D_1$ in $K_3$ that, following replacements of their respective copies of $F$, both give the same copy of $D_2$ in $K_2$. Then $D$ and $D'$ must agree on all but the internal edges of $F$. However, we chose $F$ in such a way that one of the internal edges of $F$ is fixed and thus shared between $D$ and $D'$. But since a $2r$-gon is fully determined 
	by the boundary edges and a single internal edge (since two edges of a face in the {linear simplicial complex} uniquely determine the third), we see that $D=D'$.
	
	Therefore we do not overcount, and the number of copies of $D_2$ in $K_{2}$ is indeed $\gamma_{3}\gamma_4\theta_{3}^7\theta_4^5n^{22}$. Again, it is easy to see that this is within a constant of the trivial maximum, since a dispersed ring decomposition of a 6-gon has thirteen internal vertices, so the number of internal red vertices after the second unfixing is 22 (as the sceptical reader can verify from Figure \ref{pa:fig:2ndstepfcvK}).
	
	The remaining two steps are similar. At the next step, we can replace the 8-gon around the rightmost, internal black vertex in Figure~\ref{pa:fig:2ndstepfcvK} by its dispersed ring decomposition, with all the internal edges unfixed, to create a partially fixed disc $D_3$. 
	
	By Lemma~\ref{pa:shatteredcount}, the number of dispersed ring decompositions in $A_1$ is at least $\gamma_2\theta_2^9n^{17}$, so that the number of copies of $D_3$ in $K_{1}$ is at least $\gamma_2\theta_2^9n^{17}$ times the number of copies of $D_2$ in $K_{2}$. But we will also have added $8+8+1=17$ new internal red vertices, so the trivial maximum increases by a factor of $n^{17}$. Therefore, the number of copies of $D_3$ is within a factor $\gamma_2\gamma_{3}\gamma_4\theta_2^9\theta_{3}^7\theta_4^5$ of the maximum possible.
	
	In $D_3$ there is one remaining internal vertex that is incident to fixed edges. This vertex is the internal vertex of an 8-gon in $K_3$, so we may finish by replacing this 8-gon with a dispersed ring decomposition to obtain a partially fixed disc $D_4$, for which only the two boundary edges are fixed. As before, Lemma~\ref{pa:shatteredcount} gives us at least $\gamma_1\theta_1^9n^{17}$ dispersed ring decompositions in $K_{0}$, and therefore Lemma~\ref{pa:numofvKs} tells us that the number of copies of $D_4$ is within the constant factor $\gamma_1\gamma_2\gamma_{3}\gamma_4\theta_1^9\theta_2^9\theta_{3}^7\theta_4^5$ of the trivial maximum.
	
	Drawings of the full structure of $D_3$ and $D_4$ would be too complicated to be illuminating, but we include Figure~\ref{pa:fig:fcvKproof}, which gives a global view of the replacement sequence we have performed. In this figure we show $D_1$, $D_2$, $D_3$ and $D_4$ but instead of drawing all the unfixed edges, we simply indicate where they are with red hatching.
	
	\begin{figure}
		\centering
		\begin{subfigure}{\linewidth}
			\centering
			\begin{tikzpicture}[scale=1.8, every node/.style={scale=1.0}]
			\draw (0,0) ellipse (2cm and 1cm);
			
			\draw (-2,0) node {$\bullet$};
			\draw (2,0) node {$\bullet$};
			\draw (-2/3,0) node {$\bullet$};
			\draw (2/3,0) node {$\bullet$};
			\draw (0,2/3) node {$\bullet$};

			\draw[{Latex[length=4mm, width=2mm]}-] (0,2/3) -- (2/3,0) ;

			\draw[{Latex[length=5mm, width=1mm]}-] (2,0) -- (0,2/3) ;
			\draw[{Latex[length=4mm, width=2mm]}-] (0,2/3) -- (-2,0) ;

			\draw[-{Latex[length=4mm, width=2mm]}] (0,2/3) -- (-2/3,0) ;
			\draw[-{Latex[length=4mm, width=2mm]}] (-2,0) -- (-2/3,0) ;
			\draw[-{Latex[length=4mm, width=2mm]}] (2/3,0) -- (-2/3,0) ;
			\draw[-{Latex[length=5mm, width=1mm]}] (2/3,0) -- (2,0) ;
			\draw[-{Latex[length=4mm, width=2mm]}] (1.9988,0.005) -- (2,0);
			\draw[-{Latex[length=4mm, width=2mm]}] (1.9988,-0.005) -- (2,0);
			
			\begin{scope}
			\clip (-2,0) rectangle (2,-1.3); 
			\draw[pattern=north west lines, pattern color=red] (0,0) ellipse (2cm and 1cm);
			\end{scope}
			\end{tikzpicture}
		\end{subfigure}

		\begin{subfigure}{\linewidth}
			\centering
			\begin{tikzpicture}[scale=1.8, every node/.style={scale=1.0}]
			\draw (0,0) ellipse (2cm and 1cm);
			
			\draw (-2,0) node {$\bullet$};
			\draw (2,0) node {$\bullet$};
			
			\draw (2/3,0) node {$\bullet$};
			\draw (0,2/3) node {$\bullet$};

			\draw[{Latex[length=4mm, width=2mm]}-] (0,2/3) -- (2/3,0) ;

			\draw[{Latex[length=5mm, width=1mm]}-] (2,0) -- (0,2/3) ;
			\draw[{Latex[length=4mm, width=2mm]}-] (0,2/3) -- (-2,0) ;

			\draw[-{Latex[length=5mm, width=1mm]}] (2/3,0) -- (2,0) ;
			\draw[-{Latex[length=4mm, width=2mm]}] (1.9988,0.005) -- (2,0);
			\draw[-{Latex[length=4mm, width=2mm]}] (1.9988,-0.005) -- (2,0);
			
			\begin{scope}
			\clip (-2,1) rectangle (2,-1.3); 
			\clip[rotate around={(18.435:(-2,0)}] (-4,0) rectangle (4,-10); 
			\clip[rotate around={(315:(0,2/3)}] (-4,2/3) rectangle (4,-10); 
			\draw[pattern=north west lines, pattern color=red] (0,0) ellipse (2cm and 1cm);
			\end{scope}
			
			\begin{scope}
			\clip (-2,0) rectangle (2,-1.3); 
			\clip[rotate around={(135:(0,2/3)}] (-4,2/3) rectangle (4,-10); 
			\draw[pattern=north west lines, pattern color=red] (0,0) ellipse (2cm and 1cm);

			\end{scope}
			\end{tikzpicture}
		\end{subfigure}

		\begin{subfigure}{\linewidth}
			\centering
			\begin{tikzpicture}[scale=1.8, every node/.style={scale=1.0}]
			\draw (0,0) ellipse (2cm and 1cm);
			
			\draw (-2,0) node {$\bullet$};
			\draw (2,0) node {$\bullet$};
			\draw (0,2/3) node {$\bullet$};

			\draw[{Latex[length=5mm, width=1mm]}-] (2,0) -- (0,2/3) ;
			\draw[{Latex[length=4mm, width=2mm]}-] (0,2/3) -- (-2,0) ;

			\draw[-{Latex[length=4mm, width=2mm]}] (1.9988,0.005) -- (2,0);
			\draw[-{Latex[length=4mm, width=2mm]}] (1.9988,-0.005) -- (2,0);
			
			\begin{scope}
			\clip (-2,1) rectangle (2,-1.3); 
			\clip[rotate around={(18.435:(-2,0)}] (-4,0) rectangle (4,-10); 
			\clip[rotate around={(341.565:(0,2/3)}] (-4,2/3) rectangle (4,-10); 
			\draw[pattern=north west lines, pattern color=red] (0,0) ellipse (2cm and 1cm);
			\end{scope}
			\end{tikzpicture}
		\end{subfigure}

		\begin{subfigure}{\linewidth}
			\centering
			\begin{tikzpicture}[scale=1.8, every node/.style={scale=1.0}]
			\draw (0,0) ellipse (2cm and 1cm);
			
			\draw (-2,0) node {$\bullet$};
			\draw (2,0) node {$\bullet$};

			\draw[-{Latex[length=4mm, width=2mm]}] (1.9988,0.005) -- (2,0);
			\draw[-{Latex[length=4mm, width=2mm]}] (1.9988,-0.005) -- (2,0);
			
			\begin{scope}
			\draw[pattern=north west lines, pattern color=red] (0,0) ellipse (2cm and 1cm);
			\end{scope}
			\end{tikzpicture}
		\end{subfigure}
		\caption{The sequence of four popular replacements from the proof of Lemma~\ref{pa:lem10}. Starting with a fixed disc corresponding to a slit octahedron, we progressively unfix all but the two boundary edges. Our unfixing process modifies the triangulation, and we represent the modified part with the red hatching (for example, the top figure represents $D_1$, shown in full detail in Figure~\ref{pa:fig:1ststepfcvK}). All edges in the triangulation represented by the red hatching are unfixed.}\label{pa:fig:fcvKproof}
	\end{figure}
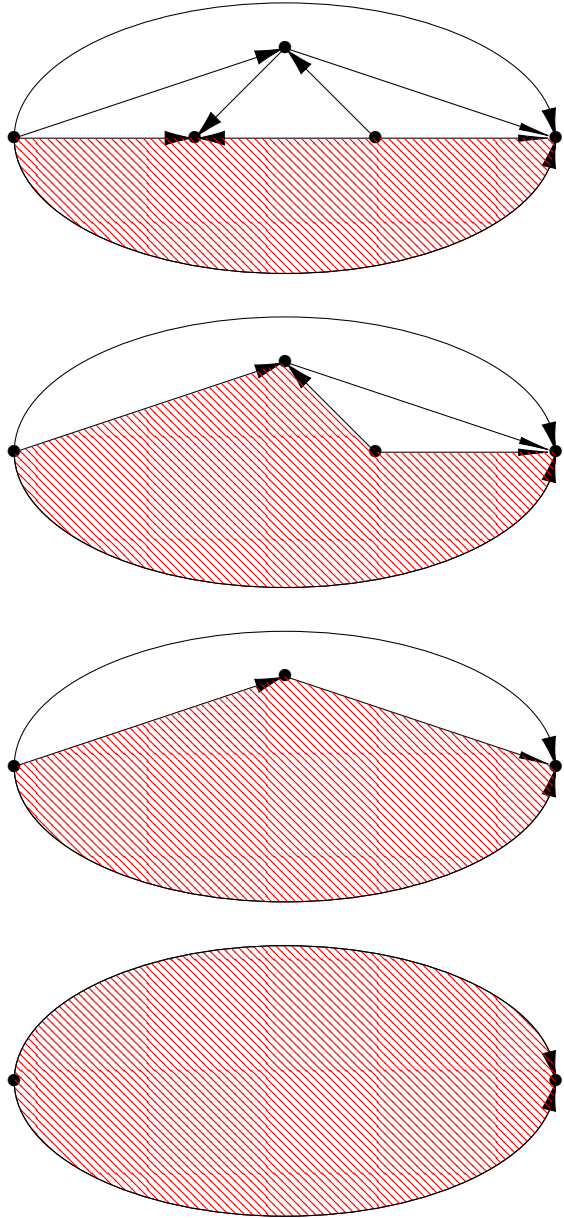
	
	Recall that this entire collection of copies of $D_4$ in $K_0$ was obtained by starting with a given slit octahedron, which yielded a disc with boundary edges $z$ and $z_j$. By performing this sequence of popular replacements for each choice of $j\in\{1,\dots,M\}$ we obtain $M$ different collections of copies of the same partially fixed disc. Each of these collections has a fixed boundary, but one of the two fixed boundary edges differs from collection to collection. By taking the union over all these collections, we obtain a final collection $\mathcal{D}$ of copies of $D_4$ in which only the label on one of the two boundary edges is fixed. 
	
	Now we need an upper bound for the maximum number of copies of the partially fixed disc $D_4'$, which is the same as $D_4$ except that only one of the two boundary edges is fixed. We cannot immediately apply Lemma~\ref{pa:numofvKs} since the entire boundary is not fixed. But we can modify $D_4'$ by attaching one new triangular face onto the unfixed boundary edge and fixing the other two edges of this face. We thus obtain a new partially fixed disc $D_4''$ with a boundary consisting of three fixed edges, and every internal edge {of $D_4''$} is unfixed. The maximum number of copies of $D_4''$ is at most the maximum number of copies of $D_4'$, since adding extra fixed edges cannot increase the number. We can now apply Lemma~\ref{pa:numofvKs} to $D_4''$, which has the same number of internal vertices as $D_4$. Therefore the maximum number of copies of $D_4''$ is the same as that of $D_4$, and hence the maximum number of copies of $D_4'$ is at most that of $D_4$. 
	
	But the size of the collection $\mathcal{D}$ is at least $M$ times the number of copies of $D_4$ found in the popular replacement process just described, which is within a constant factor $\gamma_1\gamma_2\gamma_{3}\gamma_4\theta_1^9\theta_2^9\theta_{3}^7\theta_4^5$ of the maximum possible. Therefore if $$M\gamma_1\gamma_2\gamma_{3}\gamma_4\theta_1^9\theta_2^9\theta_{3}^7\theta_4^5$$
	$$\ge M\epsilon^{2^{450}}>1$$
	then we have our contradiction. Therefore we may take $L=K_4$, which has at least $\alpha_4n^2\ge \epsilon^{2^{400}}n^2$ faces.	
\end{proof}

Our `removal lemma' for slit octahedra follows from this lemma.

\begin{theorem}\label{noslitocts}
	Let $K$ be an $n\times n\times n$ linear tripartite simplicial complex that contains at least $\e n^5$ octahedra. Then there is a subcomplex $L$ of $K$ with at least $\e^{O(1)} n^2$ faces that contains no slit octahedra.
\end{theorem}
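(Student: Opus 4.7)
The plan is to combine Lemma~\ref{pa:lem10} with a weighted independent-set argument in the auxiliary graph. First I would apply Lemma~\ref{pa:lem10} to obtain a subcomplex $L^{(0)} \subseteq K$ with at least $\e^{2^{400}} n^2$ faces such that $G(L^{(0)})$ has maximum degree at most $\Delta := \e^{-2^{450}}$. The key observation is that the two boundary edges of any slit octahedron carry the same ``label'' in the partial Latin square picture (cf.~Figure~\ref{pa:fig:fovK}), so they always lie between the same pair of vertex classes in $K$. Consequently $G(L^{(0)})$ decomposes into three subgraphs $G_X, G_Y, G_Z$ supported on the edges of the respective classes, each still of maximum degree at most $\Delta$.

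For each class $t \in \{X, Y, Z\}$ in turn, the plan is to pass to a subcomplex of the current complex whose $t$-edges form an independent set in $G_t$, losing only a bounded factor in the face count. Give each $t$-edge $e$ a weight $w_t(e)$ equal to the number of faces in the current subcomplex that contain $e$, so that the total weight equals the current face count. A weighted Caro--Wei type argument --- for instance, assigning each vertex of $G_t$ an independent uniform $[0,1]$ random variable and keeping those vertices whose variable is strictly smaller than all their neighbours' --- produces an independent set whose expected total weight is $\sum_e w_t(e)/(\deg_{G_t}(e)+1) \ge (\text{total weight})/(\Delta+1)$. Restricting to those faces whose $t$-edge lies in this independent set therefore loses at most a factor of $\Delta+1$ in the face count. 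After three iterations, one per class, we obtain a subcomplex $L'$ with at least $\e^{2^{400}}/(\Delta+1)^3 \cdot n^2 \ge \e^{O(1)} n^2$ faces.

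Finally I would observe that $L'$ has no slit octahedra. Any slit octahedron $O$ in $L' \subseteq L^{(0)}$ has its two boundary edges in some common class $t$, and both must lie in the class-$t$ independent set used at the corresponding restriction step; but these two edges are adjacent in $G_t$ by the very definition of the auxiliary graph, contradicting independence. The main obstacle is essentially conceptual rather than technical: one has to notice that Lemma~\ref{pa:lem10}'s max-degree conclusion translates directly into a multiplicative face-count loss via a weighted independent-set bound, and that the three types of slit octahedra --- indexed by which vertex class their boundary edges belong to --- can be removed in three independent stages precisely because they correspond to the three disjoint subgraphs $G_X, G_Y, G_Z$ of the auxiliary graph. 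No further combinatorial input is required beyond Lemma~\ref{pa:lem10}.
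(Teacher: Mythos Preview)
Your proposal is correct and follows essentially the same route as the paper: apply Lemma~\ref{pa:lem10}, then for each of the three edge classes pass to a weighted independent set in the bounded-degree auxiliary graph, losing a factor of at most $\Delta+1$ each time. The only difference is cosmetic---the paper uses a greedy selection (always pick the remaining vertex lying in the most faces, discard its neighbours), while you use a probabilistic Caro--Wei argument; both yield the same $1/(\Delta+1)$ weight bound.
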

\begin{proof}
	We apply Lemma~\ref{pa:lem10}. This gives us a subcomplex $L$ of $K$ with at least $\epsilon^{2^{400}}n^2$ faces such that the associated graph $G(K)$ has maximum degree at most $\epsilon^{-2^{450}}$.
	
	Let the vertices of $K$ and $L$ be $u,v$ and $w$, and let $X$ be the set of edges from $u$ to $v$, let $Y$ the set of edges from $v$ to $w$, and let $Z$ the set of edges from $u$ to $w$. To avoid confusion, it is important to keep in mind that these edges are the \emph{vertex} sets of the auxiliary graph $G(K)$.
	
	We now pick a maximal independent subset $I_X$ of $X$ in $G(K)$. We first pick a vertex $x\in X$ that (when considered as an edge of $K$) belongs to the largest number of faces of $K$ and add it to $I_X$. Then we discard all vertices in the neighbourhood of $x$ in the graph $G(K)$ and repeat, picking at each stage the remaining vertex that belongs to the largest number of faces of $K$. Since the maximum degree of $G(K)$ is at most $\epsilon^{-2^{450}}$, we end up picking at least $\epsilon^{2^{450}}n$ vertices from $X$, and these vertices when thought of as edges belong to at least a fraction $\epsilon^{2^{450}}$ of the faces of $K$ since at each stage we chose an edge that belonged to an above average number of faces. 
	Let $K_1$ be the subcomplex of $K$ induced by $I_X$, $Y$ and $Z$. Then $K_1$ has at least $\epsilon^{2^{451}}n^2$ faces, and inside $K_1$ there is no slit octahedron with its boundary edges belonging to $X$. 
	
	Since $G(K_1)$ is a subgraph of $G(K)$, it also has maximum degree at most $\epsilon^{-2^{450}}$, and we may similarly choose an independent set $I_Y$ in the graph $G(K_1)$ of at least $\epsilon^{2^{450}}n$ vertices from $Y$, accounting for at least a fraction $\epsilon^{2^{450}}$ of the faces of $K_1$. This gives us a subcomplex $K_2$ with at least $\epsilon^{2^{452}}n^2$ faces with no slit octahedron with its boundary edges belonging to either $X$ or $Y$. 
	
	Finally, we choose an independent set $I_Z$ in the graph $G(K_2)$ of at least $\epsilon^{2^{450}}n$ vertices from $Z$, accounting for the greatest fraction of faces of $K_2$. This gives us a subcomplex $K_3$ with at least $\epsilon^{2^{453}}n^2$ faces with no slit octahedra, which we take as our subcomplex $L$.
\end{proof}

Theorem~\ref{noflappycub}, stated in the introduction, follows immediately from Theorem~\ref{noslitocts}. 

\begin{proof}[Proof of Theorem~\ref{noflappycub}]
	The result follows by applying Theorem~\ref{noslitocts} to the partial Latin square $A$ (viewed as a linear hypergraph as explained in Lemma~\ref{PLSequalsH}) and noting that the resulting subcomplex $L$ of $K(A)$ corresponds to a subset $B$ of the partial Latin square $A$ of positive density (depending only on $\e$). Moreover, since $L$ contains no slit octahedra, it follows that $B$ satisfies the quadrangle condition (Definition~\ref{qc2}).
\end{proof}

\subsection{The general case}

Almost all of the complexity of the general case is contained in the detailed account given for the slit octahedron in the previous section. What remains is to describe how the replacement steps work in general, so that we can see that the argument for the slit octahedron generalizes straightforwardly to arbitrary discs with boundaries of length 2.

The outline of the approach is as above. Given a disc $D$ with boundary of length 2 and a linear tripartite simplicial complex $K$, we shall define the auxiliary graph $G(K,D)$ on the set of edges in the 1-skeleton of $K$ by joining $K$-edges $x$ and $y$ by a $G(K,D)$-edge if there is a copy of $D$ in $K$ with $x$ and $y$ as its two boundary edges.

The main lemma will show that we may pass to a dense subcomplex $L$ of $K$ such that the auxiliary graph $G(L,D)$ has bounded degree for each $D$ of size below some chosen bound. If this is the case, then the elimination of small discs of boundary length 2 is straightforward -- as in the proof of Theorem~\ref{noslitocts}, we will simply pass down to independent sets in the graphs $G(L,D)$ in such a way that we avoid discarding too much of $L$.

The proof of the main lemma is similar to that of Lemma~\ref{pa:lem10}. Given $M$ fixed copies of the disc $D$ with boundary edges $z$ and $z_j$ (for $j=1,\dots,M$), we shall unfix the edges by using popular decompositions of constituent $2r$-gons that surround internal vertices. At each stage we have, for each $j$, a collection of almost maximal size of copies of a partially fixed disc with boundary edges $z$ and $z_j$. We aim to show that once all edges incident to internal vertices are unfixed, we will have more than the trivial maximum number of copies of a certain partially fixed disc in $K'$ unless $M$ is bounded above by some constant that is independent of $n$ (which will have a power dependence on $\epsilon$, with the exponent depending on the number of faces of $D$). 

{The condition that $b\geq 100$ in the next lemma is purely for convenience, as when $b\geq 100$ it makes certain calculations easier. Of course, the result for $b\geq 100$ implies the result for $b<100$.}

\begin{lemma}\label{pa:lem11}
	Let $K$ be an $n\times n\times n$ linear tripartite simplicial complex containing at least $\epsilon n^5$ octahedra, and $b\ge 100$. Then we can pass to a subcomplex $L$ of $K$ with at least $\epsilon^{b^{20b}}n^2$ faces such that for each disc $D$ with at most $b$ faces and a boundary of length 2, the maximum degree in the graph $G(L,D)$ is at most $\epsilon^{-b^{20b}}$.
\end{lemma}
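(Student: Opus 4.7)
The plan is to generalize the popular-replacement scheme of Lemma~\ref{pa:lem10} from the specific slit octahedron to an arbitrary disc $D$ with boundary of length~$2$ and at most $b$ faces. First I would apply Theorem~\ref{surfacethm3.3} with $k=b$ to produce a chain $K=K_0\supset K_1\supset\dots\supset K_s$, taking $s=\lceil b/2\rceil$; this choice suffices because any such $D$ has, by Euler's formula (with boundary length~$2$), exactly $f/2\le b/2$ internal vertices, and the unfixing process below will consume one step per internal vertex of $D$. I would then set $L=K_s$, so that $|L|\ge\alpha_s(\e,b)n^2\ge\e^{b^{15s}}n^2\ge\e^{b^{20b}}n^2$. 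Since the number of combinatorial types of disc $D$ with at most $b$ faces and boundary length~$2$ is bounded in terms of $b$ only, and the forthcoming argument uses only the structural properties of $L$ furnished by Theorem~\ref{surfacethm3.3}, the same $L$ will work uniformly for every such $D$.

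Next, fixing $D$ and supposing for contradiction that some vertex $z$ has degree $M>\e^{-b^{20b}}$ in $G(L,D)$, I would pick $z_1,\dots,z_M$ and an associated family of copies of $D$ in $L$ with boundary $\{z,z_j\}$, and for each $j$ separately carry out a sequence of popular replacements. Starting from the fully fixed copy $D^{(0)}$, at step $i$ I would choose an internal vertex $v$ of $D^{(i)}$ still incident to at least one fixed edge, identify the $2r_i$-gon $F$ formed by the faces around $v$, and replace $F$ by a dispersed ring decomposition (Definition~\ref{2rprisms}), declaring all newly introduced internal edges to be unfixed. Theorem~\ref{surfacethm3.3} guarantees at least $\gamma_{s-i}\theta_{s-i}^{2r_i+1}n^{4r_i+1}$ such decompositions in $K_{s-i-1}$, and Lemma~\ref{pa:numofvKs} gives $n^{4r_i+1}$ as the trivial maximum for the newly introduced internal vertices, so the number of copies of $D^{(i+1)}$ remains within a factor of $\gamma_{s-i}\theta_{s-i}^{2r_i+1}$ of the trivial maximum at each step.

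The process terminates after $t\le b/2$ steps, because each replacement strictly decreases the number of internal vertices incident to a fixed edge ($v$ is removed, while the $4r_i+1$ newly introduced internal vertices are incident only to unfixed edges). At termination only the two boundary edges are still fixed, and multiplying the per-step factors yields at least $C(\e,b)n^{V_I(D)}$ copies of the resulting partially fixed disc $D^{(t)}$ with boundary $\{z,z_j\}$, where using $\gamma_i,\theta_i\ge\e^{b^{15i}}$, $r_i\le b/2$, and $t\le b/2$ gives $C(\e,b)\ge\e^{b^{18s}}$. Summing over $j=1,\dots,M$ produces a collection of at least $MC(\e,b)n^{V_I(D)}$ copies of the partially fixed disc in which only the single edge $z$ is fixed. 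Attaching one extra triangle onto the unfixed boundary edge with its other two edges fixed (exactly as in Lemma~\ref{pa:lem10}) and applying Lemma~\ref{pa:numofvKs} bounds the count by $n^{V_I(D)}$, forcing $M\le\e^{-b^{18s}}\le\e^{-b^{20b}}$ for $s=\lceil b/2\rceil$ (since $18s\le 9b\le 20b$), the desired contradiction.

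The hard part will be the non-overcounting verification at each replacement step: two distinct copies of $D^{(i)}$ that produce the same copy of $D^{(i+1)}$ after replacement must in fact coincide. As in Lemma~\ref{pa:lem10}, this uses that the boundary of $F$ is shared with the rest of $D^{(i)}$ (so it is determined by the image of $D^{(i+1)}$), and that choosing $v$ to be incident to a fixed edge guarantees that $F$ contains at least one fixed internal edge; by the linearity of the tripartite simplicial complex, the boundary of $F$ together with one internal edge determine $F$ uniquely, whence the two copies of $D^{(i)}$ coincide. The invariant that such a $v$ exists for as long as some non-boundary edge of $D^{(i)}$ remains fixed is immediate from the construction, and this is precisely what allows the process to be iterated all the way down to a disc with only the two boundary edges fixed.
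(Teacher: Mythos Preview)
Your approach is essentially the paper's, with slightly different parameter choices: you take $k=b$ and depth $s=\lceil b/2\rceil$ where the paper takes $k=2b$ and depth $b$, and you bound the number of internal vertices of $D$ by $f/2$ via Euler's formula rather than by $3b/4$ via the cruder counting the paper uses. These are legitimate tightenings.

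However, there is a genuine gap. Your claim that $r_i\le b/2$ is only correct for $i=0$; for later steps it is false. When you replace the $2r$-gon around $v$ by a dispersed ring decomposition, each boundary vertex of that $2r$-gon goes from being contained in two faces of the gon to being contained in four faces of the replacement, so its degree in $D^{(i+1)}$ increases by~$2$. Thus the degree of an original internal vertex can grow with each replacement that has it on the boundary, and after up to $b/2-1$ prior replacements a vertex's degree could be as large as $b+2(b/2-1)=2b-2$, giving $r_i\le b-1$ rather than $r_i\le b/2$. The paper makes exactly this observation and uses it to justify the choice $k=2b$. Your choice $k=b$ happens (just) to suffice, since $r_i\le b-1<b=k$, but you must supply this degree-growth argument; otherwise there is no guarantee that Theorem~\ref{surfacethm3.3} applies at every step, and your numerical estimate for $C(\e,b)$ (which used $r_i\le b/2$) needs to be redone with $r_i\le b$. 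The final bound still fits comfortably inside $\e^{-b^{20b}}$, so this is repairable, but as written it is a gap.

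One minor slip: in the penultimate paragraph you write $n^{V_I(D)}$ for the trivial maximum of the final disc, but it should be $n^{V_I(D^{(t)})}=n^{\sum_i(4r_i+1)}$; the cancellation in the final inequality uses this latter quantity on both sides.
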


\begin{proof}
	We begin the proof, as we began the proof of Lemma~\ref{pa:lem10}, by applying Theorem~\ref{surfacethm3.3}, which we do with $k=2b$. We obtain a sequence $K=K_0\supset K_1\supset \dots$ with the property that $K_i$ has at least $\alpha_i(\epsilon,2b)n^2$ faces and for each $r=2,\dots,k$ we have that every $2r$-gon in $A_i$ has at least $\gamma_i(\e,2b)\theta_i(\epsilon,2b)^{2r+1}n^{4r+1}$ different {dispersed ring decompositions}, where each of $\alpha_i,\gamma_i$ and $\theta_i$ are at least $\epsilon^{(2b)^{15i}}\ge\e^{b^{20i}}$. Our tripartite complex $L$ will be $K_b$, which has at least $\epsilon^{b^{20b}}n^2$ faces. Note that $L$ has been chosen independently of any particular disc $D$: it will in fact serve for all discs with boundary of length 2 and at most $b$ faces. 
Note that the number of internal vertices of any such disc is at most $3b/4<b$, since each internal vertex is contained in at least four faces and each face contains at most three internal vertices.
	
	Now let $D$ be any disc with at most $b$ faces and with all its edges fixed. Our goal is to unfix all edges except the boundary edges. As before, our unfixing steps involve picking vertices from the diagram, removing all of their incident faces and re-triangulating the resulting $2r$-gon using the dispersed ring decomposition, taking all internal edges of this dispersed ring decomposition to be unfixed. Starting with $D=D_0$, this process will lead us to construct a sequence $D=D_0, D_1, D_2, \dots$ of partially fixed discs and associated collections $\mathcal{D}_i$ of copies of these discs, where the copies in the family $\mathcal D_i$ live in the complex $K_{s-i}$.
	
	In the previous section, we performed the replacements one by one and ensured at each stage that the size of $\mathcal{D}_i$ was within a constant of the maximum possible. For the general case, it will be simplest to perform the latter check at the end, once all replacements have been made and we have reached a partially fixed disc $D_s$ in which all edges incident to internal vertices are unfixed.
	
	At each stage, we pick any vertex $v$ inside $D_i$ (not on the boundary) such that $v$ is incident to fixed edges. We then consider the faces containing $v$ -- there are $2r_i$ of them giving a $2r_i$-gon. We replace this $2r_i$-gon with a dispersed ring decomposition with unfixed internal edges, giving us $D_{i+1}$. As before, the collection of copies $\mathcal{D}_{i+1}$ is obtained from $\mathcal{D}_i$ by choosing each possible replacement for each member of $\mathcal{D}_i$. As in the slit octahedron case, we will have that the size of $\mathcal{D}_{i+1}$ is equal to at least the size of $\mathcal{D}_i$ times the minimum number of different dispersed ring decompositions of the $2r_i$-gon in the complex $K_{s-i-1}$. We do not overcount, since if two copies of $D_i$ agree on all edges apart from those incident to $v$ then, since $v$ is also incident to a fixed edge, they must agree everywhere.
	
	At each stage we reduce the number of internal vertices incident to fixed edges by exactly one, so the number of unfixing steps that we need to perform is equal to the number of internal vertices of the disc $D$, which is at most $3b/4$. Moreover, the maximum degree of a vertex in $D$ is bounded above by $b$ and this increases by at most two with each popular replacement. Thus, the maximum value of $r$ for which we ever perform a popular replacement of a $2r$-gon is bounded above by $(b+2(3b/4))/2\leq 2b=k$.
	
	We now consider the disc $D_s$ that we get at the end of this process. Each time we do a popular replacement of a $4r_i$-gon, we increase the size of the family by a factor $\gamma_{k+1-i}\theta_{k+1-i}^{2r_i-1}n^{4r_i+1}$.
	So at the end of the process, the size of the collection $\mathcal D_s$ is at least
	\[\gamma_{b}^b\theta_{b}^{4b^2}\prod_{i=1}^s n^{4r_i+1}\ge \epsilon^{b^{20b}}\prod_{i=1}^s n^{4r_i+1}.\]
	
	The number of internal vertices of $D_s$ is $\sum_{i=1}^s (4r_i+1)$, since at each step of the unfixing process we replace one internal vertex by the $4r_i+1$ internal vertices of a dispersed ring decomposition. So, by Lemma~\ref{pa:numofvKs}, the maximum possible size of a collection of copies of $D_s$ that agree on the boundary edges is $\prod_{i=1}^s n^{4r_i+1}$. 
	
	Therefore $|\mathcal{D}_s|$ is within a constant factor of the maximum possible. Indeed the constant factor $\eta$ is bounded by
	$$\eta\ge \epsilon^{b^{20b}}.$$
	
	As before, we may perform the same unfixing process (in the same order) for each vertex $z_i$ ($i=1,\dots,M$). Each one gives us a collection of discs with fixed boundary edges. The union of these collections is $\mathcal{D}$, a collection of copies of the partially fixed disc $D'$ obtained by unfixing the appropriate boundary edge of $D_s$. By the same trick as in the previous section, we can apply Lemma~\ref{pa:numofvKs} to deduce that the maximum possible number of copies of $D'$ is in fact the same as the maximum possible number of copies of $D_s$, and therefore we obtain a contradiction if $M\eta>1$. Therefore $M\le \epsilon^{-b^{20b}}$, which proves the lemma.
\end{proof}

We are finally ready to prove Theorem~\ref{nosmallslitspheres}, which we restate here with explicit bounds.

\begin{theorem}\label{nosmallslitsphereseffective}
	Let $b$ be a positive integer, let $K$ be an $n\times n\times n$ linear tripartite simplicial complex, and suppose that $K$ contains at least $\e n^5$ octahedra. Then $K$ contains a subcomplex $L$ with at least $\e^{b^{25b}}n^2$ faces such that $L$ does not contain a copy of any disc with area less than $b$ and boundary of length 2.
\end{theorem}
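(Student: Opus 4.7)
The proof combines Lemma~\ref{pa:lem11} with a greedy independent-set extraction analogous to the proof of Theorem~\ref{noslitocts}. I would first apply Lemma~\ref{pa:lem11} to obtain a subcomplex $L_0 \subset K$ with at least $\epsilon^{b^{20b}} n^2$ faces such that, for every disc $D$ with at most $b$ faces and boundary of length 2, the auxiliary graph $G(L_0, D)$ of Definition~\ref{auxiliary} has maximum degree at most $\epsilon^{-b^{20b}}$. (For $b \leq 99$ the hypothesis $b \geq 100$ of Lemma~\ref{pa:lem11} must be relaxed, but the proof of that lemma goes through with essentially the same exponents; alternatively, for $b \leq 3$ the theorem is vacuous since no disc with area $\leq 2$ can have boundary length $2$.)

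Let $\mathcal{D}$ be the set of isomorphism classes of abstract triangulated discs of area less than $b$ and boundary length $2$. By a crude Euler-characteristic count---any such disc has at most $O(b)$ vertices, edges and faces---$|\mathcal{D}| \leq N(b)$ for some function $N$ depending only on $b$, where $N(b) \leq b^{O(b)}$ suffices. I would then form the union graph $G^{*} = \bigcup_{D \in \mathcal{D}} G(L_0, D)$ on the $1$-skeleton of $L_0$, which has maximum degree at most $N(b)\,\epsilon^{-b^{20b}}$. Any independent set in $G^{*}$ contains no pair of edges that bound a copy of any forbidden disc in $L_0$.

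Finally, I would perform three successive greedy extractions of independent sets in $G^{*}$, one for each edge class $X$, $Y$, $Z$ of $L_0$, exactly as in the proof of Theorem~\ref{noslitocts}: at each round the vertex of largest current face-degree is added and its $G^{*}$-neighbours deleted. Each round retains at least a $(N(b)\epsilon^{-b^{20b}} + 1)^{-1}$ fraction of the current set of faces, so the resulting subcomplex $L$ has density at least $\epsilon^{b^{20b}}\,(N(b)\epsilon^{-b^{20b}} + 1)^{-3}$. Using the bound $\epsilon \leq 10^{-3}$ (inherited from Theorem~\ref{surfacethm3.3} via Lemma~\ref{pa:lem11}) and the comfortable slack between $4 b^{20b} + 3\log_{1/\epsilon} N(b)$ and $b^{25b}$, this is at least $\epsilon^{b^{25b}} n^2$. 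Since $L$ is independent in $G^{*}$ on each edge class, it contains no copy of any disc in $\mathcal{D}$, completing the proof. The main conceptual content is entirely in Lemma~\ref{pa:lem11}; I expect the only remaining obstacle to be the routine verification that the exponents line up.
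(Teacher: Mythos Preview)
Your proposal is correct and follows essentially the same route as the paper: apply Lemma~\ref{pa:lem11}, take the union of the auxiliary graphs over all disc types of bounded area (the paper uses the explicit bound $(3b/2+1)^{b+1}$ for $|\mathcal D|$), and then greedily extract independent sets in each of the three edge classes as in Theorem~\ref{noslitocts}. The only cosmetic difference is in how the case $b<100$ is handled; the paper simply remarks before Lemma~\ref{pa:lem11} that the $b\geq 100$ case already covers smaller $b$, whereas you sketch a direct relaxation---either works.
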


\begin{proof}
	Let $u,v$ and $w$ be the vertices of $K$, let $X_1$ be the set of edges from $u$ to $v$, let $X_2$ be the set of edges from $v$ to $w$, and let $X_3$ be the set of edges from $u$ to $w$. We apply Lemma~\ref{pa:lem11} to obtain a subcomplex $L$ of $K$ such that the graph $G(L,D)$ has maximum degree at most $\epsilon^{-b^{20b}}$ for any tripartite disc $D$ homeomorphic to a disc with fewer than $b$ faces and with boundary of length 2. The goal is now to pass to subsets $X_1'\subset X$, $X_2'\subset X_2$, and $X_3'\subset X_3$ such that for $i=1,2,3$, $G_i(L_X,D)$ contains no edges for any choice of $D$ as above, where $L_X$ is the subcomplex of $L$ induced by the edges in $X_1, X_2$ and $X_3$ and $G_i(L_X,D)$ is the auxiliary graph with respect to $L_X$ and $D$ with vertex set $X_i$.
	
	In order to do this, we introduce the graph $G(L,b)$ which is the union of all graphs $G_i(L,D)$ where $D$ is as above and $i=1,2$ or $3$. Since a tripartite disc with boundary of length 2 has $3b/2+1$ vertices, the number of different $D$ is at most $(3b/2+1)^{b+1}$, so $G(L,b)$ has maximum degree at most $(3b/2+1)^{b+1}\epsilon^{-b^{20b}}$.
	
	Now, as in the proof of Theorem~\ref{noslitocts}, we select our subsets $X_i$ by passing to independent sets in the $G(L,b)$ in such a way that the number of faces in the induced subcomplex $L_V$ is maximized. Doing this gives us a subcomplex $L$ which is guaranteed to have at least 
	\[\Big((3b/2+1)^{-(b+1)}\epsilon^{b^{20b}}\Big)^3 n^2 \ge\epsilon^{b^{25b}}n^2\]
faces, and which contains no copy of any disc with area less than $b$ and boundary of length 2.
\end{proof}
\begin{remark}
	Of course, Theorem~\ref{nosmallslitsphereseffective} implies a version of Theorem~\ref{noslitocts}, although the bound is somewhat worse because Theorem~\ref{nosmallslitsphereseffective} uses crude estimates for the number of replacements required (whereas in the proof of Theorem~\ref{noslitocts} we determine an exact sequence of four replacements for the slit octahedron, and determine each $r_i$ required).
\end{remark}

\section{Obtaining the approximate isomorphism}

We sketched the rest of the proof of our main theorem in Section \ref{sketch}. In this short section we give a detailed proof. The previous sketch is probably sufficient for a reader with a background in geometric group theory, so this section is mainly for the benefit of combinatorialists who may not have such a background.

Our aim in this section is to prove the following statement, the hypotheses of which come from the conclusion of Theorem \ref{nosmallslitsphereseffective}.

\begin{proposition}\label{translation}
Let $b$ be a positive integer, let $K$ be the van Kampen complex of a partial Latin square $(X,Y,Z,A,\lambda)$ and suppose that $K$ does not contain a disc with area less than $b$ and boundary of length 2. Then there is a metric group $G$ and maps $\phi:X\to G$, $\psi:Y\to G$ and $\omega:Z\to G$ such that the images $\phi(X), \psi(Y)$ and $\omega(Z)$ are 1-separated sets, and $d(\phi(x)\psi(y),\omega(z))\leq b^{-1}$ whenever $(x,y)\in A$ and $\lambda(x,y)=z$.
\end{proposition}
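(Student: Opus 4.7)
The plan is to take $G$ to be the free group on $X \sqcup Y \sqcup Z$ (passing to disjoint copies if necessary), equipped with the bi-invariant metric $d$ arising from the van Kampen construction over the set of relations $R = \{xyz^{-1} : (x,y)\in A,\ \lambda(x,y)=z\}$. Concretely, for $g,h \in G$ set
\[ d(g,h) = b^{-1} \cdot \mathrm{Area}(gh^{-1}), \]
where $\mathrm{Area}(w)$ denotes the minimum number of 2-cells in a van Kampen diagram over $R$ with boundary word $w$, with the convention $\mathrm{Area}(w)=+\infty$ if no such diagram exists. The maps $\phi,\psi,\omega$ will be the inclusions of $X,Y,Z$ into $G$ as generators, so they are automatically injective.

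First I would verify that $d$ is a (possibly $+\infty$-valued) bi-invariant metric. Symmetry and the triangle inequality are standard facts about van Kampen diagrams: reversing orientation changes the boundary word to its inverse without changing area, and concatenating two diagrams along a shared boundary arc gives a diagram whose area is the sum of the two. Bi-invariance follows because conjugating a boundary word by a letter can be realised by attaching a ``lollipop'', leaving the area unchanged. Finally, $d(g,h)=0$ forces the van Kampen diagram to be empty, hence $gh^{-1}=1$ in $G$ and $g=h$.

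The approximation condition is then immediate. If $(x,y,z)$ satisfies $\lambda(x,y)=z$, then $xyz^{-1}\in R$ is a single relator, so the diagram consisting of one 2-cell with boundary word $xyz^{-1}$ has area $1$. Hence $d(\phi(x)\psi(y),\omega(z)) = b^{-1}\mathrm{Area}(xyz^{-1}) \le b^{-1}$.

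The main step is to deduce 1-separation of $\phi(X),\psi(Y),\omega(Z)$ from the hypothesis on $K$. Consider distinct $x_1,x_2\in X$ (the arguments for $Y$ and $Z$ are identical). A van Kampen diagram over $R$ with boundary word $x_1x_2^{-1}$ translates in a standard way into a copy of a disc in $K$: each 2-cell, labelled by a relator $xyz^{-1}\in R$, maps to the face of $K$ corresponding to $(x,y,z)\in A$, and the incidence structure of the diagram then yields a simplicial map to $K$ respecting the tripartition, which is precisely a copy of a disc in $K$ in the sense of Definition~\ref{surfacedefs}. Under this translation, the area of the diagram equals the area of the resulting disc in $K$, and the boundary word of the diagram has length equal to the boundary length of the disc, namely $2$. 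Thus a van Kampen diagram with boundary word $x_1x_2^{-1}$ of area $<b$ would produce a copy of a disc in $K$ of boundary length $2$ and area $<b$, contradicting the hypothesis. Therefore $\mathrm{Area}(x_1x_2^{-1})\ge b$, giving $d(\phi(x_1),\phi(x_2))\ge 1$ as required. The main obstacle, such as it is, is the bookkeeping of the van-Kampen-to-disc translation, but this is essentially built into the definition of the van Kampen complex $K$ and can be verified directly.
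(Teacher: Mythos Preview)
Your proposal is correct and follows essentially the same approach as the paper: take $G$ to be the free group on $X\cup Y\cup Z$ with $b^{-1}$ times the van Kampen metric, observe that each relator gives a one-face diagram yielding $d(xy,z)\le b^{-1}$, and use the hypothesis on $K$ to rule out small-area diagrams with boundary word $uv^{-1}$ for distinct generators $u,v$, giving 1-separation. The paper's proof is terser because the verification that the van Kampen distance is a bi-invariant metric, and the correspondence between van Kampen diagrams and discs in $K$, are handled in the surrounding discussion rather than inside the proof itself.
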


More informally, the conclusion of the proposition is saying (for large $b$) that the partial Latin square is approximately isomorphic to part of the multiplication table of a metric group.

The definition of the metric group is given by a simple universal construction, as we said in Section \ref{sketch}. Assume, as we clearly may, that $X,Y$ and $Z$ are disjoint sets. Then $G$ is simply the free group generated by $X\cup Y\cup Z$. As for the metric on $G$, it is the largest metric that is compatible with the `approximate relations' $d(xy,z)\leq b^{-1}$, where we have such a relation for every triple $(x,y,z)$ with $(x,y)\in A$ and $\lambda(x,y)=z$ (or equivalently for every face of the associated linear tripartite 3-uniform hypergraph). Recall that we allow infinite distances.

However, if we want to prove Proposition \ref{translation}, it is simpler to use the more explicit description of this metric that we also mentioned in Section \ref{sketch}. For the main result, we do not need to know that the two metrics coincide, but we shall briefly indicate the (standard) proof once Proposition \ref{translation} is established.

\begin{definition}
Let $v_1$ and $v_2$ be two elements of the free group on $X\cup Y\cup Z$, let $w_1$ and $w_2$ be two words representing $v_1$ and $v_2$ and let $R$ be a set of relations of the form $xy=z$. The \emph{van Kampen distance} $d(v_1,v_2)$ is the smallest area of a van Kampen diagram with relations from $R$ and boundary word $w_1w_2^{-1}$. If no such diagram exists, then we set $d(v_1,v_2)$ to be infinite. \end{definition}

Of course, the above definition can be generalized to any set of relations on any free group, but we content ourselves with the case that concerns us in this paper.

It is not hard to check that the distance above is well-defined. To prove this, it is enough to show that if we insert an inverse pair into either $w_1$ or $w_2$, the smallest area of a van Kampen diagram with boundary word $w_1w_2^{-1}$ does not change. This is obvious, provided one allows van Kampen diagrams to contain degenerate parts that consist of an edge traversed forwards and then immediately backwards (appropriately directed, and with a label $u$ that counts as $u$ in the forwards direction and $u^{-1}$ in the backwards direction). 

A similar argument shows that it it is translation invariant. To see, for example, that it is left-translation invariant, note that any van Kampen diagram with boundary word $w_1w_2^{-1}$ can be converted into a van Kampen diagram with boundary word $uw_1w_2^{-1}u^{-1}$ by adding an edge labelled $u$ and directed into the vertex at the beginning of $w_1$ and the end of $w_2$, and conversely, given any van Kampen diagram with that boundary word, we can simply remove the edge labelled $u$ and obtain a van Kampen diagram for $w_1w_2^{-1}$. 

The fact that it satisfies the triangle inequality is again straightforward. Given van Kampen diagrams with boundary words $w_1w_2^{-1}$ and $w_2w_3^{-1}$, one can create a van Kampen diagram with boundary word $w_1w_3^{-1}$ by gluing them together along their common section of boundary $w_2$. The area of this new word is the sum of the areas of the first two words, so we are done.

Symmetry is obvious. To see that $d(v_1,v_2)=0$ only if $v_1=v_2$, observe that a van Kampen diagram of area zero is simply a labelled tree. The boundary word is obtained by following a path round the tree in the standard way, and a simple induction, removing one isolated vertex at a time, then shows that this word reduces to the empty word when one cancels inverse pairs.

\begin{proof}[Proof of Proposition \ref{translation}]
Let $G$ be the free group on $X\cup Y\cup Z$ with $b^{-1}$ times the van Kampen metric. If $(x,y)\in A$ and $\lambda(x,y)=z$, then the triangle with directed edges $ab$ labelled $x$, $bc$ labelled $y$, and $ac$ labelled $z$ is a van Kampen diagram of area 1 with boundary word $xyz^{-1}$, so $d(xy,z)\leq b^{-1}$. 

If $u$ and $v$ are distinct elements of $X\cup Y\cup Z$, then by hypothesis the smallest van Kampen diagram with boundary word $uv^{-1}$ has area at least $b$, so $d(u,v)\geq 1$. It follows that the images of $X, Y$ and $Z$ are 1-separated, as claimed.  
\end{proof}

\begin{remark} It is also straightforward to show that $d(u,v)=\infty$ unless $u$ and $v$ belong to the same one of the sets $X, Y$ and $Z$. Indeed, since every relation derived from the partial Latin square is of the form $x_iy_j=z_k$, there is a homomorphism from the group with generators $X\cup Y\cup Z$ and all those relations to the group with presentation $\langle x,y,z|xy=z\rangle$, which is isomorphic to the free group generated by any two of $x, y$ and $z$. Therefore, it is not possible to prove using the relations that two generators from different classes are equal. Since a van Kampen diagram with boundary word $uv^{-1}$ \emph{does} prove that $x=y$, the claim follows. 
\end{remark}

\begin{remark}
The fact that ($b^{-1}$ times) the van Kampen metric really is the largest that is compatible with the `approximate relations' $d(x_iy_j,z_k)\leq b^{-1}$ comes from an inductive argument. Suppose that $D$ is another metric compatible with these approximate relations. We need to prove that if $w_1,w_2$ are two words and there is a van Kampen diagram of area $s$ and boundary word $w_1w_2^{-1}$, then $D(w_1,w_2)\leq b^{-1}s$. If $s=1$, this is trivial. For larger $s$, we can split the van Kampen diagram into two parts, both of positive area, as follows.

Without loss of generality at least one edge $ab$ on the part of the boundary corresponding to $w_1$ bounds a face $F$. Let $w_3$ be the word obtained by following the boundary until it gets to $a$, then going round $F$ the other way to $b$, and proceeding to the end of $w_1$. Then $d(w_1,w_3)=1$ and $d(w_3,w_2)=s-1$. By induction it follows that $D(w_1,w_3)\leq b^{-1}$ and $D(w_3,w_2)\leq b^{-1}(s-1)$, and then by the triangle inequality we deduce that $D(w_1,w_2)\leq b^{-1}s$. 
\end{remark}

\begin{remark}
Another way of thinking about the van Kampen distance, and one of the main reasons van Kampen diagrams were defined, is that it corresponds to the length of the shortest proof that two words are equal given a certain set of relations. Roughly speaking, the length of the proof is defined to be the number of times the relations need to be used in order to transform one word into another. (Inserting or cancelling inverse pairs is not regarded as contributing to the proof length.) Thus, we can think of Theorem \ref{nosmallslitsphereseffective} as saying that a partial Latin square with many octahedra contains a dense partial Latin square for which there is no short proof that it is not isomorphic to part of the multiplication table of a group.
\end{remark}

\section{Concluding remarks}

It is important to stress that although algebraically the group $G$ introduced in the proof of Proposition~\ref{translation} is just a free group, the metric gives it a much more interesting structure. Indeed, one can think of this metric as an approximate group presentation: instead of declaring that certain words are equal to the identity, we declare that they are \emph{close} to the identity, and then we take the distance to be the largest one that is compatible with these `approximate relations'. (Note that this should be read as `approximate group-presentation' and not `approximate-group presentation'.)

Theorem \ref{main} gives us in particular a metric group $G$ and three 1-separated subsets $\phi(X),\psi(Y), \omega(Z)$ of $G$ of comparable size with the property that for a constant proportion of pairs $(x,y)\in \phi(X)\times \psi(Y)$ there exists $z\in \omega(Z)$ such that $d(xy,z)\leq\d$, where $\d=b^{-1}$. If we replace the condition $d(xy,z)\leq\d$ by the condition that $xy=z$, we obtain a condition that is very closely related to the definition of an approximate group. In particular, we can conclude that there is an approximate group $H$ of size not much larger than $|\phi(X)|$ and translates $xH$ and $Hy$ of $H$ such that a constant proportion of the points of $\phi(X)$ belong to $xH$ and a constant proportion of the points of $\psi(Y)$ belong to $Hy$. In the first appendix we show that a suitable `metric entropy version' of this result holds, which allows us to replace equality by approximate equality and obtain an appropriate conclusion, where the notion of an approximate group is replaced by that of an approximate group that is also approximate in a metric sense. We call these structures `rough approximate groups'. (To the best of our knowledge, this concept was first formulated by Tao \cite{taobp}, and a slight adaptation of it was introduced and studied by Hrushovski \cite{hrushovski}, who called it a metrically approximate subgroup.)

It would be very interesting to go further and describe in a more concrete way the structure of rough approximate groups, ideally obtaining an analogue of the results of Breuillard, Green and Tao on approximate groups \cite{BGT}. We have not attempted to formulate a conjecture along these lines, but examples such as taking a maximal $\d$-separated subset of a small ball about the identity in $\so3$, where the size of the ball tends to zero with $\d$ but much more slowly than $\d$, suggest that Lie groups of bounded rank are likely to play a role, and also that the part played by nilpotency may be significantly different. 

It is natural to ask whether there is an analogue of the results of this paper for Abelian groups. In a forthcoming paper we address this question, identifying a structure that plays the role that the octahedron plays for general groups, in the sense that if the number of copies of that structure in a partial Latin square is within a constant of maximal, then the partial Latin square has Abelian-group-like behaviour. The proof turns out to be quite a lot harder, because it is necessary to consider tripartite surfaces of higher genus, and that leads to significant complications.

\subsection*{Acknowledgement}
We would like to thank an anonymous referee, whose suggestions led us to rewrite the paper substantially and greatly improve its presentation.

\appendix
\appendixpage

\section{Rough approximate groups}

Let $G$ be a group. A subset $H$ of $G$ is a $k$-\emph{approximate subgroup} if it contains the identity, it is closed under taking inverses, and there exists a set $K$ of size at most $k$ such that $HH\subset KH$ -- that is, if the product set $HH$ can be covered by a bounded number of (left) translates of $H$. If $G$ is a metric group, we shall say that a subset $H$ is a $(k,\d)$-\emph{rough approximate subgroup} if there is a set $K$ of size at most $k$ such that $HH\subset(KH)_\d$, where for any subset $U$ we write $U_\d$ denotes the $\d$-expansion $\{x:d(x,U)\leq\d\}$ of $U$. Thus, $H$ is a rough approximate subgroup if every point in $HH$ can be approximated by a point in one of a bounded number of translates of $H$. By a rough approximate group, we mean simply a rough approximate subgroup of some metric group. (As with approximate groups themselves, it is possible to define rough approximate groups more intrinsically, but since ours arise naturally as subsets of an ambient group, we shall not do this.) 

{From now on, we will take the mappings $\phi,\psi$ and $\omega$ from Theorem~\ref{main} as given, and refer to $\phi(X),$ $\psi(Y)$ and $\omega(Z)$ as $X$, $Y$ and $Z$ instead.

With this convention,} Theorem \ref{main} yields for us three 1-separated subsets $X,Y,Z$ of a metric group $G$, all of roughly the same size, and a small positive number $\d$, such that $d(xy,Z)\leq\d$ for a positive proportion of pairs $(x,y)\in X\times Y$. In this appendix we shall deduce that there is a rough approximate subgroup $H$ of $G$ such that $X$ has substantial overlap with a left translate of $H$, $Y$ has substantial overlap with a right translate, and $Z$ has substantial overlap with a two-sided translate. The (slightly stronger) precise statement is Theorem~\ref{gettingrag} below. The arguments are mostly contained in either \cite{taobp} or \cite{taopaper}, and those that are not are fairly straightforward modifications or extensions of those arguments. It is for that reason, and because the result is something of an optional extra to our main result, that we present it in an appendix rather than in the main body of the paper. 

\subsection{Metric entropy definitions and some basic observations}

Given a subset $X$ of a metric space, and another subset $\Delta$, we say that $\Delta$ is an $\e$-\emph{net} of $X$ if for every $x\in X$ there exists $y\in\Delta$ such that $d(x,y)<\e$. An $\e$-\emph{separated subset} of $X$ is a subset $\Gamma$ such that $d(x,x')\geq\e$ for every pair of distinct elements $x,x'\in\Gamma$. Write $\nu_\e(X)$ for the smallest size of an $\e$-net of $X$, and $\sigma_\e(X)$ for the largest size of an $\e$-separated subset. We begin with three very basic lemmas.

\begin{lemma} \label{nusigma}
Let $X$ be a subset of a metric space and let $\e>0$. Then $\nu_\e(X)\leq\sigma_\e(X)\leq\nu_{\e/2}(X)$. \end{lemma}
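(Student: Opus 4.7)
The plan is to prove each inequality separately using standard packing-covering duality arguments, both of which follow from maximality considerations together with the triangle inequality.

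For the first inequality $\nu_\e(X)\le\sigma_\e(X)$, I would take a largest $\e$-separated subset $\Gamma\subset X$, so $|\Gamma|=\sigma_\e(X)$. The key observation is that $\Gamma$ is then automatically an $\e$-net of $X$: if there were some $x\in X$ with $d(x,y)\ge\e$ for all $y\in\Gamma$, then $\Gamma\cup\{x\}$ would be a strictly larger $\e$-separated subset (using that the separation condition is $d\ge\e$), contradicting maximality. Hence $\Gamma$ is an $\e$-net (after noting that if one wants a \emph{strict} inequality $d(x,y)<\e$ in the net condition, one gets it because the failure of maximality would give $d\ge\e$, and the negation is $d<\e$). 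Thus $\nu_\e(X)\le|\Gamma|=\sigma_\e(X)$.

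For the second inequality $\sigma_\e(X)\le\nu_{\e/2}(X)$, I would let $\Delta$ be a minimum-sized $\e/2$-net of $X$, so $|\Delta|=\nu_{\e/2}(X)$, and let $\Gamma$ be any $\e$-separated subset of $X$. For each $x\in\Gamma$ choose $f(x)\in\Delta$ with $d(x,f(x))<\e/2$. The key step is to check that $f$ is injective: if $x,x'\in\Gamma$ are distinct and $f(x)=f(x')$, then by the triangle inequality $d(x,x')<\e/2+\e/2=\e$, contradicting $\e$-separation. Hence $|\Gamma|\le|\Delta|$, and taking the supremum over $\Gamma$ gives $\sigma_\e(X)\le\nu_{\e/2}(X)$.

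Neither step presents a real obstacle; the only thing requiring mild care is matching the strict/non-strict conventions in the definitions of net (strict $<\e$) and separated set (non-strict $\ge\e$), and this is exactly what makes the factor of $2$ in the second inequality necessary: with $\e/2$-nets one gets strict distance $<\e$ between two points of $\Gamma$ sharing an image, which contradicts the $\ge\e$ separation, whereas using $\e$-nets directly would give only $d<2\e$.
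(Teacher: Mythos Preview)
Your proof is correct and essentially identical to the paper's: both inequalities are proved via the standard packing--covering duality, using that a maximal $\e$-separated set is an $\e$-net for the first, and that no two $\e$-separated points can lie within $\e/2$ of the same net point for the second.
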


\begin{proof}
Let $\Gamma$ be an $\e$-separated set of maximal size. Then in particular it is maximal. It follows that it is an $\e$-net. This proves the first inequality. 

Now let $\Delta$ be an $(\e/2)$-net. Then the balls of radius $\e/2$ about the points of $\Delta$ cover $X$, and no $\e$-separated set can contain more than one element in any of these balls. This proves the second inequality.
\end{proof}

\begin{lemma} \label{product}
Let $X$ and $Y$ be subsets of metric spaces and let $d$ be the metric on $X\times Y$ defined by $d((x,y),(x',y'))=d(x,x')\vee d(y,y')$. Then $\nu_\e(X\times Y)\leq\sigma_{\e/2}(X)\sigma_{\e/2}(Y)$.
\end{lemma}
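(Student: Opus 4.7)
The plan is to produce an explicit $\e$-net for $X\times Y$ of the desired size by taking the product of maximal $(\e/2)$-separated subsets of $X$ and $Y$. Concretely, I would begin by selecting a maximal $(\e/2)$-separated subset $\Gamma_X\subset X$ and a maximal $(\e/2)$-separated subset $\Gamma_Y\subset Y$. By the definitions, $|\Gamma_X|\le\sigma_{\e/2}(X)$ and $|\Gamma_Y|\le\sigma_{\e/2}(Y)$.

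Next I would invoke the observation used in the proof of Lemma~\ref{nusigma}: a maximal $\e$-separated subset of a metric space is automatically an $\e$-net, since otherwise a missed point could be added without destroying separation. Applied here, this yields that $\Gamma_X$ is an $(\e/2)$-net of $X$ and $\Gamma_Y$ is an $(\e/2)$-net of $Y$.

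Finally I would check that $\Gamma_X\times\Gamma_Y$ is an $\e$-net of $X\times Y$ under the sup metric. Given $(x,y)\in X\times Y$, choose $x'\in\Gamma_X$ with $d(x,x')<\e/2$ and $y'\in\Gamma_Y$ with $d(y,y')<\e/2$; then
\[d((x,y),(x',y'))=d(x,x')\vee d(y,y')<\e/2<\e,\]
so $\Gamma_X\times\Gamma_Y$ is even an $(\e/2)$-net, and in particular an $\e$-net. Hence
\[\nu_\e(X\times Y)\le|\Gamma_X\times\Gamma_Y|=|\Gamma_X|\,|\Gamma_Y|\le\sigma_{\e/2}(X)\sigma_{\e/2}(Y),\]
which is the stated inequality. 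There is no real obstacle here; this is essentially a bookkeeping lemma, and the only thing to be careful about is remembering to use $(\e/2)$-separated (rather than $\e$-separated) sets on each factor so that the sup metric automatically gives distances less than $\e$.
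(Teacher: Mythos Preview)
Your proof is correct and follows essentially the same route as the paper: the paper's two-line argument chains the inequalities $\nu_\e(X\times Y)\leq\nu_{\e/2}(X)\nu_{\e/2}(Y)\leq\sigma_{\e/2}(X)\sigma_{\e/2}(Y)$ via Lemma~\ref{nusigma}, and your explicit construction of $\Gamma_X\times\Gamma_Y$ from maximal $(\e/2)$-separated subsets is exactly what unpacks that chain.
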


\begin{proof}
By Lemma \ref{nusigma}, we have that
\[\nu_\e(X\times Y)\leq\nu_{\e/2}(X)\nu_{\e/2}(Y)\leq\sigma_{\e/2}(X)\sigma_{\e/2}(Y).\]
\end{proof}

\begin{lemma} \label{inverse}
Let $X$ be a subset of a metric group and let $\e>0$. Then $\nu_\e(X)=\nu_\e(X^{-1})$ and $\sigma_\e(X)=\sigma_\e(X^{-1})$.
\end{lemma}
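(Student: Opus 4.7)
The plan is to observe that in any metric group (as defined earlier in the paper, with a bi-invariant metric $d$), the inversion map $\iota: G \to G$ given by $\iota(x) = x^{-1}$ is an isometry, and then to transfer nets and separated sets through this isometry.

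First I would verify bi-invariance implies $d(x^{-1}, y^{-1}) = d(x,y)$. By left-invariance, $d(x^{-1}, y^{-1}) = d(yx \cdot x^{-1}, yx \cdot y^{-1}) = d(y, yxy^{-1})$. More cleanly: by left-invariance $d(x,y) = d(1, x^{-1}y)$, and by right-invariance $d(x^{-1}, y^{-1}) = d(x^{-1} \cdot y, y^{-1} \cdot y) = d(x^{-1}y, 1)$. Combining, $d(x^{-1}, y^{-1}) = d(1, x^{-1}y) = d(x,y)$, so $\iota$ is an isometry.

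Once this is established, both equalities are immediate. If $\Delta \subset G$ is an $\e$-net of $X$, then for any $x \in X$ there is $y \in \Delta$ with $d(x,y) < \e$, whence $d(x^{-1}, y^{-1}) < \e$, so $\Delta^{-1}$ is an $\e$-net of $X^{-1}$ of the same size. Applying the same argument in the reverse direction gives $\nu_\e(X) = \nu_\e(X^{-1})$. Similarly, if $\Gamma \subset X$ is $\e$-separated then $\Gamma^{-1} \subset X^{-1}$ is $\e$-separated of the same size, giving $\sigma_\e(X) = \sigma_\e(X^{-1})$.

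There is no real obstacle here — the lemma is essentially just the observation that inversion is an isometry in a bi-invariant metric group, and both $\nu_\e$ and $\sigma_\e$ are obviously preserved by any isometry of the ambient space that sends $X$ to $X^{-1}$.
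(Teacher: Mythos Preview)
Your proof is correct and takes essentially the same approach as the paper: both establish that inversion is an isometry via the bi-invariance of $d$ (the paper's chain is $d(x,y)=d(y,x)=d(e,y^{-1}x)=d(x^{-1},y^{-1})$), after which the equalities for $\nu_\e$ and $\sigma_\e$ are immediate.
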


\begin{proof} 
This is an immediate consequence of the fact that 
\[d(x,y)=d(y,x)=d(e,y^{-1}x)=d(x^{-1},y^{-1})\] 
for any two elements $x,y$ of a metric group.
\end{proof}

We shall write $\overline\nu_\e(X)$ for the size of the smallest non-strict $\e$-net of $X$ -- that is, of the smallest set $\Delta$ such that for every $x\in X$ there exists $y\in\Delta$ with $d(x,y)\leq\e$.

\begin{lemma} \label{p3lemma}
Let $X,Y,Z$ be 1-separated subsets of a metric group $G$, let $\d<\tfrac{1}{100}$, let $\e<1/6$, and suppose that $|Z|\leq\d^{-1}|X|^{1/2}|Y|^{1/2}$ and that $d(xy,Z)\leq\e$ for at least $\d|X||Y|$ pairs $(x,y)\in X\times Y$. Then there are subsets $X'\subset X$ and $Y'\subset Y$ with $|X'|\ge\d^{7}|X|$ and $|Y'|\ge\d^7|Y|$ such that $\overline\nu_{6\e}({X'}{Y'})\le\d^{-16}|X|^{1/2}|Y|^{1/2}$ and such that $d(xy,Z)\leq\e$ for at least $\d|X'||Y'|/4$ pairs $(x,y)\in X'\times Y'$.
\end{lemma}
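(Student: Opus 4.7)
Plan: This is a non-commutative, metric-entropy version of the Balog--Szemerédi--Gowers (BSG) theorem. The hypothesis that $d(xy,Z)\le\epsilon$ holds for at least $\delta|X||Y|$ pairs, with $|Z|\le\delta^{-1}|X|^{1/2}|Y|^{1/2}$, plays the role of a ``small restricted product set'' (of size $\sim\delta^{-1}|X|^{1/2}|Y|^{1/2}$), and the desired conclusion is the corresponding bound on the full product set of dense subsets, measured by $\overline\nu_{6\epsilon}$ rather than by cardinality. I would follow the standard two-stage BSG proof: first use dependent random choice on the bipartite graph $E=\{(x,y):d(xy,Z)\le\epsilon\}$ to obtain dense $X'\subset X$ and $Y'\subset Y$ in which most pairs are joined by length-3 paths through $E$, then turn each such path into a product factorisation and invoke bi-invariance of the metric to bound the range of $X'Y'$.

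For the first stage, note that 1-separation of $Z$ and $\epsilon<1/6$ make the ``nearest $Z$'' map $\zeta\colon E\to Z$ well-defined, so Cauchy--Schwarz on its fibers gives
$$\sum_{z\in Z}|E_z|^2\ge\frac{|E|^2}{|Z|}\ge\delta^3|X|^{3/2}|Y|^{3/2},$$
i.e.\ many collision quadruples $(x_1,y_1,x_2,y_2)\in E^2$ with $x_1y_1\approx_{2\epsilon}x_2y_2$. After pruning to $X_1=\{x:|N_E(x)|\ge\delta|Y|/2\}$ and $Y_1$ defined analogously (each of density $\ge\delta/2$), a Gowers-style dependent random choice---pick a random anchor in $Y_1$, restrict to its $E$-neighborhood in $X$, then discard those vertices whose own $E$-neighborhoods have small intersection with the anchor's, and symmetrically on the other side---produces $X'\subset X_1$ and $Y'\subset Y_1$ of densities $\ge\delta^7$ such that almost every pair $(x,y)\in X'\times Y'$ admits many length-3 paths $x\to y_1\to x_1\to y$ in $E$, and at least $\delta|X'||Y'|/4$ pairs in $X'\times Y'$ still lie in $E$. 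The exponent $7$ comes from tracking losses through a fixed (small) number of pigeonholing/averaging steps.

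The decisive step is converting the length-3 paths into the entropy bound. For each such path,
$$xy\;=\;(xy_1)(x_1y_1)^{-1}(x_1y),$$
and each factor lies within $\epsilon$ of some $z_i\in Z$, so bi-invariance of $d$ gives $d(xy,z_1z_2^{-1}z_3)\le3\epsilon$. The naive bound $\overline\nu_{6\epsilon}(X'Y')\le|Z|^3$ is far too weak, and the main obstacle is bringing it down to $\delta^{-O(1)}|Z|$. The BSG input is that for fixed $x$ the map $y_1\mapsto \zeta(x,y_1)$ is injective on $N_E(x)$ (by 1-separation of $Y$ and $2\epsilon<1$), and analogously for the other two factors; combined with the abundance of length-3 paths per good pair produced by the dependent random choice, this forces the triples $(z_1,z_2,z_3)$ that actually arise to lie in an essentially one-parameter family parametrised by a single choice in $Z$, rather than in the full product $Z\times Z^{-1}\times Z$. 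Carrying out the bookkeeping---essentially Tao's metric-entropy BSG from \cite{taobp}---yields $\overline\nu_{6\epsilon}(X'Y')\le\delta^{-16}|X|^{1/2}|Y|^{1/2}$, with the exponent $16$ again arising from a transparent tracking of the constant-many averaging steps.
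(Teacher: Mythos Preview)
Your overall strategy matches the paper's: form the bipartite graph $E$, apply dependent random choice to get dense $X',Y'$ in which pairs are connected by many length-$3$ paths, and use the identity $xy=(xy_1)(x_1y_1)^{-1}(x_1y)$ together with bi-invariance to place $xy$ within $3\epsilon$ of some $z_1z_2^{-1}z_3$. The paper in fact invokes its earlier Lemma~\ref{pa:lemtechnical} (with $k=1$) for the dependent random choice step, which is exactly what you are describing.

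However, your final counting step is misstated. There is no ``one-parameter family'' of triples, and none is needed. The paper's argument is much simpler than you suggest: for each $(x,y)\in X'\times Y'$, let $T(x,y)\subset Z^3$ be the set of triples $(z_1,z_2,z_3)$ arising from length-$3$ paths as above. The $1$-separation of $X,Y,Z$ makes the path-to-triple map a bijection, so $|T(x,y)|$ is at least the number of paths, which is $\ge\delta^{13}|X||Y|$. If $\{x_iy_i\}$ is a $6\epsilon$-separated subset of $X'Y'$, then the sets $T(x_i,y_i)$ are pairwise disjoint (since any triple in $T(x,y)$ forces $d(xy,z_1z_2^{-1}z_3)\le 3\epsilon$). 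Their union lies in $Z^3$, so
\[
m\cdot\delta^{13}|X||Y|\le |Z|^3\le \delta^{-3}|X|^{3/2}|Y|^{3/2},
\]
giving $m\le\delta^{-16}|X|^{1/2}|Y|^{1/2}$ directly. The ``naive'' bound $|Z|^3$ is not too weak once you divide by the number of paths per pair; this division is the whole point of arranging for \emph{many} paths, and you never need to restrict the set of triples.

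A second, related gap: you write that \emph{almost} every pair in $X'\times Y'$ has many length-$3$ paths. This is not enough. To bound $\overline\nu_{6\epsilon}(X'Y')$ you must cover \emph{every} product $xy$ with $x\in X'$, $y\in Y'$; a single exceptional pair could land anywhere. The paper's dependent random choice (Lemma~\ref{pa:lemtechnical}) is designed to give the conclusion for \emph{all} pairs $(x,y)\in X'\times Y'$, and this is essential. A standard one-sided DRC yielding only ``almost all'' would require further cleaning that you have not described.
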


\begin{proof}
Form a bipartite graph $G$ with vertex sets $X,Y$ by joining $x$ to $y$ if and only if $d(xy,z)\leq\e$. Then by hypothesis $G$ has density $\d$.

We shall apply Lemma~\ref{pa:lemtechnical}, but in order to do so we must first balance the sizes of the vertex sets. Suppose without loss of generality that $|X|\le |Y|$. From the above discussion, we recall that $|Y|\le \delta^{-4}|X|$. We now discard vertices of minimal degree from $Y$ one by one, until we arrive at a subset $Y_1\subset Y$ with $|Y_1|=|X|$. The edge density of the graph $G|_{X\times Y_1}$ is still at least $\delta$.

Applying Lemma \ref{pa:lemtechnical} with $k=1$, we can find $X'\subset X$ and $Y'\subset Y_1$ with $|X'|\ge \delta^2|X|/16\ge \d^7|X|$ and $|Y'|\geq\d^{2}|Y_1|/16\ge \delta^6|Y|/16\ge \d^7|Y|$ such that between any $x\in X'$ and $y\in Y'$ there are at least $\d^{9}|X||Y|$ paths of length 3 (with the two vertices in between not required to live in $X'$ and $Y'$) {and such that the graph $G|_{X'\times Y'}$ has density at least $\d/4$}. 

For each $x\in X'$ and $y\in Y'$, let $T(x,y)$ be the set of triples $(z_1,z_2,z_3)\in Z^3$ such that there exist $x_1\in X$ and $y_1\in Y$ with $d(xy_1,z_1), d(x_1y_1,z_2)$ and $d(x_1y,z_3)$ all at most $\e$. Since $X$, $Y$ and $Z$ are all 1-separated, there is a bijection between triples in $T(x,y)$ and paths of length 3 from $x$ to $y$ in the graph, so each set $T(x,y)$ has size at least $\d^{9}|X||Y_1|\ge \d^{13}|X||Y|$. 

Suppose now that $(z_1,z_2,z_3)$ belongs to $T(x,y)$ and $x_1,y_1$ are as above. Then from the three approximate relations and the fact that $$xy=xy_1(x_1y_1)^{-1}x_1y,$$ 
it follows that
\[d(xy,z_1z_2^{-1}z_3)\leq 3\e.\]

Now let $\Gamma=\{(x_1y_1),\dots,(x_my_m)\}$ be a $6\e$-separated subset of $X'Y'$. Then the balls of radius $3\e$ about the $x_iy_i$ are disjoint, from which it follows that the sets $T(x_i,y_i)$ are disjoint. But each one has size at least $\d^{13}|X||Y|$ and their union has size at most $|Z|^3$, so $m\le\d^{-13}|Z|^3|X|^{-1}|Y|^{-1}\le\d^{-16}|X'|^{1/2}|Y'|^{1/2}$. This bound holds for all $6\e$-separated subsets, so the result now follows from Lemma \ref{nusigma}.
\end{proof}

We remark that since $X$ and $Y$ are $1$-separated sets, we could if we wanted replace the cardinalities $|X'|$ and $|Y'|$ in the statement above by the quantities $\sigma_1(X')$ and $\sigma_1(Y')$. 

One of the main results of \cite{taopaper} is that if $X,Y$ are finite subsets of a group and $|XY|\leq C|X|^{1/2}|Y|^{1/2}$, then there exists an approximate group $H$ and sets $K,L$ of bounded size such that $X\subset KH$ and $Y\subset HL$. (One can of course take $K$ and $L$ to be the same by taking their union.) In the next subsection, we shall prove an analogous statement for our metric-entropy context. 

\subsection{Products with small metric entropy come from rough approximate groups}\label{appB}

The main theorem we prove in this subsection is the following metric-entropy variant of Theorem 4.6 of \cite{taopaper}.

\begin{theorem} \label{smalldoubling}
Let $G$ be a metric group, let $\b\geq 2048\e$, and let $X,Y\subset G$ be subsets such that $\nu_\e(XY)\leq C\sigma_\b(X)^{1/2}\sigma_\b(Y)^{1/2}$. Then there exists a $(16C^{16},256\e)$-rough approximate group $H\subset G$ and sets $K,L$ of sizes at most $256C^{32}$ and $2048C^{48}$, respectively, such that $KH$ is a {$584\e$-}net of $X$, $HL$ is a {$2304\e$-}net of $Y$, and $\nu_{128\e}(H)\leq 8C^{15}\sigma_\b(X)^{1/2}\sigma_\b(Y)^{1/2}$.
\end{theorem}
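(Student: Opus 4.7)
The plan is to adapt the classical argument (Theorem 4.6 of \cite{taopaper}), which handles the case where $\nu_\epsilon$ and $\sigma_\beta$ are replaced by cardinality, to the metric entropy setting. The three main ingredients will be (i) a metric Ruzsa triangle inequality, (ii) the construction of $H$ as a thickening of a double product like $X^{-1}X X^{-1} X$, and (iii) a metric Ruzsa covering lemma. The hypothesis $\beta \geq 2048\epsilon$ is designed to provide enough slack to absorb the triangle-inequality losses that accumulate over the roughly ten chained applications of these lemmas.

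First I would establish a metric Ruzsa triangle inequality along the lines of
\[\sigma_{\beta}(AB^{-1})\, \sigma_{\beta}(C) \;\leq\; \nu_{\epsilon}(AC)\, \nu_{\epsilon}(BC),\]
valid whenever $\beta$ exceeds a small multiple of $\epsilon$. The proof imitates the classical one: given $\beta$-separated families $\{a_i b_i^{-1}\}$ in $AB^{-1}$ and $\{c_j\}$ in $C$, one maps $(i,j)$ to $(a_i c_j, b_i c_j) \in AC \times BC$; if two such pairs collide up to $\epsilon$ in both coordinates, then $a_i b_i^{-1}$ and $a_{i'}b_{i'}^{-1}$ would differ by at most $2\epsilon$, contradicting $\beta$-separation. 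Applying this iteratively with $A = X, B = Y^{-1}, C$ suitable, and using Lemmas~\ref{nusigma} and~\ref{inverse}, I would deduce that $\sigma_\beta$ of any Plünnecke-type word such as $X^{-1}X$, $(XY)(XY)^{-1}$, or $X^{-1}X X^{-1} X$ is bounded by a fixed power of $C$ times $\sigma_\beta(X)^{1/2}\sigma_\beta(Y)^{1/2}$, which will play the role of the Plünnecke--Ruzsa inequalities in this setting.

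Next I would take $H$ to be a maximal $128\epsilon$-separated subset of $X^{-1}X X^{-1}X$. The bound $\nu_{128\epsilon}(H) \leq 8 C^{15}\sigma_\beta(X)^{1/2}\sigma_\beta(Y)^{1/2}$ then comes directly from the iterated Ruzsa estimates of the previous paragraph. To verify that $H$ is a $(16C^{16}, 256\epsilon)$-rough approximate group, I would observe that $HH$ sits inside an eightfold alternating product of $X$ and $X^{-1}$, whose metric entropy is again bounded by $C^{O(1)} \sigma_\beta(X)^{1/2}\sigma_\beta(Y)^{1/2}$, and then cover $HH$ by a controlled number of $256\epsilon$-translates of $H$ using a metric Ruzsa covering lemma of the following shape: if $\nu_{\epsilon}(AH) \leq M\, \sigma_{\beta}(H)$, then there is a set $T \subseteq A$ with $|T| \leq M$ such that $A$ lies in an $O(\epsilon)$-neighbourhood of $T H H^{-1}$. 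This lemma is proved by a greedy argument, picking $a_1, a_2, \ldots$ from $A$ so that $\beta$-separated subsets of $a_i H$ remain $\epsilon$-disjoint; the construction terminates in at most $M$ steps, and any remaining $a \in A$ is forced to be $O(\epsilon)$-close to some $a_i h h'^{-1}$. Applying this same covering lemma with $A = X$ and $A = Y^{-1}$ (using inversion invariance) produces the required sets $K$ and $L$ together with their size bounds $256 C^{32}$ and $2048 C^{48}$.

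The main obstacle is not conceptual but the bookkeeping of scales: each triangle-inequality step costs a constant factor of $\epsilon$, and at every Ruzsa-type application one needs to pass between nets and separated subsets via Lemma~\ref{nusigma}, which inflates $\epsilon$ by small constants. The powers $C^{16}, C^{32}, C^{48}$ in the conclusion directly reflect how many Plünnecke-type inequalities are composed, and the factor of $2048$ in the assumption $\beta \geq 2048\epsilon$ is what is needed to ensure that, after all thickenings, the separated sets used in the Ruzsa triangle inequality remain genuinely separated relative to the final error scale $256\epsilon$ occurring in the rough approximate group condition.
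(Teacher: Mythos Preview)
Your framework of metric Ruzsa triangle inequality plus metric Ruzsa covering is correct, and indeed these are exactly the paper's Lemmas~\ref{triangle} and~\ref{covering}. The gap is in the second step, where you claim that iterated Ruzsa gives control on ``any Pl\"unnecke-type word such as $X^{-1}X$, $(XY)(XY)^{-1}$, or $X^{-1}XX^{-1}X$''. In the non-commutative setting this is false, or at least not available by Ruzsa alone. From the hypothesis on $\nu_\e(XY)$, the triangle inequality (in the form of Corollary~\ref{sumdiff}) yields control on $XX^{-1}$ and on $Y^{-1}Y$, but \emph{not} on $X^{-1}X$. To see the obstruction, try to write $X^{-1}X=VW^{-1}$ in Lemma~\ref{triangle}: you are forced to take $V=W=X^{-1}$, and then you need $UV^{-1}=UX$ to be controlled for some pivot set $U$; but the only product you have is $XY$, and there is no way to manufacture control on any set of the form $(\cdot)X$ from it. The same obstruction blocks $X^{-1}XX^{-1}X$. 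This is precisely the well-known non-commutative obstacle that motivates Tao's approach in~\cite{taopaper}, and the metric version here inherits it.

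What the paper does instead is insert an extra ingredient between the triangle inequality and the covering lemma: a popular-differences argument (Lemmas~\ref{popdiff} and~\ref{plunnecke}). One first passes to $A=X$ with $\nu_{8\e}(AA^{-1})\leq C^2\sigma_\b(A)$, then defines $S\subset A^{-1}A$ to consist of the $(2\e,\b,\sigma_\b(A)/2C^2)$-popular differences. The popularity gives each element of $S$ many representations $y^{-1}x$, which is exactly what lets one bound $\nu_{128\e}(AS^3A^{-1})$ (Lemma~\ref{plunnecke}) by telescoping through the extra variables, sidestepping the direct control of $X^{-1}X$. The rough approximate group is then $H=S^2$, not a $128\e$-separated subset of $X^{-1}XX^{-1}X$ as you propose. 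After that, your covering strategy is essentially what the paper does via Lemmas~\ref{getrag} and~\ref{coverA}. So your outline is missing one genuine idea---the popular-differences construction---without which the chain of Ruzsa inequalities cannot be started on the $X^{-1}X$ side.
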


We begin with an analogue of the Ruzsa triangle inequality (which can also be found in \cite{taobp}).

\begin{lemma} \label{triangle}
Let $G$ be a metric group and let $U,V,W$ be subsets of $G$. Then $\nu_\e(U)\nu_\e(VW^{-1})\leq\sigma_{\e/4}(UV^{-1})\sigma_{\e/4}(UW^{-1})$.
\end{lemma}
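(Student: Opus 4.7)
The plan is to mimic the classical Ruzsa triangle inequality $|U|\,|VW^{-1}|\leq |UV^{-1}|\,|UW^{-1}|$, with cardinalities replaced by metric-entropy quantities and the underlying injection replaced by a map that is injective thanks to separation plus small perturbation. So first I would choose a maximal $\e$-separated subset $\Delta_1\subset U$ and a maximal $\e$-separated subset $\Delta_2\subset VW^{-1}$, which by Lemma~\ref{nusigma} have sizes at least $\nu_\e(U)$ and $\nu_\e(VW^{-1})$ respectively. I would also pick maximal $(\e/4)$-separated sets $\Gamma_1\subset UV^{-1}$ and $\Gamma_2\subset UW^{-1}$, which by Lemma~\ref{nusigma} double as $(\e/4)$-nets, so that there are maps $\pi_1:UV^{-1}\to\Gamma_1$ and $\pi_2:UW^{-1}\to\Gamma_2$ with $d(x,\pi_i(x))<\e/4$.

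Next, for each $t\in\Delta_2$ I would fix a factorization $t=v(t)w(t)^{-1}$ with $v(t)\in V$, $w(t)\in W$, and define
\[\psi:\Delta_1\times\Delta_2\to\Gamma_1\times\Gamma_2,\qquad \psi(u,t)=\bigl(\pi_1(uv(t)^{-1}),\,\pi_2(uw(t)^{-1})\bigr).\]
The whole argument then reduces to showing $\psi$ is injective, for that immediately gives $\nu_\e(U)\nu_\e(VW^{-1})\leq|\Delta_1||\Delta_2|\leq|\Gamma_1||\Gamma_2|=\sigma_{\e/4}(UV^{-1})\sigma_{\e/4}(UW^{-1})$.

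The key algebraic identity is
\[(uv(t)^{-1})^{-1}(uw(t)^{-1})=v(t)w(t)^{-1}=t,\]
so if $\psi(u_1,t_1)=\psi(u_2,t_2)$ and we set $a_i=u_iv(t_i)^{-1}$, $b_i=u_iw(t_i)^{-1}$, then $d(a_1,a_2)<\e/2$ and $d(b_1,b_2)<\e/2$. Using bi-invariance of the metric (which gives $d(x^{-1},y^{-1})=d(x,y)$ and left/right translation invariance) I estimate
\[d(t_1,t_2)=d(a_1^{-1}b_1,a_2^{-1}b_2)\leq d(a_1^{-1},a_2^{-1})+d(b_1,b_2)=d(a_1,a_2)+d(b_1,b_2)<\e.\]
Since $\Delta_2$ is $\e$-separated this forces $t_1=t_2$; and then $v(t_1)=v(t_2)$ gives $d(u_1,u_2)=d(a_1,a_2)<\e$, so $u_1=u_2$ by $\e$-separation of $\Delta_1$.

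There is no real obstacle here beyond being careful with the separation constants: the choice of a $(\e/4)$-net is dictated by the fact that two $(\e/4)$-approximations stack into an $\e/2$-error on each coordinate, which the triangle inequality in $G$ then combines into a total error strictly less than $\e$. The strictness is what lets the $\e$-separated sets $\Delta_1,\Delta_2$ force equality rather than merely closeness, so the use of maximal $(\e/4)$-separated (hence $(\e/4)$-net with strict inequality) projections is essential.
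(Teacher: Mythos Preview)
Your proof is correct and follows essentially the same approach as the paper. The paper defines the same map $(u,t)\mapsto(uv(t)^{-1},uw(t)^{-1})$ into $UV^{-1}\times UW^{-1}$, shows its image is $(\e/2)$-separated using the identical bi-invariance estimate, and then invokes Lemma~\ref{product} to pass to $\sigma_{\e/4}$ of the factors; your version simply short-circuits that last step by projecting onto maximal $(\e/4)$-separated subsets and arguing injectivity directly.
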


\begin{proof}
Let $\Gamma_1$ be an $\e$-separated subset of $U$ and let $\Gamma_2$ be an $\e$-separated subset of $VW^{-1}$. Define $\phi:\Gamma_1\times\Gamma_2\to UV^{-1}\times UW^{-1}$ by choosing for each $x\in\Gamma_2$ a pair of elements $(v(x),w(x))\in V\times W$ such that $v(x)w(x)^{-1}=x$, and then for each $(u,x)\in\Gamma_1\times\Gamma_2$ defining $\phi(u,x)$ to be $(uv(x)^{-1},uw(x)^{-1})$. 

Suppose now that $(u_1,x_1)$ and $(u_2,x_2)$ are elements of $\Gamma_1\times\Gamma_2$ such that $d(\phi(u_1,x_1),\phi(u_2,x_2))<\d$, where for our product metric we take the maximum of the metrics on $UV^{-1}$ and $UW^{-1}$. Then $d(u_1v(x_1)^{-1},u_2v(x_2)^{-1})<\d$ and $d(u_1w(x_1)^{-1},u_2w(x_2)^{-1})<\d$. Since $G$ is a metric group, it follows that 
\begin{align*}d(x_1,x_2)&=d(v(x_1)w(x_1)^{-1},v(x_2)w(x_2)^{-1})\\
&=d(v(x_1)u_1^{-1}u_1w(x_1)^{-1},v(x_2)u_2^{-1}u_2w(x_2)^{-1})\\
&<\d+\d=2\d.\\
\end{align*}
Therefore, if $\d\leq\e/2$ we can deduce that $x_1=x_2$, since they are both elements of $\Gamma_2$. But then $d(u_1,u_2)=d(u_1v(x_1)^{-1},u_2v(x_1)^{-1})<\d$, which implies that $u_1=u_2$ as well.

Since $\Gamma_1$ and $\Gamma_2$ were arbitrary $\e$-separated subsets, it follows that 
$$\sigma_\e(U)\sigma_\e(VW^{-1})\leq\sigma_{\e/2}(UV^{-1}\times UW^{-1}),$$ and hence by Lemmas \ref{nusigma} and \ref{product}, that $$\nu_\e(U)\nu_\e(VW^{-1})\leq\sigma_{\e/4}(UV^{-1})\sigma_{\e/4}(UW^{-1}).$$
\end{proof}

\begin{corollary} \label{sumdiff}
Let $\e,\d>0$ and let $X,Y$ be a subsets of a metric group such that $\nu_\e(XY)\leq C\sigma_{\d}(X)^{1/2}\sigma_{16\e}(Y)^{1/2}$. Then $\nu_{8\e}(XX^{-1})\leq C^2\sigma_{\d}(X)$.
\end{corollary}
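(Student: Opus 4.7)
The plan is to apply the Ruzsa-style triangle inequality of Lemma \ref{triangle} with a carefully chosen triple $(U,V,W)$ and then clean up using the elementary comparisons of Lemmas \ref{nusigma} and \ref{inverse}. Since the conclusion involves $XX^{-1}$, we want $VW^{-1}=XX^{-1}$, which forces $V=W=X$. Then the two factors on the right of Lemma \ref{triangle} both become $\sigma_{\cdot}(UX^{-1})$, and to connect with the hypothesis (which involves $XY$) we take $U=Y^{-1}$, so that $UX^{-1}=Y^{-1}X^{-1}=(XY)^{-1}$.

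Concretely, I would apply Lemma \ref{triangle} at scale $8\e$ rather than $\e$, so that the $\sigma$'s on the right are at scale $2\e$. This yields
\[
\nu_{8\e}(Y^{-1})\,\nu_{8\e}(XX^{-1})\;\le\;\sigma_{2\e}(Y^{-1}X^{-1})^2.
\]
By Lemma \ref{inverse}, $\sigma_{2\e}(Y^{-1}X^{-1})=\sigma_{2\e}(XY)$ and $\nu_{8\e}(Y^{-1})=\nu_{8\e}(Y)$. The two remaining tasks are to bound $\sigma_{2\e}(XY)$ from above and $\nu_{8\e}(Y)$ from below in terms of the quantities appearing in the statement.

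For the upper bound, Lemma \ref{nusigma} gives $\sigma_{2\e}(XY)\le\nu_\e(XY)$, and the hypothesis then gives $\sigma_{2\e}(XY)\le C\sigma_\d(X)^{1/2}\sigma_{16\e}(Y)^{1/2}$. For the lower bound, Lemma \ref{nusigma} gives $\nu_{8\e}(Y)\ge\sigma_{16\e}(Y)$. Substituting both into the displayed inequality yields
\[
\sigma_{16\e}(Y)\,\nu_{8\e}(XX^{-1})\;\le\;C^2\sigma_\d(X)\,\sigma_{16\e}(Y),
\]
and dividing through by $\sigma_{16\e}(Y)$ (which is positive whenever $Y$ is non-empty, in which case there is anything to prove) gives exactly the desired bound $\nu_{8\e}(XX^{-1})\le C^2\sigma_\d(X)$.

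There is no real obstacle here: the corollary is a one-line consequence of the triangle inequality once the scale-matching is set up correctly. The only thing to be careful about is that the factor of $8$ in the conclusion comes from applying the lemma at scale $8\e$ (so that $\e/4$ becomes $2\e$), and the choice of $\sigma_{16\e}(Y)$ in the hypothesis is exactly tuned so that the $\sigma_{16\e}(Y)^{1/2}$ factor from the hypothesis, when squared, cancels the $\sigma_{16\e}(Y)\le\nu_{8\e}(Y)$ factor on the left.
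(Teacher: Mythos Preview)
Your proof is correct and is essentially identical to the paper's own argument: the paper also applies Lemma~\ref{triangle} at scale $8\e$ with $U=Y^{-1}$, $V=W=X$, uses Lemma~\ref{inverse} to rewrite $\sigma_{2\e}(Y^{-1}X^{-1})=\sigma_{2\e}(XY)$, bounds this by $\nu_\e(XY)$ via Lemma~\ref{nusigma}, and cancels the $\sigma_{16\e}(Y)$ factor using $\nu_{8\e}(Y)\ge\sigma_{16\e}(Y)$.
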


\begin{proof}
By Lemma \ref{triangle}, Lemma \ref{inverse} and our hypothesis, we have that
\[\nu_{8\e}(Y^{-1})\nu_{8\e}(XX^{-1})\leq\sigma_{2\e}(Y^{-1}X^{-1})^2=\sigma_{2\e}(XY)^2\leq\nu_{\e}(XY)^2\leq C^2\sigma_{\d}(X)\sigma_{16\e}(Y).\]
By Lemmas \ref{inverse} and \ref{nusigma}, $\nu_{8\e}(Y^{-1})=\nu_{8\e}(Y)\geq\sigma_{16\e}(Y)$, so the result follows.
\end{proof}

Our next lemma is a version of the Ruzsa covering lemma.

\begin{lemma} \label{covering}
Let $\e>0$ and let $A,B$ be subsets of a metric group such that $\nu_\e(AB)\leq C\sigma_{2\e}(B)$. Then there exists a set $K$ of size at most $C$ such that $KBB^{-1}$ is a $2\e$-net of $A$.
\end{lemma}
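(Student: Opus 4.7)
The plan is to imitate the classical Ruzsa covering lemma, replacing the disjointness condition by a $2\e$-separation condition adapted to the metric-entropy hypothesis. The construction is greedy: I will build $K=\{a_1,a_2,\dots\}\subset A$ by repeatedly adding to $K$ any element $a\in A$ for which $d(ab,a_ib')>2\e$ for every $a_i$ already chosen and every $b,b'\in B$, stopping when no such $a$ remains. Since we will show $|K|\le C$, the procedure terminates after at most $C$ steps (and works regardless of whether $A$ is finite).

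The covering property is immediate from the stopping criterion combined with the bi-invariance of the metric. If $a\in A\setminus K$, then by maximality there exist $a_i\in K$ and $b,b'\in B$ with $d(ab,a_ib')\le 2\e$; applying right-translation by $b^{-1}$ gives $d(a,a_ib'b^{-1})\le 2\e$, so $a$ lies within distance $2\e$ of $KBB^{-1}$. If $a\in K$, then picking any fixed $b_0\in B$ (assuming $B$ nonempty, else the statement is vacuous) yields $a=a\cdot b_0\cdot b_0^{-1}\in KBB^{-1}$.

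To bound $|K|$, for each $a_i\in K$ choose a maximal $2\e$-separated subset $S_i$ of $a_ib\in a_iB$, so that $|S_i|=\sigma_{2\e}(a_iB)=\sigma_{2\e}(B)$, the last equality being a consequence of left translation being an isometry. I claim that $S:=\bigsqcup_i S_i$ is $2\e$-separated as a subset of $AB$. For two points in the same $S_i$ this holds by construction. For $x=a_ib\in S_i$ and $y=a_jb'\in S_j$ with $i<j$, the defining property of the greedy procedure (which added $a_j$ to $K$ after $a_i$) forces $d(a_jb',a_ib)>2\e$, so $d(x,y)>2\e$. Therefore
\[
|K|\,\sigma_{2\e}(B)=|S|\le \sigma_{2\e}(AB)\le \nu_\e(AB)\le C\sigma_{2\e}(B),
\]
where the middle inequality is Lemma~\ref{nusigma} and the last is the hypothesis. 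Dividing by $\sigma_{2\e}(B)$ gives $|K|\le C$, completing the proof.

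The only step requiring any care is showing that the union $\bigsqcup_i S_i$ is genuinely $2\e$-separated globally, not merely within each $S_i$; this is the whole reason for phrasing the greedy criterion in terms of the strengthened condition $d(ab,a_ib')>2\e$ over all $b,b'\in B$, rather than the weaker condition one might guess from the classical statement. Once that is set up correctly, the argument is a direct metric translation of Ruzsa's lemma, and no further difficulty arises.
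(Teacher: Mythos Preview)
Your proof is correct and follows essentially the same Ruzsa-covering strategy as the paper: choose $K\subset A$ maximal with the translates $\{aB:a\in K\}$ pairwise at distance at least (or greater than) $2\e$, derive the covering property from maximality and bi-invariance, and bound $|K|$ by producing a $2\e$-separated subset of $AB$ of size $|K|\,\sigma_{2\e}(B)$. The only cosmetic difference is that you build this separated set as a union of separate maximal $2\e$-separated sets $S_i\subset a_iB$, whereas the paper fixes a single $2\e$-separated $\Gamma\subset B$ and takes $K\Gamma$; since left translation is an isometry these are equivalent, and the paper's version is slightly more economical.
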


\begin{proof}
Let $K\subset A$ be maximal such that for any two distinct elements $x,x'\in K$ the distance between the sets $xB$ and $x'B$ is at least $2\e$. Then if $y\in A$ there must be some $x\in K$ such that $d(xB,yB)<2\e$, by maximality, from which it follows that $d(y,xBB^{-1})<2\e$. Therefore, $KBB^{-1}$ is a $2\e$-net of $A$. 

Now let $\Gamma$ be a $2\e$-separated subset of $B$. Then $K\Gamma$ is a $2\e$-separated subset of $KB$, which is contained in $AB$. It follows that $K\sigma_{2\e}(B)\leq\sigma_{2\e}(AB)$, which by Lemma \ref{nusigma} is at most $\nu_\e(AB)$. By hypothesis this is at most $C\sigma_{2\e}(B)$ and the result follows.
\end{proof}

Next we need a notion of `popular differences' that will be suitable for this metric-entropy context. 

\begin{definition}
Let $A$ be a subset of a metric group. We say that an element $d\in A^2$ is $(\e,\d,m)$-\emph{popular} if there are $m$ pairs $(x_i,y_i)\in A^2$ such that the sets $\{x_1,\dots,x_m\}$ and $\{y_1,\dots,y_m\}$ are $\d$-separated and $d(y_i^{-1}x_i,d)<\e$ for every $i$, 
\end{definition}

\begin{lemma} \label{popdiff}
Let $\d\geq 2\e$, let $A$ be a subset of a metric group such that $\nu_\e(AA^{-1})\leq{C}\sigma_\d(A)$ and let $S$ be the set of $(2\e,\d,\sigma_\d(A)/2C)$-popular elements of $A^{-1}A$. Then $\sigma_\d(S)\geq\sigma_\d(A)/2C$.
\end{lemma}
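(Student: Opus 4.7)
The plan is to combine a pigeonhole argument with a bipartite matching observation on a suitable metric net of $A^{-1}A$. Set $M := \sigma_\delta(A)$ and fix a $\delta$-separated subset $\Gamma \subseteq A$ of size $M$; the $M^2$ ordered pairs in $\Gamma \times \Gamma$ then furnish $M^2$ differences $y^{-1}x \in A^{-1}A$. Applying the hypothesis after swapping $A$ for $A^{-1}$ (which by Lemma~\ref{inverse} interchanges $AA^{-1}$ with $A^{-1}A$ while preserving $\sigma_\delta(A)$) gives the useful bound $\nu_\epsilon(A^{-1}A) \leq CM$, which I would use throughout.

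I would next choose an $\epsilon$-net $\Delta$ of $A^{-1}A$ of size at most $CM$ and, for each pair $(x,y) \in \Gamma^2$, select some $\phi(x,y) \in \Delta$ with $d(y^{-1}x, \phi(x,y)) < \epsilon$. The key point is that the hypothesis $\delta \geq 2\epsilon$ forces each fibre $\phi^{-1}(\sigma)$ to be a bipartite matching in $\Gamma \times \Gamma$: two pairs sharing an $x$-coordinate would place their two $y$-coordinates within $2\epsilon \leq \delta$ of $x\sigma^{-1}$ and hence of each other, contradicting $\delta$-separation of $\Gamma$. Consequently $|\phi^{-1}(\sigma)| \leq M$ and the $x$'s and $y$'s occurring in each fibre are automatically $\delta$-separated. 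A routine pigeonhole on $\sum_\sigma |\phi^{-1}(\sigma)| = M^2$ against $|\Delta| \leq CM$, together with the per-fibre cap $M$, yields at least $M/2$ ``heavy'' $\sigma$'s with $|\phi^{-1}(\sigma)| \geq M/(2C)$. For each heavy $\sigma$, pick any $(x_0,y_0) \in \phi^{-1}(\sigma)$ and set $\sigma^\star := y_0^{-1}x_0 \in A^{-1}A$; the triangle inequality then shows $\sigma^\star$ is $(2\epsilon, \delta, M/(2C))$-popular, witnessed by the matching $\phi^{-1}(\sigma)$, so $\sigma^\star \in S$.

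The main obstacle is the final step: extracting from these $\geq M/2$ popular elements a $\delta$-separated subset of size $\geq M/(2C)$. My plan is to refine the choice of $\Delta$ by taking it as a maximal $\delta$-separated subset of $A^{-1}A$, whose cardinality is still at most $\nu_{\delta/2}(A^{-1}A) \leq \nu_\epsilon(A^{-1}A) \leq CM$ by Lemma~\ref{nusigma} (this is where $\delta \geq 2\epsilon$ is used a second time). Then distinct heavy $\sigma$'s are automatically $\delta$-apart, and the associated witnesses $\sigma^\star$, each within $\epsilon$ of its $\sigma$, emerge $(\delta - 2\epsilon)$-separated; a small adjustment of constants, or equivalently a greedy repair step using the near-disjointness of the fibres inside any $\delta$-ball, upgrades this to true $\delta$-separation. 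The delicate bookkeeping lies in reconciling ``$\Delta$ an $\epsilon$-net'' (needed for the matching argument) with ``$\Delta$ $\delta$-separated'' (needed for the separation of outputs), and the hypothesis $\delta \geq 2\epsilon$ is precisely what furnishes the slack to carry out both.
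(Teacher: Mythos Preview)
Your argument has a genuine gap at the very first step. You claim that ``applying the hypothesis after swapping $A$ for $A^{-1}$'' yields $\nu_\e(A^{-1}A)\leq CM$, but the hypothesis $\nu_\e(AA^{-1})\leq C\sigma_\d(A)$ is a statement about \emph{this particular} set $A$, not a universal one, so you are not free to substitute $A^{-1}$ for $A$. Lemma~\ref{inverse} only tells you that $\nu_\e(X)=\nu_\e(X^{-1})$; applied to $X=AA^{-1}$ this is vacuous since $AA^{-1}$ is already symmetric, and it says nothing about the (generally different) set $A^{-1}A$. In a non-abelian group the metric entropies of $AA^{-1}$ and $A^{-1}A$ need not be comparable, so your entire pigeonhole on an $\e$-net of $A^{-1}A$ of size $\leq CM$ is unfounded.

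The paper sidesteps this issue by working with a partition of $AA^{-1}$ (the set one actually controls) into at most $C\sigma_\d(A)$ cells of radius $\e$, considering the products $x_1x_2^{-1}\in AA^{-1}$ for $x_1,x_2\in\Gamma$, and then using bi-invariance of the metric to convert the relation $d(x_1x_2^{-1},x_3x_4^{-1})<2\e$ into $d(x_3^{-1}x_1,x_4^{-1}x_2)<2\e$, which \emph{is} a statement about elements of $A^{-1}A$. A second-moment count on quadruples then gives at least $\sigma_\d(A)^2/2C$ pairs $(x_1,x_3)\in\Gamma^2$ with $x_3^{-1}x_1$ popular. The $\d$-separation of $S$ comes for free at the end: fix one coordinate $x_i$ and vary the other over $\Gamma$; the resulting elements $x_j^{-1}x_i$ are exactly $\d$-separated by right-invariance, with no repair step needed. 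Your proposed ``small adjustment of constants'' to upgrade $(\d-2\e)$-separation to $\d$-separation is also left vague, but this is a secondary concern compared with the unjustified bound on $\nu_\e(A^{-1}A)$.
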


\begin{proof}
Let $\Gamma$ be a $\d$-separated subset of $A$ of size $\sigma_\d(A)$. Choose a partition of $AA^{-1}$ into $\nu_\e(AA^{-1})$ sets, each contained in an open ball of radius ${\e}$, and write $z\sim w$ if $z$ and $w$ belong to the same cell of the partition. 

If we choose a random cell from the partition, then the expected number of pairs $(x_1,x_2)\in\Gamma^2$ with $x_1x_2^{-1}$ in that cell is at least $\sigma_\d(A)^2/\nu_\e(AA^{-1})$. It follows that there are at least $\sigma_\d(A)^4/\nu_\e(AA^{-1})\geq\sigma_\d(A)^3/C$ quadruples $(x_1,x_2,x_3,x_4)\in\Gamma^4$ such that $x_1x_2^{-1}\sim x_3x_4^{-1}$, and hence, since the cells are contained in balls of radius $\e$, such that $d(x_3^{-1}x_1,x_4^{-1}x_2)<2\e$. It follows that for a randomly chosen $(x_1,x_3)\in\Gamma^2$ the expected number of pairs $(x_2,x_4)\in\Gamma^2$ such that $d(x_3^{-1}x_1,x_4^{-1}x_2)<{2}\e$ is at least $\sigma_\d(A)/C$. Since $\d\geq 2\e$, it is not possible to find $x,y,z\in\Gamma$ such that $x^{-1}y=x^{-1}z$ or such that $x^{-1}z=y^{-1}z$. It follows that the maximum number of pairs $(x_2,x_4)$ with $d(x_3^{-1}x_1,x_4^{-1}x_2)<{2}\e$ is at most $\sigma_\d(A)$. Therefore, there are at least $\sigma_\d(A)^2/2C$ pairs $(x_1,x_3)\in\Gamma$ such that $x_3^{-1}x_1$ is $(2\e,\d,\sigma_\d(A)/2C)$-popular.

By averaging we can find some $x_i$ for which there are at least $\sigma_\d(A)/2C$ popular pairs $(x_i,x_j)$. If $(x_i,x_j)$ and $(x_i,x_k)$ are two distinct such pairs, then $d(x_j^{-1}x_i,x_k^{-1}x_i)\geq\d$. It follows that there is a $\d$-separated subset of $S$ of size at least $\sigma_\d(A)/2C$, as claimed.
\end{proof}

\begin{lemma} \label{plunnecke}
Let $\d\geq 4\e$, let $A$ be a subset of a metric group such that $\nu_\e(AA^{-1})\leq C\sigma_\d(A)$ and let $S$ be the set of $(2\e,\d,\sigma_\d(A)/2C)$-popular elements of $A^{-1}A$. Then $\nu_{16\e}(AS^3A^{-1})\leq 8C^7\sigma_\d(A)$.
\end{lemma}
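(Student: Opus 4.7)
The plan is to prove this lemma as a metric-entropy analogue of the Plünnecke--Ruzsa inequality, using both the popular structure of $S$ and the Ruzsa triangle inequality (Lemma~\ref{triangle}).

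The first step is to convert popularity into an approximate containment of $S$ in $A^{-1}A$. For each $s \in S$, popularity provides $\d$-separated sets $B_s, B_s' \subset A$ of size $m := \sigma_\d(A)/2C$ and a matching $(x_i, y_i) \in B_s \times B_s'$ with $d(y_i^{-1} x_i, s) < 2\e$. Taking any single witness gives a map $\iota\colon S \to A^{-1}A$ with $d(\iota(s), s) < 2\e$. By bi-invariance of the metric on $G$, substituting $\iota(s)$ for $s$ in each of the three copies of $S$ in $AS^3A^{-1}$ introduces a total error of at most $6\e$, yielding
\[
AS^3A^{-1} \subset \bigl( A \, (A^{-1}A)^3 \, A^{-1}\bigr)_{6\e} = \bigl((AA^{-1})^4\bigr)_{6\e}.
\]
Consequently $\nu_{16\e}(AS^3A^{-1}) \leq \nu_{10\e}\bigl((AA^{-1})^4\bigr)$, reducing the lemma to a Plünnecke-type bound on four-fold products of $AA^{-1}$.

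The second step is to bound $\nu_{10\e}\bigl((AA^{-1})^4\bigr)$ by iterating Lemma~\ref{triangle} three times, with carefully chosen sets $U, V, W$. At each application we choose $U$ to be a $\d$-separated subset of $A$ realising $\sigma_\d(A)$, so that $\nu_{\e'}(U) \geq \sigma_\d(A)$ whenever $\e' \leq \d/2$, a condition that remains valid thanks to the hypothesis $\d \geq 4\e$. We choose $V$ and $W$ so that $VW^{-1}$ is the remaining iterated product of $AA^{-1}$ to be controlled; each application then bounds this by a product of two factors of the form $\sigma_{\e'/4}(U \cdot \text{(shorter product)})$, which are handled either by induction or, in the base case, by combining Lemma~\ref{nusigma} with the given bound $\nu_\e(AA^{-1}) \leq C\sigma_\d(A)$. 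Each such reduction contributes a multiplicative factor of $C$ and doubles the relevant scale. After three iterations one arrives at the bound $8C^7\sigma_\d(A)$: the factor $C^7$ accumulates the cost of the triangle-inequality applications together with the factor $2C$ implicit in the popular support size $m = \sigma_\d(A)/2C$, while the factor $8$ absorbs the losses from $\sigma$-to-$\nu$ conversions via Lemma~\ref{nusigma}.

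The main obstacle will be the bookkeeping of scales. Each application of Lemma~\ref{triangle} at scale $\e'$ produces estimates at scale $\e'/4$, while each invocation of Lemma~\ref{nusigma} to pass from $\sigma$ to $\nu$ further halves the scale; meanwhile, the popularity witnesses operate only at scale $2\e$ and the hypothesis controls $\nu_\e(AA^{-1})$ only at scale $\e$. The value $10\e$ after the popularity reduction, together with the hypothesis $\d \geq 4\e$, is precisely what is needed to ensure the scale compatibility throughout the three-step Ruzsa iteration: any smaller initial scale or any relaxation of $\d \geq 4\e$ would break the chain of estimates.
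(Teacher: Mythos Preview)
Your proposal has a genuine gap, and it is located in the transition from Step~1 to Step~2. In Step~1 you use only a \emph{single} witness $(x_i,y_i)$ for each popular element $s$, obtaining the containment $AS^3A^{-1}\subset\bigl((AA^{-1})^4\bigr)_{6\e}$. This throws away precisely the structure that makes popularity useful: each $s\in S$ has not one but at least $m=\sigma_\d(A)/2C$ well-separated representations $y_i^{-1}x_i$. Once that multiplicity is discarded, you are left trying to prove $\nu_{10\e}\bigl((AA^{-1})^4\bigr)\leq 8C^7\sigma_\d(A)$ from the hypothesis $\nu_\e(AA^{-1})\leq C\sigma_\d(A)$ alone. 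Even in the discrete abelian model this is asking for $|4A-4A|\leq 8C^7|A|$ from $|A-A|\leq C|A|$, and Pl\"unnecke gives only $C^8$; so the claimed constant cannot be reached by your route. Your own explanation of the exponent reveals the confusion: you say the factor $2C$ comes from ``the popular support size $m=\sigma_\d(A)/2C$'', but in Step~1 you never used $m$, only a single witness.

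There is also a concrete scale failure. To divide through by $\nu_{\e'}(U)\geq\sigma_\d(A)$ in Lemma~\ref{triangle} with $U$ a maximal $\d$-separated subset of $A$, you need $\e'\leq\d/2$. With only $\d\geq 4\e$ this forces $\e'\leq 2\e$, yet your first application of Lemma~\ref{triangle} is at scale $\e'=10\e$. So the chain of scale reductions you describe does not start, let alone close.

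The paper's argument avoids both issues by keeping all $m$ representations: given a $16\e$-separated set in $AS^3A^{-1}$, each element $x_0d_1d_2d_3x_7^{-1}$ produces at least $m^3=(\sigma_\d(A)/2C)^3$ quadruples $(x_0x_1^{-1},x_2x_3^{-1},x_4x_5^{-1},x_6x_7^{-1})\in(AA^{-1})^4$; these quadruples are $4\e$-separated in the product metric across different starting elements, so
\[
\sigma_{16\e}(AS^3A^{-1})\cdot(\sigma_\d(A)/2C)^3\leq\sigma_{4\e}\bigl((AA^{-1})^4\bigr)\leq\nu_\e(AA^{-1})^4\leq C^4\sigma_\d(A)^4,
\]
which yields $8C^7\sigma_\d(A)$ directly. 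The condition $\d\geq 4\e$ enters exactly once, to guarantee the $4\e$-separation of the quadruples. The key idea you are missing is that the many-representations structure should be exploited as a counting amplification, not collapsed to a set containment.
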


\begin{proof}
Let $x_0,x_7$ be elements of $A$ and let $d_1,d_2,d_3\in S$. Since each $d_i$ is popular, we can approximate $x_0d_1d_2d_3x_7^{-1}$ as $x_0x_1^{-1}x_2x_3^{-1}x_4x_5^{-1}x_6x_7^{-1}$ in several ways. More precisely, for each $i=1,3,5$ we have at least $\sigma_\d(A)/2C$ independent choices for the pair $(x_i,x_{i+1})$, and the individual coordinates of these choices form $\d$-separated sets.

Each such product gives us an element $(x_0x_1^{-1},x_2x_3^{-1},x_4x_5^{-1},x_6x_7^{-1})$ of the set $(AA^{-1})^4$. If $(x_0x_1^{-1},x_2x_3^{-1},x_4x_5^{-1},x_6x_7^{-1})$ and $(x_0x_1'^{-1},x_2'x_3'^{-1},x_4'x_5'^{-1},x_6'x_7^{-1})$ are two different such quadruples, then if their first $i$ coordinates agree and the $(i+1)$st coordinate is different, then $x_j=x_j'$ for $0\leq j<2i$, and hence for $j=2i$ as well, so we find that the two $(i+1)$st coordinates are $x_{2i}x_{2i+1}^{-1}$ and $x_{2i}x_{2i+1}'^{-1}$, which are separated by at least $\d\geq 4\e$.

We also have that if two elements of $AS^3A^{-1}$ are separated by at least $16\e$ and for each one we choose a quadruple as above, then at least one coordinate of the two quadruples will be separated by at least $4\e$, since the products of the two quadruples give the two elements.

It follows that 
\[\sigma_{16\e}(AS^3A^{-1})(\sigma_\d(A)/2C)^3\leq\sigma_{4\e}((AA^{-1})^4)\leq\nu_\e(AA^{-1})^4.\]
Since $\nu_\e(AA^{-1})\leq C\sigma_\d(A)$, this implies the result.
\end{proof}

\begin{lemma} \label{getrag}
Let $\d\geq 2\e$ and let $S$ be a subset of a metric group such that $S=S^{-1}$ and $\nu_\e(S^3)\leq C\sigma_\d(S)$. Then $S^2$ is a $(C,2\e)$-rough approximate group.
\end{lemma}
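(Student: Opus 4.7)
The lemma requires showing that $H := S^2$ satisfies the three defining properties of a $(C, 2\epsilon)$-rough approximate subgroup: the identity belongs to $H$; $H$ is closed under inverses; and $HH \subset (KH)_{2\epsilon}$ for some set $K$ of size at most $C$. The first two are immediate from $S = S^{-1}$: for any $s \in S$ we have $e = ss^{-1} \in S \cdot S = S^2 = H$, and $H^{-1} = (S \cdot S)^{-1} = S^{-1} \cdot S^{-1} = S \cdot S = H$.

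For the covering property, my plan is to apply Lemma~\ref{covering} (the Ruzsa covering lemma), which given sets $A, B$ with $\nu_\epsilon(AB) \leq C \sigma_{2\epsilon}(B)$ produces $K \subset A$ with $|K| \leq C$ such that $K B B^{-1}$ is a $2\epsilon$-net of $A$. Taking $A = HH = S^4$ and $B = S$, we get $BB^{-1} = S \cdot S^{-1} = S \cdot S = S^2 = H$, and the Ruzsa covering conclusion $A \subset (KBB^{-1})_{2\epsilon}$ becomes exactly the desired $HH \subset (KH)_{2\epsilon}$. The hypothesis to verify in order to apply the lemma in this way is thus $\nu_\epsilon(S^5) \leq C \sigma_{2\epsilon}(S)$.

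The main step, and also the main obstacle, is to bootstrap the given bound $\nu_\epsilon(S^3) \leq C \sigma_\delta(S) \leq C\sigma_{2\epsilon}(S)$ (where the second inequality uses $\delta \geq 2\epsilon$) into the needed bound on $\nu_\epsilon(S^5)$. I would do this via Lemma~\ref{triangle} (Ruzsa's triangle inequality), applied with a triple $(U,V,W)$ chosen so that $VW^{-1} = S^5$ while $UV^{-1}$ and $UW^{-1}$ are shorter products of $S$, exploiting $S = S^{-1}$ to absorb inverses freely. For example, $U = S$, $V = S^2$, $W = S^3$ yields $\nu_\epsilon(S)\nu_\epsilon(S^5) \leq \sigma_{\epsilon/4}(S^3)\sigma_{\epsilon/4}(S^4)$, which relates $\nu_\epsilon(S^5)$ to quantities that can be controlled through the hypothesis together with a short further triangle-inequality application to handle $S^4$. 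A slight technical point I anticipate is that the right-hand side involves $\sigma_{\epsilon/4}$ rather than $\sigma_{2\epsilon}$, so some care is needed to match the scales; an alternative is a direct maximality argument, taking $K \subset S^4$ maximal such that the translates $\{kS : k \in K\}$ are pairwise $2\epsilon$-separated so that by maximality $S^4 \subset (KS \cdot S^{-1})_{2\epsilon} = (KS^2)_{2\epsilon}$, and bounding $|K|$ by comparing $K\Gamma$ (with $\Gamma$ a maximal $2\epsilon$-separated subset of $S$) to $\sigma_{2\epsilon}(S^5) \leq \nu_\epsilon(S^5)$.
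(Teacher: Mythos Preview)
Your diagnosis of what needs to be shown is more careful than the paper's own argument. The paper's proof is a single sentence: apply Lemma~\ref{covering} with $A=S^2$ and $B=S$ (the hypothesis $\nu_\e(S^3)\le C\sigma_{2\e}(S)$ follows from $\d\ge 2\e$), obtaining $K$ with $|K|\le C$ such that $KS^2$ is a $2\e$-net of $S^2$. But as you implicitly recognise, this yields only $S^2\subset (KS^2)_{2\e}$, whereas with $H=S^2$ the defining condition of a $(C,2\e)$-rough approximate group is $(S^2)^2=S^4\subset (KS^2)_{2\e}$. So the paper's one-line argument, read literally, leaves a gap; the very next proof (of Lemma~\ref{coverA}) contains a matching slip, writing ``$K_2H$ is an $\e_2$-net of $H$'' where the argument actually uses that it is an $\e_2$-net of $H^2$.

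Your plan---apply Lemma~\ref{covering} with $A=S^4$ and $B=S$, after bootstrapping the bound on $\nu_\e(S^3)$ to one on $\nu_\e(S^5)$ via Lemma~\ref{triangle} or the direct maximality argument you sketch---is the correct repair. The caveat you anticipate is real: the scale shift in Lemma~\ref{triangle} (from $\e$ to $\e/4$) and the need to pass through $S^4$ mean that what you will actually prove is that $S^2$ is a $(C^{O(1)},O(\e))$-rough approximate group, not literally $(C,2\e)$. This is entirely adequate for the only use of the lemma, in the proof of Theorem~\ref{smalldoubling}, where such polynomial losses are absorbed; but you should flag that your argument does not recover the constants exactly as stated.
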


\begin{proof}
By Lemma \ref{covering} with $A=S^2$ and $B=S$ there is a set $K$ of size at most $C$ such that $KS^2$ is a $2\e$-net of $S^2$. 
\end{proof}

\begin{lemma} \label{coverA}
Let $\d\geq 2\e_1$, let $A$ be a subset of a metric group, let $H$ be a $(C_2,\e_2)$-rough approximate group, and suppose that $\nu_{\e_1}(AH)\leq C_1\sigma_\d(H)$. Then there is a set $K$ of size at most $C_1C_2$ such that $KH$ is a $(2\e_1+\e_2)$-net of $A$.
\end{lemma}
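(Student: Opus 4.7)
The plan is to reduce this directly to the covering lemma (Lemma~\ref{covering}) already established, and then to absorb the extra factor coming from the fact that $H$ is only a \emph{rough} approximate group rather than a genuine approximate group. The key observation is that because $H$ is a rough approximate group (in particular $H=H^{-1}$), the product $HH^{-1}$ produced by Lemma~\ref{covering} equals $HH$, and the latter can be covered up to error $\e_2$ by boundedly many translates of $H$ itself.

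First I would apply Lemma~\ref{covering} with $B=H$. The hypothesis needed is $\nu_{\e_1}(AH)\leq C_1\sigma_{2\e_1}(H)$; this follows from our assumed bound $\nu_{\e_1}(AH)\leq C_1\sigma_\d(H)$ together with the monotonicity $\sigma_\d(H)\leq\sigma_{2\e_1}(H)$ (which holds because $\d\geq 2\e_1$, so a $\d$-separated set is a fortiori $2\e_1$-separated). This yields a set $K'\subset G$ with $|K'|\leq C_1$ such that $K'HH^{-1}$ is a $2\e_1$-net of $A$.

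Next I would exploit the rough approximate group structure of $H$. Since $H=H^{-1}$, we have $K'HH^{-1}=K'HH$. By definition of a $(C_2,\e_2)$-rough approximate group, there exists a set $L\subset G$ with $|L|\leq C_2$ such that $HH\subset(LH)_{\e_2}$. Because the metric is bi-invariant, left-multiplication by the finite set $K'$ preserves $\e_2$-neighbourhoods, giving $K'HH\subset(K'LH)_{\e_2}$. Setting $K=K'L$, we therefore have $|K|\leq C_1C_2$, and for every $a\in A$ we may find $z\in K'HH$ with $d(a,z)<2\e_1$ and then $w\in KH$ with $d(z,w)\leq\e_2$, so that $d(a,w)\leq 2\e_1+\e_2$. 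Thus $KH$ is a $(2\e_1+\e_2)$-net of $A$, as required.

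There is essentially no obstacle here beyond checking that the constants line up correctly. The only subtle point is remembering to use the monotonicity of $\sigma_\d$ in $\d$ to verify the hypothesis of Lemma~\ref{covering}, and to note that bi-invariance of the metric allows $\e_2$-neighbourhoods to pass through multiplication by $K'$ unchanged; both are routine.
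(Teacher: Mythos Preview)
Your proof is correct and follows essentially the same route as the paper's: apply Lemma~\ref{covering} with $B=H$ (using $\sigma_\d(H)\leq\sigma_{2\e_1}(H)$), use the symmetry $H=H^{-1}$ to rewrite $HH^{-1}$ as $H^2$, and then invoke the rough approximate group property to replace $H^2$ by boundedly many translates of $H$ at cost $\e_2$. The paper's proof is terser (and contains a minor slip, writing ``$\e_2$-net of $H$'' where ``$\e_2$-net of $H^2$'' is meant), but the content is identical.
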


\begin{proof}
By Lemma \ref{covering} there is a set $K_1$ of size at most $C_1$ such that $K_1H^2$ is a $2\e_1$-net of $A$. By the definition of an approximate group there is also a set $K_2$ of size at most $C_2$ such that $K_2H$ is an $\e_2$-net of $H$. But then $K_1K_2H$ is a $(2\e_1+\e_2)$-net of $A$. 
\end{proof}

\noindent \textbf{Proof of Theorem \ref{smalldoubling}.}

If $X,Y$ are subsets of a metric group and $\nu_\e(XY)\leq C\sigma_\b(X)^{1/2}\sigma_\b(Y)^{1/2}$, then by Corollary \ref{sumdiff} we have the inequality $\nu_{8\e}(XX^{-1})\leq C^2\sigma_\b(X)$. By Lemmas \ref{popdiff} and \ref{plunnecke} we obtain a set $S$ with $S=S^{-1}$ and $\sigma_\b(S)\geq\sigma_\b(X)/2C^2$ such that $\nu_{128\e}(XS^3X^{-1})\leq 8C^{14}\sigma_\b(X)$. 

It follows that $\nu_{128\e}(S^3)\leq 16C^{16}\sigma_\b(S)$. Therefore, by Lemma \ref{getrag}, $S^2$ is a $(16C^{16},256\e)$-rough approximate group.

We also have that $\nu_{128\e}(XS^2)\leq 16C^{16}\sigma_\b(S^2)$. Therefore, by Lemma \ref{coverA} there is a set $K$ of size at most $256C^{32}$ such that $KS^2$ is a {$512\e$-}net of $X$.

By Lemma \ref{triangle}, 
\begin{align*}
\nu_{1024\e}(X)\nu_{1024\e}(S^2Y)&\leq\sigma_{256\e}(XS^2)\sigma_{256\e}(XY)\\
&\leq\nu_{128\e}(XS^2)\nu_{128\e}(XY)\\
&\leq 16C^{16}\sigma_\b(S^2).C\sigma_\b(X)^{1/2}\sigma_\b(Y)^{1/2}.\\
\end{align*}
But 
\[\sigma_\b(X)\leq\sigma_\b(XY)\leq\nu_{\b/2}(XY)\leq C\sigma_\b(X)^{1/2}\sigma_\b(Y)^{1/2},\] 
so $\sigma_\b(X)\leq C^2\sigma_\b(Y)$ and therefore $\sigma_\b(X)^{1/2}\sigma_\b(Y)^{1/2}\leq C\sigma_\b(Y)$. Also, since $\b\geq 2048\e$,
\[\sigma_\b(S^2)\leq\nu_{128\e}(XS^3X)\leq 8C^{14}\sigma_\b(X)\leq 8C^{14}\nu_{1024\e}(X).\]
It follows that $\nu_{1024\e}(Y^{-1}S^2)=\nu_{1024\e}(S^2Y)\leq 128C^{32}\sigma_\b(Y)$.

Therefore, by Lemma \ref{coverA} again it follows that there is a set $L$ of size at most $2048C^{48}$ such that $LS^2$ is a {$2304\e$-}net of $Y^{-1}$, which implies that $S^2L^{-1}$ is a {$2304\e$-}net of $Y$. \hfill $\square$
\vspace{0.3cm}

We conclude this appendix by combining Lemma \ref{p3lemma} and Theorem \ref{smalldoubling}. We shall present the result (mostly) without explicit constants, but it is not hard to obtain them.

\begin{theorem}\label{gettingrag}
Let $X,Y, Z$ be 1-separated subsets of a metric group $G$, let $0<\d<1/100$, let $\e>0$ be sufficiently small, and suppose that $|Z|\leq\d^{-1}|X|^{1/2}|Y|^{1/2}$ and that $d(xy,Z)\leq\e$ for at least $\d|X||Y|$ pairs $(x,y)\in X\times Y$. Then there exist subsets $X''\subset X$, $Y''\subset Y$ {and $Z''\subset Z$} with $|X''|=\d^{O(1)}|X|$, $|Y''|=\d^{O(1)}|Y|$ {and $|Z''|=\delta^{O(1)}|Z|$}, a $(\d^{-O(1)}, O(\e))$-rough approximate group $H\subset G$, and elements $u,v,{w}$ of $G$ such that $\nu_{O(\e)}(H)=\d^{-O(1)}|X|^{1/2}|Y|^{1/2}$, $X''\subset (uH)_{O(\e)}$, $Y''\subset (Hv)_{O(\e)}$, {$Z''\subset (X''Y'')_\e\cap(uwHv)_{O(\e)}$} {and $d(xy,Z'')\le\e$ for $\delta^{O(1)}|X''||Y''|$ pairs $(x,y)\in X''\times Y''$.}
\end{theorem}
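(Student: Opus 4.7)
The approach is to combine Lemma~\ref{p3lemma}, which converts a density-$\delta$ set of approximate products $xy \approx z$ into a set $X_0 Y_0$ of small metric entropy, with Theorem~\ref{smalldoubling}, which in turn converts such an entropy bound into a rough approximate group together with coset-covering data for $X_0$ and $Y_0$. The three desired subsets $X'', Y'', Z''$ and the group elements $u, v, w$ are then identified through a pigeonhole on cosets.

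First I would apply Lemma~\ref{p3lemma} to pass to subsets $X_0 \subset X$ and $Y_0 \subset Y$ of density $\delta^{O(1)}$ satisfying $\overline{\nu}_{6\epsilon}(X_0Y_0) \le \delta^{-O(1)}|X|^{1/2}|Y|^{1/2}$ and retaining at least $\delta|X_0||Y_0|/4$ pairs $(x,y)$ with $d(xy,Z) \le \epsilon$. Because $X$ and $Y$ are $1$-separated one has $\sigma_1(X_0) = |X_0|$ and $\sigma_1(Y_0) = |Y_0|$, and the trivial inequality $\nu_{12\epsilon} \le \overline{\nu}_{6\epsilon}$ rearranges the entropy bound into the hypothesis $\nu_{12\epsilon}(X_0Y_0) \le \delta^{-O(1)}\sigma_1(X_0)^{1/2}\sigma_1(Y_0)^{1/2}$ of Theorem~\ref{smalldoubling} with $\beta = 1$ (valid once $\epsilon$ is small enough that $2048 \cdot 12\epsilon \le 1$). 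Applying that theorem yields a $(\delta^{-O(1)}, O(\epsilon))$-rough approximate group $H$ with $\nu_{O(\epsilon)}(H) \le \delta^{-O(1)}|X|^{1/2}|Y|^{1/2}$, together with sets $K, L \subset G$ of size $\delta^{-O(1)}$ such that $KH$ is an $O(\epsilon)$-net of $X_0$ and $HL$ is an $O(\epsilon)$-net of $Y_0$. The rough approximate group property additionally provides a set $M$ of size $\delta^{-O(1)}$ with $HH \subset (MH)_{O(\epsilon)}$.

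Next, for each popular pair $(x, y) \in X_0 \times Y_0$ I pick a witness $z(x,y) \in Z$ with $d(xy, z(x,y)) \le \epsilon$, and a triple $(u(x), v(y), w(x,y)) \in K \times L \times M$ realising $x \in (u(x)H)_{O(\epsilon)}$, $y \in (Hv(y))_{O(\epsilon)}$, and hence $xy \in (u(x)HHv(y))_{O(\epsilon)} \subset (u(x)w(x,y)Hv(y))_{O(\epsilon)}$. Since there are only $|K||L||M| = \delta^{-O(1)}$ possible triples and at least $\delta^{O(1)}|X||Y|$ popular pairs, some triple $(u,v,w)$ is assigned to at least $\delta^{O(1)}|X||Y|$ popular pairs; I fix such a triple and define $X''$, $Y''$, and $Z''$ as the sets of first coordinates, second coordinates, and witnesses, respectively, of the surviving popular pairs. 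All the required containments --- $X'' \subset (uH)_{O(\epsilon)}$, $Y'' \subset (Hv)_{O(\epsilon)}$, $Z'' \subset (uwHv)_{O(\epsilon)}$, and $Z'' \subset (X''Y'')_\epsilon$ --- are then immediate from the construction, and the count of popular pairs in $X'' \times Y''$ is at least $\delta^{O(1)}|X||Y| \ge \delta^{O(1)}|X''||Y''|$.

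The only size bound requiring a separate observation is $|Z''| \ge \delta^{O(1)}|Z|$: since $X$ and $Y$ are $1$-separated and $\epsilon < 1/2$, left-invariance of the metric forces that for a fixed $z \in Z$ the equation $d(xy, z) \le \epsilon$ determines $y$ uniquely given $x$ (and vice versa), so each $z$ can serve as a witness for at most $\min(|X|, |Y|)$ popular pairs. This gives $|Z''| \ge \delta^{O(1)}|X||Y|/\min(|X|, |Y|) = \delta^{O(1)}\max(|X|, |Y|) \ge \delta^{O(1)}|Z|$, using the assumed upper bound $|Z| \le \delta^{-1}|X|^{1/2}|Y|^{1/2}$. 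The main technical obstacle, if any, is really bookkeeping: verifying that the cumulative losses in the exponents of $\delta$ remain $O(1)$ and that the cumulative multiplicative factors in the $\epsilon$-tolerances remain absolute constants across the three stages (Lemma~\ref{p3lemma}, Theorem~\ref{smalldoubling}, and the final pigeonhole).
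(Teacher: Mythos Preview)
Your proposal is correct and follows essentially the same route as the paper: apply Lemma~\ref{p3lemma}, then Theorem~\ref{smalldoubling} with $\beta=1$, then pigeonhole on coset representatives to extract $u,v,w$ and the subsets $X'',Y'',Z''$. The only cosmetic difference is that the paper selects $(u,v)$ first by averaging and then $w$ separately, whereas you pigeonhole on the full triple $(u,v,w)$ at once; one small point you leave implicit is that the lower bounds $|X''|\ge\delta^{O(1)}|X|$ and $|Y''|\ge\delta^{O(1)}|Y|$ follow from the surviving-pair count just as your bound on $|Z''|$ does.
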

\begin{proof}
Lemma \ref{p3lemma} gives us $X'\subset X$ and $Y'\subset Y$ with $|X'|\geq\d^7|X|$ and $|Y'|\geq\d^7|Y|$ such that $\nu_{O(\e)}(X'Y')=\d^{-O(1)}|X|^{1/2}|Y|^{1/2}$ {and such that $d(xy,Z)\le\e$ for $\delta^{O(1)}|X'||Y'|$ pairs $(x,y)\in X'\times Y'$}. Applying Theorem \ref{smalldoubling} (with $\b=1$), we obtain a $(\d^{-O(1)},O(\e))$-rough approximate group $H\subset G$ and sets $K,L$ of sizes $\d^{-O(1)}$ such that $X'\subset(KH)_{O(\e)}$ and $Y'\subset(HL)_{O(\e)}$. 

{We will pick $u\in K$ and $v\in L$ at random, and let $X''=X'\cap (uH)_{O(\e)}$ and $Y''=Y'\cap (Hv)_{O(\e)}$. By averaging there are choices $u\in K$ and $v\in L$ such that $|X''|= \d^{O(1)}|X|$, $|Y''|=\d^{O(1)}|Y|$ and $d(xy,Z)\le\e$ for $\delta^{O(1)}|X''||Y''|$ pairs $(x,y)\in X''\times Y''$.}

{Observe that since $X''\subset (uH)_{O(\e)}$ and $Y''\subset (Hv)_{O(\e)}$, we have that $X''Y''\subset (uHHv)_{O(\e)}$. Since $H$ is a $(\delta^{-O(1)},O(\e))$-rough approximate subgroup of $G$, this means that there exists a set $M\subset G$ of size $\delta^{-O(1)}$ such that $X''Y''\subset (uMHv)_{O(\e)}$.}

{Since $X''Y''\subset(uMHv)_{O(\e)}$, we have that $(X''Y'')_\e\subset(uMHv)_{O(\e)}$. Let 
	$$Z'=Z\cap(X''Y'')_\e \subset(uMHv)_{O(\e)}$$
	and observe that $d(xy,Z')\le\e$ for $\delta^{O(1)}|X''||Y''|$ pairs $(x,y)\in X''\times Y''$.}

{Now we choose $w\in M$ uniformly at random, and let $Z''=Z'\cap(uwHv)_{O(\e)}$. Since $|M|=\delta^{-O(1)}$, we have in expectation that $d(xy,Z'')\le\e$ for $\delta^{O(1)}|X''||Y''|$ pairs $(x,y)\in X''\times Y''$. Suppose without loss of generality that $|X|\ge |Y|$. If $d(xy,Z'')\le\e$ for at least $\delta^{O(1)}|X''||Y''|$ pairs $(x,y)\in X''\times Y''$, then there exists a choice of $y\in Y''$ such that $d(xy,Z'')\le\e$ for $\delta^{O(1)}|X''|$ choices of $x\in X''$. Since $X''$ is 1-separated, this implies that $|Z''|= \delta^{O(1)}|X''|= \delta^{O(1)}|Z|$. Therefore there is some choice of $w\in M$ satisfying our requirements.}
\end{proof}	

\section{A Bogolyubov-type lemma for $\so3$.} \label{bog}

In this section we look at properties of product sets of dense subsets of $\so3$. The main result we shall prove is the following lemma. Once we have it, it will enable us to prove that the partial binary operation on a maximal $\d$-separated subset of $\so3$ described in the introduction is defined for a constant proportion of pairs and gives rise to within a constant of the maximum possible number of associative triples.

\begin{lemma} \label{bogolyubov}
For every $\theta>0$ and $\e>0$ there exists $\eta>0$ such that if $A$ is any subset of $\so3$ of Haar measure at least $\theta$, then $AA^{-1}$ contains a proportion $1-\e$ of a ball of radius $\eta$ about the identity.
\end{lemma}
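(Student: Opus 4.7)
Plan. Let $f(x) := (1_A * 1_{A^{-1}})(x) = \mu(A \cap xA)$. Then $f$ is continuous, positive-definite, $f(e) = \theta$, $0 \le f \le \theta$, $\int f = \theta^2$, and $\{f > 0\} \subseteq AA^{-1}$. The trivial pointwise bound $f \le \theta$ together with $f = 0$ off its support shows that it is enough to prove
$$
\frac{1}{\mu(B(e,\eta))} \int_{B(e,\eta)} f(x)\,dx \;\ge\; (1-\epsilon)\theta
$$
for some $\eta = \eta(\theta,\epsilon)$, since then $\mu(B(e,\eta) \cap \{f > 0\}) \ge (1-\epsilon)\mu(B(e,\eta))$. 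In words, the average of $f$ over a small ball about the identity is forced close to $f(e)=\theta$.

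My plan is to establish this via the Peter--Weyl decomposition of $\so3$. Writing $\pi_\ell$ for the irreducible representation of dimension $2\ell+1$ and $\hat h(\ell) = \int h(x)\pi_\ell(x^{-1})\,dx$, one has
$$
f(x) = \sum_{\ell \ge 0} (2\ell+1)\,\mathrm{tr}\bigl(\hat f(\ell)\pi_\ell(x)\bigr),\qquad \hat f(\ell) = \hat{1_A}(\ell)^*\hat{1_A}(\ell) \succeq 0,
$$
with $\sum_\ell (2\ell+1)\,\mathrm{tr}(\hat f(\ell)) = f(e) = \theta$. Diagonalising $\pi_\ell$ on a maximal torus, for $x$ with rotation angle $t \le \eta$ the eigenvalues of $\pi_\ell(x)$ are $e^{ikt}$ with $|k| \le \ell$, so $\|\pi_\ell(x) - I\|_{\mathrm{op}} \le \min(2,\ell\eta)$; averaging and using positivity of $\hat f(\ell)$,
$$
\theta - (\phi_\eta * f)(e) \;\le\; \sum_{\ell \ge 1} (2\ell+1)\,\mathrm{tr}(\hat f(\ell))\cdot \min(2,\ell\eta),
$$
where $\phi_\eta = \mu(B(e,\eta))^{-1}1_{B(e,\eta)}$. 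Splitting the sum at a cutoff $L$, the low-frequency part is at most $L\eta\theta$, which is $\le \epsilon\theta/2$ by taking $L = \epsilon/(2\eta)$.

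The main obstacle is the high-frequency tail $2\sum_{\ell > L} (2\ell+1)\,\mathrm{tr}(\hat f(\ell))$, which the global mass constraint alone only bounds by the useless quantity $2\theta$. I plan to handle it by pre-smoothing: replace $1_A$ by $g := 1_A * \psi_\rho$ with $\psi_\rho = \mu(B(e,\rho))^{-1}1_{B(e,\rho)}$ and $\rho \ll \eta$. Then $\hat g(\ell) = \hat\psi_\rho(\ell)\hat{1_A}(\ell)$, and a spectral analysis of $\psi_\rho$ yields the uniform decay $\|\hat\psi_\rho(\ell)\|_{\mathrm{op}} \lesssim \min(1, 1/(\ell\rho))$, which forces the corresponding high-frequency tail for $G = g*g^* = f * (\psi_\rho*\psi_\rho^*)$ to be small. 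Since $\psi_\rho*\psi_\rho^*$ is supported in $B(e,2\rho)$ with integral $1$, the averages of $f$ and $G$ over $B(e,\eta)$ differ by $O(\rho/\eta)\cdot\theta$; and the replacement of $f(e)$ by $G(e)=\|g\|_2^2$ introduces a further error that can be absorbed by a second application of the same spectral splitting applied to $1_A - g$. The principal technical difficulty is a consistent choice of the three scales $\eta,\rho,L$ (roughly $L = \epsilon/(2\eta)$ and $\rho = \epsilon\eta/C(\theta,\epsilon)$, followed by $\eta$ small enough depending on $\theta,\epsilon$) so that the low-frequency error, the smoothed high-frequency tail, and the smoothing discrepancy are all simultaneously at most $\epsilon\theta/3$; a cleaner but non-effective alternative would extract a weak $L^2$-limit $1_{A_n}\rightharpoonup g$ along a hypothetical failing sequence and use continuity of $G = g*g^*$ together with the fact that $1_{A_n}*1_{B(y,\eta)} \to g*1_{B(y,\eta)}$ strongly in $L^2$ (weak meeting strong) to pass to the limit and derive a contradiction.
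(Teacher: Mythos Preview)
Your reduction to the inequality
\[
\frac{1}{\mu(B(e,\eta))}\int_{B(e,\eta)} f(x)\,dx \;\ge\; (1-\epsilon)\theta
\]
is a valid \emph{sufficient} condition, but it is simply false uniformly in $A$, so no amount of spectral bookkeeping will establish it. Take $A$ to be a union of $N\sim\theta/r^{3}$ generic balls of radius $r$ with $r$ arbitrarily small. Then $f(e)=\theta$, but for $|x|\gg r$ the diagonal contributions $\mu\bigl(B(c_i,r)\cap xB(c_i,r)\bigr)$ vanish and only the off-diagonal overlaps survive, giving $f(x)\approx\theta^{2}$. Hence for any fixed $\eta$, once $r\ll\eta$ the average of $f$ over $B(e,\eta)$ is $\approx\theta^{2}$, not $(1-\epsilon)\theta$. (The lemma nonetheless holds for such $A$: $f\approx\theta^{2}>0$ on essentially all of $B(e,\eta)$.) Your pre-smoothing does not repair this: the discrepancy $f(e)-G(e)=\|1_A\|_2^2-\|1_A*\psi_\rho\|_2^2=\sum_\ell(2\ell+1)(1-|\hat\psi_\rho(\ell)|^2)\|\widehat{1_A}(\ell)\|_{HS}^2$ is precisely the high-frequency $L^2$ mass of $1_A$ that $\psi_\rho$ kills, and for the example above this is $\approx\theta-\theta^{2}$, not small. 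Equivalently, your scale requirements are inconsistent: making the smoothed tail small needs $L\rho$ large, while $L=\epsilon/(2\eta)$ and $\rho\ll\eta$ force $L\rho\ll\epsilon/2$.

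The paper's route is the same Peter--Weyl split $f=u+v$, but it never tries to push the ball-average of $f$ up to $(1-\epsilon)\theta$; the usable uniform lower bound on the low-frequency part is only $u(e)\ge\theta^{2}$ (from the trivial representation), and equicontinuity of the finitely many representations of dimension $\le C$ then gives $u\ge\theta^{2}/2$ on $B(e,\eta)$ for $\eta$ small depending only on $C,\theta$. One then works with the inclusion $\{f=0\}\cap B\subset\{v\le-\theta^{2}/2\}$ and the global bound $\|v\|_2^2\lesssim C^{-1}$ rather than attempting to match $f(e)$. So the correction you need is not a smoothing trick but a change of target: aim for $f>0$ (equivalently $f>\theta^{2}/4$, say) on most of $B$, not for the average of $f$ to be within $\epsilon\theta$ of $f(e)$.
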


It follows straightforwardly that $AA^{-1}AA^{-1}$ contains the whole of a ball of some radius $\eta(\theta)$. Thus, this result can be thought of as a Bogolyubov lemma~\cite{Bog} for $\so3$, with balls about the identity playing the role of Bohr sets. Note that unlike in the Abelian case, there is an extra uniformity here: the ball we obtain depends only on the measure of $A$ and not on $A$ itself. This fact, which can be thought of as saying that the only departure from quasirandomness of the group $\so3$ is the obvious one that a product of two small balls is contained in a small ball, will be essential to our argument. A corollary of this result will be a statement that we claimed earlier in the paper: that if $\Gamma$ is a maximal $\d$-separated subset of $\so3$ and $\circ:\Gamma\times\Gamma\to\Gamma$ is a partially defined operation where $x\circ y=z$ if and only if $xy$ is close to $z$, then $\circ$ is defined for a dense set of pairs. (We give a precise formulation later.) 

To prove Lemma \ref{bogolyubov} we shall begin, as one might expect, by imitating the proof of Bogolyubov's lemma, using non-Abelian Fourier analysis. This will show that the structure of the convolution $\bbA*\bbAA$ essentially depends on the large Fourier coefficients of $\bbA$. As is well known, these all come from the low-dimensional representations of $\so3$: the further uniformity mentioned above comes from the fact that the number of low-dimensional representations is bounded. To prove the assertion about the ball, we use the fact that the low-dimensional representations can be described explicitly as follows. Every irreducible representation has odd dimension, and for each odd dimension there is exactly one irreducible representation, which is given by the action of $\so3$ on the space of spherical harmonics of degree $d$. (See for example \cite{KSS}.) 

Let us briefly recall the basic facts about non-Abelian Fourier analysis that we shall need. Given an integrable function $f:\so3\to\C$ and an irreducible representation $\rho$ of $\so3$, we define the Fourier coefficient $\hat f(\rho)$ by the formula
\[\hat f(\rho)=\E_xf(x)\overline{\rho(x)},\] 
where we are writing $\E_x$ for the average with respect to Haar measure on $\so3$. Note that if $\rho$ is a $k$-dimensional representation, then $\hat f(\rho)$ is a $k\times k$ matrix. 

The non-Abelian versions of Parseval's identity, the convolution identity, and the inversion formula are as follows. Parseval's identity states that for any two square-integrable functions $f,g:\so3\to\C$, 
\[\int_{\so3}f(x)\overline g(x)\,dx=\sum_\rho n_\rho\tr(\hat f(\rho)\hat g(\rho)^*),\]
where the sum is over all irreducible representations and $n_\rho$ is the dimension of $\rho$. The left-hand side is the obvious definition of the inner product of $f$ and $g$. As for the right-hand side, the matrix inner product $\langle A,B\rangle$ of two $k\times k$ matrices $A$ and $B$ is $\tr(AB^*)=\sum_{ij}A_{ij}B_{ij}^*$, so we can rewrite it as $\sum_\rho n_\rho\langle \hat f(\rho),\hat g(\rho)\rangle$, which is a natural way of defining the inner product $\langle\hat f,\hat g\rangle$. So, suitably interpreted, Parseval's identity is just the usual identity $\langle f,g\rangle=\langle\hat f,\hat g\rangle$. 

The convolution identity is also the same as it is in the Abelian case: $\widehat{f*g}(\rho)=\hat f(\rho)\hat g(\rho)$. Of course, here the product $\hat f(\rho)\hat g(\rho)$ is a matrix product. 

Finally, the inversion formula is
\[f(x)=\sum_\rho n_\rho\tr(\hat f(\rho)\overline{\rho(x)^*}),\]
where the equality is valid almost everywhere. 

The \emph{Hilbert-Schmidt norm} of a complex matrix $A$ is defined by the formula
\[\|A\|_{HS}^2=\tr(AA^*)=\sum_{x,y}|A(x,y)|^2.\]
The \emph{box norm} is defined by the formula 
\[\|A\|_\square^4=\sum_{x,x',y,y'}A(x,y)\overline{A(x,y')}\,\overline{A(x',y)}A(x',y').\]
It is also equal to $\tr(AA^*AA^*)=\langle AA^*,AA^*\rangle=\|AA^*\|$. 

A Cauchy-Schwarz argument shows that $\|AB\|_{HS}\leq\|A\|_\square\|B\|_\square$, and it is also not hard to prove that $\|A\|_\square\leq\|A\|_{HS}$. 

Now let us apply these facts to say something about the convolution $f*g$ of two bounded measurable functions on $\so3$. (By `bounded' we mean that $\|f\|_\infty,\|g\|_\infty\leq 1$.) By Parseval's identity we have that
\[\|f*g\|_2^2=\sum_\rho n_\rho\|\hf(\rho)\hg(\rho)^*\|_{HS}^2\]
The generalized Cauchy-Schwarz inequality for the box norm implies that $\|AB\|_{HS}^2\leq\|A\|_\square^2\|B\|_\square^2$, so the right-hand side is at most
\[\sum_\rho n_\rho\|\hf(\rho)\|_\square^2\|\hg(\rho)\|_\square^2\leq\sum_\rho n_\rho\|\hf(\rho)\|_{HS}^2\|\hg(\rho)\|_{HS}^2.\]
Also, the convolution identity and inversion formula together imply that
\[f*g(x)=\sum_\rho n_\rho\tr(\hf(\rho)\hg(\rho)\overline{\rho(x)^*}).\]
Let us fix a constant $C$ and split the right-hand side into the two functions
\[u(x)=\sum_{\rho:n_\rho\leq C} n_\rho\tr(\hf(\rho)\hg(\rho)\overline{\rho(x)^*}).\]
and
\[v(x)=\sum_{\rho:n_\rho>C} n_\rho\tr(\hf(\rho)\hg(\rho)\overline{\rho(x)^*}).\]
By Parseval's identity, 
\[\|v\|_2=\sum_{\rho:n_\rho>C}n_\rho\|\hf(\rho)\hg(\rho)\|_{HS}^2.\]
Also, Parseval's identity implies that $\|\hat f(\rho)\|_{HS}^2\leq n_\rho^{-1}\|f\|_2^2\leq n_\rho^{-1}$ for any bounded function $f$, so 
\begin{align*}
\sum_{\rho:n_\rho>C}n_\rho\|\hf(\rho)\hg(\rho)\|_{HS}^2&=\sum_{\rho:n_\rho>C}\|\hf(\rho)\|_\square^2\|\hg(\rho)\|_\square^2\\
&\leq \left(\sum_{\rho:n_\rho>C}n_\rho\|\hf(\rho)\|_{\square}^4\right)^{1/2}\left(\sum_{\rho:n_\rho>C}n_\rho\|\hg(\rho)\|_{\square}^4\right)^{1/2}\\
&\leq \left(\sum_{\rho:n_\rho>C}n_\rho\|\hf(\rho)\|_{HS}^4\right)^{1/2}\left(\sum_{\rho:n_\rho>C}n_\rho\|\hg(\rho)\|_{HS}^4\right)^{1/2}\\
&\leq C^{-2}\left(\sum_{\rho:n_\rho>C}n_\rho\|\hf(\rho)\|_{HS}^2\right)^{1/2}\left(\sum_{\rho:n_\rho>C}n_\rho\|\hg(\rho)\|_{HS}^2\right)^{1/2}\\
&\leq C^{-2}\|f\|_2\|g\|_2.\\
\end{align*}

It follows that if $C$ is large, then the function $f*g$ is well approximated in $L_2(\so3)$ by the function $u$ defined above, which was the part that comes from the representations of dimension at most $C$. 

Now let $B$ be a ball of radius $\eta$, where $\eta>0$ is a constant to be chosen later, and let $\mu_B$ be the characteristic measure of $B$. That is, if $B$ has Haar measure $\b$, then $\mu_B(x)=\b^{-1}$ for $x\in B$ and $\mu_B(x)=0$ otherwise. We shall show that if $\eta$ is sufficiently small, then $\|u-u*\mu_B\|_\infty$, and hence $\|u-u*\mu_B\|_2$, is small. 

By the convolution law and the inversion formula, 
\begin{equation}\label{eq1}
u(x)-u*\mu_B(x)=\sum_{\rho: n_\rho\leq C} n_\rho\tr(\hf(\rho)\hg(\rho)(I_{n_\rho}-\widehat{\mu_B}(\rho))\overline{\rho(x)^*}).
\end{equation}
In order to bound the size of the right-hand side, we shall show that if $\eta$ is small enough, then $\widehat{\mu_B}(\rho)$ is close to the identity (on $\C^{n_\rho}$) for all irreducible representations $\rho$ of dimension at most $C$.

By definition,
\[\widehat{\mu_B}(\rho)=\mathop{\E}_{x\in\so3}\mu_B(x)\overline{\rho(x)}=\mathop{\E}_{x\in B}\overline{\rho(x)}.\]

One can show easily that the $d$-dimensional spherical harmonics are equicontinuous: for instance, it follows from the fact that the space of $d$-dimensional spherical harmonics is compact when considered as a subset of $C(\so3)$. (This is the easy direction of the Arz\`ela-Ascoli theorem. If one wants, one can obtain estimates for the equicontinuity by using explicit formulae for the spherical harmonics, but we shall content ourselves with a qualitative statement here.) Therefore, for every $\e>0$ and every irreducible representation $\rho$ of dimension $n_\rho=2d+1$ there exists $\eta>0$ such that if $x$ is sufficiently close to the identity in $\so3$, then $\langle\rho(x)p,p\rangle\geq 1-\eta$ for every spherical harmonic $p$ of dimension $d$. It follows by averaging over all $p$ that $n_\rho^{-1}\tr\rho(x)\geq 1-\eta$, which implies that $\|\rho(x)-I_{n_\rho}\|_{HS}^2\leq 2\eta n_\rho$, and therefore that $\|\rho(x)-I_{n_\rho}\|_{\op}^2\leq 2\eta n_\rho$ as well.

It follows that for every $\d>0$ and every $C$ we may choose $\eta>0$ such that $\|\rho(x)-I_{n_\rho}\|_\op\leq\d$ for every $x\in B$ and every irreducible representation $\rho$ of $\so3$ of dimension at most $C$. This in turn implies by averaging that $\|\widehat{\mu_B}(\rho)-I_{n_\rho}\|_{\op}\leq\d$ for every such $\rho$, where $B$ is the ball of radius $\eta$ about the identity. But then in the right-hand side of (\ref{eq1}) we are taking the trace of a product of four matrices of which three have operator norm at most 1 and one has operator norm at most $\d$. It follows that the trace is at most $\d n_\rho$, and therefore that the right-hand side is in total at most $\d\sum_{\rho:n_\rho\leq C}n_\rho^2$. Choosing $\d$ in such a way that this sum is at most $\e/2$ (which we can do with $\d$ depending on $\e$ and $C$ only) we obtain that $|u(x)-u*\mu_B(x)|\leq\e$ for every $x\in\so3$, which implies that $\|u-u*\mu_B\|_2\leq\e/2$.

If we choose $C$ such that $C^{-2}\leq\e/4$, then $\|v-v*\mu_B\|_2\leq 2\|v\|_2\leq\e/4$. Since $f*g=u+v$, it follows that
\[\|f*g-f*g*\mu_B\|_2\leq\e.\]

Let us state this result formally for later reference. When we say `the ball of radius $\eta$' this is to be understood to be the ball with respect to any reasonable metric, such as the one coming from the operator norm or the Hilbert-Schmidt norm -- the result is true for all of them.

\begin{lemma} \label{qrlemma}
For every $\e>0$ there exists $\eta>0$ such that the following statement holds. Let $f$ and $g$ be two bounded measurable complex-valued functions defined on $\so3$, let $B$ be the ball of radius $\eta$ around the identity in $\so3$ and let $\mu_B$ be the characteristic measure of $B$. Then
\[\|f*g-f*g*\mu_B\|_2\leq\e.\]
\end{lemma}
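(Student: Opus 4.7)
The plan is to use non-Abelian Fourier analysis on $\so3$, decomposing $f*g$ according to the dimension of the irreducible representations involved. Specifically, by the inversion formula and the convolution identity we can write
\[
f*g(x)=\sum_\rho n_\rho\tr(\hf(\rho)\hg(\rho)\overline{\rho(x)^*}),
\]
and for a parameter $C$ (to be chosen) split this as $f*g=u+v$, where $u$ collects the contributions from representations of dimension $n_\rho\leq C$ and $v$ collects the rest. The task is then to choose $C$ large and $\eta$ small so that $\|v\|_2\leq\e/2$ (independently of $\eta$) and $\|u-u*\mu_B\|_2\leq\e/2$ (where $B$ is the $\eta$-ball about the identity and $\mu_B$ its characteristic measure), so that applying the same splitting to $f*g*\mu_B$ and the triangle inequality yields the claim.

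For the high-frequency tail $v$, I would estimate $\|v\|_2^2$ via Parseval: $\|v\|_2^2=\sum_{\rho:n_\rho>C}n_\rho\|\hf(\rho)\hg(\rho)\|_{HS}^2$. Using the matrix Cauchy--Schwarz inequality $\|AB\|_{HS}\leq\|A\|_\square\|B\|_\square$, bounding the box norm by the Hilbert--Schmidt norm, applying Cauchy--Schwarz in the $\rho$ sum, and finally using the crucial estimate $\|\hat{f}(\rho)\|_{HS}^2\leq n_\rho^{-1}\|f\|_2^2\leq n_\rho^{-1}$ (valid because $f$ is bounded by 1), I obtain
\[
\|v\|_2^2\leq C^{-2}\|f\|_2\|g\|_2\leq C^{-2}.
\]
Thus $\|v-v*\mu_B\|_2\leq 2\|v\|_2\leq 2C^{-1}$, and we may fix $C$ large enough that this is at most $\e/2$.

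For the low-frequency part $u-u*\mu_B$, write
\[
u(x)-u*\mu_B(x)=\sum_{\rho:n_\rho\leq C} n_\rho\tr\bigl(\hf(\rho)\hg(\rho)(I_{n_\rho}-\widehat{\mu_B}(\rho))\overline{\rho(x)^*}\bigr).
\]
The factors $\hf(\rho)$, $\hg(\rho)$ and $\overline{\rho(x)^*}$ all have operator norm at most 1, so it suffices to show that $\|I_{n_\rho}-\widehat{\mu_B}(\rho)\|_{\op}$ is small for each of the finitely many representations with $n_\rho\leq C$. Here I would invoke the explicit description of the irreducible representations of $\so3$ as actions on spaces of spherical harmonics of bounded degree (equivalently, a qualitative equicontinuity argument via Arz\`ela--Ascoli applied to the finite-dimensional space of matrix coefficients): for each fixed $C$ and each $\d>0$, there exists $\eta>0$ with $\|\rho(x)-I_{n_\rho}\|_{\op}\leq\d$ for all $x$ in the $\eta$-ball and all $\rho$ with $n_\rho\leq C$. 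Averaging gives $\|\widehat{\mu_B}(\rho)-I_{n_\rho}\|_{\op}\leq\d$, so the summand for a single $\rho$ is bounded by $\d n_\rho$, and choosing $\d$ so that $\d\sum_{\rho:n_\rho\leq C}n_\rho^2\leq\e/2$ yields $\|u-u*\mu_B\|_\infty\leq\e/2$ and in particular $\|u-u*\mu_B\|_2\leq\e/2$.

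The only genuine obstacle is the equicontinuity step, which relies on specific structural facts about $\so3$ (the classification of its irreducible representations in terms of spherical harmonics). The rest is essentially the standard non-Abelian Bogolyubov calculation, and combining the two bounds via $\|f*g-f*g*\mu_B\|_2\leq\|u-u*\mu_B\|_2+\|v-v*\mu_B\|_2\leq\e$ completes the proof.
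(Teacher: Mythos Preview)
Your proposal is correct and follows essentially the same argument as the paper: the same Fourier decomposition into low- and high-dimensional representations, the same Parseval/box-norm/Cauchy--Schwarz estimate for the tail $v$, and the same equicontinuity argument (via the spherical-harmonic description of the irreducibles) to control $\|I_{n_\rho}-\widehat{\mu_B}(\rho)\|_{\op}$ for the finitely many low-dimensional $\rho$. The only differences are cosmetic (the precise power of $C$ in the tail bound, which is irrelevant to the qualitative conclusion).
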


We can interpret this result as a kind of partial quasirandomness property of $\so3$. For a fully quasirandom group (as defined in~\cite{quasirandomgps}), we can replace $\mu_B$ by the constant function that takes value 1 everywhere and the lemma above holds. Thus, when two bounded functions are convolved, the resulting function depends, up to a small $L_2$ error, only on the averages of those functions. In $\so3$ we cannot say that, but we can say that the resulting function does not depend on the fine structure of $f$ and $g$ and only on the averages over balls of radius $\eta$, since one can replace $f$ by $f*\mu_B$ or $g$ by $g*\mu_B$ without having much effect on the answer. (Strictly speaking, we have not proved that $f*g$ is close to $f*\mu_B*g$, since $\so3$ is non-Abelian, but a very minor modification of the above argument will do this as well.) Thus, $\so3$ is `quasirandom at fine scales'. This observation will play an important role in our argument.

Now we give the promised proof that the approximate multiplication we defined earlier is densely defined.

\begin{lemma}\label{denselydefined}
For every $\theta\in(0,1)$ there exists $\d>0$ such that for any maximal $\d$-separated subset $\Gamma$ of $\so3$, the proportion of $(x,y)\in\Gamma^2$ with $d(xy,\Gamma)\leq\theta\d$ is at least $(\theta/3)^9/16$.
\end{lemma}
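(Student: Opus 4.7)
The plan is to introduce the fattened set $A=\bigcup_{\gamma\in\Gamma} B(\gamma,\eta)$ with $\eta=\theta\delta/3$; this is a disjoint union of balls since $\eta<\delta/2$, and maximality of $\Gamma$ (making it a $\delta$-net) gives $\mu(A)=|\Gamma|\mu(B(\eta))\ge\mu(B(\eta))/\mu(B(\delta))$, an expression that tends to $(\theta/3)^3$ as $\delta\to 0$ by the local Euclidean behaviour of the Haar measure on $\so3$. For $u\in A$ let $\pi(u)$ denote the unique $\gamma\in\Gamma$ with $u\in B(\gamma,\eta)$, and set $S=\{(u,v)\in A\times A:uv\in A\}$. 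If $(u,v)\in S$, bi-invariance gives $d(\pi(u)\pi(v),uv)\le 2\eta$, while $uv\in A$ gives $d(uv,\Gamma)\le\eta$, hence $d(\pi(u)\pi(v),\Gamma)\le 3\eta=\theta\delta$. Since each fibre of $(u,v)\mapsto(\pi(u),\pi(v))$ has measure exactly $\mu(B(\eta))^2$, the proportion of good pairs in $\Gamma^2$ is at least $\mu(S)/\mu(A)^2$. Writing $f=\mathbf{1}_A$, this equals $\langle f*f,f\rangle/\mu(A)^2$, so the task reduces to showing $\langle f*f,f\rangle\approx\mu(A)^3$.

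I will deduce this from Lemma~\ref{qrlemma}, applied with $\varepsilon$ chosen as a small power of $\theta$: it provides $\eta_0>0$ depending only on $\varepsilon$ such that $\|f*f-f*f*\mu_B\|_2\le\varepsilon$ for $B=B(\eta_0)$, and Cauchy--Schwarz then bounds $|\langle f*f,f\rangle-\langle f*f*\mu_B,f\rangle|$ by $\varepsilon\mu(A)^{1/2}$. By associativity $f*f*\mu_B=f*(f*\mu_B)$, and $(f*\mu_B)(w)=\mu(A\cap wB)/\mu(B)$ is the local density of $A$ in the ball $wB$ of radius $\eta_0$ about $w$. Once $\delta$ is taken much smaller than $\eta_0$, the covering and packing properties of $\Gamma$ give $|\Gamma\cap B(w,\eta_0)|=(1+o(1))\mu(B(\eta_0))/\mu(B(\delta))$ uniformly in $w$, and consequently $(f*\mu_B)(w)=(1+o(1))\mu(B(\eta))/\mu(B(\delta))=(1+o(1))\mu(A)$ uniformly; therefore $\langle f*f*\mu_B,f\rangle=(1+o(1))\mu(A)^3$, and combining with the Cauchy--Schwarz error yields $\mu(S)\ge\mu(A)^3/2$ for $\delta$ small enough.

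Feeding this back gives a proportion of at least $\mu(A)/2\ge(\theta/3)^3/4$, comfortably exceeding the claimed $(\theta/3)^9/16$ since $\theta<1$. The main technical obstacle will be the pointwise, uniform approximation $(f*\mu_B)(w)\approx\mu(A)$: it reduces to showing that $\mu(B(\eta_0))/\mu(B(\delta))$ well-approximates $|\Gamma\cap B(w,\eta_0)|$ for every $w$, which in turn requires $\delta$ to be small enough that the Haar measure of small balls in $\so3$ is within a $(1+o(1))$ factor of its Euclidean model $cr^3$ at both scales $\eta_0$ and $\delta$. Since $\eta_0$ is determined by $\varepsilon$ via Lemma~\ref{qrlemma}, this pins down the required dependence of $\delta$ on $\theta$.
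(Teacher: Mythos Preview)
Your approach is essentially the paper's: fatten $\Gamma$ to a set $A$ of positive Haar measure, use Lemma~\ref{qrlemma} to compare $\langle f*f,f\rangle$ with $\langle f*f*\mu_B,f\rangle$, bound the latter via a pointwise lower bound on $f*\mu_B$, and convert back to a count on $\Gamma^2$ using the triangle inequality. Your final bound $(\theta/3)^3/4$ is even sharper than the paper's $(\theta/3)^9/16$ because you correctly normalise by $\mu(A)^2$ in the last step rather than simply using $\mu(A)\le 1$.

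One overclaim to correct: for an \emph{arbitrary} maximal $\d$-separated set the asserted equality $|\Gamma\cap B(w,\eta_0)|=(1+o(1))\mu(B(\eta_0))/\mu(B(\d))$ is false in general. Packing (disjoint $(\d/2)$-balls) only gives an upper bound of roughly $8\,\mu(B(\eta_0))/\mu(B(\d))$, while covering ($\d$-balls cover) gives the stated lower bound, so the local density of $\Gamma$ can genuinely vary by a bounded factor across $\so3$; hence $(f*\mu_B)(w)=(1+o(1))\mu(A)$ need not hold and the intermediate claim $\mu(S)\ge\mu(A)^3/2$ does not follow. This does not damage the argument, however: the one-sided bound $(f*\mu_B)(w)\ge(1-o(1))(\theta/3)^3/2$ (exactly what the paper establishes) already yields $\langle f*f*\mu_B,f\rangle\ge(\theta/3)^3\mu(A)^2/4$, and after dividing by $\mu(A)^2$ and absorbing the Cauchy--Schwarz error $\varepsilon\mu(A)^{-3/2}$ you still recover the proportion bound $(\theta/3)^3/4$.
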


\begin{proof}
Let us write $\Gamma_\theta$ for the set of all points $x$ within distance $\theta\d$ of a point in $\Gamma$. Since $\Gamma$ is a maximal $\d$-separated set, it is also a $\d$-net, so the union of the balls of radius $\d$ about each point is all of $\so3$. Since the balls of radius $\theta\d$ are disjoint and each one occupies at least a proportion $\theta^3/2$ of the ball of radius $\d$ with the same centre (proving this is one detail that we omit, but it uses the fact that $\so3$ is a three-dimensional manifold), it follows that $\Gamma_\theta$ has Haar measure at least $\theta^3/2$. Furthermore, given any $\eta>0$, if $\d$ is sufficiently small, we have for every ball $B$ of radius $\eta$ in $\so3$ that $|\Gamma_\theta\cap B|\geq(\theta^3/2)|B|$, where we write $|A|$ for the Haar measure of a subset $A$, except that if $A$ is a finite set then $|A|$ will denote its cardinality. 

Let $f$ be the characteristic function of $\Gamma_\theta$. Then the last assertion is equivalent to the statement that $f*\mu(B)(x)$ is at least $\theta^{3}/2$ for every $x\in\so3$. But that implies that $f*f*\mu_B(x)$ is at least $(\theta^3/2)\E_xf(x)\geq\theta^6/4$ for every $x\in\so3$. 

Now we apply Lemma \ref{qrlemma}. For any $\g>0$ we can choose $\eta>0$ (depending on $\theta$ and $\e$ but not on $\d$) such that $\|f*f-f*f*\mu_B\|_2\leq\g$. From this it follows that
\[|\langle f*f,f\rangle-\langle f*f*\mu_B,f\rangle|\leq\g\|f\|_2.\]
But $\langle f*f*\mu_B,f\rangle\geq\theta^9/8$ by the estimates above, so for suitable choice of $\g$ we can ensure that $\langle f*f,f\rangle\geq\theta^9/16$.

The left-hand side of this inequality is the quantity $\E_{x,y}f(x)f(y)f(xy)$. It is non-zero if and only if all of $x,y$ and $xy$ are within $\theta\d$ of points $x',y',z'$ of $\Gamma$. Since all balls of radius $\theta\d$ have the same measure, it follows that the proportion of $(x',y')\in\Gamma^2$ such that $x'y'$ is within $3\theta\d$ of some point $z'\in\Gamma$ is at least $\theta^9/16$. Replacing $\theta$ by $\theta/3$ gives the lemma as stated.
\end{proof}

We make another observation that uses part of the proof above.

\begin{lemma}
For every $\e,\theta\in(0,1)$ there exists $\d>0$ such that for any maximal $\d$-separated subset $\Gamma$ of $\so3$, the proportion of $z\in\Gamma$ such that $d(xy,z)\leq\theta\d$ for at least $(\theta/3)^6|\Gamma|/8$ pairs $(x,y)\in\Gamma^2$ is at least $1-\e$.
\end{lemma}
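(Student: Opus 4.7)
The plan is to upgrade Lemma~\ref{denselydefined} from an average pair-count to one that is uniform in $z$, by exploiting the $L^2$ quasirandomness of convolutions provided by Lemma~\ref{qrlemma}. Let $f = \mathbf{1}_{\Gamma_{\theta/3}}$ denote the characteristic function of the $(\theta/3)\d$-neighbourhood of $\Gamma$. Exactly as in the proof of Lemma~\ref{denselydefined}, for any $\eta>0$ one may take $\d$ small enough that $f*\mu_B(x) \geq (\theta/3)^3/2$ for every $x$, where $\mu_B$ is the characteristic measure of the ball $B$ of radius $\eta$ about the identity, and hence $f*f*\mu_B(x) \geq (\theta/3)^6/4$ everywhere. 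By Lemma~\ref{qrlemma}, for any $\gamma>0$ we may further shrink $\eta$ so that $\|f*f - f*f*\mu_B\|_2 \leq \gamma$, and then Markov's inequality applied to $(f*f*\mu_B - f*f)^2$ shows that the set $E = \{x : f*f(x) < (\theta/3)^6/8\}$ has Haar measure at most $64\gamma^2/(\theta/3)^{12}$, which we may render arbitrarily small by our choice of $\gamma$.

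Since $\theta<1$, the balls $B(z,(\theta/3)\d)$ for $z\in\Gamma$ are pairwise disjoint, and standard volume estimates for $\so3$ give that each has measure $\beta := |B(e,(\theta/3)\d)|$ comparable to $(\theta/3)^3/|\Gamma|$ once $\d$ is sufficiently small. Call $z\in\Gamma$ \emph{bad} if $|E\cap B(z,(\theta/3)\d)| > \beta/2$; disjointness immediately gives $|\{\text{bad }z\}| \leq 2|E|/\beta$, which can be made at most $\e|\Gamma|$ once $\gamma$ is chosen small enough in terms of $\e$ and $\theta$. For every good $z$ we have $|B(z,(\theta/3)\d)\setminus E| \geq \beta/2$, and on this set $f*f \geq (\theta/3)^6/8$, so $\int_{B(z,(\theta/3)\d)} f*f \geq \beta(\theta/3)^6/16$. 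It remains to relate this integral to the count $N(z) := |\{(x,y)\in\Gamma^2 : d(xy,z)\leq\theta\d\}|$.

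Expanding $f = \sum_{\gamma\in\Gamma}\mathbf{1}_{B(\gamma,(\theta/3)\d)}$ and using bi-invariance of the metric, if $a\in B(\gamma_1,(\theta/3)\d)$, $b\in B(\gamma_2,(\theta/3)\d)$ and $ab\in B(z,(\theta/3)\d)$, then $d(\gamma_1\gamma_2,z)\leq\theta\d$. Hence only pairs $(\gamma_1,\gamma_2)$ counted by $N(z)$ contribute to $\int_{B(z,(\theta/3)\d)} f*f$, and each contributes at most $\beta^2$. Combining, $N(z) \geq (\theta/3)^6/(16\beta)$, and absorbing the volume comparison constant (which is comfortable since $\theta<1$ leaves $(\theta/3)^6 \leq (\theta/3)^3$ with room to spare) yields $N(z) \geq (\theta/3)^6|\Gamma|/8$ for all good $z$. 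The only technical point is to schedule the parameters in the correct order: choose $\gamma$ in terms of $\e$ and $\theta$, then $\eta$ by Lemma~\ref{qrlemma} to give that $\gamma$, and finally $\d$ small enough for both the local density estimate and the ball-volume comparison to hold; the essential feature exploited throughout is the quasirandomness-at-fine-scales property, namely that the $L^2$ closeness in Lemma~\ref{qrlemma} is uniform over choices of the small scale $(\theta/3)\d$ as $\d\to 0$.
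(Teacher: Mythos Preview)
Your proof is correct and follows essentially the same route as the paper: use Lemma~\ref{qrlemma} to show $\|f*f - f*f*\mu_B\|_2$ is small, deduce via Chebyshev that $f*f \geq (\theta/3)^6/8$ off a set $E$ of tiny measure, and then argue that most $z\in\Gamma$ have their $(\theta/3)\d$-ball mostly outside $E$.

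Where you differ from the paper is in the final step converting the lower bound on $f*f$ near $z$ into a lower bound on $N(z)$. The paper simply picks \emph{one} point $z'\in B(z,(\theta/3)\d)\setminus E$, notes $f*f(z')\geq(\theta/3)^6/8$, and reads off directly that at least $(\theta/3)^6|\Gamma|/8$ elements $x\in\Gamma$ have $B(x,(\theta/3)\d)$ meeting the fibre $\{a:a^{-1}z'\in\Gamma_{\theta/3}\}$ (using only $\beta\leq 1/|\Gamma|$ from disjointness of the $\d/2$-balls). Your integrated version incurs an extra factor of $2$ from the half-ball threshold, leaving $N(z)\geq(\theta/3)^6/(16\beta)\geq(\theta/3)^6|\Gamma|/16$ from the same crude bound $\beta\leq 1/|\Gamma|$. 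Your ``room to spare'' remark is right in spirit---since in fact $\beta\asymp(\theta/3)^3/|\Gamma|$, one actually gets $N(z)\gtrsim(\theta/3)^3|\Gamma|$---but making that precise requires an \emph{upper} bound on $\beta$ of the form $\beta\leq C(\theta/3)^3/|\Gamma|$, which needs the local volume scaling on $\so3$ rather than just disjointness. The paper's pointwise argument sidesteps this and lands the stated constant without further work.
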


\begin{proof}	
Choose $\eta$, and therefore $B$, as in the proof of the previous lemma. Then the measure of the set of $x$ such that $|f*f(x)-f*f*\mu_B(x)|<\theta^6/8$ is at least $1-64\g^2\theta^{-12}$. For each $x$ in this set, we have $f*f(x)\geq\theta^6/8$ (since $f*f*\mu_B$ is always at least $\theta^6/4$). Let us denote this set of `popular products' by $W$. 

Choosing $\g$ appropriately, we can ensure that the measure of $W$ is at least $1-\e\theta^3/2$. Since $|\Gamma_\theta|\geq\theta^3/2$, it follows that $|W\cap\Gamma_\theta|\geq(1-\e)|\Gamma_\theta|$. From this it follows that the proportion of $z\in\Gamma$ such that there exists $z'\in W$ with $d(z,z')\leq\theta\d$ is at least $1-\e$. But if $z'\in W$, then $f*f(z')\geq\theta^6/8$, which implies that the proportion of $x\in\Gamma$ such that there exist $x',y,y'$ with $d(x,x')\leq\theta\d$, $y\in\Gamma$, $d(y,y')\leq\theta\d$ and $d(x'y',z')\leq\theta\d$ is at least $\theta^6/8$. But that implies that the number of pairs $(x,y)\in\Gamma$ such that $d(xy,z)\leq 3\theta\d$ is at least $(\theta^6/8)|\Gamma|$. Replacing $\theta$ by $\theta/3$ gives the result as stated.
\end{proof}

Note that the power $\theta^6$ makes sense above. Since $\so3$ is three-dimensional, the probability that two random points will be within $\theta$ of points in $\Gamma$ should be around $\theta^3.\theta^3$, and we have shown that most of the time we are within a constant of what this random model would predict.

For the partially defined operation $\circ$ to satisfy the hypothesis of Theorem~\ref{assoc_main}, we need in particular that there should be many associative triples -- that is, triples $x,y,z$ such that both $(x\circ y)\circ z$ and $x\circ(y\circ z)$ are defined (in which case, as we have noted, they must be equal). This can also be deduced from Lemma \ref{qrlemma}, as we now show.

\begin{lemma} \label{smoothconvolution}
Let $\d,\theta>0$, let $\Gamma$ be a maximal $\d$-separated subset of $\so3$, let $U$ and $V$ be $\d$-separated subsets of $\so3$ and suppose that $U$ and $V$ have cardinalities at least $\a\d^{-3}$ and $\b\d^{-3}$, respectively. Then 
\[|\{(u,v)\in U\times V:d(uv,\Gamma)\leq 3\theta\d\}|\geq\a\b\theta^3\d^{-6}/64.\]
\end{lemma}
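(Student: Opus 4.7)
The plan is to apply the Bogolyubov-type lemma (Lemma~\ref{qrlemma}) to indicator functions of the $\theta\delta$-neighbourhoods of $U$ and $V$, and convert the resulting integral lower bound into the desired combinatorial count, along the lines of the argument already used for Lemma~\ref{denselydefined}. Assume (without loss of generality) that $\theta < 1/2$, so that the $\theta\delta$-neighbourhoods $U_\theta$ and $V_\theta$ of $U$ and $V$ are disjoint unions of balls, and set $g = \mathbbm{1}_{U_\theta}$, $h = \mathbbm{1}_{V_\theta}$, $f = \mathbbm{1}_{\Gamma_\theta}$.

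First I would bound $\langle g*h, f\rangle$ from above by $N \cdot |B_{\theta\delta}|^2$, where $N$ is the count in the statement. Any $(u',v')$ contributing to $\langle g*h, f\rangle$ picks out a unique $(u,v) \in U \times V$ with $u' \in B(u,\theta\delta)$ and $v' \in B(v,\theta\delta)$. By bi-invariance of the metric, $d(uv, u'v') \leq d(u,u') + d(v,v') \leq 2\theta\delta$, and then $u'v' \in \Gamma_\theta$ together with the triangle inequality gives $d(uv, \Gamma) \leq 3\theta\delta$. So each such $(u,v)$ is counted in $N$, and each $(u,v)$ has at most $|B_{\theta\delta}|^2$ preimages, yielding the claimed upper bound.

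Second, I would lower-bound $\langle g*h, f\rangle$ using the quasirandomness of $\so3$. Apply Lemma~\ref{qrlemma} with a small parameter $\e > 0$ (to be chosen) to produce $\eta > 0$ and the ball $B = B_\eta$ with $\|g*h - g*h*\mu_B\|_2 \le \e$. A Fubini computation together with bi-invariance of the Haar measure gives
\[\langle g*h*\mu_B, f\rangle = \int (g*h)(y)\,|B|^{-1}|yB\cap\Gamma_\theta|\,dy.\]
Exactly as in the proof of Lemma~\ref{denselydefined}, once $\delta$ is sufficiently small relative to $\eta$ the $\delta$-net property of $\Gamma$ forces $|yB\cap\Gamma_\theta|\ge(\theta^3/2)|B|$ for every $y$, so $\langle g*h*\mu_B, f\rangle \ge (\theta^3/2)\|g\|_1\|h\|_1$. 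Disjointness of the balls gives $\|g\|_1 \geq \alpha|B_{\theta\delta}|\delta^{-3}$ and similarly for $h$, so choosing $\e \le (\theta^3/4)\|g\|_1\|h\|_1$ yields $\langle g*h, f\rangle \ge (\theta^3/4)\|g\|_1\|h\|_1$, and dividing by $|B_{\theta\delta}|^2$ gives the desired lower bound on $N$, up to a constant comfortably better than $1/64$.

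The main obstacle is bookkeeping: one must keep track of the explicit constants appearing in the volume of small balls in $\so3$ and in the reduction to $\theta<1/2$ to reach the precise factor $1/64$ in the statement. There is also the standard subtlety that the quasirandomness parameter $\e$ determines the scale $\eta$, and hence $\delta$ must be taken sufficiently small for every ball of radius $\eta$ to contain a representative proportion of $\Gamma_\theta$---an implicit hypothesis on $\delta$ which matches the convention used throughout this appendix.
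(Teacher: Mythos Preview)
Your proposal is correct and follows essentially the same approach as the paper: pass to the $\theta\delta$-expansions $U_\theta,V_\theta,\Gamma_\theta$, apply Lemma~\ref{qrlemma} to control $\langle \mathbbm 1_{U_\theta}*\mathbbm 1_{V_\theta},\mathbbm 1_{\Gamma_\theta}\rangle$ via the uniform lower bound $\Gamma_\theta*\mu_B\ge\theta^3/2$, and then convert the integral estimate into the combinatorial count using that each good pair $(u,v)$ contributes at most the square of a ball volume. The only cosmetic difference is that the paper moves $\mu_B$ onto $\Gamma_\theta$ via the adjoint identity and plugs in explicit volume bounds $|B_{\theta\delta}|\le 2\theta^3\delta^3$ at each step, whereas you keep $|B_{\theta\delta}|$ symbolic and cancel it at the end; this is why your sketch yields a constant better than $1/64$.
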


\begin{proof}
Write $U_\theta, V_\theta$ and $\Gamma_\theta$ for the $\theta\d$-expansions of $U, V$ and $\Gamma$. Then $\Gamma_\theta$ has density at least $\theta^3/2$, while $U_\theta$ and $V_\theta$ have densities at least $\a\theta^3/2$ and $\b\theta^3/2$, respectively. 

Let $\e=\a\b\theta^{15/2}/32$. By Lemma \ref{qrlemma}, there exists $\eta>0$ such that, writing $\mu_B$ for the characteristic measure of the ball of radius $\eta$ about the identity, we have that
\[\|U_\theta*V_\theta-U_\theta*V_\theta*\mu_B\|\leq\e,\]
where we have written $U_\theta$ and $V_\theta$ for the characteristic functions of the sets $U_\theta$ and $V_\theta$.

Since $|\Gamma_\theta|\geq\theta^3/2$, it follows that its characteristic function, which again we write $\Gamma_\theta$, has the property that $\|\Gamma_\theta\|_2\leq 2\theta^{3/2}$. Writing $x\approx_\eta y$ as an abbreviation for $|x-y|\leq\eta$, we therefore have
\[\langle U_\theta*V_\theta,\Gamma_\theta\rangle\approx_{2\e\theta^{3/2}}\langle U_\theta*V_\theta*\mu_B,\Gamma_\theta\rangle=\langle U_\theta*V_\theta,\Gamma_\theta*\mu_B\rangle.\]
Recall from the proof of Lemma \ref{denselydefined} that $\Gamma_\theta*\mu_B$ is bounded below by $\theta^3/2$ everywhere. It follows that the inner product on the right-hand side is at least $\a\b\theta^9/8$, and therefore that the inner product on the left-hand side is at least $\a\b\theta^9/8-2\e\theta^{3/2}$, which is at least $\a\b\theta^9/{16}$.

Now let $(u,v)\in U\times V$. If $d(uv,\Gamma)>3\theta$, then by the triangle inequality the product of the balls of radius $\theta\d$ about $u$ and $v$ does not intersect $\Gamma_\theta$, so the pair $(u,v)$ contributes nothing to the inner product. And otherwise, since the balls have volume at most $2\theta^3\d^3$ and the product of the balls intersects at most one ball of radius $\theta\d$ about a point of $\Gamma$, the contribution is at most $4\theta^6\d^6$. It follows that the number of pairs $(u,v)\in U\times V$ such that $d(uv,\Gamma)\leq 3\theta$ is at least $\a\b\theta^3\d^{-6}/64$, as claimed.
\end{proof}

\begin{corollary}\label{mosty}
Let $\Gamma$ be a maximal $\d$-separated subset of $\so3$. Then for at least half of the elements $y\in\Gamma$ there are at least $\theta^3\d^{-3}/128$ elements $x\in\Gamma$ such that $d(xy,\Gamma)\leq 3\theta\d$ and at least $\theta^3\d^{-3}/128$ elements $z\in\Gamma$ such that $d(yz,\Gamma)\leq 3\theta\d$.
\end{corollary}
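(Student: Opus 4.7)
The plan is to deduce Corollary~\ref{mosty} from Lemma~\ref{smoothconvolution} via a contradiction argument applied to the ``bad'' set in each of the two directions. First I would record the standard fact that a maximal $\d$-separated subset of $\so3$ satisfies $|\Gamma|\geq\alpha_0\d^{-3}$ for some absolute constant $\alpha_0>0$: the balls of radius $\d$ about points of $\Gamma$ cover $\so3$ (maximality) and Haar balls of small radius have volume of order $\d^3$, so one gets a lower bound on $|\Gamma|$ in terms of $\d^{-3}$.

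Next, for $y\in\Gamma$ define
\[
N_L(y)=|\{x\in\Gamma:d(xy,\Gamma)\leq 3\theta\d\}|,\qquad N_R(y)=|\{z\in\Gamma:d(yz,\Gamma)\leq 3\theta\d\}|,
\]
and let $B_L=\{y\in\Gamma:N_L(y)<\theta^3\d^{-3}/128\}$, $B_R$ defined analogously. I would show $|B_L|<|\Gamma|/4$ and symmetrically $|B_R|<|\Gamma|/4$, which together give the corollary by inclusion--exclusion. To handle $B_L$, suppose for contradiction that $|B_L|\geq|\Gamma|/4\geq(\alpha_0/4)\d^{-3}$. Then Lemma~\ref{smoothconvolution}, applied to the $\d$-separated sets $U=\Gamma$ and $V=B_L$ with parameters $\alpha=\alpha_0$ and $\beta=\alpha_0/4$, produces at least $\alpha_0^2\theta^3\d^{-6}/256$ pairs $(x,y)\in\Gamma\times B_L$ with $d(xy,\Gamma)\leq 3\theta\d$. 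But this count equals $\sum_{y\in B_L}N_L(y)<|B_L|\cdot\theta^3\d^{-3}/128$, which is bounded above by a quantity of the form $c\theta^3\d^{-6}$ with a smaller constant --- a contradiction. The argument for $B_R$ is identical after swapping the roles of the two factors.

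The main obstacle, or really the only subtlety, is bookkeeping the multiplicative constants so that the contradiction actually closes. One has to be careful to use the lower bound $|\Gamma|\geq\alpha_0\d^{-3}$ in the application of Lemma~\ref{smoothconvolution} and the matching upper bound $|\Gamma|\leq C\d^{-3}$ (from disjointness of balls of radius $\d/2$) to bound $|B_L|$ and $|B_R|$ from above. If the constants $\alpha_0$ and $C$ turn out not to line up perfectly with the factor $1/128$ in the statement, the resolution is simply to shrink the constant and/or to choose $\d$ small enough (depending on $\theta$) that the estimates of Lemma~\ref{smoothconvolution} and the volume comparison ball-of-radius-$r$-has-volume-$\approx Kr^3$ both apply with enough room to spare. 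No further ideas are required beyond Lemma~\ref{smoothconvolution} itself and this contradiction-via-averaging.
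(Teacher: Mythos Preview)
Your approach is essentially the same as the paper's: assume the bad set is large and derive a contradiction from Lemma~\ref{smoothconvolution} applied to $U=\Gamma$ and $V=B_L$. However, your bookkeeping does not close, and the proposed fix (``shrink the constant'') is not available since the corollary specifies the constant $\theta^3/128$.

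The issue is that you feed the \emph{lower bound} $\beta=\alpha_0/4$ into Lemma~\ref{smoothconvolution} but then bound $|B_L|$ from \emph{above} by $C\d^{-3}$ when estimating $\sum_{y\in B_L}N_L(y)$. This decouples the two sides: the lemma gives $\geq\alpha_0^2\theta^3\d^{-6}/256$ while the trivial upper bound gives $<C\theta^3\d^{-6}/128$, and for the natural values ($\alpha_0\approx 1/2$, $C\approx 2$) these are the wrong way round. The paper avoids this by writing $|B_L|=\beta\d^{-3}$ and using this \emph{same} $\beta$ on both sides. Then the lemma gives $\geq\alpha_0\beta\theta^3\d^{-6}/64$ and the definition of $B_L$ gives $<\beta\theta^3\d^{-6}/128$; the $\beta$'s cancel, and with $\alpha_0=1/2$ (from $|\Gamma|\geq\d^{-3}/2$) the strict inequality yields the contradiction exactly at the constant $1/128$. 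No upper bound on $|\Gamma|$ is needed at all.

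One further point you should make explicit (the paper does): Lemma~\ref{smoothconvolution} requires $\d$ small depending on $\alpha,\beta,\theta$. Since under the contradiction hypothesis $\beta\geq\alpha_0/4$ is bounded below by an absolute constant, this dependence becomes a dependence only on $\theta$, so choosing $\d$ small in terms of $\theta$ (as you suggest) is indeed legitimate --- but only after you have set $\beta$ equal to the actual density rather than an arbitrary lower bound.
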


\begin{proof}
Let $A$ be the set of all $y$ for which there are fewer than $\xi\d^{-3}$ elements $x\in\Gamma$ with $d(xy,\Gamma)\leq 3\theta$. Let $|A|=\a\d^{-3}$. Since $|\Gamma|\geq\d^{-3}/2$, we have by Lemma \ref{smoothconvolution} that $\xi\a\d^{-6}\geq\a\theta^3\d^{-6}/128$.

This appears to place no restriction on $\a$, but that is because the restriction is hidden. We need $\d$ to be small compared with a parameter $\eta$ in the previous lemma which depends on $\e$, which in turn depends on $\a,\b$ and $\theta$. However, for any fixed $\a,\theta$ we obtain the above result for sufficiently small $\d$. In particular, we can obtain it for $\a=1/4$ and deduce that $\xi\geq\theta^3/128$.

A similar argument proves that at most quarter of all $y$ fail the other property, and we are done.
\end{proof}

We are now ready to obtain many associative triples.

\begin{corollary}
Let $\Gamma$ be a maximal $\d$-separated subset of $\so3$. Then there are at least $\theta^9\d^{-9}/2^{22}$ triples $(x,y,z)\in\Gamma^3$ such that both $x\circ(y\circ z)$ and $(x\circ y)\circ z$ are defined.
\end{corollary}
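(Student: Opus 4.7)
The plan is to identify a clean sufficient condition on $(x,y,z)\in\Gamma^3$ that forces both associativity products to be defined, and then to count the triples satisfying it by combining Corollary~\ref{mosty} (to freeze a good middle element $y$) with Lemma~\ref{smoothconvolution} (to count compatible endpoints).

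First I would verify the sufficient condition: if $d(xy,\Gamma),\,d(yz,\Gamma),\,d(xyz,\Gamma)\le\theta\d/2$, then both $x\circ(y\circ z)$ and $(x\circ y)\circ z$ are defined. To see this, pick $u,u',v\in\Gamma$ with $d(yz,u)\le\theta\d/2$, $d(xy,u')\le\theta\d/2$ and $d(xyz,v)\le\theta\d/2$. Using bi-invariance,
\[d(xu,v)\le d(xu,xyz)+d(xyz,v)=d(u,yz)+d(xyz,v)\le\theta\d,\]
so $x\circ u=v$; in particular $y\circ z=u$ (unique since $\Gamma$ is $\d$-separated and $\theta<1/2$), hence $x\circ(y\circ z)=v$ is defined. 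Symmetrically, $(x\circ y)\circ z=v$.

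Next I would count triples meeting this sufficient condition in two stages. Applying Corollary~\ref{mosty} with parameter $\theta/6$ in place of $\theta$, for at least half of $y\in\Gamma$ the two sets
\[X_y=\{x\in\Gamma:d(xy,\Gamma)\le\theta\d/2\}\quad\text{and}\quad Z_y=\{z\in\Gamma:d(yz,\Gamma)\le\theta\d/2\}\]
both have cardinality at least $(\theta/6)^3\d^{-3}/128$. Fix such a $y$. The subsets $X_y$ and $yZ_y$ of $\so3$ are each $\d$-separated with cardinality at least $\alpha\d^{-3}$, where $\alpha=(\theta/6)^3/128$, so Lemma~\ref{smoothconvolution} applied to $U=X_y$, $V=yZ_y$ with parameter $\theta/6$ yields at least $\alpha^2(\theta/6)^3\d^{-6}/64$ pairs $(x,w)\in X_y\times yZ_y$ with $d(xw,\Gamma)\le\theta\d/2$. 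Since $w=yz$ for a unique $z\in Z_y$, these correspond to pairs $(x,z)\in X_y\times Z_y$ with $d(xyz,\Gamma)\le\theta\d/2$; every such $(x,y,z)$ then satisfies all three conditions of the sufficient criterion.

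Summing over the at least $|\Gamma|/2\ge\d^{-3}/4$ good $y$'s produces a lower bound of the form $c\,\theta^9\d^{-9}$ for an explicit absolute constant $c$. The only real obstacle is the bookkeeping needed to extract the constant $1/2^{22}$: tracking the powers of $2$ and $3$ through the composition of Corollary~\ref{mosty} (which costs a factor from the threshold rescaling) and Lemma~\ref{smoothconvolution}, and possibly replacing the symmetric split $\theta/2$ by an asymmetric split $d(xy,\Gamma),d(yz,\Gamma)\le c\theta\d$ and $d(xyz,\Gamma)\le(1-c)\theta\d$ optimized in $c\in(0,1)$, so as to balance the loss of constants incurred at each of the two counting stages.
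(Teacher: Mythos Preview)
Your proposal is correct and follows essentially the same route as the paper: fix a good middle element $y$ via Corollary~\ref{mosty}, apply Lemma~\ref{smoothconvolution} to $U=X_y$ and $V=yZ_y$ to count pairs $(x,z)$ with $xyz$ close to $\Gamma$, and sum over the at least $\d^{-3}/4$ good $y$'s. The paper's proof is the same argument; the only cosmetic difference is that the paper works with the threshold $3\theta\d$ throughout and then declares at the end that $\circ$ is taken with parameter $6\theta$, whereas you rescale the parameter in the two lemmas to $\theta/6$ so that the original operation with parameter $\theta$ is used directly. Your honest remark about the constant is well placed: the paper's stated $2^{-22}$ is obtained before this final rescaling, so neither version literally yields $\theta^9\d^{-9}/2^{22}$ for the operation with parameter exactly $\theta$; the content is the bound $c\,\theta^9\d^{-9}$ for an absolute $c>0$.
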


\begin{proof}
Let $y$ satisfy the conclusion of Corollary \ref{mosty}. Now let $U=\{x:d(xy,\Gamma)\leq 3\theta\}$ and let $V=\{yz:d(yz,\Gamma)\leq 3\theta\}$. (The lack of symmetry between those two definitions is deliberate.) Then $U$ and $V$ satisfy the assumptions of Lemma \ref{smoothconvolution} with $\a=\b=\theta^3/128$. It follows that there are at least $\theta^9\d^{-6}/2^{20}$ elements $(x,yz)$ of $U\times V$ such that $d(xyz,\Gamma)\leq 3\theta\d$. But in that case, if we define the operation $\circ$ using the parameter $6\theta$ in place of $\theta$, then for each such pair $(x,yz)$ we have that $z\in V$, so $d(yz,\Gamma)\leq 3\theta$, so $y\circ z$ is defined, and then
\[d(x(y\circ z),\Gamma)\approx_{3\theta}d(xyz,\Gamma)\leq 3\theta,\]
so $x\circ(y\circ z)$ is also defined. 

Since $x\in V$, we have that $d(xy,\Gamma)\leq 3\theta$, so $x\circ y$ is defined, and finally
\[d((x\circ y)z,\Gamma)\approx_{3\theta}d(xyz,\Gamma)\leq 3\theta,\]
so $(x\circ y)\circ z$ is also defined and equal to $x\circ(y\circ z)$.

We can do this for at least $\d^{-3}/4$ elements $y$, so it follows that there are at least $\theta^9\d^{-9}/2^{22}$ associative triples, as claimed.
\end{proof}

\end{document}